\setlist{leftmargin=*, wide, labelindent=0pt}
\setlist[enumerate]{label*=(\alph*),ref=\alph*}
\numberwithin{equation}{section}%
\crefname{Thm}{Theorem}{Theorems}
\crefname{Rem}{Remark}{Remarks}
\crefname{Prop}{Proposition}{Propositions}
\crefname{Cons}{Construction}{Constructions}
\crefname{Cor}{Corollary}{Corollaries}
\crefname{Exa}{Example}{Examples}
\crefname{Lem}{Lemma}{Lemmas}
\definecolor{azure(colorwheel)}{rgb}{0.0, 0.5, 1.0}
\definecolor{indigo}{rgb}{75, 0, 130}
\newtheorem{Cor}[equation]{Corollary}
\newtheorem{Lem}[equation]{Lemma}
\newtheorem{Prop}[equation]{Proposition}
\newtheorem{Thm}[equation]{Theorem}
\theoremstyle{remark}
\newtheorem{Cons}[equation]{Construction}
\newtheorem{Def}[equation]{Definition}
\newtheorem{Not}[equation]{Notation}
\newtheorem{Exa}[equation]{Example}
\newtheorem{Exas}[equation]{Examples}
\newtheorem{Rem}[equation]{Remark}
\newtheorem{Rec}[equation]{Recollection}
\newtheorem*{Ack}{Acknowledgements}
\newtheorem{Ter}[equation]{Terminology}
\newtheorem{Conv}[equation]{Convention}
\newtheorem{War}[equation]{Warning}
\let\le=\leqslant%
\newcommand{\nc}{\newcommand}
\nc{\dmo}{\DeclareMathOperator}
\dmo{\Aut}{Aut}
\dmo{\add}{add}
\dmo{\Add}{Add}
\dmo{\Chain}{Ch}%
\nc{\CohMack}[1][]{\mathop{\mathrm{Mack^{coh}}_{#1}}}
\dmo{\coker}{coker}
\dmo{\cone}{cone}
\dmo{\Der}{D}%
\dmo{\DMack}{DMack}%
\dmo{\Hm}{H}%
\dmo{\Hom}{Hom}
\dmo{\Hty}{\Kb}%
\dmo{\Id}{Id}
\dmo{\incl}{incl}
\dmo{\Ind}{Ind}
\dmo{\Infl}{Infl}
\dmo{\Ker}{Ker}
\dmo{\modname}{mod}%
\dmo{\Mod}{Mod}%
\dmo{\opname}{op}
\dmo{\perm}{perm}%
\dmo{\Perm}{Perm}%
\dmo{\quot}{quot}%
\dmo{\SH}{SH}%
\dmo{\smallperf}{perf}%
\dmo{\Rel}{Rel}
\dmo{\Spc}{Spc}
\dmo{\Spec}{Spec}
\dmo{\Spech}{Spec^{{\rm h}}}
\dmo{\stab}{stab}%
\dmo{\Stab}{Stab}%
\dmo{\supp}{supp}
\dmo{\thick}{thick}
\dmo{\Tr}{Tr}
\dmo{\Locname}{Loc}%
\nc{\ababs}{{\sl ab absurdo}\xspace}
\nc{\ac}{\mathrm{ac}}%
\nc{\Inj}{\mathrm{Inj}}%
\nc{\calO}{\mathcal{O}}
\nc{\cat}[1]{\mathscr{#1}}%
\nc{\cc}{\mathsf{c}}%
\nc{\cA}{\cat{A}}
\nc{\cB}{\cat{B}}
\nc{\cC}{\cat{C}}
\nc{\cD}{\cat{D}}
\nc{\cF}{\cat{F}}
\nc{\cH}{\cat{H}}
\nc{\cI}{\cat{I}}
\nc{\cJ}{\cat{J}}
\nc{\cK}{\cat{K}}
\nc{\cL}{\cat{L}}
\nc{\cM}{\cat{M}}
\nc{\colim}{\mathop{\mathrm{colim}}}
\nc{\hocolim}{\mathop{\mathrm{hocolim}}}
\nc{\cN}{\cat{N}}
\nc{\cP}{\cat{P}}
\nc{\cQ}{\cat{Q}}
\nc{\cS}{\cat{S}}
\nc{\cT}{\cat{T}}
\nc{\Dperf}{\Der_{\smallperf}}%
\nc{\eg}{{\sl e.g.}\@\xspace}
\nc{\gp}{\mathfrak{p}}%
\nc{\gq}{\mathfrak{q}}%
\nc{\Homcat}[1]{\Hom_{\cat #1}}
\nc{\hook}{\hookrightarrow}
\nc{\ie}{{\sl i.e.}\@\xspace}
\nc{\into}{\mathop{\rightarrowtail}}
\nc{\inv}{^{-1}}
\nc{\kk}{k}%
\nc{\kkG}{\kk G}%
\nc{\Loc}[1]{\Locname(#1)}
\nc{\maxel}{\operatorname{Max}_{p\textup{-sec}}^{\textup{elab}}}
\nc{\Mid}{\,\big|\,}
\nc{\mmod}[1]{\modname(#1)}%
\nc{\MMod}[1]{\Mod(#1)}%
\nc{\onto}{\mathop{\twoheadrightarrow}}
\nc{\op}{^{\opname}}
\nc{\sminus}{\smallsetminus}
\nc{\potimes}[1]{^{\otimes #1}}%
\nc{\sbull}{{\scriptscriptstyle\bullet}}
\nc{\brk}[1]{\{#1\}}
\nc{\SET}[2]{\big\{\,#1\Mid#2\,\big\}}
\nc{\SHc}{\SH^{\mathrm{c}}}
\nc{\too}{\mathop{\longrightarrow}\limits}
\nc{\unit}{\mathbb{1}}%
\nc{\zerotwist}{_{0\textrm{-twist}}}
\dmo{\DPerm}{DPerm}
\nc{\Dperm}[2]{\Der_{\perm}(#1;#2)}%
\nc{\W}{\mathbb{W}}
\dmo{\End}{End}
\nc{\isoto}{\overset{\sim}{\,\to\,}}
\nc{\lto}{\leftarrow}
\nc{\xfrom}[1]{\xleftarrow{#1}}
\nc{\xto}[1]{\xrightarrow{#1}}
\nc{\xinto}[1]{\overset{#1}{\,\into\,}}
\nc{\xonto}[1]{\overset{#1}{\,\onto\,}}
\nc{\qquadtext}[1]{\qquad\textrm{#1}\qquad}
\nc{\quadtext}[1]{\quad\textrm{#1}\quad}
\nc{\normaleq}{\trianglelefteqslant}
\nc{\normal}{\lhd}
\nc{\lecl}{\Subset}
\dmo{\can}{can}
\dmo{\chara}{char}%
\dmo{\id}{id}
\dmo{\Img}{Im}
\dmo{\Komp}{K}%
\dmo{\comp}{comp}
\dmo{\open}{open}
\dmo{\proj}{proj}%
\dmo{\Proj}{Proj} %
\dmo{\rmH}{H}
\dmo{\Res}{Res}
\dmo{\smallb}{b}%
\dmo{\Rad}{Rad}
\dmo{\Supp}{Supp}
\dmo{\stmod}{stmod}
\dmo{\kosname}{kos}
\dmo{\subname}{Sub}
\dmo{\zulname}{zul}
\dmo{\red}{red}%
\dmo{\Max}{Max}
\nc{\Sub}[1]{\subname_{#1}}
\nc{\Weyl}[2]{{#1}/\!\!/{#2}}%
\nc{\WGH}{\Weyl{G}{H}}
\nc{\WGK}{\Weyl{G}{K}}
\nc{\WGL}{\Weyl{G}{L}}
\nc{\tInd}{{}^{\otimes\!}\Ind}%
\nc{\inn}{;}%
\nc{\Vee}[1]{V_{#1}}%
\nc{\tVee}[1]{\tilde{V}_{#1}}%
\nc{\VG}{\Vee{G}}
\nc{\Loctens}[1]{\Locname_{\otimes}(#1)}
\nc{\SpcKE}{\Spc(\cK(E))}
\nc{\SpcKG}{\Spc(\cK(G))}
\nc{\RN}{R^\sbull_{\textrm{max}}}
\nc{\twH}{\tilde{\rmH}}
\nc{\Rall}{\rmH^{\sbull\sbull}}%
\nc{\Rloc}[1]{\calO^\sbull_{#1}}%
\nc{\RE}{\Rloc{E}}%
\nc{\REH}{\Rloc{E}(H)}%
\nc{\RGH}{\Rloc{G}(H)}%
\nc{\Rlocp}[1]{\underline{\calO}^\sbull_{#1}}%
\nc{\Rallp}[1]{\underline{\rmH}^{\sbull\sbull}(#1)}%
\nc{\Aff}[1]{{\bbV}^{#1}}
\nc{\Pone}{{\bbP^1_{\!\!\displaystyle\cdot\!\cdot\!\cdot}}}
\nc{\tinyPone}{{\bbP^1_{\!\!\scriptstyle\cdot \cdot \cdot}}}
\nc\ouratop[2]{\genfrac{}{}{0pt}{1}{#1}{#2}}
\nc{\EA}[2]{\mathcal{E}_{#1}(#2)}
\nc{\EAt}[2]{\tilde{\mathcal{E}}_{#1}(#2)}
\nc{\EAp}[1]{\bar{\mathcal{E}}_{p}(#1)}
\nc{\EApp}[1]{\EA{p}{#1}}
\nc{\EApG}{\EAp{G}}
\nc{\EAppG}{\EApp{G}}
\nc{\EApppG}{\mathcal{O}''_{p\textrm{-elab}}(G)}
\dmo{\ttCat}{tt-Cat} %
\dmo{\GrRing}{GrRing} %
\nc{\goodopen}[2][]{U_{#1}(#2)}
\nc{\kos}[2][]{\kosname_{#1}(#2)}%
\nc{\sH}{\kos{H}}%
\nc{\sHG}{\kos[G]{H}}%
\nc{\sKG}{\kos[G]{K}}%
\nc{\zul}[2][]{\zulname_{#1}(#2)}%
\nc{\tHG}{\zul[G]{H}}%
\nc{\adh}[1]{\overline{#1}}%
\nc{\adhpt}[1]{\adh{\{#1\}}}%
\nc{\adj}{\dashv}
\nc{\aka}{{a.\,k.\,a.}\ }
\nc{\bs}{\backslash}
\nc{\Cb}{\Chain_{\smallb}}%
\nc{\Ch}{\Cb}%
\nc{\doublequot}[3]{#1\backslash #2/#3}%
\nc{\Db}{\Der_{\smallb}}%
\nc{\eps}{\epsilon}
\nc{\equalby}[1]{\overset{\textrm{#1}}=}
\nc{\etal}{{\sl et al.}}
\nc{\FFq}{\FF_{\!q}}
\nc{\FFp}{\FF_{\!p}}
\nc{\gm}{\mathfrak{m}}%
\mathchardef\mhyphen="2D
\nc{\Gasets}[1][]{\ifblank{#1}{\Gamma}{#1}\mathrm{\mhyphen Sets}}
\nc{\gasets}[1][]{\ifblank{#1}{\Gamma}{#1}\mathrm{\mhyphen sets}}
\nc{\HGH}{\doublequot HGH}%
\nc{\ideal}[1]{\langle #1\rangle}
\nc{\idealK}[1]{\langle #1\rangle_{\scriptscriptstyle\cK}}
\nc{\ihom}{{\mathsf{hom}}} %
\nc{\Kb}{\Komp_{\smallb}}%
\nc{\K}{\Komp}%
\nc{\Kbac}{\Komp_{\smallb,\mathrm{ac}}}%
\nc{\KKac}{\KK_{\mathrm{ac}}}%
\nc{\Kac}{\cK_{\mathrm{ac}}}%
\nc{\Kp}{\Komp_{+}}%
\nc{\Km}{\Komp_{-}}%
\nc{\Ksp}{\Komp_{+,\mathrm{sp}}}%
\nc{\Kbm}{\Komp_{\mathrm{b},\le0}}%
\nc{\Lotimes}{\otimes^{\rmL}}
\nc{\pproj}[1]{#1\text{-}\kern-0.1em\proj}%
\nc{\restr}[1]{_{\downharpoonright{\scriptstyle #1}}}%
\nc{\smat}[1]{\left(\begin{smallmatrix} #1 \end{smallmatrix}\right)}%
\nc{\Sn}[1]{\mathfrak{S}_{#1}}%
\nc{\Snm}{\Sn{m}}%
\nc{\Spccat}[1]{\Spc(\cat #1)}
\nc{\SpcH}{\Spc(\cH)}%
\nc{\SpcK}{\Spc(\cK)}%
\nc{\To}{\Rightarrow}
\nc{\Top}{\mathsf{Top}}
\nc{\EndHere}{\bibliographystyle{alpha}\bibliography{ref}\end{document}}
\nc{\tristar}{\smallbreak\begin{center}*\ *\ *\end{center}}
\let\ea\expandafter
\def\foreachLetter#1#2#3{\foreachcount=#1
  \ea\loop\ea\ea\ea#3\@Alph\foreachcount
  \advance\foreachcount by 1
  \ifnum\foreachcount<#2\repeat}
\def\definebb#1{\ea\gdef\csname #1#1\endcsname{\ensuremath{\mathbb{#1}}\xspace}}
\def\definebb#1{\ea\gdef\csname bb#1\endcsname{\ensuremath{\mathbb{#1}}\xspace}}
\nc{\martin}[1]{{\color{orange}{#1}}}
\nc{\paul}[1]{{\color{VioletRed}{#1}}}
\author{Paul Balmer}
\address{Paul Balmer, UCLA Mathematics Department, Los Angeles, CA 90095, USA}
\email{balmer@math.ucla.edu}
\urladdr{https://www.math.ucla.edu/~balmer}
\author{Martin Gallauer}
\address{Martin Gallauer, Warwick Mathematics Institute, Coventry CV4 7AL, UK}
\email{martin.gallauer@warwick.ac.uk}
\urladdr{https://warwick.ac.uk/mgallauer}
\title[The geometry of permutation modules]{The geometry of permutation modules}
\let\ourtitle\@title \makeatother
\begin{document}

\date{2023 October 23}

\subjclass[2020]{20C20; 18F99, 20J06, 13A02}
\keywords{Tensor-triangular geometry, permutation modules, modular fixed-points, stratification, twisted permutation cohomology}

\thanks{First-named author supported by NSF grant~DMS-2153758. Second-named author partially supported by the Max-Planck Institute for Mathematics in Bonn.
  The authors thank the Hausdorff Institute for Mathematics in Bonn for its hospitality during the 2022 Trimester ``Spectral Methods in Algebra, Geometry, and Topology''.
For the purpose of open access, the authors have applied a Creative Commons Attribution (CC-BY) licence to any Author Accepted Manuscript version arising from this submission.
}

\maketitle

\begin{abstract}
We consider the derived category of permutation modules for a finite group, in positive characteristic.
We stratify this tensor triangulated category using Brauer quotients.
We describe the spectrum of its compact objects, by reducing the problem to elementary abelian groups and then by using a twisted form of cohomology to express the spectrum locally in terms of the graded endomorphism ring of the unit.
Together, these results yield a classification of thick and of localizing ideals.
\end{abstract}
\vspace{.5cm}
\begin{figure}[H]
\centering
\begin{tikzpicture}
\node[inner sep=0pt] (dperm) at (-.5,0)
{\includegraphics[width=.5\textwidth]{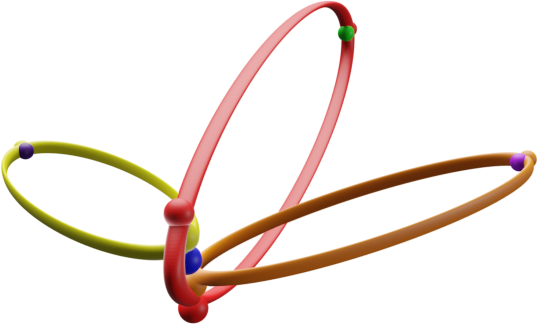}};
\node[inner sep=0pt] (mackey) at (-5,-4)
{\includegraphics[width=.18\textwidth]{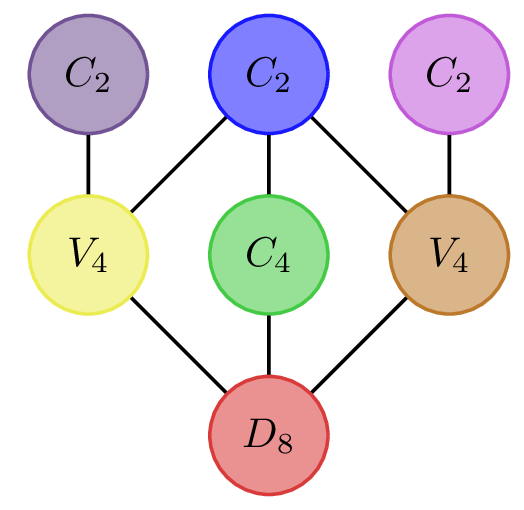}};
\node[inner sep=0pt] (stab) at (3,-4)
{\includegraphics[width=.4\textwidth]{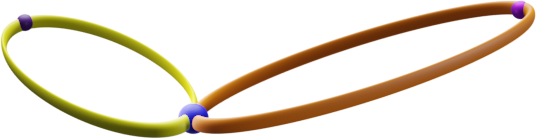}};
\draw[->,thick] (-3,-2) -- node[sloped,below] {\ } (-4,-2.5);
\draw[->,thick] (1.5,-2.5) -- node[below,sloped] {\ } (0.5,-2);
\end{tikzpicture}
\vspace{1cm}
\caption{The geometry for the dihedral group~$D_8$.}
\label{fig:mackey-perm-stab}
\end{figure}
\tableofcontents

\section{Preamble}
\label{sec:introduction}

Fix a field~$\kk$ of positive characteristic~\mbox{$p$}.
Let $G$ be a finite group.
We often write `tt' to abbreviate `tensor triangulated' or `tensor-triangular'.

\subsection*{Topic}
Among $\kk$-linear representations of~$G$, \emph{permutation modules} are perhaps the easiest to grasp: They are simply the $\kk$-linearizations $\kk(X)$ of $G$-sets~$X$.
They play an important role throughout equivariant mathematics, in subjects as varied as derived equivalences~\cite{rickard:splendid}, Mackey functors~\cite{yoshida:G-functors},
or equivariant homotopy theory~\cite{mathew-naumann-noel:nilpotence-descent}, to name a few.
The authors' original interest stems from yet another connection, with Voevodsky's theory of motives~\cite{Voevodsky00} and specifically Artin motives.
For elaboration on this theme, we refer the reader to~\cite{balmer-gallauer:Dperm}.

We consider a `small' tt-category, the homotopy category
\begin{equation}\label{eq:K(G)}%
\cK(G)=\cK(G;\kk):=\Kb\!\big(\perm(G;\kk)^\natural\big)
\end{equation}
of bounded complexes of finitely generated permutation $\kkG$-modules, idempotent-completed.
It sits as the compact part of the `big' tt-category
\begin{equation}\label{eq:T(G)}%
\cT(G)=\DPerm(G;\kk)
\end{equation}
obtained, for instance, by closing~$\cK(G)$ under coproducts and triangles in the homotopy category $\K(\MMod{\kkG})$ of all $\kkG$-modules.
We call $\DPerm(G;\kk)$ the \emph{derived category of permutation $\kkG$-modules}.
See~\Cref{Rec:perm} for details.

As we shall discuss in this preamble, these tt-categories of permutation modules are interesting and important for a variety of reasons.
To begin with, they stand at the crossroad of several subjects, as alluded to above.
Concretely, the tt-category~$\cT(G)$ is equivalent to:
\begin{enumerate}[left=0pt,label=(\roman*),itemsep=2pt]
\item\label{it:mack} the derived category of cohomological $\kk$-linear Mackey functors over~$G$,%
\item\label{it:equiv-hty} the homotopy category of modules over the constant Green functor $\rmH\!\underline{\kk}$ in genuine $G$-spectra,
\item\label{it:artin} the triangulated category of $\kk$-linear Artin motives generated by motives of intermediate fields in any Galois extension with Galois group~$G$.
\end{enumerate}
Consequently, while we adopt here the language of permutation modules, our results admit translations into each of the contexts~\ref{it:mack}, \ref{it:equiv-hty} and~\ref{it:artin}.

\subsection*{Main goals}
We want to understand the tensor-triangular geometry of these permutation tt-categories.
Tensor-triangular geometry~\cite{balmer:icm} is a way to bring organization to sometimes bewildering tt-categories, be it in topology, algebraic geometry or representation theory.
Its fundamental device is the tt-spectrum~$\SpcK$ of a small tt-category~$\cK$.
Computing~$\Spc(\cK(G))$ will provide a classification of all thick triangulated $\otimes$-ideals in~$\cK(G)$.
We also want to show that $\cK(G)$ strongly controls the big tt-category~$\cT(G)$, namely the Telescope Conjecture holds for~$\cT(G)$ and the localizing~$\otimes$-ideals of~$\cT(G)$ are classified by subsets of $\Spc(\cK(G))$.

\subsection*{Landscape}

Let us place~$\cK(G)$ among some standard $G$-equivariant tt-categories:
\begin{enumerate}[itemsep=2pt]
\item
\label{it:SH}%
The equivariant stable homotopy category~$\SH(G)^c$ of finite genuine $G$-spectra.
\item
\label{it:DMack}%
Kaledin's category of derived Mackey functors~$\DMack(G;\kk)^c$.
\item
\label{it:Db}%
The bounded derived category $\Db(\kkG)$ of finitely generated $\kkG$-modules.
\item
\label{it:stab}%
The stable module category~$\stab(\kkG)=\mmod{\kkG}/\proj(\kkG)$.
\end{enumerate}
These categories all fit in a natural sequence of tt-functors, from equivariant homotopy theory to modular representation theory, with our~$\cK(G)$ at center stage:
\begin{equation}
\label{eq:5-tt-cats}%
\xymatrix@C=1.5em{\SH(G)^c \ar[r] & \DMack(G;\kk)^c \ar[r]^-{} & \cK(G;\kk) \ar@{->>}[r]^-{} & \Db(\kkG) \ar@{->>}[r]^-{} & \stab(\kkG).}
\end{equation}
The initial one, $\SH(G)^c$, is topological in nature and its tt-geometry relies heavily on chromatic theory, \`a la Devinatz-Hopkins-Smith~\cite{devinatz-hopkins-smith:chromatic,hopkins-smith:chromatic}.
The first functor~$\SH(G)^c\to \DMack(G;\kk)^c$ moves to the $\kk$-linear world and thus the chromatic refinements disappear from~$\DMack(G;\kk)^c$ onwards.
A central feature of the tt-categories in~\eqref{eq:5-tt-cats} is their variance in the group~$G$.
Restriction, induction and conjugation turn them into so-called \emph{Mackey 2-functors}.
In the language of~\cite{balmer-dellambrogio:coh-two-mackey}, the three Mackey 2-functors~$\cK(G;\kk)$, $\Db(\kkG)$ and~$\stab(\kkG)$ are moreover \emph{cohomological}.
(This categorifies the fact that an ordinary Mackey functor is cohomological if $I^K_H\circ R_H^K$ is multiplication by~$[K\!:\!H]$.)
In other words, the second functor~$\DMack(G;\kk)^c\to \cK(G)$ in~\eqref{eq:5-tt-cats} moves us to the cohomological world.
The subsequent functors in~\eqref{eq:5-tt-cats} are simply localizations.
(For $\cK(G)\onto\Db(\kkG)$ this follows from~\cite{balmer-gallauer:resol-small}. For $\Db(\kkG)\onto \stab(\kkG)$ it is a well-known theorem due to Rickard~\cite{rickard:der-stab}, or Buchweitz~\cite{buchweitz:cohen-macauley}.)

\subsection*{Classical methods}

The four categories surrounding our~$\cK(G)$ in~\eqref{eq:5-tt-cats}
have a fairly well-understood tt-geometry, thanks to a series of powerful and widely used techniques that we shall now briefly review with application to~$\cK(G)$ in mind.

\smallbreak
The first obvious idea is to try some induction on the order of~$G$. For each of the tt-categories in~\eqref{eq:5-tt-cats} we can define a so-called
\begin{center}
\emph{geometric open}
\end{center}
inside their tt-spectrum. It is the open complement of all the images of the closed maps induced by restriction to proper subgroups.
This geometric open captures what is intrinsically new over~$G$, beyond what is detected by proper subgroups.
The name comes from stable homotopy theory~\eqref{it:SH}, as the localization of~$\SH(G)$ over the geometric open recovers the classical \emph{geometric $G$-fixed-points functor}.
In fact, a miracle occurs here: That localization of~$\SH(G)$ is simply the non-equivariant~$\SH$.
This fact has allowed~\cite{balmer-sanders:SH(G)-finite} to describe all points of the spectrum of~$\SH(G)^c$:
All points come from the non-equivariant chromatic spectrum~$\Spc(\SH^c)$ via geometric $H$-fixed-points, for all subgroups~$H\le G$ up to conjugation.
The same strategy has been applied by Patchkoria-Sanders-Wimmer~\cite{psw:derived-mackey} to derived Mackey functors over~$G$, where the same miracle occurs: The geometric open boils down to the non-equivariant version, independently of~$G$.
One deduces that the spectrum of~$\DMack(G;\kk)^c$ is the set of conjugacy classes of subgroups of~$G$ with a certain Alexandrov topology (if $G$ is a $p$-group, $K\in\adh{\{H\}}$ iff $K\leq_GH$).

This geometric fixed-points method has been formalized by Barthel-Castellana-Heard-Naumann-Pol~\cite{barthel-et-al:Quillen-stratification} for arbitrary equivariant tt-categories.
However, the induction process breaks down because the above `miracle' can evaporate in general: There is no simple description of the geometric open \textsl{a priori} and it can heavily depend on the group~$G$.
For example, for the stable module category of an elementary abelian $p$-group~$G=(C_p)^r$ the geometric open is dense in the spectrum, a projective space of dimension~$r-1$, and thus it grows with~$G$.
In that respect, $\cK(G)$ unfortunately behaves like~$\stab(\kkG)$, and~$\Db(\kkG)$; the miracle breaks down.
Beyond groups with very small $p$-Sylow the inductive approach of~\cite{balmer-sanders:SH(G)-finite} hits a wall because the geometric fixed-points of~$\cK(G)$ are too complicated.

\smallbreak
The second important method goes back to Serre's 1965 Theorem~\cite{serre:bockstein}.
In modern lingo, it says that the geometric open of $\Db(\kkG)$ is empty unless $G$ is an elementary abelian $p$-group.
As a consequence, and through further work of Quillen~\cite{quillen:spec-cohomology}, the tt-geometry of $\Db(\kkG)$ and~$\stab(\kkG)$ reduces to elementary abelian subgroups of~$G$.
Unfortunately again, Serre's result does not hold for~$\cK(G)$: The geometric open is non-empty for \emph{every} $p$-group~$G$.
Ipso facto, one cannot reduce the tt-geometry of~$\cK(G)$ to the elementary abelian subgroups of~$G$.

\smallbreak
Here is a third classical method.
Work of Benson-Carlson-Rickard~\cite{benson-carlson-rickard:tt-class-stab(kG)} determines the tt-geometry of the derived and stable categories by using the cohomology $\rmH^\sbull(G,\kk)$, that we can view as the graded endomorphism ring~$\End_{\Db(\kkG)}^\sbull(\unit)$ of the $\otimes$-unit object~$\unit$ in~$\Db(\kkG)$.
Reformulated in the language of~\cite{balmer:sss}, their result implies that the \emph{comparison map}, which exists for every tt-category~$\cK$,
\begin{equation}
\label{eq:comp-classical}%
\comp_{\cK}\colon\SpcK\to \Spech(\End^\sbull_{\cK}(\unit)),
\end{equation}
is a homeomorphism in the case of~$\cK=\Db(\kkG)$.
The case of $\stab(\kkG)$ only differs from the above by removing the closed point, \ie the `irrelevant' ideal~$\rmH^+(G;\kk)$.
Again, these ideas have been pushed and generalized, most famously in a corpus of work affectionately known as `BIK', after Benson-Iyengar-Krause~\cite{BIK:stratifying-stmod-kG}.
So we could hope that the BIK methods might apply to our tt-category of permutation modules~$\cK(G)$. Alas, the graded ring $\End^\sbull_{\cK(G)}(\unit)$ is just the field~$\kk$ and its spectrum, a meagre singleton, refuses to entertain any idea of geometry.

\subsection*{The challenge}
In summary, the classical methods that worked so well for~$\SH(G)^c$ and $\DMack(G;\kk)^c$ on the one hand, and those that worked for~$\Db(\kkG)$ and~$\stab(\kkG)$ on the other, all fall short in the case of~$\cK(G)$:
\[
\begin{tabular}{|c||c|c|c|}
  \hline
  (\checkmark=works, \ding{55}=fails)${}^{\vphantom{I^I}}$ & $\SH(G)^c$ {\footnotesize\&} $\DMack(G;\kk)^c$ & $\cK(G;\kk)$ & $\Db(\kkG)$ {\footnotesize\&} $\stab(\kkG)$ \\[.1em] \hline\hline
   Geom.\ fixed-pts${}^{\vphantom{I^I}}$ & \checkmark  & \ding{55} & \ding{55} \\[.1em] \hline
   Elem.\ ab.\ subgps${}^{\vphantom{I^I}}$ & \ding{55} & \ding{55} & \checkmark \\[.1em] \hline
   Comp.\ map \& BIK${}^{\vphantom{I^I}}$ & \ding{55} &  \ding{55} & \checkmark \\[.1em] \hline
\end{tabular}
\]
This turn of events could be surprising considering that $\cK(G)$ might seem to be the most accessible one among the five tt-categories in our list.
On the one side, the mere construction of $\SH(G)$ and $\DMack(G;\kk)$ is quite involved, whereas~$\cK(G)$ is simply the bounded homotopy category of an additive category. And on the other side, the modular representations that make up $\Db(\kkG)$ and $\stab(\kkG)$ are notoriously wild, whereas there are only finitely many isomorphism classes of indecomposables permutation $kG$-modules.

As we shall demonstrate in this article, the tt-geometry of~$\cK(G)$ just \emph{is} very complex.
It combines the complexity of its neighbors in~\eqref{eq:5-tt-cats}, $\DMack(G;\kk)^c$ and $\Db(\kkG)$, in a way reminiscent of how $\SH(G)^c$ combines the complexity of $\DMack(G;\kk)^c$ and $\SH^c$.
More precisely, just as the underlying set
\begin{align}
  \label{eq:SpcSHG-as-set}%
  \Spc(\SH(G)^c)&=\amalg_{H}\Spc(\SH^c)\\
  \intertext{decomposes, over conjugacy classes of subgroups $H\leq G$, into chromatic strata, so it will be shown that}
  \SpcKG&=\amalg_{H}\Spc(\Db(\kk(\WGH)))
          \label{eq:SpcKG-as-set}
\end{align}
decomposes, over conjugacy classes of $p$-subgroups $H\leq G$, into cohomological support varieties for the associated Weyl groups~$\WGH$.
\Cref{fig:landscape} may help the reader visualize the various phenomena at play.
Each of them can be thought of as contributing a `dimension' to the spectrum.
\begin{figure}[H]
\centering
\scalebox{.8}{\begin{tikzpicture}[set/.style={fill opacity=0.1}]

  \draw[yellow!10,fill=yellow,set,
xshift=4.2cm,
yshift=-0.705cm,
rotate =-45,] (0,0) ellipse (3cm and 1.5cm);

\draw[cyan!10,fill=cyan,set,
xshift=-2.2cm,
yshift=-0.705cm,
rotate =-45,] (0,0) ellipse (3cm and 1.5cm);

\draw[magenta!10,fill=magenta,set,
xshift=1cm,
yshift=-0.705cm,
rotate =45] (0,0) ellipse (3cm and 1.5cm);

\node at (-2.5,0) {$\SH^c$};
\node at (-.6,-2) {$\SH(G)^c$};
\node at (1.1,-.9) {$\DMack(G;\kk)^c$};
\node at (2.6,.5) {$\cK(G;\kk)$};
\node at (4.5,-1.1) {$\Db(\kkG)$};
\node at (5.2,-2.1) {$\stab(\kkG)$};

\end{tikzpicture}
}
\caption{Related tt-categories and tt-geometric phenomena: chromatic ({\color{cyan}cyan}), group-combinatorial ({\color{magenta}magenta}), and Mackey-cohomological ({\color{yellow}yellow}).}
\label{fig:landscape}
\end{figure}
So, how do we approach the tt-geometry of~$\cK(G)$ given that the classical methods fail us?
Let us discuss in broad strokes how the ideas behind those methods can still guide us to the solution, with suitably reinvented tools.
\subsection*{New methods}
While geometric fixed-points definitely remain insufficient, a different type of fixed-points functors, the \emph{modular fixed-points}, will prove very useful.
They are the correct analogue at the level of~$\cK(G)$ of Brauer quotients, a well-known tool for the study of permutation modules.
Firstly, we will use them to describe all points of the spectrum of~$\cK(G)$, arriving at~\eqref{eq:SpcKG-as-set} above.
In addition, they allow us to circumvent the failure of Serre's theorem for~$\cK(G)$ and instead of a reduction to elementary abelian \emph{subgroups}, obtain a reduction to elementary abelian \emph{subquotients} of~$G$.
Finally, for $G$ elementary abelian, although the comparison with cohomology and the BIK method still cannot be used globally on~$\cK(G)$, we will produce an open cover of $\Spc(\cK(G))$ over which the comparison map is indeed a homeomorphism. In other words, BIK will work on small enough pieces of the category~$\cK(G)$. Their determination will involve a `twisted' version of cohomology which epimorphically maps to the group cohomology for each Weyl group.

We explain these ideas in more detail and state precise theorems in the introduction to Part~I (\Cref{sec:intro-I}), where we discuss modular fixed-points and the `stratification' results about the big tt-category~$\cT(G)$, and in the introduction to Part~II (\Cref{sec:intro-II}) where we focus on the topology of~$\Spc(\cK(G))$ and produce the announced local analysis for $G$ elementary abelian.

\subsection*{Illustration}

A geometric paper should include pictures and there will be many of those below.
The title page shows what happens for $G=D_8$, the dihedral group of order~8, at the prime~$p=2$.
Hopefully, the beauty of \Cref{fig:mackey-perm-stab} will entice the reader to proceed beyond this preamble.

At the bottom right of~\Cref{fig:mackey-perm-stab}, we recognize the projective support variety of~$D_8$ consisting of two copies of~$\mathbb{P}^1_{\!\kk}$ glued together at an~$\FF_{\!2}$-rational point. It is the spectrum of the stable module category~$\stab(\kk D_8)$ and also the spectrum of~$\Db(\kk D_8)$ with its `irrelevant' closed point punctured out. This `puncturing' process produces more geometric pictures, displaying classical projective varieties associated to graded rings instead of their full homogeneous spectra.
At the bottom left, we recognize the lattice of conjugacy classes of subgroups of~$D_8$, with the Alexandrov topology, which is the spectrum of~$\DMack(D_8;\kk)^c$ with the closed point (the trivial subgroup) punctured out for coherence. In the center of this triptych sits the spectrum of~$\cK(D_8)$ in majesty, with its closed points removed.
It has three irreducible components, each of which is a~$\mathbb{P}^1_{\!\kk}$ with multiple $\FF_{\!2}$-rational points doubled.
The components meet in some of these doubled points.
This spectrum is presented in detail at the end of the paper, in \Cref{Exa:D8}.

The two maps in \Cref{fig:mackey-perm-stab} are the images under the contravariant functor~$\Spc(-)$ of the tt-functors in~\eqref{eq:5-tt-cats}, ignoring~$\SH(D_8)$.
The colors are chosen to indicate where each point goes in a hopefully self-explanatory way.
We see that the right-hand projective support variety $\Spc(\stab(\kk D_8))$ embeds as an open subset of $\Spc(\cK(D_8))$, meeting two of the three irreducible components.
These two components are detected by the two Klein-four subgroups of~$D_8$.
The third component is detected by the announced modular fixed-points and relies on the elementary abelian Klein-four~$D_8/Z(D_8)$ that appears as a \emph{quotient}.

\medskip
\begin{Ack}
We thank Tobias Barthel, Henning Krause and Peter Symonds for precious conversations and for their stimulating interest.
We also thank Ivo Dell'Ambrogio, Colin Ni and Beren Sanders for comments and suggestions.
Finally, we thank an anonymous referee for their thorough comments.
\end{Ack}

\tristar

\begin{Ter}
\label{Ter:0}%
A `tensor category' is an additive category with a symmetric-monoidal product additive in each variable.
We say `tt-category' for `tensor triangulated category' and `tt-ideal' for `thick (triangulated) $\otimes$-ideal'.
We say `big' tt-category for a rigidly-compactly generated tt-category, as in~\cite{balmer-favi:idempotents}.
We write $\SpcK$ for the tt-spectrum of a tt-category~$\cK$.
For an object~$x\in\cK$, we write $\open(x)=\SET{\cP\in\SpcK}{x\in\cP}$ to denote the open complement of~$\supp(x)$.

For subgroups~$H,K\le G$, we write $H\le_G K$ to say that $H$ is $G$-conjugate to a subgroup of~$K$, that is, $H^g\le K$ for some~$g\in G$.
We write $\sim_G$ for $G$-conjugation.
As always $H^g=g\inv H\,g$ and~${}^{g\!}H=g\,H\,g\inv$.
We write $\Sub{p}(G)$ for the set of $p$-subgroups of~$G$ and $\Sub{p}(G)/_G$ for its $G$-orbits under $G$-conjugation.
We write $N_G(H,K)$ for $\SET{g\in G}{H^g\le K}$ and $N_GH=N_G(H,H)$ for the normalizer.
For each subgroup~$H\le G$, its Weyl group is $\WGH=(N_G H)/H$.
\end{Ter}
\begin{Conv}
\label{Conv:light}%
When a notation involves a subgroup~$H$ of an ambient group~$G$, we drop the mention of~$G$ if no ambiguity can occur, for instance $\Res_H$ for~$\Res^G_H$.
The mention of the field~$\kk$ is sometimes dropped, for readability.
\end{Conv}

\bigbreak
\part{Modular fixed-points and stratification}
\label{part:stratification}
\bigbreak

\section{Introduction to Part~I}
\label{sec:intro-I}

\smallbreak
Having sketched the broad context and the aims of the article, let us turn to the content of \Cref{part:stratification} in more detail.

\subsection*{Stratification}
In colloquial terms, one of our main results says that the big derived category $\cT(G)$ of permutation modules given in~\eqref{eq:T(G)} is strongly controlled by its compact part~$\cK(G)$ described in~\eqref{eq:K(G)}:
\begin{Thm}[\Cref{Thm:stratification}]
\label{Thm:stratification-intro}%
The derived category of permutation modules $\cT(G)$ is \emph{stratified} by~$\SpcKG$ in the sense of Barthel-Heard-Sanders~\cite{barthel-heard-sanders:stratification-Mackey}.
\end{Thm}

Let us remind the reader of BHS-stratification.
What we establish in \Cref{Thm:stratification} is an inclusion-preserving bijection between the localizing $\otimes$-ideals of~$\cT(G)$ and the subsets of the spectrum~$\SpcKG$.
This bijection is defined via a canonical support theory on~$\cT(G)$ that exists once we know that~$\SpcKG$ is a noetherian space (\Cref{Prop:Spc-noetherian}).
Note that \Cref{Thm:stratification-intro} cannot be obtained via `BIK-stratification' as in Benson-Iyengar-Krause~\cite{BIK:stratifying-stmod-kG}, since the endomorphism ring of the unit~$\End^\sbull_{\cK(G)}(\unit)=\kk$ is too small.
However, we shall see that \cite{BIK:stratifying-stmod-kG} plays an important role in our proof, albeit indirectly.
An immediate consequence of stratification is the Telescope Property (\Cref{Cor:telescope}):
\begin{Cor}
\label{Cor:smash-intro}%
Every smashing $\otimes$-ideal of~$\cT(G)$ is generated by its compact part.
\end{Cor}

The key question is now to understand the spectrum~$\SpcKG$.
Recall from \cite[Theorem~5.13]{balmer-gallauer:resol-small} that the innocent-looking category $\cK(G)$ actually captures much of the wilderness of modular representation theory. It admits as Verdier quotient the derived category $\Db(\kk G)$ of \emph{all} finitely generated $\kkG$-modules.
By Benson-Carlson-Rickard~\cite{benson-carlson-rickard:tt-class-stab(kG)}, the spectrum of $\Db(\kkG)$ is the homogeneous spectrum of the cohomology ring~$\rmH^\sbull(G,\kk)$.
We deduce in \Cref{Prop:VeeG} that $\SpcKG$ contains an open piece~$\VG$
\begin{equation}
\label{eq:VeeG}%
\Spech(\rmH^\sbull(G,\kk))\cong\Spc(\Db(\kkG))=:\,{\VG} \ \hook \,\SpcKG
\end{equation}
that we call the \emph{cohomological open} of~$G$.
\goodbreak

In good logic, the closed complement of~$\Vee{G}$ is
\begin{equation}
\label{eq:Z_G}%
\SpcKG\sminus \VG\quad = \quad \Supp(\Kac(G))
\end{equation}
the support of the tt-ideal $\Kac(G)=\Ker(\cK(G)\onto \Db(\kkG))$ of acyclic objects.
The problem becomes to understand this closed subset~$\Supp(\Kac(G))$.
To appreciate the issue, let us say a word of closed points.
\Cref{Cor:closed-pts} gives the complete list: There is one closed point~$\cM(H)$ of~$\SpcKG$ for every conjugacy class of $p$-subgroups~$H\le G$.
The cohomological open~$\VG$ only contains one closed point, for the trivial subgroup~\mbox{$H=1$}. All other closed points $\cM(H)$ for $H\neq 1$ are to be found in the complement $\Supp(\Kac(G))$.
It will turn out that $\SpcKG$ is substantially richer than the cohomological open~$\VG$, in a way that involves $p$-local information about~$G$.
To understand this, we need the right notion of fixed-points.

\smallbreak
\subsection*{Modular fixed-points}
Let $H\le G$ be a subgroup. We abbreviate by
\begin{equation}\label{eq:Weyl}%
\WGH:=W_G(H)=N_G(H)/H
\end{equation}
the Weyl group of~$H$ in~$G$.
If $H\normaleq G$ is normal then of course~$\WGH=G/H$.

For every $G$-set~$X$, its $H$-fixed-points~$X^H$ is canonically a $(\WGH)$-set.
We also have a naive fixed-points functor $M\mapsto M^H$ on $\kkG$-modules but it does not `linearize' fixed-points of $G$-sets, that is, $\kk(X)^H$ differs from~$\kk(X^H)$ in general. And it does not preserve the tensor product.
We would prefer a \emph{tensor}-triangular functor
\begin{equation}
\label{eq:Psi^H}%
\Psi^H\colon \cT(G)\to \cT(\WGH)
\end{equation}
such that $\Psi^H(\kk(X))=\kk(X^H)$ for every $G$-set~$X$.

A related problem was encountered long ago for the $G$-equivariant stable homotopy category~$\SH(G)$, see~\cite{LMSM:equivariant-stable-htpy}:
The naive fixed-points functor (\aka the `genuine' or `categorical' fixed-points functor) is not compatible with taking suspension spectra, and it does not preserve the smash product.
To solve both issues, topologists invented \emph{geometric} fixed-points~$\Phi^H$.
As we saw in the preamble, those geometric fixed-points functors already played an important role in tensor-triangular geometry~\cite{balmer-sanders:SH(G)-finite,barthel-greenlees-hausmann:SH(G)-compact-Lie,psw:derived-mackey} and it would be reasonable, if not very original, to try the same strategy for~$\cT(G)$.
Unfortunately they do \emph{not} give us the wanted~$\Psi^H$ of~\eqref{eq:Psi^H}, as we explain in \Cref{Rem:geom-fixed-pts}.

In summary, we need a third notion of fixed-points functor~$\Psi^H$, which is neither the naive one~$(-)^H$, nor the `geometric' one~$\Phi^H$ imported from topology.
It turns out (see \Cref{Rem:only-p-subgroups}) that it can only exist in characteristic~$p$ when $H$ is a~$p$-subgroup.
The good news is that this is the only restriction (see~\Cref{sec:modular-fixed-pts}):
\begin{Prop}
\label{Prop:modular-fixed-points-intro}%
For every $p$-subgroup~$H\le G$ there exists a coproduct-preserving tensor-triangular functor on the big derived category of permutation modules~\eqref{eq:T(G)}
\[
\Psi^H\colon \quad \cT(G)\too \cT(\WGH)
\]
such that $\Psi^H(\kk(X))\cong\kk(X^H)$ for every $G$-set~$X$.
In particular, this functor preserves compacts and restricts to a tt-functor $\Psi^H\colon \cK(G)\to \cK(\WGH)$ on~\eqref{eq:K(G)}.
\end{Prop}
We call the $\Psi^H$ the \emph{modular $H$-fixed-points functors}. These functors already exist at the level of additive categories~$\perm(G;\kk)^\natural\to \perm(\WGH;\kk)^\natural$, where they agree with the classical Brauer quotient, although our construction is quite different.
See~\Cref{Rem:Brauer}.
These $\Psi^H$ also recover motivic functors considered by Bachmann in~\cite[Corollary~5.48]{bachmann:thesis}.
Equipped with those~$\Psi^H$, let us return to~$\SpcKG$.

\smallbreak
\subsection*{The spectrum}
Each tt-functor~$\Psi^{H}$ induces a continuous map on spectra
\begin{equation}
\label{eq:psi^N}%
\psi^{H}:=\Spc(\Psi^{H})\,\colon \quad \Spc(\cK(\WGH))\too \SpcKG.
\end{equation}
In particular $\SpcKG$ receives via this map~$\psi^H$ the cohomological open~$\Vee{\WGH}$ of the Weyl group of~$H$:
\begin{equation}
\label{eq:psi^H-intro}
\Vee{\WGH}=\Spc(\Db(\kk(\WGH)))\hook\Spc(\cK(\WGH))\xto{\psi^H}\SpcKG.
\end{equation}
Using this, we can describe the \emph{set} underlying $\SpcKG$ in~\Cref{Thm:all-points}:

\begin{Thm}
\label{Thm:Spc(K(G))-intro}%
Every point of~$\SpcKG$ is the image ${\psi}^H(\gp)$ of a point~$\gp\in \Vee{\WGH}$ for some $p$-subgroup~$H\le G$, in a unique way up to $G$-conjugation, \ie we have ${\psi}^H(\gp)={\psi}^{H'}(\gp')$ if and only if there exists $g\in G$ such that $H^g=H'$ and $\gp^g=\gp'$.
\end{Thm}

In this description, the trivial subgroup~$H=1$ contributes the cohomological open~$\VG$ (since $\Psi^1=\Id$).
Its closed complement $\Supp(\Kac(G))$, introduced in~\eqref{eq:Z_G}, is covered by images of the modular fixed-points maps~\eqref{eq:psi^H-intro}, for~$H$ running through all non-trivial $p$-subgroups of~$G$.
The main ingredient in proving \Cref{Thm:Spc(K(G))-intro} is our Conservativity \Cref{Thm:conservativity} on the associated big categories:
\begin{Thm}
\label{Thm:conservativity-intro}%
The family of functors $\{\cT(G)\xto{\Psi^H}\cT(\WGH)\onto \K\Inj(\kk(\WGH))\}_{H}$, indexed by the (conjugacy classes of) $p$-subgroups~$H\le G$, is conservative.
\end{Thm}

This determines the set $\SpcKG$. The topology of~$\SpcKG$ involves new characters and we postpone its discussion to \Cref{part:twisted-cohomology}.

\smallbreak
\subsection*{Measuring progress by examples}
Before the present work, we only knew the case of cyclic group~$C_p$ of order~$p=2$, where $\Spc(\cK(C_2))$ is a 3-point space\,{\rm(\footnote{\,A line indicates specialization: The higher point is in the closure of the lower one.})}
\begin{equation}
\label{eq:C_2}%
\vcenter{\xymatrix@R=.5em@C=.2em{
{\color{Brown}\scriptstyle\Supp(\Kac(C_2))}
& {\color{Brown}\bullet} \ar@{-}@[Gray][rd]
&&
{\color{OliveGreen}\bullet}
\\
&& {\color{OliveGreen}\bullet} \ar@{-}@[OliveGreen][ru]_-{{\color{OliveGreen}\Vee{C_2}}}
}}\kern5em
\end{equation}
This was the starting point of our study of real Artin-Tate motives~\cite[Theorem~3.14]{balmer-gallauer:rage}.
It appears independently in Dugger-Hazel-May~\cite[Theorem~5.4]{dugger-et-al:C2}.

The present paper gives a description of~$\Spc(\cK(G))$ for arbitrary finite groups~$G$.
We gather several examples in \Cref{sec:examples} to illustrate the progress made since~\eqref{eq:C_2}, and also for later use in~\cite{balmer-gallauer:Artin-finite-fields}.
Let us highlight the case of the quaternion group~$G=Q_8$ (\Cref{Exa:Q_8}).
By Quillen, we know that the cohomological open~$\Vee{Q_8}$ is the same as for its center~$Z(Q_8)=C_2$, that is, the 2-point Sierpi\'{n}ski space displayed in green on the right-hand side of~\eqref{eq:C_2}, and again below:
\[
\vcenter{\xymatrix@R=.5em@C=.2em{
{\color{Brown}\scriptstyle\Supp(\Kac(Q_8))=?} \ar@{..}@[Gray][rd]
&&
{\color{OliveGreen}\bullet}
\\
& {\color{OliveGreen}\bullet} \ar@{-}@[OliveGreen][ru]_-{{\color{OliveGreen}\Vee{Q_8}\,\cong\,\Vee{C_2}}}
}}
\]
If intuition was solely based on~\eqref{eq:C_2} one could believe that $\SpcKG$ is just~$\Vee{G}$ with some discrete decoration for the acyclics, like the single (brown) point on the left-hand side of~\eqref{eq:C_2}. The quaternion group offers a stark rebuttal.

Indeed, the spectrum $\Spc(\cK(Q_8))$ is the following space:
\begin{equation}\label{eq:Q_8}%
\kern2em\vcenter{\xymatrix@C=0.1em@R=.4em{
{\color{Brown}\overset{}{\bullet}} \ar@{-}@[Brown][rrdd] \ar@{-}@[Brown][rrrrdd] \ar@{-}@[Brown][rrrrrrdd] \ar@{~}@[Brown][rrrrrrrrdd] &&& {\color{Brown}\overset{}{\bullet}} \ar@{-}@[Brown][ldd] \ar@{-}@[Brown][rrrrrrdd]
&& {\color{Brown}\overset{}{\bullet}} \ar@{-}@[Brown][ldd] \ar@{-}@[Brown][rrrrrrdd]
&& {\color{Brown}\overset{}{\bullet}} \ar@{-}@[Brown][ldd] \ar@{-}@[Brown][rrrrrrdd]
&& {\color{Brown}\overset{}{\bullet}} \ar@{~}@[Brown][ldd] \ar@{-}@[Brown][dd] \ar@{-}@[Brown][rrdd] \ar@{-}@[Brown][rrrrdd]  \ar@{-}@[Gray][rrrrrrrdd]
&&&&&&&& {\color{OliveGreen}\bullet} \ar@{-}@[OliveGreen][ldd]^-{{\color{OliveGreen}\Vee{Q_8}\,\cong\,\Vee{C_2}}}
\\ \\
&& {\color{Brown}\bullet} \ar@{-}@[Brown][rrrrrrrdd]_-{\color{Brown}\Supp(\Kac(Q_8))\,\cong\,\Spc(\cK(C_2^{\times2}))\kern5em}
&& {\color{Brown}\bullet} \ar@{-}@[Brown][rrrrrdd]
&& {\color{Brown}\bullet} \ar@{-}@[Brown][rrrdd]
& \ar@{.}@[Brown][r]
& {\scriptstyle\color{Brown}\tinyPone} \ar@{~}@[Brown][rdd] \ar@{.}@[Brown][rrrrrrr]
& {\color{Brown}\bullet} \ar@{-}@[Brown][dd]
&& {\color{Brown}\bullet} \ar@{-}@[Brown][lldd]
&& {\color{Brown}\bullet} \ar@{-}@[Brown][lllldd]
&&& {\color{OliveGreen}\bullet}
\\ \\
&&&&&&&&& {\color{Brown}\bullet}
&&&&
}}
\end{equation}
Its support of acyclics (in brown) is actually way more complicated than the cohomological open itself: It has Krull dimension two and contains a copy of the projective line~$\bbP^1_{\!\kk}$. In fact, the map~$\psi^{C_2}$ given by modular fixed-points identifies the closed piece $\Supp(\Kac(Q_8))$ with the whole spectrum for $Q_8/C_2$, which is a Klein-four. We discuss the latter in \Cref{Exa:Klein4} where we also explain the meaning of~$\Pone$ and the undulated lines in~\eqref{eq:Q_8}.

\section{Recollections and Koszul objects}
\label{sec:rec-red}%

\begin{Rec}
\label{Rec:ttg}%
We refer to~\cite{balmer:icm} for elements of tensor-triangular geometry.
Recall simply that the \emph{spectrum} of an essentially small tt-category~$\cK$ is $\SpcK=\SET{\cP\subsetneq\cK}{\cP\textrm{ is a prime tt-ideal}}$.
For every object~$x\in\cK$, its support is $\supp(x):=\SET{\cP\in\SpcK}{x\notin \cP}$.
These form a basis of closed subsets for the topology.
\end{Rec}

\begin{Rec}
\label{Rec:perm}%
(Here $\kk$ can be a commutative ring.)
Recall our reference~\cite{balmer-gallauer:Dperm} for details on permutation modules.
Linearizing a $G$-set~$X$, we let $\kk(X)$ be the free $\kk$-module with basis~$X$ and $G$-action $\kk$-linearly extending the $G$-action on~$X$.
A \emph{permutation $\kkG$-module} is a $\kkG$-module isomorphic to one of the form~$\kk(X)$.
These modules form an additive subcategory $\Perm(G;\kk)$ of~$\MMod{\kkG}$, with all $\kkG$-linear maps.
We write $\perm(G;\kk)$ for the full subcategory of finitely generated permutation $\kkG$-modules and $\perm(G;\kk)^\natural$ for its idempotent-completion.

We tensor $\kkG$-modules in the usual way, over~$\kk$ with diagonal $G$-action. The linearization functor~$\kk(-)\colon \Gasets[G] \too \Perm(G;\kk)$ turns the cartesian product of $G$-sets into this tensor product. For every finite~$X$, the module $\kk(X)$ is self-dual.

We consider the idempotent-completion $(-)^\natural$ of the homotopy category of bounded complexes in the additive category~$\perm(G;\kk)$
\begin{equation*}
\cK(G)=\cK(G;\kk):=\Kb(\perm(G;\kk))^\natural\cong \Kb(\perm(G;\kk)^\natural).
\end{equation*}
As $\perm(G;\kk)$ is an essentially small tensor-additive category, $\cK(G)$ becomes an essentially small tensor triangulated category. As $\perm(G;\kk)$ is rigid so is~$\cK(G)$, with degreewise duals. Its tensor-unit $\unit=\kk$ is the trivial $\kkG$-module $\kk=\kk(G/G)$.

The `big' \emph{derived category of permutation $\kkG$-modules}~\cite[Definition~3.6]{balmer-gallauer:Dperm} is
\begin{equation*}
\DPerm(G;\kk)=\K(\Perm(G;\kk))\big[\{G\textrm{-quasi-isos}\}\inv\big],
\end{equation*}
where a $G$-quasi-isomorphism $f\colon P\to Q$ is a morphism of complexes such that the induced morphism on $H$-fixed points $f^H$ is a quasi-isomorphism for every subgroup~$H\le G$.
It is also the localizing subcategory of $\K(\Perm(G;\kk))$ generated by~$\cK(G)$, and it follows that $\cK(G)=\DPerm(G;\kk)^c$.
\end{Rec}

\begin{Exa}
For $G$ trivial, the category~$\cK(1;\kk)=\Dperf(\kk)$ is that of perfect complexes over~$\kk$ (any ring) and $\DPerm(1;\kk)$ is the derived category of~$\kk$.
\end{Exa}

\begin{Rem}
The tt-category $\cK(G)$ depends functorially on~$G$ and~$\kk$. It is contravariant in the group. Namely if $\alpha\colon G\to G'$ is a homomorphism then restriction along~$\alpha$ yields a tt-functor $\alpha^*\colon \cK(G')\to \cK(G)$.
When $\alpha$ is the inclusion of a subgroup~$G\le G'$, we recover usual restriction
\[
\Res^{G'}_{G}\colon \cK(G')\to \cK(G).
\]
When~$\alpha$ is a quotient $G\onto G'=G/N$ for~$N\normaleq G$, we get \emph{inflation}, denoted here\,(\footnote{\,We avoid the traditional $\Infl^{G}_{G/N}$ notation which is not coherent with the restriction notation.})
\[
\Infl^{G/N}_G\colon \cK(G/N;\kk)\to \cK(G).
\]
The covariance of $\cK(G)$ in~$\kk$ is simply obtained by extension-of-scalars.
All these functors are the `compact parts' of similarly defined functors on $\DPerm$.
\end{Rem}

Let us say a word of $\kkG$-linear morphisms between permutation modules.
\begin{Rec}
\label{Rec:Hom}%
Let $H,K\le G$ be subgroups. Then $\Hom_{\kkG}(\kk(G/H),\kk(G/K))$ admits a $\kk$-basis $\{f_g\}_{[g]}$ indexed by classes $[g]\in \doublequot HGK$. Namely, choosing a representative in each class $[g]\in \doublequot HGK$, one defines
\begin{equation}
\label{eq:Hom-f_g}%
f_g\colon \quad \kk(G/H)\underset{\eta}{\ \into\ } \kk(G/L)\underset{c_g}{\isoto} \kk(G/L^g)\underset{\eps}{\ \onto\ } \kk(G/K)
\end{equation}
where we set $L:=H\cap {}^{g\!}K$, where $\eta$ and~$\eps$ are the usual maps using that $L\le H$ and $L^g\le K$ (thus $\eta$ maps $[e]_H$ to $\sum_{\gamma\in H/L} \gamma$ and $\eps$ extends $\kk$-linearly the projection $G/L^g\onto G/K$), and finally where the middle isomorphism~$c_g$ is
\begin{equation}\label{eq:Hom-c_g}%
\vcenter{\xymatrix@R=.1em{
c_g\colon & \kk(G/L) \ar[r] & \kk(G/L^g)
\\
& [x]_{L} \ar@{|->}[r] & [x\cdot g]_{L^g}\,.
}}
\end{equation}
This is a standard computation, using the adjunction $\Ind_H^G\adj\Res^G_H$ and the Mackey formula for~$\Res^G_H(\kk(G/K))\simeq\oplus_{[g]\in \doublequot HGK}\,\kk(H/H\cap {}^{g\!}K)$.
\end{Rec}

We can now begin our analysis of the spectrum of the tt-category~$\cK(G)$.

\begin{Prop}
\label{Prop:index-invertible}%
Let $G\le G'$ be a subgroup of index invertible in~$\kk$. Then the map $\Spc(\Res^{G'}_{G})\colon \Spc(\cK(G))\to \Spc(\cK(G'))$ is surjective.
\end{Prop}
\begin{proof}
This is a standard argument.
For a subgroup $G\le G'$, the restriction functor $\Res^{G'}_{G}$ has a two-sided adjoint $\Ind_{G}^{G'}\colon \cK(G)\to \cK(G')$ such that the composite of the unit and counit of these adjunctions $\Id\to \Ind\Res\to \Id$ is multiplication by the index. If the latter is invertible, it follows that $\Res^{G'}_{G}$ is a faithful functor.
The result now follows from~\cite[Theorem~1.3]{balmer:surjectivity}.
\end{proof}
\begin{Cor}
Let $\kk$ be a field of characteristic zero and $G$ be a finite group. Then $\Spc(\cK(G))=\ast$ is a singleton.
\end{Cor}
\begin{proof}
Direct from \Cref{Prop:index-invertible} since $\Spc(\cK(1;\kk))=\Spc(\Dperf(\kk))=\ast$.
\end{proof}

\begin{Rem}
\label{Rem:reduc}%
In view of these reductions, the fun happens with coefficients in a field~$\kk$ of positive characteristic~$p$ dividing the order of~$G$.
\end{Rem}

Let us now identify what the derived category tells us about $\SpcKG$.
\begin{Not}
\label{Not:Kac(G)}%
We can define a tt-ideal of~$\cK(G)=\Kb(\perm(G;\kk)^\natural)$ by
\[
\cK_\ac(G):=\SET{x\in\cK(G)}{x\textrm{ is \emph{acyclic} as a complex of $kG$-modules}}.
\]
It is the kernel of the tt-functor $\Upsilon_G\colon \cK(G)\to \Db(\kk G):=\Db(\mmod{\kkG})$ induced by the inclusion $\perm(G;\kk)^\natural\hook\mmod{\kkG}$ of the additive category of $p$-permutation $\kkG$-modules inside the abelian category of all finitely generated $\kkG$-modules.
\end{Not}

\begin{Rec}
\label{Rec:K(G)/Kac}%
The canonical functor induced by~$\Upsilon_G$ on the Verdier quotient
\[
\frac{\cK(G)}{\cK_\ac(G)}\too \Db(\kk G)
\]
is an equivalence of tt-categories. This is \cite[Theorem~5.13]{balmer-gallauer:resol-small}. In other words,
\begin{equation}
\label{eq:Upsilon_G}%
\Upsilon_G\colon \cK(G)\onto \Db(\kk G)
\end{equation}
realizes the derived category of finitely generated $\kkG$-modules as a localization of our~$\cK(G)$, away from the Thomason subset~$\Supp(\Kac(G))$ of~\eqref{eq:Z_G}.
Following Neeman-Thomason, the above localization~\eqref{eq:Upsilon_G} is the compact part of a finite localization of the corresponding `big' tt-categories $\cT(G)\onto \K\Inj(\kkG)$, where the latter is the homotopy category of complexes of injectives. See~\cite[Remark~4.21]{balmer-gallauer:resol-big}. We return to this localization of big categories in \Cref{Rec:J-lambda}.
\end{Rec}

We want to better understand the tt-ideal of acyclics~$\Kac(G)$ and in particular show that it has closed support.

\begin{Rec}
Let $H\le G$ be a subgroup. Tensor-induction $\tInd_H^G$ is a standard process to turn $\kk H$-modules into $\kk G$-modules, see for instance~\cite[\S\,I.3.15]{benson:representation-cohomology}. It can be applied to \emph{complexes} of modules, as spelled out in~\cite[\S\,~II.4.1]{benson:representation-cohomology} or~\cite[\S\,3]{balmer-gallauer:resol-small}.
Recall that tensor-induction is not additive and therefore the functor $\tInd_H^G\colon\Ch(\kk H)\to\Ch(\kk G)$ does not preserve contractibility of complexes. This is why the following construction does not produce trivial objects.
\end{Rec}
\begin{Cons}
\label{Cons:kos}%
Let $H\le G$ be a subgroup. We define a complex of~$\kkG$-modules by tensor-induction (recall \Cref{Conv:light})
\[
\sH=\sHG:=\tInd_H^G(0\to \kk\xto{1} \kk\to 0)
\]
where $0\to \kk\xto{1} \kk\to 0$ is non-trivial in homological degrees~1 and~0; hence $\sH$ lives in degrees between~$[G\!:\!H]$ and~0.
The underlying complex of $k$-vector spaces is $\otimes_{G/H}(0\to\kk\xto{1} \kk\to 0)$.
Since $H$ acts trivially on~$k$, the action of~$G$ on~$\sH$ is the action of~$G$ by permutation of the factors $\otimes_{G/H}(0\to\kk\xto{1} \kk\to 0)$. This can also be described as a Koszul complex. For every $0\le d\le[G\!:\!H]$, the complex~$\sH$ in degree~$d$ is the $k$-vector space $\Lambda^d(\kk(G/H))$ of dimension~${[G:H]\choose d}$. If we choose a numbering of the elements of~$G/H=\{v_1,\ldots,v_{[G:H]}\}$ then $\sH_d$ has a $k$-basis $\SET{v_{i_1}\wedge\cdots\wedge v_{i_d}}{1\le i_1<\cdots<i_d\le [G\!:\!H]}$. The canonical diagonal action of~$G$ permutes this basis but introduces signs when re-ordering the $v_i$'s so that indices increase. When $p=2$ these signs are irrelevant. When $p>2$, every such `sign-permutation' $\kkG$-module is isomorphic to an actual permutation $\kkG$-module (by changing some signs in the basis, see~\cite[Lemma~3.8]{balmer-gallauer:resol-small}).
\end{Cons}
\begin{Prop}
\label{Prop:s(H;G)}%
Let $H\le G$ be a subgroup. Then~$\sHG$ is an acyclic complex of finitely generated permutation $\kkG$-modules which is concentrated in degrees between $[G\!:\!H]$ and~$0$ and such that it is~$k$ in degree~0 and~$\kk(G/H)$ in degree~1.
\end{Prop}
\begin{proof}
See \Cref{Cons:kos}. Exactness is obvious since the underlying complex of $\kk$-modules is $(0\to k\to k\to 0)\potimes{[G:H]}$.
The values in degrees~$0,1$ are immediate.
\end{proof}

\begin{Exa}
\label{Exa:s(1;G)}
We have $\kos[G]{G}=0$ in~$\cK(G)$. The complex $\kos[G]{1}$ is an acyclic complex of permutation modules that was important in~\cite[\S\,3]{balmer-gallauer:resol-small}:
\[
\kos[G]{1}=\xymatrix{\cdots 0\ar[r] & P_n\ar[r] & \cdots \ar[r] & P_2 \ar[r] & \kk G \ar[r] & \kk \ar[r] & 0 \cdots
}
\]
\end{Exa}

\begin{Lem}
\label{Lem:kos(N;G)}%
Let $H\normaleq G$ be a normal subgroup and $H\le K\le G$. Then $\kos[G]{K}\cong \Infl^{G/H}_G(\kos[G/H]{K/H})$.
In particular, $\kos[G]H\cong\Infl^{G/H}_G(\kos[G/H]{1})$.
\end{Lem}
\begin{proof}
The construction of~$\kos[G]{K}=\otimes_{G/K}(0\to\kk\xto{1} \kk\to 0)$ depends only on the $G$-set~$G/K$ which is inflated from the $G/H$-set $(G/H)/(K/H)$.
\end{proof}

In fact, $\sHG$ is not only exact. It is split-exact on~$H$. More generally:

\begin{Lem}
\label{Lem:Res(s(G;H))}%
For all subgroups $H,K\le G$ and all choices of representatives in $K\bs G/H$, we have a non-canonical isomorphism of complexes of~$\kk K$-modules
\[
\Res^G_K(\sHG)\simeq\ \bigotimes_{[g]\in K\bs G/H}\ \kos[K]{K\cap {}^{g\!}H}.
\]
In particular, if $K\le_G H$, we have $\Res^G_K(\sHG)=0$ in~$\cK(K)$.
\end{Lem}
\begin{proof}
By the Mackey formula for tensor-induction \cite[Proposition~I.3.15.2.(v)]{benson:representation-cohomology},
we have in~$\Ch(\perm(K;\kk))$
\[
\Res^G_K(\sHG)\simeq\ \bigotimes_{[g]\in K\bs G/H}\ \tInd_{K\cap {}^{g\!}H}^K \big( {}^{g\!}\Res^H_{K\cap{}^{g\!}H}(0\to \kk\xto{1}\kk\to 0)\big).
\]
The result follows since $\Res(0\to \kk\xto{1}\kk\to 0)=(0\to \kk\xto{1}\kk\to 0)$.
If $K\le_GH$, the factor $\kos[K]{K}$ appears in the tensor product and $\kos[K]{K}=0$ in~$\cK(K)$.
\end{proof}

We record a general technical argument that we shall use a couple of times.
\begin{Lem}
\label{Lem:s-generates}%
Let $\cA$ be a rigid tensor category and $s=(\cdots s_2\to s_1\to s_0 \to 0\cdots)$ a complex concentrated in non-negative degrees.
Let $x\in\Ch(\cA)$ be a bounded complex such that $s_1\otimes x=0$ in~$\Kb(\cA)$. Then there exists $n\gg0$ such that $s_0\potimes{n}\otimes x$ belongs to the smallest thick subcategory $\ideal{s}'$ of~$\K(\cA)$ that contains~$s$ and is closed under tensoring with~$\Kb(\cA)\cup\{s\}$ in~$\K(\cA)$.
In particular, if $s\in\Kb(\cA)$ is itself bounded, then $s_0\potimes{n}\otimes x$ belongs to the tt-ideal $\ideal{s}$ generated by~$s$ in~$\Kb(\cA)$.
\end{Lem}
\begin{proof}
Let $u:=s_{\geq 1}[-1]$ be the truncation of~$s$ such that $s=\cone(d:u\to s_0)$.
Similarly we have $u=\cone(u_{\geq 1}[-1]\to s_1)$.
Note that $u_{\ge 1}$ is concentrated in positive degrees.
Since $x\otimes s_1=0$ we have $u\otimes x\cong u_{\ge 1}\otimes x$ in~$\K(\cA)$ and thus
\[
u\potimes{n}\otimes x\cong (u_{\geq 1})\potimes{n}\otimes x
\]
for all~$n\ge 0$. For $n$ large enough there are no non-zero maps of complexes from $(u_{\geq 1})\potimes{n}\otimes x$ to $s_0\potimes{n}\otimes x$, simply because the former `moves' further and further away to the left and $x$ is bounded. So $d\potimes{n}\otimes x\colon u\potimes{n}\otimes x\too s_0\potimes{n}\otimes x$ is zero in~$\cK(\cA)$.

Let $\cL$ be the tt-subcategory of~$\K(\cA)$ generated by~$\Kb(\cA)\cup\{s\}$; then $\ideal{s}'$ is a tt-ideal in~$\cL$, and similarly we write $\ideal{\cone(d\potimes{n})}'$ for the tt-ideal in~$\cL$ generated by $\cone(d\potimes{n})$.
By the argument above, we have $s_0\potimes{n}\otimes x\in\ideal{\cone(d\potimes{n})}'\subseteq\ideal{s}'$
where the last inclusion holds by the octahedron axiom.
\end{proof}

\begin{Cor}
\label{Cor:s-generates}%
Let $\cA$ be a rigid tensor category and $\cI\subseteq \Kb(\cA)$ a tt-ideal. Let $s\in\cI$ be a (bounded) complex concentrated in non-negative degrees such that
\begin{enumerate}[label=\rm(\arabic*), ref=\rm(\arabic*)]
\item
\label{it:s-generates-1}%
$\supp(s_0)\supseteq\supp(\cI)$ in~$\Spc(\Kb(\cA))$ (for instance if $s_0=\unit_{\cA}$), and
\smallbreak
\item
\label{it:s-generates-2}%
$\supp(s_1)\cap\supp(\cI)=\varnothing$, meaning that $s_1\otimes x=0$ in~$\Kb(\cA)$ for all~$x\in\cI$.
\end{enumerate}
Then $s$ generates $\cI$ as a tt-ideal in~$\Kb(\cA)$, that is, $\supp(\cI)=\supp(s)$ in~$\Spc(\Kb(\cA))$.
\end{Cor}
\begin{proof}
Let $x\in \cI$. By~\ref{it:s-generates-2}, \Cref{Lem:s-generates} gives us $s_0\potimes{n}\otimes x\in\ideal{s}$ for $n\gg0$. Hence $\supp(s_0)\cap \supp(x)\subseteq \supp(s)$. By~\ref{it:s-generates-1} we have $\supp(x)\subseteq\supp(s_0)$. Therefore $\supp(x)=\supp(s_0)\cap \supp(x)\subseteq\supp(s)$. In short $x\in\ideal{s}$ for all~$x\in\cI$.
\end{proof}

We apply this to the object $s=\sHG$ of \Cref{Cons:kos}.
\begin{Prop}
\label{Prop:Ker(Res)}%
For every subgroup~$H\le G$, the object $\sHG$ generates the tt-ideal $\Ker(\Res^G_H)$ of~$\cK(G)$.
\end{Prop}
\begin{proof}
We apply \Cref{Cor:s-generates} to $\cI=\Ker(\Res^G_H)$ and $s=\sHG$. We have $s\in\cI$ by \Cref{Lem:Res(s(G;H))}. Conditions~\ref{it:s-generates-1} and~\ref{it:s-generates-2} hold since $s_0=\kk$ and $s_1=\kk(G/H)$ by \Cref{Prop:s(H;G)} and Frobenius gives $s_1\otimes\cI=\kk(G/H)\otimes\cI=\Ind_H^G\Res^G_H(\cI)=0$.
\end{proof}

We can apply the above discussion to $H=1$ and $\cI=\Ker(\Res^G_1)=\cK_\ac(G)$.
\begin{Prop}
\label{Prop:VeeG}%
The tt-functor $\Upsilon_G \colon \cK(G)\onto\Db(\kkG)$ induces an \emph{open} inclusion $\upsilon_G\colon \VG\hook \SpcKG$ where $\VG=\Spc(\Db(\kkG))\cong\Spech(\rmH^\sbull(G,\kk))$. The closed complement of~$\VG$ is the support of~$\kos[G]{1}=\tInd_1^G(0\to \kk\xto{1}\kk\to 0)$.
\end{Prop}
\begin{proof}
The homeomorphism $\Spc(\Db(\kkG))\cong \Spech(\rmH^\sbull(G,\kk))$ follows from the tt-classification~\cite{benson-carlson-rickard:tt-class-stab(kG)}; see~\cite[Theorem~57]{balmer:icm}. By \Cref{Rec:K(G)/Kac}, the map $\upsilon_G:=\Spc(\Upsilon_G)$ is a homeomorphism onto its image, and the complement of this image is $\supp(\cK_\ac(G))=\supp(\kos[G]{1})$, by \Cref{Prop:Ker(Res)} applied to $H=1$. In particular, $\supp(\cK_\ac(G))$ is a closed subset, not just a Thomason.
\end{proof}
\begin{Rem}
The notation for the so-called \emph{cohomological open}~$\VG$ has been chosen to evoke the classical \emph{projective support variety}~$\mathcal{V}_G(\kk)=\Proj(\rmH^\sbull(G,\kk))\cong\Spc(\stmod(\kkG))$, which consists of~$\VG$ without its unique closed point, $\rmH^+(G;\kk)$.
\end{Rem}

We can also describe the kernel of restriction for classes of subgroups.
\begin{Cor}
\label{Cor:Ker(Res)}%
For every collection $\cH$ of subgroups of~$G$, we have an equality of tt-ideals in~$\cK(G)$
\[
\bigcap_{H\in\cH}\Ker(\Res^G_H)=\big\langle\ \bigotimes_{H\in\cH}\ \sHG\ \big\rangle.
\]
\end{Cor}
\begin{proof}
This is direct from \Cref{Prop:Ker(Res)} and the general fact that $\ideal{x}\cap \ideal{y}=\ideal{x\otimes y}$. (In the case of $\cH=\varnothing$, the intersection is~$\cK(G)$ and the tensor is~$\unit$.)
\end{proof}

\section{Restriction, induction and geometric fixed-points}
\label{sec:Spc(Res)}

In the previous section, we saw how much of $\SpcKG$ comes from~$\Db(\kkG)$. We now want to discuss how much is controlled by restriction to subgroups, to see how far the `standard' strategy of~\cite{balmer-sanders:SH(G)-finite} gets us.

\begin{Rem}
\label{Rem:not}%
The tt-categories~$\cK(G)$ and $\Db(\kk G)$, as well as the Weyl groups~$\WGH$ are functorial in~$G$. To keep track of this, we adopt the following notational system.

Let $\alpha\colon G\to G'$ be a group homomorphism.
We write $\alpha^*\colon \cK(G')\to \cK(G)$ for restriction along~$\alpha$, and similarly for~$\alpha^*\colon \Db(\kk G')\to \Db(\kk G)$.
When applying the contravariant $\Spc(-)$, we simply denote $\Spc(\alpha^*)$ by~$\alpha_*\colon\SpcKG\to \Spc(\cK(G'))$ and similarly for $\alpha_*\colon \Vee{G}\to \Vee{G'}$ on the spectrum of derived categories.

As announced, Weyl groups $\WGH=(N_G H)/H$ of subgroups~$H\le G$ will play a role.
Since $\alpha(N_G H)\le N_{G'}(\alpha(H))$, every homomorphism $\alpha\colon G\to G'$ induces a homomorphism $\bar \alpha\colon \WGH\to \Weyl{G'}{\alpha(H)}$.
Combining with the above, these homomorphisms $\bar{\alpha}$ define functors~$\bar{\alpha}^*$ and maps~$\bar{\alpha}_*$.
For instance, $\bar\alpha_*\colon \Vee{\WGH}\to \Vee{\Weyl{G'}{\alpha(H)}}$ is the continuous map induced on $\Spc(\Db(\kk(-)))$ by $\bar{\alpha}\colon \WGH\to \Weyl{G'}{\alpha(H)}$.

Following tradition, we have special names when $\alpha$ is an inclusion, a quotient or a conjugation. For the latter, we choose the lightest notation possible.

\smallbreak
\begin{enumerate}[wide, labelindent=0pt, label=\rm(\alph*), ref=\rm(\alph*)]
\item
For \emph{conjugation}, for a subgroup $G\le G'$ and an element ${x}\in G'$, the isomorphism $c_{x}\colon G\isoto G^{x}$ induces a tt-functor $c_{x}^*\colon \cK(G^{x})\isoto \cK(G)$ and a homeomorphism %
\[
\xymatrix@R=.1em{
(-)^{x}:=(c_{x})_*=\Spc(c_{x}^*)\colon
& \Spc(\cK(G)) \ar[r]^-{\sim}
& \Spc(\cK(G^{x}))
\\
& \cP \ar@{|->}[r]
& \cP^{x}.
}
\]
Note that if $x=g\in G$ belongs to~$G$ itself, the functor $c_g^*\colon \cK(G)\to \cK(G)$ is isomorphic to the identity and therefore we get the useful fact that
\begin{equation}\label{eq:no-conj}%
\qquad g\in G \qquad\Longrightarrow\qquad \cP^g=\cP\qquad\textrm{for all~}\cP\in \Spc(\cK(G)).
\end{equation}
Similarly we have a conjugation homeomorphism $\gp\mapsto \gp^x$ on the cohomological opens $\Vee{G}\isoto \Vee{G^x}$, which is the identity if~$x\in G$.
When $H\le G$ is a further subgroup then conjugation yields homeomorphisms~$\Vee{\Weyl{G}{H}}\isoto \Vee{\Weyl{G^x}{H^x}}$ still denoted $\cP\mapsto \cP^x$.
Again, if $x=g\in N_{G}H$, so $[g]_H$ defines an element in~$\Weyl{G}{H}$, the equivalence $(c_g)_*\colon \Db(\Weyl{G}{H})\isoto \Db(\Weyl{G}{H})$ is isomorphic to the identity. Thus
\begin{equation}\label{eq:no-conj-2}%
\qquad g\in N_G(H) \qquad\Longrightarrow\qquad \gp^g=\gp\qquad\textrm{for all~}\gp\in\Vee{\Weyl{G}{H}}.
\end{equation}

\smallbreak
\item
\label{it:rho}%
For \emph{restriction}, take $\alpha$ the inclusion $K\hook G$ of a subgroup. We write
\begin{equation}
\label{eq:rho}%
\rho_K=\rho^G_K:=\Spc(\Res^G_K)\,\colon\ \Spc(\cK(K))\to\SpcKG
\end{equation}
and similarly for derived categories.
When $H\le K$ is a subgroup, we write $\bar{\rho}_K\colon \Vee{\Weyl{K}{H}}\to \Vee{\WGH}$ for the map induced by restriction along~$\Weyl{K}{H}\hook \WGH$.
Beware that $\rho_K$ is not necessarily injective, already on~$\Vee{K}\to \Vee{G}$, as `fusion' phenomena can happen: If $g\in G$ normalizes~$K$, then $\cQ$ and~$\cQ^g$ in~$\Vee{K}$ have the same image in~$\VG$ by~\eqref{eq:no-conj} but are in general different in~$\Vee{K}$.

\smallbreak
\item
For \emph{inflation}, let $N\normaleq G$ be a normal subgroup and let~$\alpha=\proj\colon G\onto G/N$ be the quotient homomorphism.
We write
\begin{equation}
\label{eq:pi}%
\pi^{G/N}=\pi^{G/N}_G:=\Spc(\Infl^{G/N}_G)\,\colon\ \SpcKG\to\Spc(\cK(G/N))
\end{equation}
and similarly for derived categories. For $H\le G$ a subgroup, we write $\bar{\pi}^{G/N}_G\colon \Vee{\Weyl{G}{H}}\to \Vee{\Weyl{(G/N)}{(HN/N)}}$ for the map induced by~$\overline{\proj}\colon \Weyl{G}{H}\to\Weyl{(G/N)}{(HN/N)}$.
(Note that this homomorphism is not always surjective, \eg\ with $G=D_8$ and $N\simeq C_2^{\times 2}$.)
\end{enumerate}
\end{Rem}

\begin{Rec}
\label{Rec:tt-ring}%
One verifies that the $\Res^G_H\adj \Ind_H^G$ adjunction is monadic, see for instance~\cite[\S\,4]{balmer:tt-separable}, and that the associated monad~$A_H\otimes-$ is separable, where $A_H:=\kk(G/H)=\Ind^G_H\kk\in\perm(G;\kk)$. The ring structure on~$A_H$ is given by the usual unit~$\eta\colon \kk\to \kk(G/H)$, mapping $1$ to~$\sum_{\gamma\in G/H}\,\gamma$, and the multiplication~$\mu\colon A_H\otimes A_H\to A_H$ that is characterized by $\mu(\gamma\otimes\gamma)=\gamma$ and $\mu(\gamma\otimes\gamma')=0$ for all~$\gamma\neq\gamma'$ in~$G/H$.
The ring $A_H$ is separable and commutative. The tt-category $\MMod{A_H}=\Mod_{\cK(G)}(A_H)$ of~$A_H$-modules in~$\cK(G)$ identifies with~$\cK(H)$, in such a way that extension-of-scalars to~$A_H$ (\ie along $\eta$) coincides with restriction~$\Res^G_H$.
Similarly, extension-of-scalars along the isomorphism $c_{g\inv}\colon A_{H^g}\isoto A_{H}$, being an equivalence, is the inverse of its adjoint, that is~$((c_{g\inv})^*)\inv=c_g^*$, hence is the conjugation tt-functor~$c_g^*\colon \cK(H^g)\isoto \cK(H)$ of \Cref{Rem:not}.
\end{Rec}

\begin{Prop}
\label{Prop:Spc-Res}%
The continuous map~$\rho_H\colon \Spc(\cK(H))\to \SpcKG$ of~\eqref{eq:rho} is a closed map and for every $y\in \cK(H)$, we have $\rho_H(\supp(y))=\supp(\Ind_H^G(y))$ in~$\SpcKG$. In particular, $\Img(\rho_H)=\supp(\kk(G/H))$. Moreover, there is a coequalizer of topological spaces (independent of the choices of representatives~$g$)
\[
\coprod_{[g]\in \doublequot{H}{G}{H}}\Spc(\cK(H\cap {}^{g\!}H)) \ \rightrightarrows\ \Spc(\cK(H))\ \xto{\rho_H} \ \supp(k(G/H))
\]
where the two left horizontal maps are, on the $[g]$-component, induced by the restriction functor and by conjugation by~$g$ followed by restriction, respectively.
\end{Prop}
\begin{proof}
We invoke~\cite[Theorem~3.19]{balmer:tt-separable}. In particular, we have a coequalizer
\begin{equation}
\label{eq:aux-equaliz}%
\Spc(\MMod{A_H\otimes A_H})\rightrightarrows\Spc(\MMod{A_H}) \to \supp(A_H)
\end{equation}
where the two left horizontal maps are induced by the canonical ring morphisms $A_H\otimes \eta$ and $\eta\otimes A_H\colon A_H\to A_H\otimes A_H$.
For any choice of representatives $[g]\in \HGH$ the Mackey isomorphism
\[
\bigoplus_{[g]\in \doublequot{H}{G}{H}}A_{H\cap {}^{g\!}H}\isoto A_H\otimes A_H
\]
maps~$[x]_{H\cap {}^{g\!}H}$ to $[x]_H\otimes[x\cdot g]_H$. We can then plug this identification in~\eqref{eq:aux-equaliz}. The second homomorphism $\eta\otimes A_H$ followed by the projection onto the factor indexed by~$[g]$ becomes the composite $A_H\xto{c_{g\inv}} A_{{}^{g\!}H} \xinto{\eta} A_{H\cap {}^{g\!}H}$.
See \Cref{Rec:tt-ring}.
\end{proof}

\begin{Cor}
\label{Cor:Spc-Res}
For $\cP,\cP'\in\Spc(\cK(H))$ we have $\rho_H(\cP)=\rho_H(\cP')$ in $\SpcKG$ if and only if there exists $g\in G$ and $\cQ\in\Spc(\cK(H\cap {}^{g\!}H))$ such that
\[
\cP=\rho^{H}_{H\cap {}^{g\!}H}(\cQ)
\qquadtext{and}
\cP'=\big(\rho^{{}^{g\!}H}_{H\cap {}^{g\!}H}(\cQ)\big)^g
\]
using \Cref{Rem:not} for the notation $(-)^g\colon \Spc(\cK({}^{g\!}H))\isoto \Spc(\cK(H))$.
\end{Cor}
\begin{proof}
This is~\cite[Corollary~3.12]{balmer:tt-separable}, which implies the set-theoretic part of the coequalizer of \Cref{Prop:Spc-Res}.
\end{proof}

We single out a particular case.
\begin{Cor}
\label{Cor:Spc-Res-central}
If $H\le Z(G)$ is central in $G$ (for example, if $G$ is abelian) then restriction induces a closed immersion $\rho_H\colon\Spc(\cK(H))\hook\Spc(\cK(G))$.
\qed
\end{Cor}

\begin{Rem}
\label{Rem:geom-fixed-pts}%
In view of \Cref{Prop:Spc-Res}, the image of the map induced by restriction $\Img(\rho_H)=\supp(\kk(G/H))$ coincides with the support of the tt-ideal generated by~$\Ind_H^G(\cK(H))$.
Following the construction of the \emph{geometric} fixed-points functor~$\Phi^G\colon \SHc(G)\to \SHc$ in topology, we can consider the Verdier quotient
\[
\tilde{\cK}(G):=\frac{\cK(G)}{\ideal{\Ind_H^G(\cK(H))\mid H\lneqq G}}
\]
obtained by modding-out, in tensor-triangular fashion, everything induced from all proper subgroups~$H$.
This tt-category $\tilde{\cK}(G)$ has a smaller spectrum than~$\cK(G)$, namely the `geometric open' of the preamble, the complement in~$\SpcKG$ of the closed subset $\cup_{H\lneqq G}\Img(\rho_H)$ covered by proper subgroups.
This method has worked nicely in~\cite{balmer-sanders:SH(G)-finite,barthel-greenlees-hausmann:SH(G)-compact-Lie,psw:derived-mackey} because, in those instances, this Verdier quotient is equivalent to the non-equivariant version of the tt-category under consideration.
However, this fails for~$\tilde{\cK}(G)$, for instance $\tilde\cK(C_2)$ is \emph{not} equivalent to $\cK(1)=\Db(\kk)$:
\[
\frac{\SHc(G)}{\ideal{\Ind_H^G(\SHc(H))\mid H\lneqq G}} \cong\SHc
\qquadtext{but}
\frac{\cK(G)}{\ideal{\Ind_H^G(\cK(H))\mid H\lneqq G}}\not\cong \cK(1).
\]
For small groups, for instance for cyclic $p$-groups~$C_{p^n}$, the tt-category $\tilde{\cK}(G)$ is reasonably complicated and one could still compute $\SpcKG$ through an analysis of~$\tilde{\cK}(G)$.
However, the higher the $p$-rank, the harder it becomes to control~$\tilde{\cK}(G)$.

One can already see the germ of the problem with~$G=C_2$, see~\eqref{eq:C_2}:
\[
\Spc(\cK(C_2))=\qquad\vcenter{\xymatrix@R=1em@C=.5em{
{\color{Brown}\cM(C_2)}
\ar@{-}@[Gray][rd]
&&
{\color{OliveGreen}\cM(1)}
\\
& {\color{OliveGreen}\cP}
\ar@{-}@[Gray][ru]
}}
\]
We have given names to the three primes.
The only proper subgroup is~$H=1$ and the image of~$\rho_1=\Spc(\Res_1)$ is simply the single closed point~$\{\cM(1)\}=\supp(\kk C_2)$.
Chopping off this induced part, leaves us with the open $\Spc(\tilde\cK(C_2))=\{\cM(C_2),\cP\}$.
So geometric fixed points $\Phi^{C_2}\colon\cK(C_2)\to \tilde\cK(C_2)$ detects both of these points.
(This also proves that $\tilde\cK(G)\neq\cK(1)=\Db(\kk)$ since $\Db(\kk)$ would have only one point in its spectrum.)
However there is no need for a tt-functor detecting $\cM(C_2)$ \emph{and~$\cP$ again}, since $\cP$ is already in the cohomological open~$\Vee{C_2}$ detected by ~$\Db(\kk C_2)$.
In other words, geometric fixed points see \emph{too much}, not too little: The target category~$\tilde{\cK}(G)$ is too complicated in general.
And as the group grows, this issue only gets worse, as the reader can check with Klein-four in~\Cref{Exa:Klein4}.

In conclusion, we need tt-functors better tailored to the task, namely tt-functors that detect just what is missing from~$\VG$.
In the case of~$C_2$, we expect a tt-functor to~$\Db(\kk)$, to catch~$\cM(C_2)$, but for larger groups the story gets more complicated and involves more complex subquotients of~$G$, as we explain in the next section.
\end{Rem}

\section{Modular fixed-points functors}
\label{sec:modular-fixed-pts}%

Motivated by \Cref{Rem:geom-fixed-pts}, we want to find a replacement for geometric fixed points in the setting of modular representation theory.
In a nutshell, our construction amounts to taking classical Brauer quotients~\cite[\S\,1]{broue:permutation-brauer} on the level of permutation modules and then passing to the tt-categories~$\cK(G)$ and~$\cT(G)$.
We follow a somewhat different route than~\cite{broue:permutation-brauer} though, more in line with the construction of the geometric fixed-points discussed in \Cref{Rem:geom-fixed-pts}.
We hope some readers will benefit from our exposition.

It is here important that $\chara(\kk)=p$ is positive.

\begin{War}
\label{Rem:only-p-subgroups}%
A tt-functor $\Psi^H\colon \cK(G)\to \cK(\WGH)$ such that $\Psi^H(\kk(X))\cong\kk(X^H)$, as in~\eqref{eq:Psi^H}, cannot exist unless $H$ is a $p$-subgroup.
Indeed, if $P\le G$ is a $p$-Sylow then since $[G\!:\!P]$ is invertible in~$\kk$, the unit $\unit=\kk$ is a direct summand of~$\kk(G/P)$ in~$\cK(G)$.
A tt-functor~$\Psi^H$ cannot map~$\unit$ to zero.
Thus $\Psi^H(\kk(G/P))=\kk((G/P)^H)$ must be non-zero, forcing $(G/P)^H\neq\varnothing$.
If $[g]\in G/P$ is fixed by~$H$ then $H^g\le P$ and therefore $H$ must be a $p$-subgroup.
(If $\chara(\kk)=0$ this would force $H=1$.)
\end{War}

\begin{Rec}
\label{Rec:family}%
A collection~$\cF$ of subgroups of $G$ is called a \emph{family} if it is closed under conjugation and subgroups.
For instance, given $H\le G$, we have the family
\[
\cF_{H}=\SET{K\le G}{(G/K)^H=\varnothing}=\SET{K\le G}{H\not\le_G K}.
\]
For $N\normaleq G$ a normal subgroup, it is~$\cF_N=\SET{K\le G}{N\not\le K}$.
\end{Rec}

In view of \Cref{Rem:only-p-subgroups}, we must focus attention on $p$-subgroups.
The following standard lemma would not be true without the characteristic $p$ hypothesis.

\begin{Lem}
\label{Lem:infl-stab}%
Let $N\normaleq G$ be a normal $p$-subgroup.
Let $H,K\le G$ be subgroups such that~$N\le H$ and~$N\not\le K$.
Then every $\kkG$-linear homomorphism that factors as $f\colon \kk(G/H)\xto{\ell}\kk(G/K)\xto{m}\kk$ must be zero.
\end{Lem}
\begin{proof}
By \Cref{Rec:Hom} and $\kk$-linearity, we can assume that $m$ is the augmentation and that $\ell=\eps\circ c_g\circ \eta$ as in~\eqref{eq:Hom-f_g}, where $g\in G$ is some element, where we set $L=H\cap {}^{g\!}K$ and where $\eps\colon \kk(G/L^g)\onto \kk(G/K)$, $c_g\colon \kk(G/L)\isoto \kk(G/L^g)$ and $\eta\colon \kk(G/H)\into \kk(G/L)$ are the usual maps, using $L\le H$ and $L^g\le K$. The composite $m\circ \eps\circ c_g$ is an augmentation map again, hence our map $f$ is the composite
\[
f\,\colon \quad\kk(G/H) \xinto{\eta} \kk(G/L) \xonto{\eps} \kk.
\]
So $f$ maps~$[e]_H$ to $\sum_{\gamma\in H/L} 1=|H/L|$ in~$\kk$.
Now, the $p$-group $N\le H$ acts on the set $H/L$ by multiplication on the left. This action has no fixed point, for otherwise we would have $N\le_H L\le_G K$ and thus $N\le K$, a contradiction. Therefore the $N$-set $H/L$ has order divisible by~$p$. So $|H/L|=0$ in~$\kk$ and~$f=0$ as claimed.
\end{proof}
\begin{Prop}
\label{Prop:infl-pstab}%
Let $N\normaleq G$ be a normal $p$-subgroup.
Then the permutation category of the quotient~$G/N$ is an additive quotient of the permutation category of~$G$.
More precisely, consider $\proj(\cF_N)=\add^\natural\SET{\kk(G/K)}{K\in\cF_N}$, the closure of~$\SET{\kk(G/K)}{N\not\le K}$ under direct sum and summands in~$\perm(G;\kk)^\natural$.
Consider the additive quotient of~$\perm(G;\kk)^\natural$ by~this $\otimes$-ideal.\,{\rm\,(\footnote{\,Keep the same objects as $\perm(G;\kk)^\natural$ and define Hom groups by modding out all maps that factor via objects of~$\proj(\cF_N)$, as in the ordinary construction of the stable module category.})}
Then the composite
\begin{equation}
\label{eq:modular-fixed-equiv}%
\vcenter{\xymatrix@C=4em{
\perm(G/N;\kk)^\natural \ar@{ >->}[r]^-{\ \Infl^{G/N}_G}
& \perm(G;\kk)^\natural \ar@{->>}[r]^-{\quot}
& \frac{\perm(G;\kk)^\natural}{\proj(\cF_N)}
}}
\end{equation}
is an equivalence of tensor categories.
\end{Prop}

\begin{proof}
By the Mackey formula and since~$\cF_N$ is a family, $\proj(\cF_N)$ is a tensor ideal, hence $\quot$ is a tensor-functor.
Inflation $\Infl^{G/N}_G\colon \perm(G/N;\kk)^\natural\to \perm(G;\kk)^\natural$ is also a tensor-functor. It is moreover fully faithful with essential image the subcategory $\add^\natural\SET{\kk(G/H)}{N\le H}$. So we need to show that the composite
\[
\add^\natural\SET{\kk(G/H)}{N\le H}
\hook
\perm(G;\kk)^\natural \onto
\frac{\perm(G;\kk)^\natural}{\add^\natural\SET{\kk(G/K)}{N\not\le K}}
\]
is an equivalence.
Both functors in the composite are full.
The composite is faithful by \Cref{Lem:infl-stab}, rigidity, additivity and the Mackey formula.
Essential surjectivity is then easy (idempotent-completion is harmless since the functor is fully-faithful).
\end{proof}

\begin{Cons}
\label{Cons:fixed-pts}%
Let $N\normaleq G$ be a normal $p$-subgroup. The composite of the additive quotient functor with the inverse of the equivalence of \Cref{Prop:infl-pstab} yields a tensor-functor on the categories of $p$-permutation modules
\begin{equation}
\label{eq:Psi^N-perm}%
\Psi^N\colon\perm(G;\kk)^\natural\onto \frac{\perm(G;\kk)^\natural}{\proj(\cF_N)}\isoto\perm(G/N;\kk)^\natural.
\end{equation}
Applying the above degreewise, we get a tt-functor on homotopy categories~$\Kb(-)$
\[
\Psi^N=\Psi^{N\inn G}\colon\cK(G)\too \cK(G/N).
\]
\end{Cons}

\begin{Rem}
\label{Rem:Brauer}%
Following up on \Cref{Rem:geom-fixed-pts}, we have constructed~$\Psi^N$ by modding-out in \emph{additive} fashion this time, everything induced from subgroups not containing~$N$. We did it on the `core' additive category and only then passed to homotopy categories. Such a construction would not make sense on bounded derived categories, as $\Psi^N$ has no reason to preserve acyclic complexes.

The classical Brauer quotient seems different at first sight. It is typically defined at the level of individual $\kkG$-modules~$M$ by a formula like
\begin{equation}
\label{eq:dirty-brauer}%
\coker\big(\oplus_{Q\lneq N} M^Q \xto{\,(\mathrm{Tr}_{Q}^N)_Q\,} M^N\big).
\end{equation}
A priori, this definition uses the ambient abelian category of modules and one then needs to verify that it preserves permutation modules, the tensor structure, etc.
Our approach is a categorification of~\eqref{eq:dirty-brauer}: \Cref{Prop:infl-pstab} recovers the category~$\perm(G/N;\kk)^\natural$ as a tensor-additive quotient of~$\perm(G;\kk)^\natural$, at the categorical level, not at the individual module level. Amusingly, one can verify that it yields the same answer (\Cref{Prop:fixed-pts}) -- a fact that we shall not use at all.
\end{Rem}

We relax the condition that the $p$-subgroup is normal in the standard way.

\begin{Def}
\label{Def:fixed-pts}%
Let $H\le G$ be an arbitrary $p$-subgroup. We define the \emph{modular (or Brauer) $H$-fixed-points functor} by the composite
\[
\Psi^{H\inn G}\,\colon\quad \cK(G)\xto{\Res^G_{N_G H}}\cK(N_G H)\xto{\Psi^{H\inn N_G H}}\cK(\WGH)
\]
where $N_G H$ is the normalizer of~$H$ in~$G$ and $\WGH=(N_G H)/H$ its Weyl group. The second functor comes from \Cref{Cons:fixed-pts}.
Note that $\Psi^{H\inn G}$ is computed degreewise, applying the functors $\Res^G_{N_G H}$ and $\Psi^{H\inn N_G H}$ at the level of~$\perm(-;\kk)^\natural$.
\end{Def}
\begin{Rem}
We prefer the phrase `modular fixed-points' to `Brauer fixed-points', out of respect for L.\,E.\,J.\ Brouwer and his fixed points.
It also fits nicely in the flow: naive fixed-points, geometric fixed-points, modular fixed-points.
Finally, the phrase `Brauer quotient' would be unfortunate, as $\Psi^H\colon\cK(G)\to \cK(\WGH)$ is \emph{not} a quotient of categories in any reasonable sense.
\end{Rem}

Let us verify that our~$\Psi^H$ linearize the $H$-fixed-points of~$G$-sets, as promised.
\begin{Prop}
\label{Prop:fixed-pts}%
Let $H\le G$ be a $p$-subgroup. The following square commutes up to isomorphism:
\[
\xymatrix@C=4em{
\gasets[G] \ar[r]^-{\kk(-)} \ar[d]_-{(-)^H}
& \perm(G;\kk)^\natural \ \ar@{^(->}[r] \ar[d]^-{\Psi^H}
& \cK(G) \ar[d]^-{\Psi^H}
\\
\gasets[(\WGH)] \ar[r]^-{\kk(-)}
& \perm(\WGH;\kk)^\natural\ \ar@{^(->}[r]
& \cK(\WGH).
}
\]
In particular, for every~$K\le G$, we have an isomorphism of~$\kk(\WGH)$-modules
\begin{equation}
\label{eq:fixed-pts}%
\Psi^H(\kk(G/K))\cong \kk((G/K)^H)=\kk(N_G(H,K)/K).
\end{equation}
This module is non-zero if and only if $H$ is subconjugate to~$K$ in~$G$.
\end{Prop}
\begin{proof}
We only need to prove the commutativity of the left-hand square.
As restriction to a subgroup commutes with linearization, we can assume that $H\normaleq G$ is normal. Let $X$ be a $G$-set. Consider its $G$-subset~$X^H$ (which is inflated from~$G/H$).
Inclusion yields a morphism in~$\perm(G;\kk)$, natural in~$X$,
\begin{equation}\label{eq:fixed-example-aux}%
f_X\colon\kk(X^H)\to \kk(X).
\end{equation}
We claim that this morphism becomes an isomorphism in the quotient $\frac{\perm(G;\kk)^\natural}{\proj(\cF_H)}$. By additivity, we can assume that $X=G/K$ for $K\le G$. It is a well-known exercise that $(G/K)^H=N_G(H,K)/K$, which in the normal case $H\normaleq G$ boils down to $G/K$ or $\varnothing$, depending on whether $H\le K$ or not, \ie whether $K\notin\cF_H$ or $K\in\cF_H$. In both cases, $f_X$ becomes an isomorphism (an equality or $0\isoto \kk(G/K)$, respectively) in the quotient by~$\proj(\cF_H)$. Hence the claim.

Let us now discuss the commutativity of the following diagram
\[
\xymatrix@C=3em{
\gasets[G] \ar[r]^-{\kk(-)} \ar[dd]_-{(-)^H}
& \perm(G;\kk)^\natural \ar@{->>}[rd]^-{\quot} \ar@{=}[rr]
&& \perm(G;\kk)^\natural \ar[dd]^-{\Psi^H}_-{\textrm{(Def.\,\ref{Def:fixed-pts})}}
\\
& \perm(G;\kk)^\natural \ar@{->>}[r]^-{\quot}
& \frac{\perm(G;\kk)^\natural}{\proj(\cF)}
\\
\gasets[G/H] \ar[r]^-{\kk(-)}
& \perm(G/H;\kk)^\natural \ar[u]^-{\Infl^{G/H}_G} \ar[ru]_-{\cong}^-{\textrm{(Cor.\,\ref{Prop:infl-pstab})\ \ }} \ar@{=}[rr]
&& \perm(G/H;\kk)^\natural }
\]
The module~$\kk(X^H)$ in~\eqref{eq:fixed-example-aux} can be written more precisely as $\kk(\Infl^{G/H}_G(X^H))\cong \Infl^{G/H}_G\kk(X^H)$. So the first part of the proof shows that the left-hand `hexagon' of the diagram commutes, \ie the two functors $\gasets[G]\to \frac{\perm(G;\kk)^\natural}{\proj(\cF)}$ are isomorphic. The result follows by definition of~$\Psi^H$, recalled on the right-hand side.
\end{proof}

Here is how modular fixed points act on restriction.
\begin{Prop}
\label{Prop:Psi-Res}%
Let $\alpha:G\to G'$ be a homomorphism and $H\le G$ a $p$-subgroup. Set $H'=\alpha(H)\le G'$. Then the following square commutes up to isomorphism
\[
\xymatrix@R=2em{\cK(G') \ar[r]^-{\alpha^*} \ar[d]_-{\Psi^{H',G'}}
& \cK(G) \ar[d]^-{\Psi^{H\inn G}}
\\
\cK(\Weyl{G'}{H'}) \ar[r]^-{\bar \alpha^*}
& \cK(\WGH).
}
\]
\end{Prop}
\begin{proof}
Exercise. This already holds at the level of $\perm(-;\kk)^\natural$.
\end{proof}
\begin{Cor}
\label{Cor:Psi-Infl}%
Let $N\normaleq G$ be a normal $p$-subgroup. Then the composite functor $\Psi^N\circ \Infl^{G/N}_G\colon \cK(G/N)\to \cK(G)\to \cK(G/N)$ is isomorphic to the identity.
Consequently, the map $\Spc(\Psi^H)$ is a split injection retracted by~$\Spc(\Infl^{G/H}_G)$.
\end{Cor}
\begin{proof}
Apply \Cref{Prop:Psi-Res} to $\alpha\colon G\onto G/N$ and $H=N$, and thus $H'=1$.
The second statement is just contravariance of~$\Spc(-)$.
\end{proof}

Composition of two `nested' modular fixed-points functors almost gives another modular fixed-points functor. We only need to beware of Weyl groups.
\begin{Prop}
\label{Prop:Psi-Psi}%
Let $H\le G$ be a $p$-subgroup and $\bar{K}=K/H$ a $p$-subgroup of~$\WGH$, for $H\le K\le N_G H$. Then there is a canonical inclusion
\[
\Weyl{(\WGH)}{\bar{K}}=(N_{\WGH}\bar{K})/\bar{K} \hook (N_G K)/K=\WGK
\]
and the following square commutes up to isomorphism
\[
\xymatrix@R=2em{
\cK(G) \ar[r]^-{\Psi^{H\inn G}} \ar[d]_-{\Psi^{K\inn G}}
& \cK(\WGH) \ar[d]^-{\Psi^{\bar{K}\inn\WGH}}
\\
\cK(\WGK) \ar[r]^-{\Res}
& \cK\big(\Weyl{(\WGH)}{\bar{K}}\big).
}
\]
\end{Prop}
\begin{proof}
The inclusion comes from $N_{N_{G}(H)}K\hook N_{G}K$ and the rest is an exercise. Again, everything already holds at the level of~$\perm(-;\kk)^\natural$.
\end{proof}
\begin{Cor}
\label{Cor:Psi-Psi}%
Let $H\le K\le G$ be two $p$-subgroups with $H\normaleq G$ normal. Then $\Weyl{(G/H)}{(K/H)}\cong \Weyl{G}{K}$ and the following diagram commutes up to isomorphism
\[
\xymatrix@R=2em{
\cK(G) \ar[r]^-{\Psi^{H\inn G}} \ar[rd]_-{\Psi^{K\inn G}}
& \cK(G/H) \ar[d]^-{\Psi^{K/H\inn\, G/H}}
\\
& \cK\big(\Weyl{G}{K}\big).
}
\]
\end{Cor}
\begin{proof}
The surjectivity of the canonical inclusion $\Weyl{(G/H)}{(K/H)}\hook \Weyl{G}{K}$ of \Cref{Prop:Psi-Psi} holds since $H$ is normal in~$G$. The result follows.
\end{proof}

\begin{Rem}
\label{Rem:Psi-big}%
We have essentially finished the proof of \Cref{Prop:modular-fixed-points-intro}.
It only remains to verify that there are variants of the constructions and results of this section for the big categories of \Cref{Rec:perm}.
For a normal $p$-subgroup $N\normaleq G$, the canonical functor on big additive categories
\begin{equation}\label{eq:Psi-big}%
\Add^\natural(\SET{\kk(G/H)}{N\le H})\to\frac{\Perm(G;\kk)^\natural}{\Proj(\cF_N)}
\end{equation}
is an equivalence of tensor categories, where
\[
\Proj(\cF_N)=\Add^\natural\SET{\kk(G/K)}{N\not\le K}
\]
is the closure of $\proj(\cF_N)$ under coproducts and summands.
Since the tensor product commutes with coproducts, $\Proj(\cF_N)$ is again a $\otimes$-ideal in $\Perm(G;\kk)^\natural$.
Fullness and essential surjectivity of~\eqref{eq:Psi-big} are easy, and faithfulness reduces to the finite case by compact generation.
(A map $f:P\to Q$ in $\Perm(G;\kk)$ is zero if and only if all composites $P'\xto{\alpha} P\xto{f} Q$ are zero, for $P'$ finitely generated. Such a composite necessarily factors through a finitely generated direct summand of~$Q$, etc.)
As a consequence, the analogue of \Cref{Prop:infl-pstab} also holds for big categories.

Let us write $\cS(G)$ for $\K(\Perm(G;\kk))=\K(\Perm(G;\kk)^\natural)$, which is a compactly generated tt-category with compact unit.
(Compactly generated is not obvious: see~\cite[Remark~5.12]{balmer-gallauer:Dperm} or the precursors J\o rgensen~\cite{jorgensen:projectives} and Neeman~\cite{neeman:K-flat}.)
By the above discussion, the modular fixed-points functor with respect to a $p$-subgroup $H\le G$ extends to the big categories~$\cS(-)$:
\[
\Psi^H=\Psi^{H\inn G}\colon\ \cS(G)\xto{\Res^G_{N_G H}}\cS(N_G H)\to \K\left(\frac{\Perm(N_G H;\kk)}{\Proj(\cF_H)}\right)\xleftarrow[\sim]{\Infl^{\WGH}_{N_G H}}\cS(\WGH).
\]
Note that $\Psi^H$ is a tensor triangulated functor that commutes with coproducts and that maps~$\cK(G)$ into~$\cK(\WGH)$.
It follows that it restricts to $\Psi^H\colon\DPerm(G;\kk)\to\DPerm(\WGH;\kk)$.
The analogues of \Cref{Prop:fixed-pts,Prop:Psi-Res,Cor:Psi-Infl,Prop:Psi-Psi,Cor:Psi-Psi} all continue to hold for both $\cS(-)$ and $\DPerm(-;\kk)$.
\end{Rem}

This finishes our exposition of modular fixed-points functors~$\Psi^H$ on derived categories of permutation modules. We now start using them to analyze the tt-geometry.
First, we apply them to the Koszul complexes~$\kos[G]{K}$ of \Cref{Cons:kos}.
\begin{Lem}
\label{Lem:Psi^H-of-s}
Let $H,K\le G$ be two subgroups, with $H$ a $p$-subgroup.
\begin{enumerate}[label=\rm(\alph*), ref=\rm(\alph*)]
\item
\label{it:Psi^H-1}%
If $H\not\le_G K$, then $\Psi^H(\sKG)$ generates~$\cK(\WGH)$ as a tt-ideal.
\smallbreak
\item
\label{it:Psi^H-2}%
If $H\le_G K$, then $\Psi^H(\sKG)$ is acyclic in~$\cK(\WGH)$.
\smallbreak
\item
\label{it:Psi^H-3}%
If $H\sim_G K$, then $\Psi^H(\sKG)$ generates~$\Kac(\WGH)$ as a tt-ideal.
\end{enumerate}
\end{Lem}
\begin{proof}
For~\ref{it:Psi^H-1}, we have $N_G(H,K)=\varnothing$ and thus $\Psi^H(\kk(G/K))=0$ by \Cref{Prop:fixed-pts}. It follows that $\Psi^H(\sKG)=(\cdots \to \ast \to 0\to \kk\to 0)$ by \Cref{Prop:s(H;G)}. Thus the $\otimes$-unit $\unit_{\cK(\WGH)}=\kk[0]$ is a direct summand of $\Psi^H(\sKG)$.

For~\ref{it:Psi^H-2} and~\ref{it:Psi^H-3}, by invariance under conjugation, we can assume that $H\le K$. Let $N:=N_G H$ be the normalizer of~$H$. We have by \Cref{Lem:Res(s(G;H))} that
\begin{equation}
\label{eq:res-s}
\Psi^{H\inn G}(\sKG)=\Psi^{H\inn N}\Res^G_{N}(\sKG)\simeq\!\!\bigotimes_{[g]\in N\backslash G/K}\kern-.5em\Psi^{H\inn N}\big(\kos[N]{N\cap {}^{g\!}K}\big).
\end{equation}
For the index $g=e$ (or simply $g\in N_GK$), we can use $H\normaleq N\cap K$ and compute
\[
\begin{array}{rll}
\Psi^{H\inn N}(\kos[N]{N\cap K}) \kern-.7em & \cong\Psi^{H\inn N}(\Infl^{N/H}_N\kos[N/H]{(N\cap K)/H}) & \textrm{by \Cref{Lem:kos(N;G)}}
\\[2pt]
& \cong \kos[N/H]{(N\cap K)/H} & \textrm{by \Cref{Cor:Psi-Infl}.}
\end{array}
\]
As this object is acyclic in~$\cK(N/H)$ so is the tensor in~\eqref{eq:res-s}. Hence~\ref{it:Psi^H-2}. Continuing in the special case~\ref{it:Psi^H-3} with $H=K$, we have $(N\cap K)/H=1$ and the above $\kos[N/H]{1}$ generates~$\Kac(N/H)$ by \Cref{Prop:Ker(Res)}. It suffices to show that all the other factors in the tensor product~\eqref{eq:res-s} generate the whole~$\cK(\WGH)$. This follows from Part~\ref{it:Psi^H-1} applied to the group~$N$; indeed when $g\in G\sminus N$ we have $H\not\le_N N\cap {}^{g\!}H$ (as $H\le_N N\cap {}^{g\!}H$ and $H\normaleq N$ would imply $H={}^{g\!}H$).
\end{proof}

\section{Conservativity via modular fixed-points}
\label{sec:conservativity}%

In this section, we explain why the spectrum of~$\cK(G)$ is controlled by modular fixed-points functors~$\Psi^H$ together with the localizations~$\Upsilon_G\colon \cK(G)\onto \Db(\kkG)$. It stems from a conservativity result on the `big' category~$\cT(G)=\DPerm(G;\kk)$, namely \Cref{Thm:conservativity}, for which we need some preparation.

\begin{Lem}
\label{Lem:nilpotence}%
Suppose that $G$ is a $p$-group. Let $H\le G$ be a subgroup and let $\bar{G}=\WGH$ be its Weyl group. The modular $H$-fixed-points functor $\Psi^H\colon \perm(G;\kk)^\natural\to \perm(\bar{G};\kk)^\natural$ induces a ring homomorphism
\begin{equation}\label{eq:Psi-nilpotence}%
\Psi^H\colon \End_{\kk G}(\kk(G/H))\too \End_{\kk \bar{G}}(\kk(\bar{G})).
\end{equation}
This homomorphism is surjective with nilpotent kernel: $(\ker(\Psi^H))^n=0$ for $n\gg1$. More precisely, it suffices to take~$n\in\bbN$ such that $\Rad(\kk G)^n=0$.
\end{Lem}
\begin{proof}
The reader can check this with Brauer quotients. We outline the argument.
By \eqref{eq:fixed-pts} we have $\Psi^H(\kk(G/H))\cong \kk(N_G(H,H)/H)=\kk(\bar{G})$, so the problem is well-stated.
\Cref{Rec:Hom} provides $\kk$-bases for both vector spaces in~\eqref{eq:Psi-nilpotence}, namely
\[
\{\,f_{g}=\eps\circ c_g\circ\eta\,\}_{[g]\in \HGH}
\qquadtext{and}
\{\,c_{\bar{g}}\,\}_{\bar{g}\in \bar{G}}
\]
using the notation of~\eqref{eq:Hom-f_g} and~\eqref{eq:Hom-c_g}.
The homomorphism~$\Psi^H$ in~\eqref{eq:Psi-nilpotence} respects those bases.
Even better, it is a bijection from the part of the first basis indexed by~$H\bs(N_G H)/H$ onto the second basis, and it maps the rest of the first basis to zero.
Indeed, when $g\in N_G H$, we have $f_g=c_g$ and $\Psi^H(f_g)=\Psi^H(c_g)=c_{\bar{g}}$ for $\bar{g}=[g]_H$.
On the other hand, when $g\in G\sminus N_G H$ then $\Psi^H(f_g)=0$, by the factorization~\eqref{eq:Hom-f_g} and the fact that $\Psi^H(\kk(G/L))=0$ for $L=H\cap {}^{g\!} H$ with $g\notin N_G H$, using again \eqref{eq:fixed-pts}.
Hence \eqref{eq:Psi-nilpotence} is surjective and $\ker(\Psi^H)$ has basis $\{f_g=\eps\circ c_g\circ\eta\}_{[g]\in \HGH,\,g\notin N_G H}$.
One easily verifies that such an~$f_g$ has image contained in $\Rad(\kk G)\cdot \kk(G/H)$, using that $H\cap {}^{g\!}H$ is strictly smaller than~$H$.
Composing $n$ such generators $f_{g_1}\circ\cdots \circ f_{g_n}$ then maps $\kk(G/H)$ into $\Rad(\kkG)^n\cdot \kk(G/H)$ which is eventually zero for~$n\gg1$, since~$G$ is a $p$-group.
\end{proof}

We now isolate a purely additive result that we shall of course apply to the case where $\Psi$ is a modular fixed-points functor.

\begin{Lem}
\label{Lem:conservativity}%
Let $\Psi\colon \cA\to \cD$ be an additive functor between additive categories. Let $\cB,\cC\subseteq \cA$ be full additive subcategories such that:
\begin{enumerate}[label=\rm(\arabic*), ref=\rm(\arabic*)]
\item
\label{it:cons-1}%
Every object of~$\cA$ decomposes as $B\oplus C$ with $B\in\cB$ and~$C\in\cC$.
\item
\label{it:cons-2}%
The functor~$\Psi$ vanishes on~$\cC$, that is, $\Psi(\cC)=0$.
\item
\label{it:cons-3}%
The restricted functor $\Psi\restr{\cB}\colon \cB\to \cD$ is full with nilpotent kernel.\,{\rm(\footnote{\,There exists $n\gg1$ such that if $n$ composable morphisms $f_1,\ldots,f_n$ in~$\cB$ all go to zero in~$\cD$ under~$\Psi$ then their composite $f_n\circ \cdots\circ f_1$ is zero in~$\cB$.})}
\end{enumerate}
Let $X\in\Chain(\cA)$ be a complex such that $\Psi(X)$ is contractible in~$\Chain(\cD)$. Then $X$ is homotopy equivalent to a complex in~$\Chain(\cC)$.
\end{Lem}
\begin{proof}
Decompose every $X_i=B_i\oplus C_i$ in~$\cA$, using~\ref{it:cons-1}, for all~$i\in \bbZ$.
We are going to build a complex on the objects~$C_i$ in such a way that $X_\sbull$ becomes homotopy equivalent to~$C_\sbull$ in~$\Chain(\cA^\natural)$, where $\cA^\natural$ is the idempotent-completion of~$\cA$. As both~$X_\sbull$ and~$C_\sbull$ belong to~$\Chain(\cA)$, this proves the result.
By~\ref{it:cons-2}, the complex $\cdots \to \Psi(B_i)\to \Psi(B_{i-1})\to \cdots$ is isomorphic to~$\Psi(X)$, hence it is contractible. So each $\Psi(B_i)$ decomposes in~$\cD^\natural$ as $D_{i}\oplus D_{i-1}$ in such a way that the differential $\Psi(B_i)=D_i\oplus D_{i-1} \too \Psi(B_{i-1})=D_{i-1}\oplus D_{i-2}$ is just $\smat{0&1\\0&0}$. Since $\Psi\restr{\cB}\colon \cB\to \cD$ is full with nilpotent kernel by~\ref{it:cons-3}, we can lift idempotents. In other words, we can decompose each $B_i$ in the idempotent-completion~$\cB^\natural$ (hence in~$\cA^\natural$) as
\[
B_i\simeq B_i'\oplus B_i''
\]
with $\Psi(B_i')\simeq D_i$ and $\Psi(B_i'')\simeq D_{i-1}$ in a compatible way with the decomposition in~$\cD^\natural$.
This means that when we write the differentials in~$X$ in components in~$\cA^\natural$
\[
\cdots \to X_i=B_i'\oplus B_i''\oplus C_i \xto{\smat{*&b_i&*\\ *&*&*\\ *&*&*}} X_{i-1}=B_{i-1}'\oplus B_{i-1}''\oplus C_{i-1} \to \cdots
\]
the component $b_i\colon B_i''\to B_{i-1}'$ maps to the isomorphism $\Psi(B_i'')\simeq D_{i-1}\simeq \Psi(B_{i-1}')$ in~$\cD^\natural$.
Hence $b_i$ is already an isomorphism in~$\cB^\natural$ by~\ref{it:cons-3} again.
(Note that~\ref{it:cons-3} passes to~$\cB^\natural\to\cD^\natural$.)
Using elementary operations on~$X_i$ and~$X_{i-1}$ we can replace $X$ by an isomorphic complex in~$\cA^\natural$ of the form
\begin{equation}\label{eq:aux-cons-1}%
\cdots \to X_{i+1} \to B_i'\oplus B_i''\oplus C_i \xto{\smat{0&b_i&0\\ *&0&*\\ *&0&*}} B_{i-1}'\oplus B_{i-1}''\oplus C_{i-1} \to X_{i-2}\to \cdots
\end{equation}
This being a complex forces the `previous' differential $X_{i+1}\to X_i$ to be of the form~$\smat{*&*&*\\ 0&0&0\\ *&*&*}$ and the `next' differential $X_{i-1}\to X_{i-2}$ to be of the form~$\smat{0&*&*\\ 0&*&*\\ 0&*&*}$. We can then remove from $X$ a direct summand in~$\Chain(\cA^\natural)$ that is a homotopically trivial complex of the form~$\cdots0\to B_i''\isoto B_{i-1}'\to 0\cdots$.

The reader might be concerned about how to perform this reduction in all degrees at once, since we do not put boundedness conditions on~$X$ (thus preventing the `obvious' induction argument). The solution is simple. Do the above for all differentials in \emph{even} indices $i=2j$. By elementary operations on~$X_{2j}$ and~$X_{2j-1}$ for all~$j\in \bbZ$, we can replace~$X$ up to isomorphism into a complex whose \emph{even} differentials are of the form~\eqref{eq:aux-cons-1}. We then remove the contractible complexes $\cdots 0\to B_{2j}''\isoto B_{2j-1}'\to 0\cdots$.
We obtain in this way a homotopy equivalent complex in~$\cA^\natural$ that we call~$\tilde X$, where $B_i',B_i''\in\cB^\natural$ and~$C_i\in \cC$
\begin{equation}\label{eq:aux-X}%
\cdots \to B_{2j+1}''\oplus C_{2j+1} \xto{\smat{a_{2j+1}&*\\ *&*}} B_{2j}'\oplus C_{2j} \xto{\smat{*&*\\ *&*}} B_{2j-1}''\oplus C_{2j-1} \to \cdots
\end{equation}
in which the even differentials go to zero under~$\Psi$, by the above construction. In particular the homotopy trivial complex $\Psi(\tilde X)\simeq\Psi(X)$ in~$\cD^\natural$ has the form $\cdots \xto{0} \Psi(B_{2j+1}'')\xto{\Psi(a_{2j+1})} \Psi(B_{2j}')\xto {0} \cdots$ hence its odd-degree differentials $\Psi(a_{2j+1})$ are isomorphisms. It follows that $a_{2j+1}\colon B_{2j+1}''\to B_{2j}'$ is itself an isomorphism by~\ref{it:cons-3} again. Using new elementary operations (again in all degrees), we change the odd-degree differentials of the complex~$\tilde X$ in~\eqref{eq:aux-X} into diagonal ones and we remove the contractible summands $0\to B_{2j+1}''\isoto B_{2j}'\to 0$ as before, to get a complex consisting only of the $C_i$ in each degree~$i\in \bbZ$.
In summary, we have shown that~$X$ is homotopy equivalent to a complex $C\in\Chain(\cC)$ inside~$\Chain(\cA^\natural)$, as announced.
\end{proof}

\begin{Rem}
Of course, it would be silly to discuss conservativity of the functors $\{\Psi^H\}_{H\le G}$ since among them we find~$\Psi^1=\Id$. The interesting result appears when each $\Psi^H$ is used in conjunction with the derived category of~$\WGH$, or, in `big' form, its homotopy category of injectives.
Let us remind the reader.
\end{Rem}

\begin{Rec}
\label{Rec:J-lambda}%
In \cite{balmer-gallauer:resol-big}, we prove that the homotopy category of injective $RG$-modules, with coefficients in any regular ring~$R$ (\eg\ our field~$\kk$), is a localization of~$\DPerm(G;R)$. In our case, we have an inclusion $J_G\colon \K\Inj(\kk G)\hook \DPerm(G;\kk)$, inside~$\K(\Perm(G;\kk))$, and this inclusion admits a left adjoint~$\Upsilon_G$
\begin{equation}
\label{eq:J-lambda}%
\vcenter{\xymatrix{
\DPerm(G;\kk) \ar@<-.5em>@{->>}[d]_-{\Upsilon_G}
\\
\K\Inj(\kkG). \ar@<-.5em>@{ >->}[u]_-{J_G}
}}
\end{equation}
This realizes the finite localization of~$\DPerm(G;\kk)$ with respect to the subcategory~$\Kac(G)\subseteq\cK(G)=\DPerm(G;\kk)^c$. In particular, $\Upsilon_G$ preserves compact objects and yields the equivalence~$\Upsilon_G\colon\cK(G)/\Kac(G)\cong \Db(\kkG)\cong\K\Inj(\kkG)^c$ of~\eqref{eq:Upsilon_G}, also denoted~$\Upsilon_G$ for this reason. Note that $\Upsilon_G\circ J_G\cong\Id$ as usual with localizations.

Let $P\le G$ be a subgroup. Observe that induction~$\Ind_P^G$ preserves injectives so that $J_G\circ\Ind_P^G\cong\Ind_P^G\circ J_P$. Taking left adjoints, we see that
\begin{equation}\label{eq:Upsilon-Res}%
\Res^G_P\circ\Upsilon_G\cong \Upsilon_P\circ\Res^G_P.
\end{equation}
\end{Rec}

\begin{Not}
\label{Not:bar-Psi}%
For each $p$-subgroup~$H\le G$, we are interested in the composite
\[
\xymatrix@C=3em{\check{\Psi}^H=\check{\Psi}^{H\inn G}\colon \quad \DPerm(G;\kk) \ar[r]^-{\Psi^{H\inn G}} & \DPerm(\WGH;\kk) \ar@{->>}[r]^-{\Upsilon_{\WGH}} & \K\Inj(\kk(\WGH))}
\]
of the modular $H$-fixed-points functor followed by localization to the homotopy category of injectives~\eqref{eq:J-lambda}. We use the same notation on compacts
\begin{equation}
\label{eq:bar-Psi}%
\xymatrix@C=3em{\check{\Psi}^H=\check{\Psi}^{H\inn G}\colon \quad \cK(G;\kk) \ar[r]^-{\Psi^{H\inn G}} & \cK(\WGH;\kk) \ar@{->>}[r]^-{\Upsilon_{\WGH}} & \Db(\kk(\WGH)).}
\end{equation}
\end{Not}

We are now ready to prove the first important result of the paper.
\begin{Thm}[Conservativity]
\label{Thm:conservativity}%
Let $G$ be a finite group.
The above family of functors~$\check{\Psi}^H\colon \cT(G)\to \K\Inj(\kk(\WGH))$, indexed by all the (conjugacy classes of) $p$-sub\-groups~$H\le G$, collectively detects vanishing of objects of~$\DPerm(G;\kk)$.
\end{Thm}
\begin{proof}
Let $P\le G$ be a $p$-Sylow subgroup. For every subgroup~$H\le P$, we have $\Weyl{P}{H}\hook\WGH$ and $\check{\Psi}^{H\inn P}\circ \Res^G_P$ can be computed as $\Res^{\WGH}_{\Weyl{P}{H}}\circ\check{\Psi}^{H\inn G}$ thanks to \Cref{Prop:Psi-Res} and~\eqref{eq:Upsilon-Res}. On the other hand, $\Res^G_P$ is (split) faithful, as $\Ind_P^G\circ\Res^G_P$ admits the identity as a direct summand. Hence it suffices to prove the theorem for the group~$P$, \ie we can assume that $G$ is a $p$-group.

Let $G$ be a $p$-group and $\cF$ be a family of subgroups (\Cref{Rec:family}). We say that a complex~$X$ in~$\Chain(\Perm(G;\kk))$ is \emph{of type~$\cF$} if every $X_i$ is $\cF$-free, \ie a coproduct of $\kk(G/K)$ for $K\in\cF$. So every complex is of type~$\cF_{\textrm{all}}=\{\textrm{all }H\le G\}$. Saying that $X$ is of type $\cF_1=\varnothing$ means $X=0$.
We want to prove that if $X$ defines an object in $\DPerm(G;\kk)$ and $\check{\Psi}^H(X)=0$ for all~$H\le G$ then $X$ is homotopy equivalent to a complex $X'$ of type~$\cF_1=\varnothing$.
We proceed by a form of `descending induction' on~$\cF$. Namely, we prove:
\smallbreak
\noindent \textbf{Claim}: \textit{Let $X\in \DPerm(G;\kk)$ be a complex of type~$\cF$ for some family~$\cF$ and let $H\in \cF$ be a maximal element of~$\cF$ for inclusion.
  If $\check{\Psi}^H(X)=0$ then $X\cong X'$ is homotopy equivalent to a complex $X'\in\Chain(\Perm(G;\kk))$ of type~$\cF'\subsetneqq \cF$.
}
\smallbreak
By the above discussion, proving this claim proves the theorem. Explicitly, we are going to prove this claim for $\cF'=\cF\cap \cF_H$, that is, $\cF$ with all conjugates of~$H$ removed.
By maximality of~$H$ in~$\cF$, every $K\in\cF$ is either conjugate to~$H$ or in~$\cF'$. Of course, for $H'$ conjugate to~$H$ we have $\kk(G/H')\simeq \kk(G/H)$ in~$\Perm(G;\kk)$.

We apply \Cref{Lem:conservativity} for $\cA=\Add\SET{\kk(G/K)}{K\in\cF}$, $\cB=\Add(\kk(G/H))$, $\cC=\Add\SET{\kk(G/K)}{K\in\cF'}$, $\cD=\Perm(\WGH;\kk)$ and the functor $\Psi=\Psi^H$ naturally.
Let us check the hypotheses of \Cref{Lem:conservativity}. Regrouping the terms $\kk(G/K)$ into those for which $K$ is conjugate to~$H$ and those not conjugate to~$H$, we get Hypothesis~\ref{it:cons-1}.
Hypothesis~\ref{it:cons-2} follows immediately from \eqref{eq:fixed-pts} since~$(G/K)^H=\varnothing$ for every $K\in\cF'$.
Finally, Hypothesis~\ref{it:cons-3} follows from \Cref{Lem:nilpotence} and additivity.
So it remains to show that $\Psi^H(X)$ is contractible.
Since $X$ is of type~$\cF$ and $H$ is maximal, we see that $\Psi^H(X)\in\Chain(\Inj(\kk(\WGH)))$ and applying~$\Upsilon_{\WGH}$ gives the same complex (up to homotopy). In other words, $\check{\Psi}^H(X)=0$ forces $\Psi^H(X)$ to be contractible and we can indeed get the above Claim from \Cref{Lem:conservativity}.
\end{proof}

\section{The spectrum as a set}
\label{sec:spectrum}%

In this section, we obtain the description of all points of~$\SpcKG$, as well as some elements of its topology.
We start with a general fact, which is now folklore.

\begin{Prop}
Let $F\colon \cT\to \cS$ be a coproduct-preserving tt-functor between `big' tt-categories. Suppose that $F$ is conservative. Then $F$ detects $\otimes$-nilpotence of morphisms~$f\colon x\to Y$ in~$\cT$, whose source $x\in \cT^c$ is compact, \ie if $F(f)=0$ in~$\cS$ then there exists $n\gg1$ such that $f\potimes{n}=0$ in~$\cT$. In particlar, $F\colon \cT^c\to \cS^c$ detects nilpotence of morphisms and therefore $\Spc(F)\colon \Spc(\cS^c)\to \Spc(\cT^c)$ is surjective.
\end{Prop}
\begin{proof}
Using rigidity of compacts, we can assume that $x=\unit$. Given a morphism $f\colon \unit\to Y$ we can construct in~$\cT$ the homotopy colimit $Y^\infty:=\hocolim_n Y\potimes{n}$ under the transition maps $f\otimes\id\colon Y\potimes{n}\to Y\potimes{(n+1)}$. Let $f^\infty\colon \unit\to Y^\infty$ be the resulting map. Now since $F(f)=0$ it follows that $F(Y^\infty)=0$ in~$\cS$, as it is a sequential homotopy colimit of zero maps. By conservativity of~$F$, we get $Y^\infty=0$ in~$\cT$. Since $\unit$ is compact, the vanishing of~$f^\infty\colon \unit\to\hocolim Y\potimes{n}$ must already happen at a finite stage, that is, the map $f\potimes{n}\colon \unit \to Y\potimes{n}$ is zero for~$n\gg1$, as claimed.
The second statement follows from this, together with~\cite[Theorem~1.4]{balmer:surjectivity}.
\end{proof}

Combined with our Conservativity \Cref{Thm:conservativity} we get:

\begin{Cor}
\label{Cor:Spc-surjection-derived}%
The family of functors $\check{\Psi}^H\colon\cK(G)\to\Db(k(\WGH))$, indexed by conjugacy classes of $p$-subgroups $H\le G$, detects $\otimes$-nilpotence. So the induced map
\[
\coprod_{H\in\Sub{p}(G)}\Spc(\Db(k(\WGH)))\onto \SpcKG
\]
is surjective.
\qed
\end{Cor}

\begin{Def}
\label{Def:psi}%
Let $H\le G$ be a $p$-subgroup. We write (under \Cref{Conv:light})
\[
\psi^H=\psi^{H\inn G}:=\Spc(\Psi^{H})\colon\Spc(\cK(\WGH))\to \SpcKG
\]
for the map induced by the modular $H$-fixed-points functor. We write
\[
\check{\psi}^H=\check{\psi}^{H\inn G}:=\Spc(\check{\Psi}^{H})\colon\Spc(\Db(\WGH))\to \SpcKG
\]
for the map induced by the tt-functor~$\check{\Psi}^H=\Upsilon_{\WGH}\circ\Psi^{H}$ of \eqref{eq:bar-Psi}.
In other words, $\check{\psi}^H$ is the composite of the inclusion of \Cref{Prop:VeeG} with the above~$\psi^H$
\[
\check{\psi}^H\ \colon \quad \Vee{\WGH}=\Spc(\Db(\kk(\WGH)))\ \overset{\upsilon_{\WGH}}{\hook}\ \Spc(\cK(\WGH))\ \xto{\psi^H}\ \SpcKG.
\]
\end{Def}

\begin{Def}
\label{Def:P(H;p)}%
Let $H\le G$ be a $p$-subgroup and $\gp\in\Vee{\WGH}$ a `cohomological' prime over the Weyl group of~$H$ in~$G$.
We define a point in $\SpcKG$ by
\[
\cP(H,\gp)=\cP_G(H,\gp):=\check{\psi}^H(\gp)=(\check{\Psi}^H)\inv(\gp).
\]
\Cref{Cor:Spc-surjection-derived} tells us that every point of~$\SpcKG$ is of the form~$\cP(H,\gp)$ for some $p$-subgroup~$H\le G$ and some cohomological point~$\gp\in\Vee{\WGH}$. Different subgroups and different cohomological points can give the same~$\cP(H,\gp)$. See \Cref{Thm:all-points}.
\end{Def}

\begin{Rem}
Although we shall not use it, we can unpack the definitions of~$\cP_G(H,\gp)$ for the nostalgic reader.
Let us start with the bijection~$\Vee{G}=\Spc(\Db(\kkG))\cong\Spech(\rmH^\sbull(G,\kk))$.
Let $\gp^{\sbull}\subset \rmH^\sbull(G;\kk)=\End^\sbull_{\Db(\kkG)}(\unit)$ be a homogeneous prime ideal of the cohomology.
The corresponding prime~$\gp$ in~$\Db(\kkG)$ can be described as
\[
\gp=\SET{x\in \Db(\kkG)}{\exists\, \zeta\in\rmH^\sbull(G;\kk)\textrm{ such that }\zeta\notin\gp^{\sbull}\textrm{ and }\zeta\otimes x=0}.
\]
Consequently, the prime~$\cP_G(H,\gp)$ of \Cref{Def:P(H;p)} is equal to
\[
\SET{x\in\cK(G)}{\exists\,\zeta\in\rmH^\sbull(\WGH;\kk)\sminus\gp^{\sbull}\textrm{ such that }\zeta\otimes \Psi^H(x)=0\textrm{ in }\Db(\kk(\WGH))}.
\]
\end{Rem}

\begin{Rem}
\label{Rem:Spc-Res}%
By \Cref{Prop:Psi-Res} and functoriality of~$\Spc(-)$, the primes~$\cP_G(H,\gp)$ are themselves functorial in~$G$. To wit, if $\alpha:G\to G'$ is a group homomorphism and $H$ is a $p$-subgroup of~$G$ then $\alpha(H)$ is a $p$-subgroup of~$G'$ and we have
\begin{equation}
\label{eq:Spc-Res}%
\alpha_*(\cP_G(H,\gp))=\cP_{G'}(\alpha(H),\bar\alpha_*\gp)
\end{equation}
in~$\Spc(\cK(G'))$, where $\alpha_*\colon \SpcKG\to \Spc(\cK(G'))$ and $\bar{\alpha}_*\colon \Vee{\WGH}\to \Vee{\Weyl{G'}{\alpha(H)}}$ are as in \Cref{Rem:not}.
We single out the usual suspects. Fix $H\le G$ a $p$-subgroup.

\begin{enumerate}[wide, labelindent=0pt, label=\rm(\alph*), ref=\rm(\alph*)]
\smallbreak
\item
\label{it:Spc-conj}%
For \emph{conjugation}, let $G\le G'$ and $x\in G'$.
We get $\cP_{G}(H,\gp)^x=\cP_{G^x}(H^x,\gp^x)$ for every~$\gp\in\Vee{\WGH}$.
In particular, when~$x$ belongs to~$G$ itself, we get by~\eqref{eq:no-conj}
\begin{equation}
\label{eq:Spc-conjugation}%
\qquad g\in G \qquad\Longrightarrow\qquad \cP_G(H,\gp)=\cP_G(H^g,\gp^g).
\end{equation}

\smallbreak
\item
For \emph{restriction}, let $K\le G$ be a subgroup containing~$H$ and let $\gp\in \Vee{\Weyl{K}{H}}$ be a cohomological point over the Weyl group of~$H$ in~$K$.
Then we have
\begin{equation}
\label{eq:Spc-Res-K}%
\rho_K(\cP_K(H,\gp))=\cP_G(H,\bar\rho_K(\gp)),
\end{equation}
in $\SpcKG$, where the maps $\rho_K=(\Res^G_K)_*\colon\Spc(\cK(K))\to \SpcKG$ and $\bar\rho_K\colon \Vee{\Weyl{K}{H}}\to \Vee{\WGH}$ are spelled out around~\eqref{eq:rho}.

\smallbreak
\item
For \emph{inflation}, let $N\normaleq G$ be a normal subgroup. Set $\bar G=G/N$ and $\bar H=HN/N$.
Then for every $\gp\in \Vee{\WGH}$, we have
\begin{equation}
\label{eq:Spc-infl}%
\pi^{\bar G}\,(\cP_{G}(H,\gp))=\cP_{\bar G}(\bar H,\overline{\pi}^{\bar G}\,\gp),
\end{equation}
in $\Spc(\cK(\bar G))$, where the maps $\pi^{\bar{G}}=(\Infl^{\bar{G}}_G)_*\colon\SpcKG\to\Spc(\cK(\bar{G}))$ and $\bar{\pi}^{\bar{G}}\colon \Vee{\Weyl{G}{H}}\to \Vee{\Weyl{\bar{G}}{\bar{H}}}$ are spelled out around~\eqref{eq:pi}.
\end{enumerate}
\end{Rem}

Our primes also behave nicely under modular fixed-points maps:
\begin{Prop}
\label{Prop:Spc-Psi}
Let $H\le G$ be a $p$-subgroup and let $H\le K\le N_G H$ defining a `further' $p$-subgroup~$K/H\le \WGH$.
Then for every $\gp\in \Vee{\Weyl{(\WGH)}{(K/H)}}$, we have
\[
\psi^{H\inn G}(\cP_{\WGH}(K/H,\gp))=\cP_G(K,\bar{\rho}(\gp))
\]
in~$\SpcKG$, where $\bar{\rho}=\Spc(\overline{\Res}^{\WGK}_{\Weyl{(\WGH)}{(K/H)}})\colon \Vee{\Weyl{(\WGH)}{(K/H)}}\too\Vee{\Weyl{G}{K}}$. In particular, if $H\normaleq G$ is normal, we have
\[
\psi^{H\inn G}(\cP_{G/H}(K/H,\gp))=\cP_G(K,\gp)
\]
in~$\SpcKG$, using that $\gp\in\Vee{\Weyl{(G/H)}{(K/H)}}=\Vee{\Weyl{G}{K}}$.
\end{Prop}
\begin{proof}
This is immediate from \Cref{Prop:Psi-Psi} and \Cref{Cor:Psi-Psi}.
\end{proof}

The relation between Koszul objects and modular fixed-points functors, obtained in \Cref{Lem:Psi^H-of-s}, can be reformulated in terms of the primes~$\cP_G(H,\gp)$.
\begin{Lem}
\label{Lem:P-to-s}%
Let $H\le G$ be a $p$-subgroup and $\gp\in \Vee{\WGH}$. Let $K\le G$ be a subgroup and~$\sKG$ be the Koszul object of~\Cref{Cons:kos}. Then $\sKG\in\cP_G(H,\gp)$ if and only if~$H\le_G K$. (Note that the latter condition does not depend on~$\gp$.)
\end{Lem}
\begin{proof}
We have seen in \Cref{Lem:Psi^H-of-s}\,\ref{it:Psi^H-2} that if $H\le_G K$ then $\check{\Psi}^H(\sKG)=0$ in~$\Db(\kk(\WGH))$, in which case $\sKG\in(\check{\Psi}^H)\inv(0)\subseteq (\check{\Psi}^H)\inv(\gp)=\cP_G(H,\gp)$ for every~$\gp$.
Conversely, we have seen in \Cref{Lem:Psi^H-of-s}\,\ref{it:Psi^H-1} that if $H\not\le_G K$ then $\check{\Psi}^H(\sKG)$ generates~$\Db(\kk(\WGH))$, hence is not contained in any cohomological point~$\gp$, in which case $\sKG\notin(\check{\Psi}^H)\inv(\gp)=\cP_G(H,\gp)$.
\end{proof}

\begin{Cor}
\label{Cor:P<P'}%
If $\cP_G(H,\gp)\subseteq \cP_G(H',\gp')$ then $H'\le_G H$.
Therefore if $\cP_G(H,\gp)=\cP_G(H',\gp')$ then $H$ and~$H'$ are conjugate in~$G$.
\end{Cor}
\begin{proof}
Apply \Cref{Lem:P-to-s} to $K=H$ twice, for~$H$ being once~$H$ and once~$H'$.
\end{proof}

\begin{Prop}
\label{Prop:bar-psi-injective}%
Let $H\le G$ be a $p$-subgroup. Then the map~$\check{\psi}^H\colon \Vee{\WGH}\to \Spc(\cK(G))$ is injective, that is, $\cP_G(H,\gp)=\cP_G(H,\gp')$ implies $\gp=\gp'$.
\end{Prop}
\begin{proof}
Let $N=N_G H$. By assumption we have $\rho^G_N(\cP_N(H,\gp))=\rho^G_N(\cP_N(H,\gp'))$. By \Cref{Cor:Spc-Res}, there exists $g\in G$ and a prime~$\cQ\in\Spc(\cK(N\cap{}^{g\!}N))$ such that
\begin{equation}
\label{eq:aux-inj-1}%
\cP_N(H,\gp)=\rho_{N\cap{}^{g\!}N}^{N}(\cQ)
\quadtext{and}
\cP_N(H,\gp')=\big(\rho_{N\cap{}^{g\!}N}^{{}^{g\!}N}(\cQ)\big)^g.
\end{equation}
By \Cref{Cor:Spc-surjection-derived} for the group~$N\cap{}^{g\!}N$, there exists a $p$-subgroup $L\le N\cap{}^{g\!}N$ and some~$\gq\in \Vee{\Weyl{(N\cap{}^{g\!}N)}{L}}$ such that $\cQ=\cP_{N\cap{}^{g\!}N}(L,\gq)$.
By \eqref{eq:Spc-Res} we know where such a prime $\cP_{N\cap{}^{g\!}N}(L,\gq)$ goes under the maps~$\rho=\Spc(\Res)$ of~\eqref{eq:aux-inj-1} and, for the second one, we also know what happens under conjugation by~\Cref{Rem:Spc-Res}\,\ref{it:Spc-conj}.
Applying these properties to the above relations~\eqref{eq:aux-inj-1} we get
\begin{equation*}
\label{eq:aux-inj-2}%
\cP_N(H,\gp)=\cP_N(L,\gq')
\quadtext{and}
\cP_N(H,\gp')=\cP_N(L^g,\gq'')
\end{equation*}
for suitable cohomological points $\gq'\in\Vee{\Weyl{N}{L}}$ and $\gq''\in\Vee{\Weyl{N}{L^g}}$ that we do not need to unpack.
By \Cref{Cor:P<P'} applied to the group~$N$, we must have $H\sim_N L$ and $H\sim_N L^g$. But since $H\normaleq N$, this forces $H=L=L^g$ and therefore $g\in N_G H=N$.
In that case, returning to~\eqref{eq:aux-inj-1}, we have $N\cap N^g=N=N^g$ and therefore
\[
\cP_N(H,\gp)=\cQ
\quadtext{and}
\cP_N(H,\gp')=\cQ^g=\cQ
\]
where the last equality uses~$g\in N$ and~\eqref{eq:no-conj}. Hence $\cP_N(H,\gp)=\cQ=\cP_N(H,\gp')$.
As $H$ is normal in~$N$ the map $\psi^{H\inn N}\colon \Spc(\cK(N/H))\too \Spc(\cK(N))$ is split injective by \Cref{Cor:Psi-Infl}, and we conclude that $\gp=\gp'$.
\end{proof}

We can now summarize our description of the set~$\SpcKG$.

\begin{Thm}
\label{Thm:all-points}
Every point in $\SpcKG$ is of the form $\cP_G(H,\gp)$ as in \Cref{Def:P(H;p)}, for some $p$-subgroup $H\le G$ and some point~$\gp\in\Vee{\WGH}$ of the cohomological open of the Weyl group of~$H$ in~$G$.
Moreover, we have $\cP_G(H,\gp)=\cP_G(H',\gp')$ if and only if there exists $g\in G$ such that
$H'=H^g$ and $\gp'=\gp^g$.
\end{Thm}
\begin{proof}
The first statement follows from \Cref{Cor:Spc-surjection-derived}.
For the second statement, the ``if''-direction follows from~\eqref{eq:Spc-conjugation}.
For the ``only if''-direction assume $\cP_G(H,\gp)=\cP_G(H',\gp')$. By \Cref{Cor:P<P'}, this forces $H\sim_G H'$.
Using~\eqref{eq:Spc-conjugation}, we can replace $H'$ by~$H^g$ and assume that $\cP_G(H,\gp)=\cP_G(H,\gp')$ for $\gp,\gp'\in\Vee{\WGH}$.
We can then conclude by \Cref{Prop:bar-psi-injective}.
\end{proof}

Here is an example of support, for the Koszul objects of \Cref{Cons:kos}.
\begin{Cor}
\label{Cor:supp(kos)}%
Let $K\le G$. Then $\supp(\sKG)=\SET{\cP(H,\gp)}{H\not\le_G K}$.
\end{Cor}
\begin{proof}
Since all primes are of the form~$\cP(H,\gp)$, it is a simple contraposition on \Cref{Lem:P-to-s}, for $\cP(H,\gp)\in\supp(\sKG)\Leftrightarrow\sKG\notin \cP(H,\gp)\Leftrightarrow H\not\le_G K$.
\end{proof}

We can use this result to identify the image of $\psi^H$. First, in the normal case:

\begin{Prop}
\label{Prop:Im(psi^N)}%
Let $H\normaleq G$ be a normal $p$-subgroup. Then the continuous map
\[
\psi^{H}=\Spc(\Psi^H)\colon \Spc(\cK(G/H))\to\SpcKG
\]
is a closed immersion, retracted by~$\Spc(\Infl^{G/H}_G)$. Its image is the closed subset
\begin{equation}
\label{eq:Im(psi^N)}%
\Img(\psi^H)=\SET{\cP_G(L,\gp)}{H\le L\in\Sub{p}\!G,\ \gp\in\Vee{\WGL}}=\!\cap_{K\not\ge H}\!\supp(\sKG).\kern-1.1em
\end{equation}
Furthermore, this image of~$\psi^H$ is also the support of the object
\begin{equation}
\label{eq:y-normal}%
\bigotimes_{K\in\cF_H}\sKG
\end{equation}
and it is also the support of the tt-ideal $\cap_{K\in \cF_H}\Ker(\Res^G_K)$.
\end{Prop}
\begin{proof}
By \Cref{Cor:Psi-Infl}, the map $\psi^H$ has a continuous retraction hence is a closed immersion as soon as we know that its image is closed.
So let us prove~\eqref{eq:Im(psi^N)}.

By \Cref{Prop:Spc-Psi} and the fact that all points are of the form~$\cP(L,\gp)$, the image of~$\psi^H$ is the subset $\SET{\cP_G(L,\gp)}{H\le L,\ \gp\in\Vee{\WGL}}$. Here we use $H\normaleq G$.

\Cref{Cor:supp(kos)} tells us that every such point $\cP(L,\gp)$ belongs to the support of~$\sKG$ as long as $L\not\le_G K$, which clearly holds if $H\le L$ and $H\not\le K$. Therefore $\Img(\psi^H)\subseteq \cap_{K\in\cF_H}\supp(\sKG)$.

Conversely, let $\cP(L,\gp)\in \cap_{K\in\cF_H}\supp(\sKG)$ and let us show that $H\le L$. If \ababs, $H\not\le L$ then $L\in\cF_H$ is one of the indices~$K$ that appear in the intersection $\cap_{K\in\cF_H}\supp(\sKG)$. In other words, $\cP(L,\gp)\in\supp(\kos[G]{L})$. By \Cref{Cor:supp(kos)}, this means $L\not\le_G L$, which is absurd. Hence the result.

The `furthermore part' follows: The first claim is~\eqref{eq:Im(psi^N)} since $\supp(x)\cap \supp(y)=\supp(x\otimes y)$ and the second claim follows from \Cref{Cor:Ker(Res)}.
 (For $H=1$, the result does not tell us much, as $\psi^1=\id$ and $\otimes_{\varnothing}=\unit$.)
\end{proof}

Let us extend the above discussion to not necessarily normal subgroups~$H$.

\begin{Not}
\label{Not:zul}%
Let $H\le G$ be an arbitrary subgroup. We define an object of~$\cK(G)$
\begin{equation}
\label{eq:zul}%
\tHG:=\Ind_{N_G H}^G \bigg(\bigotimes_{\SET{K\le N_G H}{H\not\le K}}\kos[N_{G}(H)]{K}\bigg).
\end{equation}
(Note that we use plain induction here, not tensor-induction as in \Cref{Cons:kos}.) If $H\normaleq G$ is normal this $\tHG$ is simply the object displayed in~\eqref{eq:y-normal}.
\end{Not}

\begin{Cor}
\label{Cor:Im(psi^H)}%
Let $H\le G$ be a $p$-subgroup. Then the continuous map
\[
\psi^{H\inn G}=\Spc(\Psi^{H\inn G})\colon \Spc(\cK(\WGH))\to\SpcKG
\]
is a closed map, whose image is $\supp(\tHG)$ where $\tHG$ is as in~\eqref{eq:zul}.
\end{Cor}
\begin{proof}
By definition $\Psi^{H\inn G}=\Psi^{H\inn N_G H}\circ\Res^G_{N_G H}$. We know the map induced on spectra by the second functor~$\Psi^{H\inn N_G H}$ by \Cref{Prop:Im(psi^N)} and we can describe what happens under the closed map~$\Spc(\Res)$ by \Cref{Prop:Spc-Res}.
\end{proof}

We record the answer to a question stated in the Introduction (\Cref{sec:intro-I}):
\begin{Cor}
\label{Cor:supp(Kac)}%
The support of the tt-ideal of acyclics~$\Kac(G)$ is the union of the images of the modular $H$-fixed-points maps~$\psi^H$, for non-trivial $p$-subgroups~$H\le G$.
\end{Cor}
\begin{proof}
The points of~$\SpcKG$ are of the form~$\cP(H,\gp)$. Such primes belong to~$\VG=\SET{\cP(1,\gq)}{\gq\in\VG}$ if and only if~$H$ is trivial. The complement is then $\Supp(\Kac(G))$. Hence $\Supp(\Kac(G))\subseteq \cup_{H\neq 1}\Img(\psi^H)$. Conversely, for every $p$-subgroup $H\neq 1$, the object~$\tHG$ of \Cref{Cor:Im(psi^H)} is acyclic, since the tensor is non-empty and any $\kos[N_G H]{K}$ is acyclic. So $\Img(\psi^H)\subseteq\Supp(\Kac(G))$.
\end{proof}

Let us now describe all closed points of~$\SpcKG$.
\begin{Rem}
\label{Rem:Db(kG)-local}%
Recall that in tt-geometry closed points~$\cM\in \Spc(\cK)$ are exactly the minimal primes for inclusion. Also every prime contains a minimal one.

For instance, the tt-category $\Db(\kkG)$ is local, with a unique closed point~$0=\Ker(\Db(\kkG)\to \Db(\kk))$. (In terms of homogeneous primes in~$\Spech(\rmH^\sbull(G,\kk))$ the zero tt-ideal~$\gp=0$ corresponds to the closed point~$\gp^\sbull=\rmH^+(G,\kk)$.)
\end{Rem}

\begin{Def}
\label{Def:m_H}%
Let $H\le G$ be a $p$-subgroup. (This definition only depends on the conjugacy class of~$H$ in~$G$.)
By \Cref{Prop:Psi-Res}, the following diagram commutes
\begin{equation}
\label{eq:F_H}%
\vcenter{\xymatrix@C=4em{
\cK(G) \ar[r]^-{\Res^G_H} \ar[d]_-{\Psi^{H\inn G}} \ar[rd]|-{\ \bbF^H\ }
& \cK(H) \ar[d]^-{\Psi^{H\inn H}}
\\
\cK(\Weyl{G}{H}) \ar[r]_-{\Res^{\WGH}_1}
& \cK(1)=\Db(\kk).\kern-3em
}}
\end{equation}
We baptize $\bbF^H=\bbF^{H\inn G}$ the diagonal. Its kernel is one of the primes of \Cref{Def:P(H;p)}
\begin{equation}
\label{eq:M(H)}%
\cM(H)=\cM_G(H):=\Ker(\bbF^H)=\cP_G(H,0)
\end{equation}
where $0\in\Spc(\Db(\kk(\WGH)))$ is the zero tt-ideal, \ie the unique closed point of the cohomological open~$\Vee{\WGH}$ of the Weyl group.
(See \Cref{Rem:Db(kG)-local}.)
We can think of $\bbF^H\colon \cK(G)\to \Db(\kk)$ as a tt-residue field functor at the (closed) point~$\cM(H)$.
\end{Def}

\begin{Exa}
\label{Exa:M(1)}%
For $H=1$, we have $\cM(1)=\Ker\big(\Res^G_1:\cK(G)\to \Db(\kk)\big)=\Kac(G)$. In other words, $\cM(1)=\Upsilon_G\inv(0)$ is the image under the open immersion $\upsilon_G\colon \VG\hook \SpcKG$ of \Cref{Prop:VeeG} of the unique closed point~$0\in\VG$ of \Cref{Rem:Db(kG)-local}. In general, a closed point of an open is not necessarily closed in the ambient space. Here $\cM(1)$ is closed since by definition $\{\cM(1)\}=\Img(\rho^G_1)$ where $\rho^G_1=\Spc(\Res^G_1)$. By \Cref{Prop:Spc-Res}, we know that $\Img(\rho^G_1)=\supp(\kk(G))$ is closed.
\end{Exa}

\begin{Exa}
\label{Exa:M(G)}%
For $H=G$ a $p$-group, we can give generators of the closed point
\[
\cM(G)=\ideal{\kk(G/K)\mid K\neq G}.
\]
As $\cM(G)=\ker(\Psi^G:\cK(G)\to\Db(k))$, inclusion~$\supseteq$ follows from \Cref{Prop:fixed-pts}.
For~$\subseteq$, let $X\in\cM(G)$ be a complex that vanishes under~$\Psi^G$.
Splitting the modules~$X_n$ in each homological degree~$n$ into a trivial (\ie a $\kk$-vector space with trivial action) and non-trivial permutation modules, \Cref{Lem:conservativity} shows that $X$ is homotopy equivalent to a complex in the additive category generated by $\kk(G/K)$, $K\neq G$.
\end{Exa}

\begin{Cor}
\label{Cor:closed-pts}%
The closed points of $\SpcKG$ are exactly the tt-primes~$\cM_G(H)$ of \eqref{eq:M(H)} for the $p$-subgroups~$H\le G$. Furthermore, we have $\cM_G(H)=\cM_G(H')$ if and only if~$H$ is conjugate to~$H'$ in~$G$.
\end{Cor}
\begin{proof}
Let us first verify that $\cM_G(H)$ is closed for every~$H\le G$. For $H=1$, we checked it in \Cref{Exa:M(1)}. For $H\neq 1$, we have $\cM_G(H)=\cP_G(H,0)=\Psi^{H}(\cM_{\WGH}(1))$. This gives the result since $\cM_{\WGH}(1)$ is closed in~$\Spc(\cK(\WGH))$, by \Cref{Exa:M(1)} again, and since $\psi^H$ is a closed map by \Cref{Cor:Im(psi^H)}.

Now, every point $\gp\in\Vee{\WGH}$ admits $0$ in its closure in~$\Spc(\Db(\kk(\WGH)))=\Vee{\WGH}$. (See \Cref{Rem:Db(kG)-local}.) By continuity of~$\check{\psi}^H\colon \Vee{\WGH}\to \SpcKG$, it follows that $\check{\psi}^H(0)=\cM_G(H)$ belongs to the closure of~$\check{\psi}^H(\gp)=\cP_G(H,\gp)$, which proves that the $\cM_G(H)$ are the only closed points.

We already saw that $\cP(H,0)=\cP(H',0)$ implies $H\sim_G H'$, in \Cref{Thm:all-points}.
\end{proof}

We wrap up this section on the spectrum by discussing the strata defined by modular fixed-points.
\begin{Prop}
\label{Prop:fibered}%
For every $p$-subgroup~$H\le G$, consider the subset
\[
\Vee{G}(H):=\Img(\check{\psi}^H)=\check{\psi}^H(\Vee{\WGH})
\]
of~$\Spc(\cK(G))$. Then $\cM_G(H)$ is the unique closed point of~$\SpcKG$ that belongs to~$\Vee{G}(H)$.
We have a set-partition indexed by conjugacy classes of $p$-subgroups
\begin{equation}
\label{eq:SpcKG-partition}%
\SpcKG=\coprod_{H\in(\Sub{p}\!G)/G}\ \Vee{G}(H)
\end{equation}
where each $\Vee{G}(H)$ is open in its closure.
\end{Prop}

\begin{proof}
The partition is immediate from \Cref{Thm:all-points}.
Each subset $\Vee{G}(H)=\SET{\cP(H,\gp)}{\gp\in\Vee{\WGH}}$ is a subset of the closed set~$\Img(\psi^H)$.
By \Cref{Cor:supp(Kac)} and \Cref{Prop:Spc-Psi}, the complement of~$\Vee{G}(H)$ in~$\Img(\psi^H)$ consists of the images $\Img(\psi^K)$ for every `further'~$p$-group~$K$, \ie such that $H\lneqq K\le N_G(H)$ and these are closed by \Cref{Cor:Im(psi^H)}.
Thus~$\Vee{G}(H)$ is an open in the closed subset~$\Img(\psi^H)$.
\end{proof}

\begin{Rem}
\label{Rem:filtered}%
\label{Rem:psi^H-V(H)}%
We can use~\eqref{eq:SpcKG-partition} to define a map~$\SpcKG\to (\Sub{p}\!G)/G$.
\Cref{Cor:P<P'} tells us that this map is continuous for the (Alexandrov) topology on $(\Sub{p}\!G)/G$ whose open subsets are the ones stable under subconjugacy.

Moreover, for $H\le G$ a $p$-subgroup, the square
\[
\xymatrix@R=1.5em{
  \Spc(\cK(\WGH))
  \ar[d]
  \ar[r]_-{\psi^H}
  &
  \SpcKG
  \ar[d]
  \\
  (\Sub{p}(\WGH))/(\WGH)\
  \ar@{^(->}[r]
  &
  (\Sub{p}\!G)/G
}
\]
commutes, where the bottom horizontal arrow is the canonical inclusion that sends $H\le K\le N_G(H)$ to $K$.
This follows from \Cref{Prop:Spc-Psi}.
Consequently, while $\psi^H$ might not be injective in general, we still have $(\psi^H)\inv(\Vee{G}(H))=\Vee{\WGH}$.
\end{Rem}

\section{Examples}
\label{sec:examples}%

Although the full treatment of the topology of~$\SpcKG$ will require the additional technology of~\Cref{part:twisted-cohomology}, we can already present the answer for small groups.
Some of the most interesting phenomena are already visible once we reach $p$-rank two in \Cref{Exa:Klein4}.
Let us start with the easy examples.

\begin{Not}
\label{Not:W*}%
Fix an integer $n\geq 0$ and consider the following space $\W^n$ consisting of $2n+1$ points, with specialization relations pointing upward as usual:
\begin{equation}
\label{eq:W*}%
\W^n=\qquad
\vcenter{\xymatrix@R=1em@C=.7em{
{\scriptstyle\gm_0\kern-1em}
& {\bullet} \ar@{-}@[Gray] '[rd] '[rr] '[drrr]
&&
{\bullet}
&{\kern-1em\scriptstyle\gm_1}
&&{\scriptstyle\gm_{n-1}\kern-1em}&\bullet&&\bullet&{\kern-1em\scriptstyle\gm_n}
\\
&{\scriptstyle\gp_1\kern-1em}& {\bullet}&&& {\cdots}& \ar@{-}@[Gray] '[ru] '[rr] '[rrru] &&\bullet&{\kern-1em}\scriptstyle\gp_{n}}}
\end{equation}
The closed subsets of~$\W^n$ are simply the specialization-closed subsets, \ie those that contain a $\gp_i$ only if they contain~$\gm_{i-1}$ and~$\gm_i$.
So the $\gm_i$ are closed points and the $\gp_i$ are generic points of the $n$ irreducible V-shaped closed subsets~$\{\gm_{i-1},\gp_i,\gm_i\}$.
\end{Not}

\begin{Prop}
\label{Prop:cyclic}%
Let $G=C_{p^n}$ be a cyclic $p$-group. Then $\Spc(\cK(C_{p^n}))$ is homeomorphic to the space~$\W^n$ of~\eqref{eq:W*}.

More precisely, if we denote by $1=N_n<N_{n-1}<\cdots<N_0=G$ the $n+1$ subgroups of~$C_{p^n}$\,{\rm(\footnote{\,The numbering of the~$N_i$ keeps track of the index, that is, $G/N_i\cong C_{p^i}$. This choice will allow simple formulas for inflation and fixed-points, and for procyclic groups in~\cite{balmer-gallauer:Artin-finite-fields}.})},
then the points~$\gp_i$ and~$\gm_i$ in~$\SpcKG$ are given by
\[
\gm_i=(\check{\Psi}^{N_i})\inv(0)
\qquadtext{and}
\gp_i=(\check{\Psi}^{N_i})\inv(\Dperf(\kk(G/N_i)))
\]
where $\check{\Psi}^{N}=\Upsilon_{G/N}\circ\Psi^N\colon \cK(G)\to\cK(G/N)\onto\Db(\kk(G/N))$ is the tt-functor~\eqref{eq:bar-Psi}.
\end{Prop}

\begin{proof}
\label{Exa:Cyclic-revisited}%
By \Cref{Prop:fibered}, we have a partition of the spectrum in subsets
\[
\SpcKG=\coprod_{i=0}^n \ \Vee{G}(N_i)=\coprod_{i=0}^n \ \Img(\check{\psi}^{N_i})
\]
and each $\Vee{G}(N_i)$ is homeomorphic to~$\Spc(\Db(\kk G/N_i))=\Vee{G/N_i}$.
For $i>0$, each $\Vee{G}(N_i)$ is a Sierpi\'{n}ski space $\{\gp_i\rightsquigarrow\gm_i=\cM(N_i)\}$, while $\Vee{G}(N_0)$ is a singleton set~$\{\gm_0:=\cM(G)\}$.
In other words, we know the set~$\SpcKG$ has the announced~$2n+1$ points and the unmarked specializations~$\gp_i\rightsquigarrow\gm_i$ below
\begin{equation}
\label{eq:W*-temp}%
\vcenter{\xymatrix@R=1em@C=.7em{
{\scriptstyle\gm_0\kern-1em}
& {\bullet} \ar@{-}@[Gray] '[rd]|-{?} '[rr] '[rrrd]|-{?}
&& {\bullet} &{\kern-1em\scriptstyle\gm_1}
& \cdots
&{\scriptstyle\gm_{i-1}\kern-1em}&\bullet \ar@{-}@[Gray] '[rd]|-{?} '[rr]
&&\bullet&{\kern-1em\scriptstyle\gm_i}
& \cdots
&&\bullet&{\kern-1em\scriptstyle\gm_{n-1}}
&\bullet&{\kern-1em\scriptstyle\gm_n}
\\
&{\scriptstyle\gp_1\kern-1em}& {\bullet}
&&& {\cdots}
&&& \bullet&{\kern-1em}\scriptstyle\gp_{i}
&& {\cdots}
& \ar@{-}@[Gray] '[ru] '[rr]|-{?} '[rrru]
&&\bullet&{\kern-1em}\scriptstyle\gp_{n}}}
\end{equation}
We need to elucidate the topology.
Since all~$\gm_i=\cM(N_i)$ are closed (\Cref{Cor:closed-pts}), we only need to see where each $\gp_i$ specializes for~$1\le i\le n$.
By \Cref{Cor:P<P'}, the point $\gp_i=\cP(N_i,\gp)$ can only specialize to a $\cP(N_j,\gq)$ for $N_j\ge N_i$, that is, to the points $\gm_j$ or~$\gp_j$ for~$j\le i$.
On the other hand, direct inspection using \eqref{eq:fixed-pts} shows that $\supp(\kk(G/N_{i-1}))=\SET{\gm_j}{j\ge i-1}\cup\SET{\gp_j}{j\ge i}$.
This closed subset contains~$\gp_i$ hence its closure.
Combining those two observations, we have
\[
\adhpt{\gp_i}\subseteq \SET{\gm_j,\gp_j}{j\le i}\cap \big(\{\gm_{i-1}\}\cup\SET{\gm_j,\gp_j}{j\ge i}\big)=\{\gm_{i-1},\gp_{i},\gm_i\}.
\]
If any of the $\adhpt{\gp_{i}}$ was smaller than~$\{\gm_{i-1},\gp_{i},\gm_i\}$, that is, if one of the specialization relations $\gp_{i}\rightsquigarrow\gm_{i-1}$ marked with~`$?$' in~\eqref{eq:W*-temp} did not hold, then $\SpcKG$ would be a disconnected space. This would force the rigid tt-category~$\cK(G)$ to be the product of two tt-categories
(see Stevenson~\cite{stevenson:disconnecting}), which is clearly absurd, \eg because $\End_{\cK(G)}(\unit)=\kk$.
\end{proof}

With this identification, we can record the maps $\psi^H$ of~\Cref{Def:psi} and the maps $\rho_K$ and~$\pi^{G/N}$ of \Cref{Rem:Spc-Res}, that relate different cyclic $p$-groups.
\begin{Lem}
\label{Lem:Spc-cyclic}%
Let $n\ge 0$. We identify~$\Spc(\cK(C_{p^n}))$ with~$\W^n$ as in \Cref{Prop:cyclic}.
\begin{enumerate}[label=\rm(\alph*), ref=\rm(\alph*)]
\item
\label{it:Spc-cyclic.fixed-pts}%
Let $0\le i\le n$ and $H=N_i=C_{p^{n-i}}\le C_{p^n}$, so that $C_{p^n}/H\cong C_{p^i}$.
The map~$\psi^H\colon \W^i\to \W^n$ induced by modular fixed points $\Psi^H$ is the inclusion
\[
\psi\colon\W^i\hook\W^n
\]
that catches the left-most points: $\gp_\ell\mapsto \gp_\ell$ and~$\gm_\ell\mapsto \gm_\ell$.
\smallbreak
\item
\label{it:Spc-cyclic.res}%
Let $0\le j\le n$ and $K=C_{p^j}\le C_{p^n}$.
The map~$\rho_K\colon \W^j\to \W^n$ induced by restriction $\Res_K$ is the inclusion
\[
\rho:\W^{j}\hook\W^n
\]
that catches the right-most points: $\gm_\ell\mapsto\gm_{\ell+n-j}$ and $\gp_\ell\mapsto\gp_{\ell+n-j}$.
\smallbreak
\item
\label{it:Spc-cyclic.infl}%
Let $0\le m\le n$. Inflation along $C_{p^n}\onto C_{p^m}$ induces on spectra the map
\[
\pi\colon\W^n\onto\W^m
\]
that retracts $\psi$ and sends everything else to $\gm_m$, that is, for all~$0\le \ell\le n$
\[
\qquad\pi(\gp_\ell)=\left\{
\begin{array}{cl}
  \gp_\ell & \textrm{if }\ell\le m
  \\
  \gm_{m} & \textrm{otherwise}
\end{array}\right.
\qquadtext{and}
\pi(\gm_\ell)=\left\{
\begin{array}{cl}
  \gm_\ell & \textrm{if }\ell\le m
  \\
  \gm_{m} & \textrm{otherwise}.
\end{array}\right.
\]
\end{enumerate}
\end{Lem}
\begin{proof}
Part~\ref{it:Spc-cyclic.fixed-pts} follows from \Cref{Prop:Spc-Psi}, while parts~\ref{it:Spc-cyclic.res} and~\ref{it:Spc-cyclic.infl} follow from \Cref{Rem:Spc-Res}.
\end{proof}

Let us now move to higher $p$-rank.
\begin{Exa}
\label{Exa:elem-ab}%
Let $E=(C_p)^{\times r}$ be the elementary abelian $p$-group of rank~$r$.
We know that $\Vee{E}=\Spc(\Db(\kk E))\cong\Spech(\rmH^\sbull(E,\kk))$ is homeomorphic to the space
\begin{equation}
\label{eq:Aff}%
\Aff{r}:=\Spech(\kk[x_1,\ldots,x_r]),
\end{equation}
that is, projective space~$\bbP^{r-1}_\kk$ with one closed point `on top'.
For instance, $\Aff{0}$ is a single point and $\Aff{1}$ is a 2-point Sierpi\'{n}ski space.
The example of~$r=1$ (see \Cref{Prop:cyclic} for $n=1$) is not predictive of what happens in higher rank.
Indeed, by \Cref{Prop:fibered}, the closed complement $\Supp(\Kac(E))$ is far from discrete in general.
It contains~$\frac{p^r-1}{p-1}$ copies of~$\Aff{r-1}$ and more generally $|\mathrm{Gr}_p(d,r)|$ copies of the $d$-dimensional~$\Aff{d}$ for $d=0,\ldots,r-1$, where $|\mathrm{Gr}_p(d,r)|$ is the number of rank-$d$ subgroups of~$(C_p)^{\times r}$.
Here is a `low-resolution' picture for Klein-four $r=p=2$:
\begin{equation}
\label{eq:C_2xC_2-artist}%
\vcenter{\xymatrix@C=3.2em@R=.8em{
{\color{Brown} \Aff{0}} \ar@{--}@[Gray][rd] \ar@{--}@[Gray][rrd] \ar@{--}@[Gray][rrrd] \ar@/_.5em/@{--}@[Gray][rrrrdd]
\\
& {\color{Brown} \Aff{1}} \ar@/_.5em/@{--}@[Gray][rrrd]
& {\color{Brown} \Aff{1}} \ar@{--}@[Gray][rrd]
& {\color{Brown} \Aff{1}} \ar@{--}@[Gray][rd]
\\
&&&& {\color{OliveGreen} \Aff{2}}
}}\kern2em
\end{equation}
The dashed lines indicate `partial' specialization relations: \emph{Some} points in the lower variety specialize to \emph{some} points in the higher one; see~ \Cref{Cor:P<P'}.
In rank~3, the similar `low-resolution' picture of~$\Spc(\cK(C_2^{\times 3}))$, still for $p=2$, looks as follows:
\begin{equation}
\label{eq:C_2xC_2xC_2-artist}%
\vcenter{\xymatrix@R=1em@C=.3em{
&&&&&& {\color{Brown}\Aff{0}}
\ar@{--}@[Gray][lllllld]\ar@{--}@[Gray][lllld]\ar@{--}@[Gray][lld]\ar@{--}@[Gray][d]\ar@{--}@[Gray][rrd]\ar@{--}@[Gray][rrrrd]\ar@{--}@[Gray][rrrrrrd]
\ar@{--}@[Gray][llllldd]\ar@{--}@[Gray][llldd]\ar@{--}@[Gray][ldd]\ar@{--}@[Gray][rdd]\ar@{--}@[Gray][rrrdd]\ar@{--}@[Gray][rrrrrdd]\ar@{--}@[Gray][rrrrrrrdd]
\ar@{--}@[Gray][rddd]
\\
{\color{Brown}\Aff{1}}
\ar@{--}@[Gray][rd]\ar@{--}@[Gray][rrrd]\ar@{--}@[Gray][rrrrrd]
&& {\color{Brown}\Aff{1}}
\ar@{--}@[Gray][ld]\ar@{--}@[Gray][rrrrrd]\ar@{--}@[Gray][rrrrrrrd]
&& {\color{Brown}\Aff{1}}
\ar@{--}@[Gray][llld]\ar@{--}@[Gray][rrrrrrrd]\ar@{--}@[Gray][rrrrrrrrrd]
&& {\color{Brown}\Aff{1}}
\ar@{--}@[Gray][llld]\ar@{--}@[Gray][rd]\ar@{--}@[Gray][rrrrrrrd]
&& {\color{Brown}\Aff{1}}
\ar@{--}@[Gray][llld]\ar@{--}@[Gray][ld]\ar@{--}@[Gray][rrrd]
&& {\color{Brown}\Aff{1}}
\ar@{--}@[Gray][llllllld]\ar@{--}@[Gray][ld]\ar@{--}@[Gray][rd]
&& {\color{Brown}\Aff{1}}
\ar@{--}@[Gray][llllllld]\ar@{--}@[Gray][llld]\ar@{--}@[Gray][rd]
\\
& {\color{Brown}\Aff{2}}
&& {\color{Brown}\Aff{2}}
&& {\color{Brown}\Aff{2}}
&& {\color{Brown}\Aff{2}}
&& {\color{Brown}\Aff{2}}
&& {\color{Brown}\Aff{2}}
&& {\color{Brown}\Aff{2}}
\\
&&&&&&& {\color{OliveGreen}\Aff{3}}
\ar@{--}@[Gray][llllllu]\ar@{--}@[Gray][llllu]\ar@{--}@[Gray][llu]\ar@{--}@[Gray][u]\ar@{--}@[Gray][rru]\ar@{--}@[Gray][rrrru]\ar@{--}@[Gray][rrrrrru]
\ar@{--}@[Gray][llllllluu]\ar@{--}@[Gray][llllluu]\ar@{--}@[Gray][llluu]\ar@{--}@[Gray][luu]\ar@{--}@[Gray][ruu]\ar@{--}@[Gray][rrruu]\ar@{--}@[Gray][rrrrruu]
}}
\end{equation}
Each $\Aff{d}$ has Krull dimension~$d\in\{0,1,2,3\}$ and contains one of 16 closed points.
\end{Exa}

Let us now discuss the example of Klein-four and `zoom-in' on~\eqref{eq:C_2xC_2-artist} to display every point at its actual height, as well as all specialization relations.

\begin{Exa}
\label{Exa:Klein4}%
Let $G=C_2\times C_2$ be the Klein four-group, in characteristic~$p=2$.
In \Cref{Exa:Klein-four}, we shall see that the spectrum $\Spc(\cK(E))$ is exactly as follows:
\begin{equation}\label{eq:C_2xC_2}%
\kern2em\vcenter{\xymatrix@C=.0em@R=.4em{
{\color{Brown}\overset{\cM(E)}{\bullet}} \ar@{-}@[Gray][rrdd] \ar@{-}@[Gray][rrrrdd] \ar@{-}@[Gray][rrrrrrdd] \ar@{~}@<.1em>@[Gray][rrrrrrrrdd] &&& {\color{Brown}\overset{\cM(N_0)}{\bullet}} \ar@{-}@[Brown][ldd] \ar@{-}@<.1em>@[Gray][rrrrrrdd]
&& {\color{Brown}\overset{\cM(N_1)}{\bullet}} \ar@{-}@[Brown][ldd] \ar@{-}@[Gray][rrrrrrdd]
&& {\color{Brown}\overset{\cM(N_\infty)}{\bullet}} \ar@{-}@[Brown][ldd] \ar@{-}@[Gray][rrrrrrdd]
&& {\color{OliveGreen}\overset{\cM(1)}{\bullet}} \ar@{~}@[OliveGreen][ldd] \ar@{-}@[OliveGreen][dd] \ar@{-}@[OliveGreen][rrdd] \ar@{-}@[OliveGreen][rrrrdd]
\\ \\
&& {\color{Brown}\underset{\cP(N_0)}{\bullet}} \ar@{-}@<-.4em>@[Gray][rrrrrrrdd]
&& {\color{Brown}\underset{\cP(N_1)}{\bullet}} \ar@{-}@<-.1em>@[Gray][rrrrrdd]
&& {\color{Brown}\underset{\cP(N_\infty)}{\bullet}} \ar@{-}@[Gray][rrrdd]
&\ar@{.}@[OliveGreen][r]
& {\scriptstyle\color{OliveGreen}\tinyPone} \ar@{~}@[OliveGreen][rdd] \ar@{.}@[OliveGreen][rrrrrrr]
& {\color{OliveGreen}\bullet_{0}} \ar@{-}@[OliveGreen][dd]
&& {\color{OliveGreen}\bullet_{1}} \ar@{-}@[OliveGreen][lldd]
&& {\color{OliveGreen}\bullet_{\infty}} \ar@{-}@[OliveGreen][lllldd]
&&
\\ \\
&&&&&&&&& {\color{OliveGreen}\bullet_{\cP_0}}\kern-1em
&&&&
}}\kern-1.11em
\end{equation}
In this picture, $N_0,\,N_1$ and~$N_\infty$ are the three cyclic subgroups of~$G$.
The colors match those of~\eqref{eq:C_2xC_2-artist}.
The green part is the cohomological open~$\Vee{E}\simeq \Aff{2}$ as in \eqref{eq:Aff}, that is, a~$\bbP^1$ with a closed point on top; we marked with~${\color{OliveGreen}\bullet}$ the closed point~$\cM(1)$, the three $\bbF_{\!2}$-rational points $0$, $1$, $\infty$ of~$\bbP^1$ and its generic point~$\cP_0$; the notation $\Pone$ and the dotted line indicate $\bbP^1\sminus\{0,1,\infty,\cP_0\}$.
The brown part is the support of the acyclics, namely the union of the $\Vee{E}(H)$ for non-trivial subgroups~$H\le E$ as in \Cref{Prop:fibered};
it consists of three Sierpi\'{n}ski subspaces $\{\cP(N_i)\rightsquigarrow\cM(N_i)\}\simeq\Vee{E/N_i}\simeq \Aff{1}$ and the singleton~$\{\cM(E)\}\simeq\Vee{E/E}\simeq \Aff{0}$.

The specializations involving points of~$\Pone$ are displayed with undulated lines, indicating that all points share the same behavior.
For instance, the gray undulated line indicates that \emph{all} points of~$\Pone$ specialize to~$\cM(E)$. The proof of this critical fact will require the new tools of \Cref{part:twisted-cohomology}.
\end{Exa}

\begin{Exa}
\label{Exa:Q_8}%
The spectrum of the quaternion group~$Q_8$ is very similar to that of its quotient~$E:=Q_8/Z(Q_8)\cong C_2\times C_2$, as we announced in~\eqref{eq:Q_8}.
The center~$Z:=Z(Q_8)\cong C_2$ is the maximal elementary abelian $2$-subgroup and it follows that $\Res^{Q_8}_{Z}$ induces a homeomorphism $V_{C_2}\isoto V_{Q_8}$. In other words, $V_{Q_8}$ is again a Sierpi\'{n}ski space~$\{\cP,\cM(1)\}$.
On the other hand, the center $Z$ is also the unique minimal non-trivial subgroup.
It follows from~\Cref{Cor:Spc-surjection-derived} and \Cref{Prop:Im(psi^N)} that $\Supp(\Kac(Q_8))$ is the image under the closed immersion~$\psi^Z$ of~$\Spc(\cK(Q_8/Z))$.
It only remains to describe the specialization relations between the cohomological open~$\Vee{Q_8}$ and its closed complement~$\Supp(\Kac(Q_8))$.
Since $\cM(1)\in\Vee{Q_8}$ is also a closed point in~$\Spc(\cK(Q_8))$, we only need to decide where the generic point~$\cP$ of~$V_{Q_8}$ specializes in~$\Spc(\cK(Q_8))$.
Interestingly, $\cP$ will not be generic in the whole of~$\Spc(\cK(Q_8))$.
As $\cP$ belongs to $\Img(\rho_{Z})$, it suffices to determine~$\rho_Z(\cM_{C_2}(C_2))$. The preimage of~$\Img(\rho_{Z})=\supp(\kk(Q_8/Z))$ under~$\psi^Z$ is $\supp_{E}(\Psi^Z(\kk(Q_8/Z)))=\supp_E(\kk(E))=\{\cM_E(1)\}$. It follows that $\cP$ specializes to exactly one point:~$\psi^Z(\cM_E(1))=\cM_{Q_8}(Z)$ as depicted in~\eqref{eq:Q_8}.
\end{Exa}

\section{Stratification}
\label{sec:stratification}%

It is by now well-understood how to deduce stratification in the presence of a noetherian spectrum and a conservative theory of supports. We follow the general method of Barthel-Heard-Sanders~\cite{barthel-heard-sanders:stratification-supph,barthel-heard-sanders:stratification-Mackey}.

\begin{Prop}
\label{Prop:Spc-noetherian}%
The spectrum $\SpcKG$ is a noetherian topological space.
\end{Prop}
\begin{proof}
Recall that a space is noetherian if every open is quasi-compact. It follows that the continuous image of a noetherian space is noetherian.
The claim now follows from \Cref{Cor:Spc-surjection-derived}.
\end{proof}

We start with the key technical fact. Recall that coproduct-preserving exact functors between compactly-generated triangulated categories have right adjoints by Brown-Neeman Representability. We apply this to~$\Psi^H$.
\begin{Lem}
\label{Lem:Psi^N_rho}%
Let $N\normaleq G$ be a normal $p$-subgroup and $\Psi^N_\rho\colon \DPerm(G/N;\kk)\to \DPerm(G;\kk)$ the right adjoint of modular $N$-fixed points~$\Psi^N\colon \DPerm(G;\kk)\to \DPerm(G/N;\kk)$. Then $\Psi^N_\rho(\unit)$ is isomorphic to a complex~$s$ in $\perm(G;\kk)$, concentrated in non-negative degrees
\[
s=\quad\big(\cdots \to s_n \to \cdots \to s_2 \to s_1 \to s_0\to 0\to 0\cdots\big)
\]
with $s_0=\kk$ and $s_1=\oplus_{H\in\cF_N}\kk(G/H)$, where $\cF_N=\SET{H\le G}{N\not\le H}$.
\end{Lem}
\begin{proof}
Following the recipe of Brown-Neeman Representability~\cite{neeman:brown}, we give an explicit description of~$\Psi^N_\rho(\unit)$ as the homotopy colimit in~$\cT(G)$ of a sequence of objects $x_0=\unit\xto{f_0} x_1\xto{f_1} \cdots \to x_n\xto{f_n} x_{n+1}\to \cdots$ in~$\cK(G)$. This sequence is built together with maps~$g_n\colon \Psi^N(x_n)\to \unit$ in~$\cK(G/N)$ making the following commute
\begin{equation}
\label{eq:aux-strat-1}%
\vcenter{\xymatrix@C=2em{
\Psi^N(x_0)=\unit \ar@{=}@/_1em/[rrrd]_(.3){g_0=\id\ } \ar[rr]^-{\Psi^N(f_0)}_-{}
&& {\cdots} \ar[r]|-{} \ar@{}[rd]|-{\cdots}  & \Psi^N(x_n) \ar[rr]^-{\Psi^N(f_n)} \ar[d]_-{g_n} && \Psi^N(x_{n+1}) \ar[lld]^(.4){g_{n+1}} \ar@{}[r]|-{\cdots}&\ar@{}[lld]|-{\cdots}
\\
&&& \quad \unit \quad &
}}
\end{equation}
Note that such $g_n$ yield homomorphisms, natural in $t\in\DPerm(G;\kk)$, as follows
\begin{equation}
\label{eq:aux-strat-2}%
\alpha_{n,t}\colon\Hom_{G}(t,x_n)\xto{\Psi^N}
\Hom_{G/N}(\Psi^N(t),\Psi^N(x_n))\xto{(g_n)_*}
\Hom_{G/N}(\Psi^N(t),\unit)\kern-1em
\end{equation}
where we abbreviate $\Hom_G$ for $\Hom_{\DPerm(G;\kk)}$.
We are going to build our sequence of objects $x_0\to x_1\to \cdots$ and the maps~$g_n$ so that for each~$n\ge 0$
\begin{equation}\label{eq:aux-strat-3}%
\textrm{$\alpha_{n,t}$ is an isomorphism for every $t\in \SET{\Sigma^i\,\kk(G/H)}{i< n,\ H\le G}$.}
\end{equation}
It follows that, if we set~$x_\infty=\hocolim_n x_n$ and $g_\infty\colon \Psi^N(x_\infty)\cong\hocolim_n\Psi^N(x_n)\to \unit$ a colimit of the~$g_n$, then the map
\[
\alpha_{t}\colon\Hom_{G}(t,x_\infty)\xto{\Psi^N}
\Hom_{G/N}(\Psi^N(t),\Psi^N(x_\infty))\xto{(g_\infty)_*}
\Hom_{G/N}(\Psi^N(t),\unit)
\]
is an isomorphism for all~$t\in\SET{\Sigma^i\,\kk(G/H)}{i\in \bbZ,\ H\le G}$.
Since the $\kk(G/H)$ generate~$\DPerm(G;\kk)$, it follows that $\alpha_{t}$ is an isomorphism for all~$t\in\DPerm(G;\kk)$.
Hence~$x_\infty=\hocolim_n x_n$ is indeed the image of~$\unit$ by the right adjoint~$\Psi^N_\rho$.

Let us construct these sequences $x_n,\ f_n$ and~$g_n$, for $n\ge 0$.
In fact, every complex~$x_n$ will be concentrated in degrees between zero and~$n$, so that \eqref{eq:aux-strat-3} is trivially true for $n=0$ (that is, for $i<0$), both source and target of~$\alpha_{n,t}$ being zero in that case.
Furthermore, $x_{n+1}$ will only differ from $x_n$ in degree~$n+1$, with $f_n$ being the identity in degrees~$\le n$. So the verification of~\eqref{eq:aux-strat-3} for~$n+1$ will boil down to checking the cases of~$t=\Sigma^i\,\kk(G/H)$ for~$i=n$.

As indicated, we set $x_0=\unit$ and $g_0=\id$. We define $x_1$ by the exact triangle
\[
s_1\xto{\eps} \unit \xto{f_0} x_1 \to \Sigma(s_1)
\]
where $s_1:=\oplus_{H\in\cF_N}\kk(G/H)$ and $\eps_H\colon \kk(G/H)\to \kk$ is the usual map. Note that $\Psi^N(s_1)=0$ by \eqref{eq:fixed-pts}, hence $\Psi^N(f_0)\colon \unit \to \Psi^N(x_1)$ is an isomorphism. We call~$g_1$ its inverse. One verifies that~\eqref{eq:aux-strat-3} holds for~$n=1$: For $t=\kk(G/H)$ with $H\in \cF_N$, both the source and target of~$\alpha_{1,t}$ are zero thanks to the definition of~$s_1$. For the case where $H\ge N$, there are no non-zero homotopies for maps $\kk(G/H)\to x_1$ thanks to \Cref{Lem:infl-stab}.

Let us construct $x_{n+1}$ and $g_{n+1}$ for $n\ge 1$. For every $H\le G$ let $t=\Sigma^n(\kk(G/H))$ and choose generators $h_{H,1},\ldots,h_{H,r_H}\colon t\to x_n$ of the $\kk$-module~$\Hom_{G}(t,x_n)$, source of~$\alpha_{n,t}$.
Define $s_{n+1}=\oplus_{H\le G}\oplus_{i=1}^{r_H}\kk(G/H)$ in~$\perm(G;\kk)$, a sum of $r_H$ copies of~$\kk(G/H)$ for every~$H\le G$, and define $h_n\colon \Sigma^n(s_{n+1})\to x_n$ as being $h_{H,i}$ on the $i$-th summand~$\Sigma^n\,\kk(G/H)$. Define $x_{n+1}$ as the cone of~$h_n$ in~$\cK(G)$:
\begin{equation}
\label{eq:aux-strat-4}%
\Sigma^n(s_{n+1})\xto{h_n} x_n\xto{f_n} x_{n+1} \to \Sigma^{n+1}(s_{n+1}).
\end{equation}
Note that $x_{n+1}$ only differs from~$x_n$ in homological degree~$n+1$ as announced. Since $n\ge 1$, we get $\Hom_{G/N}(\Psi^N(x_{n+1}),\unit)\cong \Hom_{G/N}(\Psi^N(x_n),\unit)$ and there exists a unique $g_{n+1}\colon \Psi^{N}(x_{n+1})\to \unit$ making~\eqref{eq:aux-strat-1} commute.
It remains to verify that $\alpha_{n+1,t}$ is an isomorphism for $t\in \SET{\Sigma^n\,\kk(G/H)}{H\le G}$. Note that the target of this map is zero.
Applying $\Hom_G(\Sigma^{n}\,\kk(G/H),-)$ to the exact triangle~\eqref{eq:aux-strat-4} shows that the source of~$\alpha_{n+1,t}$ is also zero, by construction. Hence~\eqref{eq:aux-strat-3} holds for~$n+1$.

This realizes the wanted sequence and therefore $\Psi^N_\rho(\unit)\simeq\hocolim_n(x_n)$ has the following form:
\[
\cdots \to s_{n} \to \cdots \to s_2 \to s_1 \to \kk \to 0\to 0\cdots
\]
where $s_1=\oplus_{H\in\cF_N}\kk(G/H)$ and $s_{n}\in\perm(G;\kk)$ for all~$n$.
\end{proof}

\begin{Rem}
The above description of~$\Psi^N_\rho(\unit)$ gives a formula for the right adjoint $\Psi^N_\rho\colon \DPerm(G/N;\kk)\to \DPerm(G;\kk)$ on all objects.
Indeed, for every $t\in\DPerm(G/N;\kk)$, we have a canonical isomorphism in~$\DPerm(G;\kk)$
\[
\Psi^N_\rho(t)\cong \Psi^N_\rho(\Psi^N\Infl^{G/N}_G(t)\otimes\unit)\cong \Infl^{G/N}_G(t)\otimes \Psi^N_\rho(\unit)
\]
using that $\Psi^N\circ\Infl^{G/N}_G\cong\Id$ and the projection formula. In other words, the right adjoint~$\Psi^N_\rho$ is simply inflation tensored with the commutative ring object~$\Psi^N_\rho(\unit)$.
\end{Rem}

\begin{Lem}
\label{Lem:stratification}%
Let $H\normaleq G$ be a normal $p$-subgroup and $\Psi^H_\rho\colon \DPerm(G/H;\kk)\to \DPerm(G;\kk)$ the right adjoint of modular $H$-fixed points~$\Psi^H\colon \DPerm(G;\kk)\to \DPerm(G/H;\kk)$. Then the object~$\zul[G]{H}$ displayed in~\eqref{eq:y-normal} belongs to the localizing tt-ideal of~$\DPerm(G;\kk)$ generated by~$\Psi^H_\rho(\unit)$.
\end{Lem}
\begin{proof}
By \Cref{Prop:Im(psi^N)}, we know that the tt-ideal generated by~$\zul[G]{H}$ is exactly $\cap_{K\in\cF_H}\Ker\Res^G_K$.
By Frobenius, the latter is the tt-ideal $\SET{x\in\cK(G)}{s_1\otimes x=0}$ where $s_1=\oplus_{K\in\cF_H}\kk(G/K)$ is the degree one part of the complex~$s\simeq\Psi^H_\rho(\unit)$ of \Cref{Lem:Psi^N_rho}.
We can now conclude by \Cref{Lem:s-generates} applied to this complex~$s$ and $x=\zul[G]{H}$ that $x$ must belong to the localizing tensor-ideal of~$\DPerm(G;\kk)$ generated by~$\Psi^H_\rho(\unit)$.
(Note that $s_0=\unit$ here.)
\end{proof}

Recall from \Cref{Cor:Im(psi^H)} that the map $\psi^H$ has closed image in~$\SpcKG$.
\begin{Prop}
\label{Prop:stratification}%
Let $H\le G$ be a $p$-subgroup and let ${\Psi}^H_\rho\colon \DPerm(\WGH;\kk)\to \DPerm(G;\kk)$ be the right adjoint of~${\Psi}^H\colon \DPerm(G;\kk)\to \DPerm(\WGH;\kk)$. Then the tt-ideal of~$\cK(G)$ supported on the closed subset~$\Img(\psi^H)$ is contained in the localizing tt-ideal of~$\DPerm(G;\kk)$ generated by~${\Psi}^H_\rho(\unit)$.
\end{Prop}
\begin{proof}
Let $N=N_G H$.
By definition, ${\Psi}^{H\inn G}=\Psi^{H\inn N}\circ \Res^G_{N}$ and therefore the right adjoint is ${\Psi}^{H\inn G}_\rho\cong \Ind_{N}^G\circ\Psi^{H\inn N}_\rho$. By \Cref{Lem:stratification}, we can handle $H\normaleq N$ hence we know (see also~\Cref{Prop:Im(psi^N)}) that the generator $\zul[N]{H}$ of the tt-ideal supported on~$\Img(\psi^{H\inn N})$ belongs to $\Loctens{\Psi^{H\inn N}_\rho(\unit)}$ in~$\DPerm(N;\kk)$.
Since $\Res^G_N$ is surjective up to direct summands (by the Mackey formula), this subcategory $\Loctens{\Psi^{H\inn N}_\rho(\unit)}$ is also $\Loc{\SET{\Res^G_N(t)\otimes\Psi^{H\inn N}_\rho(\unit)}{\textrm{all }t\in\DPerm(G;k)}}$. Using that $\Ind_N^G$ is cocontinuous and the projection formula for~$\Res\adj\Ind$ we conclude that $\zul[G]{H}\overset{\textrm{def}}{\ =\ }\Ind_{N}^G(\zul[N]{H})$ belongs to~$\Ind_{N}^G(\Loc{\SET{\Res^G_N(t)\otimes\Psi^{H\inn N}_\rho(\unit)}{\textrm{all }t}})\subseteq\Loc{\SET{t\otimes\Ind_{N}^G\Psi^{H\inn N}_{\rho}(\unit)}{\textrm{all }t}}=\Loctens{\Psi^{H\inn G}_{\rho}(\unit)}$ in~$\DPerm(G;\kk)$.
\end{proof}

Let us now turn to stratification. By noetherianity, we can define a support for possibly non-compact objects in the `big' tt-category under consideration, here $\DPerm(G;\kk)$, following~\cite[\S\,7]{balmer-favi:idempotents} and~\cite[\S\,2]{barthel-heard-sanders:stratification-Mackey}. We remind the reader.

\begin{Rec}
\label{Rec:idempotents}%
Every Thomason subset $Y\subseteq \SpcKG$ yields a so-called `idempotent triangle' $e(Y)\to\unit\to f(Y)\to \Sigma e(Y)$ in $\cT(G)=\DPerm(G;\kk)$,
meaning that $e(Y)\otimes f(Y)=0$, hence $e(Y)\cong e(Y)\potimes{2}$ and $f(Y)\cong f(Y)\potimes{2}$.
The left idempotent $e(Y)$ is the generator of $\Loctens{\cK(G)_{Y}}$, the localizing tt-ideal of~$\cT(G)$ `supported' on~$Y$. The right idempotent~$f(Y)$ realizes localization of~$\cT(G)$ `away' from~$Y$, that is, the localization on the complement~$Y^c$.

By noetherianity, for every point $\cP\in \SpcKG$, the closed subset~$\adhpt{\cP}$ is Thomason.
Hence $\adhpt{\cP}\cap (Y_{\cP})^c=\{\cP\}$, where $Y_\cP:=\supp(\cP)=\SET{\cQ}{\cP\not\subseteq\cQ}$ is always a Thomason subset.
The idempotent~$g(\cP)$ in~$\cT(G)$ is then defined as
\[
g(\cP)=e(\adhpt{\cP})\otimes f(Y_{\cP}).
\]
It is built to capture the part of~$\DPerm(G;\kk)$ that lives both `over~$\adhpt{\cP}$' (thanks to $e(\adhpt{\cP})$) and `over~$Y_{\cP}^c$' (thanks to~$f(Y_{\cP})$);
in other words, $g(\cP)$ lives exactly `at~$\cP$'.
This idea originates in~\cite{hovey-palmieri-strickland}.
It explains why the support is defined as
\[
\Supp(t)=\SET{\cP\in\SpcKG}{g(\cP)\otimes t\neq 0}
\]
for every (possibly non-compact) object~$t\in\DPerm(G;\kk)$.
\end{Rec}

\begin{Thm}
\label{Thm:stratification}%
Let $G$ be a finite group and let $\kk$ be a field. Then the big tt-category $\cT(G)=\DPerm(G;\kk)$ is stratified, that is, we have an order-preserving bijection
\[
\left\{
\textrm{Localizing tt-ideals $\cL\subseteq \cT(G)$}
\right\}
\overset{\sim}{\longleftrightarrow}
\left\{
\textrm{Subsets of $\SpcKG$}
\right\}
\]
given by sending a subcategory $\cL$ to the union of the supports of its objects; its inverse sends a subset $Y\subseteq \SpcKG$ to $\cL_{Y}:=\SET{t\in \cT(G)}{\Supp(t)\subseteq Y}$.
\end{Thm}

\begin{proof}
By induction on the order of the group, we can assume that the result holds for every proper subquotient~$\WGH$ (with $H\neq 1$).
By~\cite[Theorem~3.22]{barthel-heard-sanders:stratification-Mackey}, noetherianity of the spectrum of compacts reduces stratification to proving \emph{minimality} of $\Loctens{g(\cP)}$ for every $\cP\in\SpcKG$.
This means that $\Loctens{g(\cP)}$ admits no non-trivial localizing tt-ideal subcategory.
If $\cP$ belongs to the cohomological open~$\VG=\Spc(\Db(\kkG))$ then minimality at~$\cP$ in~$\cT=\DPerm(G;\kk)$ is equivalent to minimality at~$\cP$ in $\cT(\VG)\cong\K\Inj(\kk G)$ by~\cite[Proposition~5.2]{barthel-heard-sanders:stratification-Mackey}. Since $\K\Inj(\kk G)$ is stratified by~\cite{BIK:stratifying-stmod-kG}, we have the result in that case.

Let now $\cP\in\Supp(\Kac(G))$. By \Cref{Cor:supp(Kac)}, we know that $\cP=\cP_G(H,\gp)$ for some non-trivial $p$-subgroup $1\neq H\le G$ and some cohomological point~$\gp\in\Vee{\WGH}$. (In the notation of \Cref{Prop:fibered}, this means $\cP\in \Vee{G}(H)$.)
Suppose that $t\in\Loctens{g(\cP)}$ is non-zero. We need to show that $\Loctens{t}=\Loctens{g(\cP)}$, that is, we need to show that $g(\cP)\in \Loctens{t}$.

Recall the tt-functor~$\check{\Psi}^H\colon \DPerm(G;\kk)\to \K\Inj(\kk \WGH))$ from \Cref{Not:bar-Psi}.
By general properties of BF-idempotents~\cite[Theorem~6.3]{balmer-favi:idempotents}, we have $\check{\Psi}^K(g(\cP))=g((\check{\psi}^K)\inv(\cP))$ in~$\K\Inj(\kk(\WGK))$ for every~$K\in\Sub{p}\!G$.
Since $\check{\psi}^K$ is injective by \Cref{Prop:bar-psi-injective}, the fiber $(\check{\psi}^K)\inv(\cP)$ is a singleton (namely~$\gp$) if~$K\sim H$ and is empty otherwise.
It follows that for all~$K\not\sim H$ we have $\check{\Psi}^K(g(\cP))=0$ and therefore $\check{\Psi}^K(t)=0$ as well. Since~$t$ is non-zero, the Conservativity \Cref{Thm:conservativity} forces the only remaining~$\check{\Psi}^H(t)$ to be non-zero in~$\K\Inj(\kk(\WGH))$.
This forces $\Psi^H(t)$ to be non-zero in~$\cT(\WGH)$ as well, since~$\check{\Psi}^H=\Upsilon_{\WGH}\circ\Psi^H$.
This object~$\Psi^H(t)$ belongs to~$\Loctens{\Psi^H(g(\cP))}=\Loctens{g((\psi^H)\inv(\cP))}$.
Note that $\upsilon_{\WGH}(\gp)$ is the only preimage of~$\cP=\cP_G(H,\gp)$ under~$\psi^H$ (see \Cref{Rem:psi^H-V(H)}).
By induction hypothesis, this localizing tt-ideal $\Loctens{\Psi^H(g(\cP))}$ is minimal.
And it contains our non-zero object~$\Psi^H(t)$.
Hence $\Psi^H(g(\cP))\in\Loctens{\Psi^H(t)}$. Applying the right adjoint~$\Psi^H_\rho$, it follows that $\Psi^H_\rho\Psi^H(g(\cP))\in \Psi^H_\rho(\Loctens{\Psi^H(t)})\subseteq\Loctens{t}$ where the last inclusion follows by the projection formula for $\Psi^H\adj \Psi^H_\rho$. Hence by the projection formula again we have in~$\cT(G)$ that
\begin{equation*}
\label{eq:aux-strat-5}%
\Psi^H_\rho(\unit)\otimes g(\cP)\in\Loctens{t}.
\end{equation*}
But we proved in \Cref{Prop:stratification} that the localizing tt-ideal generated by $\Psi^H_\rho(\unit)$ contains~$\cK(G)_{\Img(\psi^H)}$ and in particular $e(\adhpt{\cP})$ and a fortiori~$g(\cP)$. In short, we have $g(\cP)\cong g(\cP)\potimes{2}\in\Loctens{\Psi^H_\rho(\unit)\otimes g(\cP)}\subseteq \Loctens{t}$ as needed to be proved.
\end{proof}

\begin{Cor}
\label{Cor:telescope}%
The Telescope Conjecture holds for~$\DPerm(G;\kk)$. Every smashing tt-ideal~$\cS\subseteq \DPerm(G;\kk)$ is generated by its compact part:~$\cS=\Loctens{\cS^c}$.
\end{Cor}
\begin{proof}
This follows from noetherianity of~$\SpcKG$ and stratification by~\cite[Theorem~9.11]{barthel-heard-sanders:stratification-Mackey}.
\end{proof}

\bigbreak
\part{Topology of the spectrum and twisted cohomology}
\label{part:twisted-cohomology}
\bigbreak

\section{Introduction to Part~II}
\label{sec:intro-II}

After identifying all the points in the spectrum $\Spc(\cK(G))$ of the permutation tt-category~\eqref{eq:K(G)} in \Cref{part:stratification}, we now want to describe the topology.
This knowledge will give us the classification of thick $\otimes$-ideals in~$\cK(G)$.

\subsection*{The colimit theorem}

To discuss the tt-geometry of~$\cK(G)$, it is instructive to keep in mind the bounded derived category of finitely generated $\kkG$-modules, $\Db(\kkG)$, which is a localization of our~$\cK(G)$ by~\cite[Theorem~5.13]{balmer-gallauer:resol-small}.
A theorem of Serre~\cite{serre:bockstein}, famously expanded by Quillen~\cite{quillen:spec-cohomology}, implies that $\Spc(\Db(\kkG))$ is the colimit of the $\Spc(\Db(\kk E))$, for $E$ running through the elementary abelian $p$-subgroups of~$G$; see~\cite[\S\,4]{balmer:tt-separable}. The indexing category for this colimit is an orbit category: Its morphisms keep track of conjugations and inclusions of subgroups.

In Part~I, we proved that $\SpcKG$ is set-theoretically partitioned into spectra of derived categories~$\Db(\kk (\WGK))$ for certain subquotients of~$G$, namely the Weyl groups~$\WGK=(N_G K)/K$ of $p$-subgroups~$K\le G$.
It is then natural to expect a more intricate analogue of Quillen's result for the tt-category~$\cK(G)$, in which subgroups are replaced by \emph{subquotients}.
This is precisely what we prove.
The orbit category has to be replaced by a category~$\EAppG$ whose objects are elementary abelian $p$-sections~$E=H/K$, for $p$-subgroups~$K\normaleq H\le G$.
The morphisms in~$\EAppG$ keep track of conjugations, inclusions \emph{and quotients}.
See \Cref{Cons:EA}.

This allows us to formulate our reduction to elementary abelian groups:
\begin{Thm}[\Cref{Thm:colim}]
\label{Thm:colim-intro}%
There is a canonical homeomorphism
\[
\colim_{E\in\EAppG}\ \Spc(\cK(E))\isoto\SpcKG.
\]
\end{Thm}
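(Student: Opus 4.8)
The plan is to realize the homeomorphism as a comparison map arising from a cone, and then verify it is a bijection and a homeomorphism by combining the set-level description from Part~I with a local-to-global analysis of the topology. First I would assemble the functors: for each $p$-section $E=H/K$ of $G$, the inflation-along-$H\twoheadrightarrow E$ followed by restriction-along-$H\hookrightarrow G$ (together with conjugation isomorphisms) gives a tt-functor $\cK(G)\to\cK(E)$, and one checks these are compatible with the morphisms in $\EAppG$ (conjugations, inclusions, quotients), so that applying $\Spc(-)$ yields a cocone $\Spc(\cK(E))\to\SpcKG$ in topological spaces, hence a continuous comparison map $\phi\colon\colim_{E\in\EAppG}\Spc(\cK(E))\to\SpcKG$. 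The bulk of the work is to show $\phi$ is a homeomorphism.

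The key steps, in order, would be: (1) \emph{Surjectivity and injectivity (bijectivity).} Use the set-theoretic partition of $\SpcKG$ from Part~I into copies of $\Spc(\Db(\kk\WGL))$ indexed by $p$-subgroups $L$, together with Quillen stratification identifying $\Spc(\Db(\kk\WGL))$ with a colimit over elementary abelian $p$-subgroups of $\WGL$. An elementary abelian $p$-subgroup of $\WGL=(N_GL)/L$ is exactly (the image of) a $p$-section $H/L$ of $G$ with $L\normaleq H\le N_GL$ and $H/L$ elementary abelian; this is precisely an object of $\EAppG$. So on underlying sets the colimit $\colim_{E}\Spc(\cK(E))$ matches the Part~I partition, and one must check that the equivalence relation defining the colimit (generated by the morphisms in $\EAppG$) is exactly the one identifying points that map to the same prime of $\cK(G)$. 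This is where the precise bookkeeping of morphisms in $\EAppG$ — conjugations, inclusions \emph{and quotients} — is essential, and it is the main obstacle: one must show no spurious identifications occur (faithfulness-type statement) and no identifications are missed. (2) \emph{Continuity of the inverse / openness of $\phi$.} Since a colimit of spectral spaces need not be spectral and $\SpcKG$ is spectral, I would argue that $\phi$ is a homeomorphism by showing it is a \emph{closed} (equivalently open) bijection. Concretely, every closed subset of $\SpcKG$ is a union of supports $\supp(x)$ for objects $x\in\cK(G)$; its preimage in each $\Spc(\cK(E))$ is $\supp$ of the image of $x$, hence closed, so $\phi$ is continuous (already known). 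For the converse, one shows that a subset of the colimit closed in the colimit topology has closed image: using quasi-compactness of the spaces $\Spc(\cK(E))$ and the fact (from Part~I, or from the finiteness of the indexing category up to isomorphism) that $\SpcKG$ is covered by finitely many images, reduce to checking that the image of each $\Spc(\cK(E))$ is closed in $\SpcKG$ and that the topology on $\SpcKG$ is the quotient topology. The cleanest route is to invoke the tt-descent/finite-morphism machinery of \cite{balmer:tt-separable}: the family of functors $\{\cK(G)\to\cK(E)\}_E$ (or a finite cofinal subfamily) is jointly conservative up to the localization onto $\Db(\kkG)$-type data — more precisely, after the Part~I reduction to Weyl groups, it is the $\Db$-version which is handled exactly as in Quillen's theorem via \cite[\S4]{balmer:tt-separable}.

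I would organize the proof so that the genuinely new content — the passage from subgroups to subquotients — is isolated in two lemmas: one identifying $\colim_{E\in\EAppG}\Spc(\cK(E))$ as a set with the Part~I partition (combinatorial, via Quillen stratification applied fibrewise over each $\WGL$), and one promoting this set bijection to a homeomorphism (topological, via spectral-space arguments and the closed-map criterion). The expected main obstacle is controlling the colimit: showing that the evident map out of the disjoint union of the $\Spc(\cK(E))$ induces a bijection on the colimit requires proving that two points in $\Spc(\cK(E))$ and $\Spc(\cK(E'))$ are identified in the colimit \emph{if and only if} they have the same image in $\SpcKG$, which in turn forces a careful analysis of when a prime of $\cK(E)$ and a prime of $\cK(E')$ pull back to the same prime of $\cK(G)$ — this uses the explicit form of the primes from Part~I and the structure of $\EAppG$ in an essential way.
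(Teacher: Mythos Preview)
Your outline is essentially the paper's own proof: construct the cocone, prove surjectivity via the Part~I stratification combined with Quillen for each Weyl group, prove injectivity by chasing the explicit description $\cP_G(K,\gp)$ of primes from Part~I and producing a zigzag in~$\EAppG$, and conclude homeomorphism from closedness plus bijectivity. The injectivity step you flag as the ``main obstacle'' is indeed the crux, and the paper handles it exactly as you describe (reduce via a type-(c) morphism to primes in the cohomological open, use Part~I to get conjugacy of the $K$'s, then invoke Quillen one more time inside the Weyl group).

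The one place where you overcomplicate is step~(2). You do not need spectral-space arguments, joint conservativity, or the descent machinery of~\cite{balmer:tt-separable} to see that~$\varphi$ is closed. Each component $\varphi_{(H,K)}\colon\Spc(\cK(H/K))\to\SpcKG$ factors as $\psi^{K\inn G}\circ\rho_{H/K}$ (or equivalently $\rho_H\circ\psi^{K\inn H}$), and Part~I tells you that each $\psi^K$ is closed (even a closed immersion when $K$ is normal) and each $\rho$ is closed. Hence every $\varphi_{(H,K)}$ is a closed map. Since the indexing category has only finitely many isomorphism classes of objects, a closed set in the colimit has closed image (it is a finite union of images of closed sets under closed maps). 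So $\varphi$ is closed, and once you have bijectivity you are done. The paper dispatches this in one line; your proposed detour through quasi-compactness and quotient-topology verification is unnecessary.
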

The category~$\EAppG$ has been considered before, \eg\ in Bouc-Th\'evenaz~\cite{bouc-thevenaz:gluing-endo-perm}.
Every morphism in~$\EAppG$ is the composite of three special morphisms (\Cref{Rem:EA-generating-morphisms})
\begin{equation}
\label{eq:3-morphisms}%
E\xto{\simeq} E'\to E''\xto{!} E'''
\end{equation}
where $E'$ is a $G$-conjugate of~$E$, where $E'\le E''$ is a subgroup of~$E''$ and where $E''=E'''/N$ is a quotient of~$E'''$ (sic!).
The tt-category $\cK(E)$ is contravariant in~$E\in\EAppG$ and the tt-functors corresponding to~\eqref{eq:3-morphisms}
\begin{equation}
\label{eq:3-functors}%
\xymatrix@C=2em{\cK(E''') \ar[r]^-{\Psi^N}
  & \cK(E'') \ar[r]^-{\Res}
& \cK(E') \ar[r]^-{\simeq}
& \cK(E)}
\end{equation}
yield the \emph{modular $N$-fixed-points functor}~$\Psi^N$ introduced in Part~I, and the standard restriction functor and conjugation isomorphism.
As we saw, the $\Psi^N$ are a type of Brauer quotient that make sense on the homotopy category of permutation modules but do not exist on derived or stable categories.
They distinguish our results and their proofs from the classical theory.

\subsection*{Twisted cohomology}

The above discussion reduces the analysis of~$\SpcKG$ to the case of elementary abelian $p$-groups~$E$.
As often in modular representation theory, this case is far from trivial and can be viewed as the heart of the matter.

So let $E$ be an elementary abelian $p$-group.
Our methods will rely on $\otimes$-invertible objects~$u_N$ in~$\cK(E)$ indexed by the set~$\cN(E)=\SET{N\normal E}{[E\!:\!N]=p}$ of maximal subgroups.
These objects are of the form $u_N=\big(0\to \kk(E/N)\to \kk(E/N)\to\kk \to 0\big)$ for $p$ odd and $u_N=\big(0\to \kk(E/N)\to\kk \to 0\big)$ for $p=2$. See \Cref{Def:u_N}.
We use these $\otimes$-invertibles $u_N$ to construct a multi-graded ring
\begin{equation}
\label{eq:Rall-intro}%
\Rall(E) = \bigoplus_{s\in\bbZ} \ \bigoplus_{q\in\bbN^{\cN(E)}} \Hom_{\cK(E)}\big(\unit,\unit(q)[s]\big),
\end{equation}
where $\unit(q)$ is the $\otimes$-invertible~$\bigotimes_{N\in\cN(E)}\,u_N\potimes{q(N)}$ for every tuple~\mbox{$q\colon \cN(E)\to \bbN$}, that we refer to as a `twist'.
Without these twists we would obtain the standard $\bbZ$-graded endomorphism ring $\End^\sbull(\unit):=\oplus_{s\in\bbZ}\Hom(\unit,\unit[s])$ of~$\unit$ which, for~$\Db(\kk E)$, is the cohomology~$\Hm^\sbull(E,k)$, but for~$\cK(E)$ is reduced to the field~$\kk$ and therefore rather uninteresting.
We call~$\Rall(E)$ the \emph{(permutation) twisted cohomology} of~$E$.
Some readers may appreciate the analogy with cohomology twisted by line bundles in algebraic geometry, or with Tate twists in motivic cohomology.

We can employ this multi-graded ring~$\Rall(E)$ to describe~$\SpcKE$:

\begin{Thm}[\Cref{Cor:abelem-top}]
\label{Thm:abelem-top-intro}%
The space $\Spc(\cK(E))$ identifies with an \emph{open subspace} of the homogeneous spectrum of~$\Rall(E)$ via a canonical `comparison map'.
\end{Thm}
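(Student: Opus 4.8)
The plan is to invoke Balmer's general machinery of comparison maps between the tt-spectrum of a tensor-triangulated category and the homogeneous spectrum of a graded endomorphism ring, adapted to the multi-graded setting. Recall that for any tt-category $\cT$ with a chosen family of $\otimes$-invertibles, one has a graded-central ring $R^{\sbull\sbull}$ built from morphisms $\unit\to\unit(q)[s]$, and a continuous comparison map $\rho\colon\Spc(\cT)\to\Spech(R^{\sbull\sbull})$ sending a prime $\gp$ to the homogeneous ideal of graded morphisms $f$ whose cone is \emph{not} in~$\gp$. The first step is to set this up carefully for $\cT=\cK(E)$ with the invertibles $u_N$, $N\in\cN(E)$, so that $R^{\sbull\sbull}=\Rall(E)$ as in~\eqref{eq:Rall-intro}, check graded-commutativity (up to the usual signs coming from the suspension and from the symmetry on the $u_N$), and record that the comparison map $\rho_E\colon\SpcKE\to\Spech(\Rall(E))$ is well-defined and continuous. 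This part is formal.

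The substance is to show $\rho_E$ is an \emph{open immersion} onto its image. I would do this in two stages. First, injectivity and the homeomorphism-onto-image property: this should follow from a suitable ``detection'' statement, namely that the thick $\otimes$-ideal generated by $\unit$ together with the Koszul-type objects $\cone(f)$ for homogeneous $f\in\Rall(E)$ is all of $\cK(E)$, or more precisely that the objects $u_N$ and their tensor powers generate enough of the category to separate primes. Concretely, I expect to use the explicit description of the $u_N$ as two- or three-term complexes of permutation modules together with the classification of points of $\SpcKE$ from Part~I: each point of $\SpcKE$ is controlled by a $p$-subgroup $K\le E$ and a homogeneous prime of $\Hm^\sbull(\WGK,\kk)$, and one checks that the twisted cohomology $\Rall(E)$ sees exactly this data via the modular fixed-points functors $\Psi^N$. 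The key computation here is to identify, for each maximal subgroup $N$, how $\Psi^N$ interacts with the invertible $u_N$ — essentially $u_N$ becomes trivial (or a shift) after applying $\Psi^N$, while restricting to $\Db(\kk E)$ it recovers a polynomial generator of $\Hm^\sbull(E,\kk)$ up to nilpotents. Assembling these compatibilities along the lines of Balmer's ``tt-geometry via graded endomorphism rings'' should give that $\rho_E$ is a homeomorphism onto its image.

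The harder half is identifying the image as an \emph{open} subset, \ie showing $\rho_E$ is not merely continuous and injective but an open map onto its image. The natural route is to exhibit the image as the union of the basic open subsets $D(f)=\{\gp\mid f\notin\gp\}$ of $\Spech(\Rall(E))$ over which the comparison map for the localization $\cK(E)[f\inv]$ (or rather the finite localization inverting $\cone(f)$) becomes an \emph{iso}. In other words: for each homogeneous $f\in\Rall(E)$, the open subset $\supp(\cone(f))^c$ of $\SpcKE$ maps homeomorphically onto $D(f)$, because on that locus $f$ is already invertible so the relevant graded endomorphism ring is a genuine (multi-graded) commutative ring whose homogeneous spectrum is recovered by the comparison map — here one would cite the classical case where the comparison map is known to be an iso, \eg after passing to $\Db(\kk E)$ where $\Rall$ specializes to actual group cohomology and Quillen/Avrunin-Scott applies. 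The main obstacle, and where I expect the real work to lie, is proving precisely this surjectivity-onto-$D(f)$ statement: it requires knowing that inverting the twisted class $f$ kills exactly the ``permutation-theoretic'' part of $\cK(E)$ not visible to ordinary cohomology, and controlling this uniformly in the twist $q$. I anticipate that this is where the explicit complexes defining $u_N$, the behavior of $\Psi^N$, and the Part~I stratification all have to be combined, and that the openness (as opposed to a mere bijection of underlying sets) is exactly the point that forces the ``open subspace'' rather than ``closed subscheme'' conclusion in the statement.
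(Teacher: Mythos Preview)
Your outline has the right skeleton --- set up the comparison map, prove injectivity, then prove it is an open map --- but the two substantive steps are underspecified in ways that matter, and the paper's actual route is more concrete than what you sketch.

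For injectivity, you gesture at a ``detection'' statement about thick $\otimes$-ideals generated by cones. The paper does something much more direct: it uses the Part~I stratification $\SpcKE=\bigsqcup_H\Vee{E}(H)$ and observes that the elements $b_N\in\Rall(E)$ already distinguish the strata. Concretely, $\cP_E(H,\gp)\in\open(b_N)$ if and only if $H\le N$ (an easy computation with $\check\Psi^H(b_N)$), so two primes with the same image under $\comp_E$ must lie in the same stratum; then one reduces to the injectivity of the ordinary cohomological comparison map for $\Db(\kk(E/H))$, which is classical. Your proposed route via generation is not obviously wrong, but it is not clear how ``the cones generate'' yields separation of primes without essentially redoing this stratum-by-stratum analysis.

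For openness, your plan to show that $\open(\cone(f))\to D(f)$ is a homeomorphism for \emph{every} homogeneous $f$ is both too strong and circular: the localized category $\cK(E)[f^{-1}]$ is in general no easier to handle than $\cK(E)$ itself, and your only concrete reduction (``pass to $\Db(\kk E)$'') covers only the single open $\goodopen{1}=\Vee{E}$. The paper instead constructs a \emph{specific finite} open cover $\{\goodopen{H}\}_{H\le E}$, where $\goodopen{H}$ is obtained by inverting exactly those $a_N$ with $H\not\le N$ and those $b_N$ with $H\le N$. The crucial point --- which your proposal does not anticipate --- is that on each such open the localized category $\cL(H)=\cK(E)[S_H^{-1}]$ is generated by its unit~$\unit$ as a thick subcategory (because every $\kk(E/N)$ either dies or lies in $\thick(\unit)$ once the appropriate $a_N$ or $b_N$ is inverted). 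This, together with noetherianity of $\End^\sbull_{\cL(H)}(\unit)$ (a consequence of the finite-generation Lemma~\ref{Lem:finite-generation}), lets one invoke Lau's closedness criterion to conclude that $\comp$ is a homeomorphism on each $\goodopen{H}$. Openness of $\comp_E$ then follows locally. Without isolating this cover and the unit-generation claim, your argument for openness has a genuine gap.
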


The comparison map in question generalizes the one of~\cite{balmer:sss}, which landed in the homogenous spectrum of~$\End^\sbull(\unit)$ without twist.
We also describe in \Cref{Cor:abelem-top} the open image of this map by explicit equations in~$\Rall(E)$.

\subsection*{Dirac geometry}

If the reader is puzzled by the multi-graded ring~$\Rall(E)$, here is another approach based on a special open cover~$\{\goodopen{H}\}_{H\le E}$ of~$\SpcKE$ indexed by the subgroups of~$E$ and introduced in~\Cref{Prop:U(H)}. Its key property is that over each open~$\goodopen{H}$ all the $\otimes$-invertible objects~$u_N$ are trivial: $(u_N)\restr{\goodopen{H}}\simeq \unit[s]$ for some shift~$s\in\bbZ$ depending on $H$ and~$N$. For the trivial subgroup~\mbox{$H=1$}, the open~$\goodopen{1}$ is the `cohomological open' of~Part~I, that corresponds to the image under~$\Spc(-)$ of the localization $\cK(E)\onto \Db(\kk E)$. See \Cref{Prop:U(1)}.
At the other end, for $H=E$, we show in \Cref{Prop:U(G)} that the open~$\goodopen{E}$ is the `geometric open' that corresponds to the localization of~$\cK(E)$ given by the geometric fixed-points functor. Compare \Cref{Rem:geom-fixed-pts}. For $E$ of rank one, these two opens~$\goodopen{1}$ and~$\goodopen{E}$ are all there is to consider. But as the $p$-rank of~$E$ grows, there is an exponentially larger collection~$\{\goodopen{H}\}_{H\le E}$ of open subsets interpolating between $\goodopen{1}$ and~$\goodopen{E}$.
This cover~$\{\goodopen{H}\}_{H\le E}$ allows us to use the classical comparison map of~\cite{balmer:sss} locally. It yields a homeomorphism between each~$\goodopen{H}$ and the homogeneous spectrum of the $\bbZ$-graded endomorphism ring~$\End^\sbull_{\goodopen{H}}(\unit)$ in the localization~$\cK(E)\restr{\goodopen{H}}$. In compact form, this can be rephrased as follows (a Dirac scheme is to a usual scheme what a $\bbZ$-graded ring is to a non-graded one):
\begin{Thm}[\Cref{Cor:abelem-Dirac}]
\label{Thm:abelem-Dirac-intro}%
The space $\Spc(\cK(E))$, together with the sheaf of $\bbZ$-graded rings obtained locally from endomorphisms of the unit, is a Dirac scheme.
\end{Thm}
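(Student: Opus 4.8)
The plan is to verify directly the defining property of a Dirac scheme in the sense of Hesselholt--Pstr\k{a}gowski \cite{hesselholt-pstragowski:dirac1}: a topological space equipped with a sheaf of $\bbZ$-graded commutative rings (with the Koszul sign rule) which is locally isomorphic, as a graded-ringed space, to the homogeneous spectrum $\Spech(R)$ of such a ring $R$ with its graded structure sheaf. On $X:=\Spc(\cK(E))$ the structure sheaf $\calO$ in question is the sheafification of the presheaf $U\mapsto\End^\sbull_{\cK(E)\restr{U}}(\unit)$, where $\cK(E)\restr{U}$ denotes the finite localization of $\cK(E)$ with spectrum $U$; this presheaf takes values in graded-commutative rings by the standard tt-argument for the graded endomorphism ring of the unit. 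I would use the finite open cover $\{\goodopen{H}\}_{H\le E}$ of \Cref{Prop:U(H)} as an atlas and prove, for each subgroup $H\le E$, that $(\goodopen{H},\calO\restr{\goodopen{H}})$ is isomorphic as a graded-ringed space to $\Spech(R_H)$, where $\cL_H:=\cK(E)\restr{\goodopen{H}}$ and $R_H:=\End^\sbull_{\cL_H}(\unit)$.

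The first step is the homeomorphism $\goodopen{H}\cong\Spech(R_H)$. I would obtain it from the comparison map $\rho_{\cL_H}\colon\Spc(\cL_H)\to\Spech(R_H)$ of \cite{balmer:sss} together with its criterion for being a homeomorphism. Since $\cK(E)$ is rigid (permutation modules are self-dual), so is $\cL_H$, and it remains to check that $\cL_H$ is generated by its unit and that $R_H$ is Noetherian. Generation by the unit is precisely where the key property of $\goodopen{H}$ enters: over $\goodopen{H}$ every $\otimes$-invertible $u_N$ becomes a shift of $\unit$, so, since $\kk(E/N)$ sits in an exact triangle with $u_N$ and $\unit$, its restriction to $\goodopen{H}$ is the (co)fiber of a morphism between shifts of $\unit$ and hence lies in $\thick(\unit)$; and as $E$ is elementary abelian, every subgroup is an intersection $N_1\cap\dots\cap N_r$ of maximal subgroups in general position, for which $\kk(E/(N_1\cap\dots\cap N_r))\simeq\kk(E/N_1)\otimes\dots\otimes\kk(E/N_r)$, so all the permutation generators of $\cK(E)$ restrict into $\thick(\unit)$ and $\cL_H=\thick_{\cL_H}(\unit)$.

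The second step is to upgrade this homeomorphism to an isomorphism of graded-ringed spaces, identifying $\calO\restr{\goodopen{H}}$ with the graded structure sheaf of $\Spech(R_H)$. This should follow from the compatibility of the comparison map with finite localization: for homogeneous $f\in R_H$, the distinguished open $D(f)\subseteq\Spech(R_H)$ pulls back to $\goodopen{H}\sminus\supp(\cone f)$, the finite localization of $\cL_H$ there is the Verdier quotient by the ideal generated by $\cone f$, in which $f$ is inverted, and the graded endomorphism ring of $\unit$ in it is the homogeneous localization $R_H[1/f]$; since the $D(f)$ form a basis of $\Spech(R_H)$, the presheaf defining $\calO\restr{\goodopen{H}}$ and the structure presheaf of $\Spech(R_H)$ agree on that basis and so have the same sheafification. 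Running this over the finitely many subgroups $H\le E$ exhibits $(X,\calO)$ as a Dirac scheme covered by finitely many affine Dirac opens. As a sanity check, for $H=1$ one recovers $\goodopen{1}\cong\Spech\Hm^\sbull(E,\kk)$ (the cohomological open, with $R_1=\Hm^\sbull(E,\kk)$) and for $H=E$ the one-point geometric open.

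The step I expect to be the main obstacle is the Noetherianity of the local rings $R_H$ in the first step — equivalently, that $\rho_{\cL_H}$ is a homeomorphism and not merely a continuous bijection. For $H=1$ this is underpinned by the Evens--Venkov finite-generation theorem, but for intermediate $H$ there is no ready-made cohomological finiteness input. I would try to supply it by reduction to the Noetherianity of the multigraded twisted cohomology $\Rall(E)$ (which is in any case needed for \Cref{Thm:abelem-top-intro}): over $\goodopen{H}$ all the twists $u_N$ trivialize, so $R_H$ is tightly linked to a localization-and-regrading of $\Rall(E)$, and I expect finiteness to descend along this link; alternatively one could identify $\cL_H$ explicitly — presumably in terms of the modular fixed-points functors $\Psi^N$ used to build $\goodopen{H}$ in \Cref{Prop:U(H)} — as a tt-category whose spectrum and endomorphism ring are already understood. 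A more routine difficulty is making the triangle/tensor identifications and the localization-compatibility uniform enough in $H$ for the gluing to go through cleanly.
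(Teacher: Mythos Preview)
Your strategy is essentially the paper's: use the cover $\{\goodopen{H}\}_{H\le E}$ and show each $\goodopen{H}$ is affine via the comparison map, with generation-by-unit proved exactly as you outline and Noetherianity of $R_H$ inherited from the finitely-generated twisted cohomology $\Rall(E)$. Your handling of the sheaf upgrade (matching the structure sheaf on principal opens $D(f)$ with $R_H[1/f]$) is a welcome addition; the paper's one-line proof of \Cref{Cor:abelem-Dirac} leaves this implicit.

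There is, however, a genuine gap. You write that the homeomorphism $\goodopen{H}\cong\Spech(R_H)$ follows from ``the comparison map of \cite{balmer:sss} together with its criterion for being a homeomorphism'', given rigidity, generation by the unit, and Noetherianity. But \cite{balmer:sss} does not supply such a criterion: Noetherianity gives \emph{surjectivity}, and generation by the unit (hence finite-generation of all hom-modules over $R_H$) gives \emph{closedness} via Lau's observation \cite[Proposition~2.4]{lau:spc-dm-stacks}, as the paper invokes. \emph{Injectivity} is a separate ingredient that you never address. In the paper this is \Cref{Prop:abelem-inj}, and its proof is specific to the elementary abelian situation: it uses the $b_N$ to pin down the subgroup $H$ for which a given prime lies in the stratum $\Vee{E}(H)$, and then the classical comparison for $\Db(\kk(E/H))$ via \Cref{Prop:H**-compose}. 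Without this (or an alternative argument), your continuous surjective closed map could a priori collapse distinct primes. You should either supply injectivity directly, or cite a sharper theorem than \cite{balmer:sss} that actually packages all three properties.

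A small correction to your sanity check: the geometric open $\goodopen{E}$ is \emph{not} a single point. Already for $E=C_p$ it is a two-point Sierpi\'nski space (see \Cref{Exa:C_p}), and for higher rank it is larger still; by \Cref{Prop:U(G)} it is the complement of $\bigcup_{H\lneqq E}\Img(\rho_H)$.
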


\subsection*{Elementary abelian take-home}

Let us ponder the $\bbZ$-graded endomorphism ring of the unit~$\End^\sbull(\unit)$ for a moment longer.
As we know, the ring $\End^\sbull_{\cK(E)}(\unit)=\kk$ is too small to provide geometric information.
So we have developed two substitutes.
Our first approach is to replace the usual $\bbZ$-graded ring $\End^\sbull(\unit)$ by a richer multi-graded ring involving twists.
This leads us to twisted cohomology~$\Rall(E)$ and to \Cref{Thm:abelem-top-intro}.
The second approach is to hope that the endomorphism ring $\End^\sbull(\unit)$, although useless globally, becomes rich enough to control the topology \emph{locally} on~$\SpcKE$, without leaving the world of $\bbZ$-graded rings. This is what we achieve in \Cref{Thm:abelem-Dirac-intro} thanks to the open cover~$\{\goodopen{H}\}_{H\le E}$.
As can be expected, the two proofs are intertwined.

\subsection*{Touching ground}

Combining \Cref{Thm:colim-intro,Thm:abelem-Dirac-intro} ultimately describes the topological space~$\SpcKG$ for all~$G$, in terms of homogeneous spectra of graded rings.
In \Cref{sec:closure,sec:presentation-H**,sec:applications} we improve and apply these results as follows.

In \Cref{sec:closure}, we explain how to go from the `local' rings $\End^\sbull_{\goodopen{H}}(\unit)$ over the open~$\goodopen{H}$, for each subgroup~$H\le E$, to the `global' topology of~$\SpcKE$.

In \Cref{Thm:presentation}, we give a finite presentation by generators and relations of the reduced $\kk$-algebra $(\End^\sbull_{\goodopen{H}}(\unit))_{\red}$ generalizing the usual one for cohomology.

In \Cref{Cor:SpcKG-coeq}, we express~$\SpcKG$ for a general finite group~$G$ as the quotient of a disjoint union of~$\SpcKE$ for the \emph{maximal} elementary abelian $p$-sections~$E$ of~$G$ by \emph{maximal} relations.

In \Cref{Prop:SpcKG-components}, we prove that the irreducible components of~$\SpcKG$ correspond to the maximal elementary abelian $p$-sections of~$G$ up to conjugation. It follows that the Krull dimension of~$\SpcKG$ is the sectional $p$-rank of~$G$, the maximal rank of elementary abelian $p$-sections.
(For comparison, recall that for the derived category these irreducible components correspond to maximal elementary abelian $p$-subgroups, not sections, and the Krull dimension is the usual $p$-rank.)

And of course, we discuss more examples. Using our techniques, we compute~$\SpcKG$ for some notable groups~$G$, in particular Klein-four (\Cref{Exa:Klein-four}) and the dihedral group (\Cref{Exa:D8}).

For the reader's convenience, we tried to keep Part~II somewhat self-contained. Here is a quick summary of the main ingredients we need from \Cref{part:stratification}.
\begin{Rec}
\label{Rec:Part-I}%
The canonical localization $\Upsilon_G\colon \cK(G)\onto \Db(\kkG)$ gives us an open piece $\VG:=\Spc(\Db(\kkG))\cong\Spech(\rmH^\sbull(G,\kk))$ of the spectrum, that we call  the `cohomological open'. We write $\upsilon_{G}=\Spc(\Upsilon_G)\colon \Vee{G}\hook\SpcKG$ for the inclusion.
For every $H\in \Sub{p}(G)$ we denote by $\Psi^H\colon \cK(G)\to \cK(\WGH)$ the \emph{modular $H$-fixed-points} tt-functor constructed in \Cref{sec:modular-fixed-pts}. It is characterized by $\Psi^H(\kk(X))\simeq \kk(X^H)$ on permutation modules and by the same formula degreewise on complexes.
We write $\check{\Psi}^H=\Upsilon_{\WGH}\circ\Psi^H$ for the composite $\cK(G)\to \cK(\WGH)\onto \Db(\kk(\WGH))$ all the way down to the derived category of~$\WGH$.
For every $H\in \Sub{p}(G)$, the tt-prime $\cM(H)=\Ker(\check{\Psi}^H)$ is a closed point of~$\SpcKG$.
It is also~$\cM(H)=\Ker(\bbF^H)$ where $\bbF^H=\Res^{\WGH}_1\circ\Psi^{H}\colon \cK(G)\to \cK(\WGH)\to \Db(\kk)$.
All closed points of~$\SpcKG$ are of this form by \Cref{Cor:closed-pts}.
We write $\psi^H=\Spc(\Psi^H)\colon \Spc(\cK(\WGH))\to \SpcKG$ for the continuous map induced by~$\Psi^H$ and $\check{\psi}^H=\Spc(\check{\Psi}^H)\colon\Vee{\WGH}\overset{\upsilon}{\hook}\Spc(\cK(\WGH))\xto{\psi^H}\SpcKG$ for its restriction to the cohomological open of~$\WGH$.
If we need to specify the ambient group we write~$\psi^{H\inn G}$ for~$\psi^{H}$, etc.
We saw in \Cref{sec:spectrum} that~$\psi^H$ is a closed map, and a closed immersion if~$H\normaleq G$ is normal.
Every prime $\cP\in\SpcKG$ is of the form $\cP=\cP_G(H,\gp):=\check{\psi}^H(\gp)$ for a $p$-subgroup~$H\le G$ and a point $\gp\in\Vee{\WGH}$ in the cohomological open of the Weyl group of~$H$, in a unique way up to $G$-conjugation; see \Cref{Thm:all-points}.
Hence the pieces~$\Vee{G}(H):=\check\psi(\Vee{\WGH})$ yield a partition $\SpcKG=\sqcup_{H\in\Sub{p}(G)/_G}\Vee{G}(H)$ into relatively open strata~$\Vee{G}(H)$, homeomorphic to~$\Vee{\WGH}$.
The crux of the problem is to understand how these strata~$\Vee{G}(H)\simeq\Vee{\WGH}$ attach together topologically, to build the space~$\SpcKG$.
\end{Rec}

\section{The colimit theorem}
\label{sec:topology-colim}%

To reduce the determination of $\SpcKG$ to the elementary abelian case, we invoke the \emph{category $\EAppG$ of elementary abelian $p$-sections} of a finite group~$G$.
Recall that a section of~$G$ is a pair $(H,K)$ of subgroups with $K$ normal in~$H$.
\begin{Cons}
\label{Cons:EA}
We denote by $\EAppG$ the category whose objects are pairs $(H,K)$ where $K\normaleq H$ are $p$-subgroups of~$G$ such that $H/K$ is elementary abelian.
Morphisms $(H,K)\to (H',K')$ are defined to be elements $g\in G$ such that
\[
K'\le K^g\le H^g\le H'.
\]
Composition of morphisms is defined by multiplication in~$G$.
Note that the rank of the elementary abelian group~$H/K$ increases or stays the same along any morphism $(H,K)\to (H',K')$ in this category.
\end{Cons}

\begin{Exas}
\label{Exas:EA-morphisms}%
Let us highlight three types of morphisms in~$\EAppG$.
\begin{enumerate}[wide, labelindent=0pt, label=\rm(\alph*), ref=\rm(\alph*)]
\item
\label{it:mor-a}%
We have an isomorphism $g\colon (H,K)\isoto (H^g,K^g)$ in~$\EAppG$ for every $g\in G$.
Intuitively, we can think of this as the group isomorphism $c_g\colon H/K\isoto H^g/K^g$.
\smallbreak
\item
\label{it:mor-b}%
For every object $(H',K')$ in~$\EAppG$ and every subgroup~$H\le H'$ containing~$K'$, we have a well-defined object $(H,K')$ and the morphism~$1\colon (H,K')\to (H',K')$.
Intuitively, we think of it as the inclusion $H/K'\hook H'/K'$ of a subgroup.
\smallbreak
\item
\label{it:mor-c}%
For $(H,K)$ in~$\EAppG$ and a subgroup~$\bar{L}=L/K$ of~$H/K$, for $K\le L\le H$, there is another morphism in~$\EAppG$ associated to~\mbox{$1\in G$}, namely~$1\colon(H,L)\to (H,K)$.
This one does not correspond to an intuitive group homomorphism $H/L\dashrightarrow H/K$, as~$K$ is \emph{smaller} than~$L$.
Instead, $H/L$ is the quotient of~$H/K$ by~$\bar{L}\normaleq H/K$.
This last morphism will be responsible for the modular $\bar{L}$-fixed-points functor.
\end{enumerate}
\end{Exas}

\begin{Rem}
\label{Rem:EA-generating-morphisms}%
Every morphism $g\colon (H,K)\to (H',K')$ in~$\EAppG$ is a composition of three morphisms of the above types~\ref{it:mor-a}, \ref{it:mor-b} and~\ref{it:mor-c} in the following canonical way:
\[
\xymatrix@R=0em@C=1em{
  H
  &&& H^g
  &&& H'
  &&& H'
  \\
  \nabla
  & \ar[r]^-{\textrm{\ref{it:mor-a}}}
  && \nabla
  & \ar[r]^-{\textrm{\ref{it:mor-b}}}
  && \nabla
  & \ar[r]^-{\textrm{\ref{it:mor-c}}}
  && \nabla
  \\
  K
  &&& K^g
  &&& K^g
  &&& K'
}
\]
where the first is given by~$g\in G$ and the last two are given by~$1\in G$.
This follows from \Cref{Exas:EA-morphisms} and $1\cdot 1\cdot g=g$, as composition in~$\EAppG$ is multiplication in~$G$.
\end{Rem}

\begin{Cons}
\label{Cons:L(g)}
To every object~$(H,K)$ in~$\EAppG$, we associate the tt-category $\cK(H/K)=\Kb(\perm(H/K;\kk))$.
For every morphism $g\colon (H,K)\to (H',K')$ in~$\EAppG$, %
we set~$\bar{K}=K^g/K'$ and we define a functor of tt-categories:
\[
\cK(g)\colon\cK(H'/K')\xto{\Psi^{\bar{K}}}\cK(H'/K^g)\xto{\Res}\cK(H^g/K^g)\xto{c_g^*}\cK(H/K)
\]
using that $(H'/K')/\bar{K}=H'/K^g$ for the modular fixed-points functor~$\Psi^{\bar{K}}$, and using that $H^g/K^g$ is a subgroup of~$H'/K^g$ for the restriction.

It follows from \Cref{Prop:Psi-Res,Cor:Psi-Psi} that~$\cK(-)$ is a contravariant (pseudo)\,functor on~$\EAppG$ with values in tt-categories:
\begin{equation}
\label{eq:K-diagram}%
\cK\colon \EAppG\op\too\ttCat.
\end{equation}
We can compose this with $\Spc(-)$, which incidentally makes the coherence of the 2-isomorphisms accompanying~\eqref{eq:K-diagram} irrelevant, and obtain a covariant functor from~$\EAppG$ to topological spaces.
Let us compare this diagram of spaces (and its colimit) with the space~$\SpcKG$.
For each $(H,K)\in\EAppG$, we have a tt-functor
\begin{equation}
\label{eq:K(G)-to-K(E)}%
\cK(G)\xto{\Res^G_H}\cK(H)\xto{\Psi^K}\cK(H/K)
\end{equation}
which yields a natural transformation from the constant functor $(H,K)\mapsto \cK(G)$ to the functor~$\cK\colon\EAppG\op\to \ttCat$ of~\eqref{eq:K-diagram}.
The above~$\Psi^{K}$ is~$\Psi^{K\inn H}$.
Since $H\le N_G K$, the tt-functor~\eqref{eq:K(G)-to-K(E)} is also~$\Res^{\WGK}_{H/K}\circ\Psi^{K\inn G}\colon \cK(G)\to \cK(\WGK)\to \cK(H/K)$.
Applying~$\Spc(-)$ to this observation, we obtain a commutative square:
\begin{equation}\label{eq:colim-comp}%
\vcenter{\xymatrix@C=4em@R=2em{
  \Spc(\cK(H/K))
  \ar[r]^-{\psi^{K\inn H}}
  \ar[d]_-{\rho_{H/K}}
  \ar[rd]^(.6){\varphi_{(H,K)}}
  &
  \Spc(\cK(H))
  \ar[d]^-{\rho_H}
  \\
  \Spc(\cK(\WGK))
  \ar[r]_-{\psi^{K\inn G}}
  &
  \SpcKG
  }
}\end{equation}
whose diagonal we baptize~$\varphi_{(H,K)}$.
In summary, we obtain a continuous map
\begin{equation}
\label{eq:colim-elab}%
\varphi\colon\colim_{(H,K)\in\EAppG}\Spc(\cK(H/K))\to \SpcKG
\end{equation}
whose component~$\varphi_{(H,K)}$ at~$(H,K)$ is the diagonal map in~\eqref{eq:colim-comp}.
\end{Cons}

\begin{Lem}
\label{Lem:preserve-dimension}%
\begin{enumerate}[label=\rm(\alph*), ref=\rm(\alph*)]
\item
Each of the maps $\Spc(\cK(g))\colon\Spc(\cK(H/K))\to\Spc(\cK(H'/K'))$ in the colimit diagram~\eqref{eq:colim-elab} is a closed immersion.
\smallbreak
\item
Each of the components $\varphi_{(H,K)}\colon\Spc(\cK(H/K))\to\SpcKG$ of~\eqref{eq:colim-elab} is closed and preserves the dimension of points (\ie the Krull dimension of their closure).
\end{enumerate}
\end{Lem}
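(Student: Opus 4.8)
The plan for both parts is to break up the functors involved into the three elementary kinds that underlie \Cref{Cons:L(g)}: conjugation equivalences, modular fixed-point functors~$\Psi^{N}$ with~$N$ \emph{normal}, and restriction functors to subgroups. Conjugation equivalences induce homeomorphisms on spectra; by \Cref{Rec:Part-I}, $\psi^{N}$ is a closed immersion whenever~$N$ is normal in the ambient group; so everything will come down to controlling restriction maps.

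For part~(a), \Cref{Cons:L(g)} (via \Cref{Rem:EA-generating-morphisms}) writes $\cK(g)$ as the composite $\Psi^{\bar{K}}\circ\Res\circ c_{g}^{*}$, where $\bar{K}$ is normal in the elementary abelian group $H/(H\cap {}^{g\!}K')$ and $\Res$ is restriction along a subgroup of elementary abelian $p$-groups. By the previous paragraph it remains to show that such a restriction map $\rho:=\Spc(\Res^{E''}_{E'})$, for $E'\le E''$ elementary abelian, is a closed immersion, which I would do by checking it is continuous, closed and injective. Closedness holds because $\cK(E')$ is the category of modules in~$\cK(E'')$ over the commutative separable monoid $k(E''/E')$, so the theory of separable extensions applies~\cite{balmer:tt-separable}. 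For injectivity I would use the partition of \Cref{Rec:Part-I}: the compatibilities of modular fixed points with restriction and with~$\Upsilon$ proved in Part~I (cf.\ the references in \Cref{Cons:L(g)}) yield, for each subgroup $M\le E'$, a commutative square showing that~$\rho$ carries the stratum $\Vee{E'}(M)\cong\Vee{E'/M}$ into the stratum $\Vee{E''}(M)\cong\Vee{E''/M}$ and is identified there --- under the identification $\Vee{E}\cong\Spech(\rmH^\sbull(E,\kk))$ of \Cref{Rec:Part-I} --- with the morphism of homogeneous spectra dual to the restriction homomorphism $\rmH^\sbull(E''/M,\kk)\to\rmH^\sbull(E'/M,\kk)$. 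This homomorphism is split surjective, since $E'/M$ is an $\FFp$-linear retract of $E''/M$, so the dual morphism of homogeneous spectra is a closed immersion. As distinct subgroups $M\le E'$ land in distinct strata of~$\Spc(\cK(E''))$, the map~$\rho$ is injective; being also continuous and closed, it is a closed immersion.

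For part~(b), closedness of $\varphi_{(H,K)}$ is immediate from the commutative square in \Cref{Cons:L(g)}: writing $\varphi_{(H,K)}=\rho_{H}\circ\psi^{K\inn H}$, the map $\psi^{K\inn H}$ is closed by \Cref{Rec:Part-I} and $\rho_{H}=\Spc(\Res^{G}_{H})$ is closed by the same separability argument as in part~(a) (applied to $k(G/H)$ in $\cK(G)$), and a composite of closed maps is closed. For the statement on dimensions, the inequality $\dim\overline{\{\varphi_{(H,K)}(\cP)\}}\le\dim\overline{\{\cP\}}$ holds for any continuous closed map. For the reverse inequality I would use that $K\normaleq H$ holds by the very definition of~$\EAppG$, so that $\psi^{K\inn H}$ is in fact a closed immersion (\Cref{Rec:Part-I}) and therefore preserves the dimension of points; this reduces the claim to showing that $\rho_{H}=\Spc(\Res^{G}_{H})$ preserves the dimension of points.

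This last reduction is where I expect the real difficulty. Stratum by stratum, the same compatibilities as in part~(a) identify~$\rho_{H}$ with morphisms of homogeneous spectra dual to the \emph{finite} ring homomorphisms $\rmH^\sbull(\WGL,\kk)\to\rmH^\sbull(\Weyl{H}{L},\kk)$, which preserve the Krull dimension of points by the Cohen--Seidenberg theorems; what must be added is a ``going-up'' statement for~$\rho_{H}$ --- every chain of specializations below $\rho_{H}(\cQ)$ lifts to a chain of specializations below~$\cQ$ --- that accounts for how the locally closed strata sit inside the closures of points. I would establish this either by importing the description of point-closures underlying \Cref{sec:closure}, or by a direct tt-theoretic going-up argument based on the biadjunction $\Res^{G}_{H}\dashv\Ind^{G}_{H}\dashv\Res^{G}_{H}$ and the projection formula. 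The bookkeeping of the strata inside closures is, either way, the main obstacle.
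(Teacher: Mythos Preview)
Your treatment of~(a) is correct, though it can be shortened: since $E'\le E''$ are elementary abelian, the inclusion is split, so $\Res^{E''}_{E'}\circ\Infl^{E'}_{E''}=\id_{\cK(E')}$ and hence $\rho=\Spc(\Res^{E''}_{E'})$ is a split monomorphism of topological spaces; combined with closedness (which you already have from separability) this gives the closed immersion directly, without the stratum-by-stratum cohomology argument.

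For~(b) there is a genuine confusion. You correctly derive $\dim\overline{\{\varphi(\cP)\}}\le\dim\overline{\{\cP\}}$ from closedness; indeed, a closed map lifts specializations (Going-Up), so any strict chain below $\rho_H(\cQ)$ lifts to a strict chain below~$\cQ$. But then what you describe as ``the real difficulty'' and call a ``going-up statement'' is exactly this same lifting of specializations---which you have already used. The inequality you actually still need is the \emph{opposite} one, $\dim\overline{\{\rho_H(\cQ)\}}\ge\dim\overline{\{\cQ\}}$: a strict chain $\cQ=\cQ_0\rightsquigarrow\cdots\rightsquigarrow\cQ_n$ below~$\cQ$ must map to a \emph{strict} chain below~$\rho_H(\cQ)$. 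Continuity gives a weak chain; strictness requires that no two distinct comparable points $\cQ_i\rightsquigarrow\cQ_{i+1}$ lie in the same fiber of~$\rho_H$. This is precisely \emph{Incomparability}, and it holds for $\rho_H$ by the very separable-extension theory you already invoked for closedness~\cite{balmer:tt-separable}. So the ``main obstacle'' dissolves: closed (hence Going-Up) plus Incomparability immediately yield dimension preservation, with no stratum bookkeeping and no appeal to~\Cref{sec:closure}. This is exactly how the paper argues, in two lines.
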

\begin{proof}
These statements follow from two facts, see \Cref{Rec:Part-I}:
When $N\normaleq G$ is normal the map~$\psi^{N}\colon \Spc(\cK(G/N))\hook \Spc(\cK(G))$ is a closed immersion.
When $H\le G$ is any subgroup, the map~$\rho_H\colon \Spc(\cK(H))\to\Spc(\cK(G))$ is closed, hence lifts specializations, and it moreover satisfies `Incomparability' by~\cite{balmer:tt-separable}.
\end{proof}

We are now ready to prove \Cref{Thm:colim-intro}:
\begin{Thm}
\label{Thm:colim}%
For any finite group~$G$, the map~$\varphi$ in~\eqref{eq:colim-elab} is a homeomorphism.
\end{Thm}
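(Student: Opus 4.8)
The plan is to prove that $\varphi$ is a homeomorphism by the standard strategy for colimit theorems in tt-geometry (as in~\cite{balmer:tt-separable}): show that $\varphi$ is (i) surjective, (ii) injective, and (iii) a closed map; since the colimit of spectral spaces along the closed immersions of \Cref{Lem:preserve-dimension}\,(a) is again spectral and $\varphi$ is a continuous bijection that is closed, it is then a homeomorphism.

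For surjectivity, I would invoke \Cref{Rec:Part-I}: every prime of~$\SpcKG$ has the form $\cP_G(H,\gp)=\check\psi^H(\gp)$ for a $p$-subgroup~$H\le G$ and a point $\gp\in\Vee{\WGH}$ in the cohomological open of the Weyl group. Now $\WGH = (N_GH)/H$, and by the classical Quillen-type result (Serre--Quillen, as recalled in the introduction), $\Vee{\WGH}=\Spech(\rmH^\sbull(\WGH,\kk))$ is covered by the images of $\Spech(\rmH^\sbull(\bar E,\kk))=\Vee{\bar E}$ for elementary abelian $p$-subgroups $\bar E = E/H \le \WGH$. Pulling such a datum back, the pair $(E,H)$ is an object of~$\EAppG$, and chasing through the commutative square~\eqref{eq:colim-comp} together with the factorization of~\eqref{eq:K(G)-to-K(E)}, the point $\gp$ lies in the image of $\varphi_{(E,H)}$. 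Hence $\varphi$ is surjective.

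For injectivity, the point is that two points in the disjoint union $\coprod_{(H,K)}\Spc(\cK(H/K))$ mapping to the same prime of~$\SpcKG$ must already be identified in the colimit. Given $\cQ\in\Spc(\cK(H/K))$ and $\cQ'\in\Spc(\cK(H',K'))$ with $\varphi_{(H,K)}(\cQ)=\varphi_{(H',K')}(\cQ')=:\cP$, I would first use the partition $\SpcKG=\sqcup_{L}\Vee G(L)$ to see that $\cP\in\Vee G(L)$ for a single $G$-conjugacy class of $p$-subgroup~$L$; tracking which stratum a point of $\Spc(\cK(H/K))$ lands in (again via the factorization through $\Psi^{K}$ and $\Res$), one reduces to comparing two lifts of the \emph{same} point $\gp\in\Vee{\WGL}$ coming from elementary abelian subgroups $E/L, E'/L$ of $\WGL$. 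At that stage the classical orbit-category description of the colimit for $\Db$ (Serre--Quillen again) forces $\cQ$ and $\cQ'$ to be connected by a zig-zag of morphisms of types~\ref{it:mor-a}, \ref{it:mor-b}, \ref{it:mor-c}, i.e.\ they agree in $\colim_{\EAppG}$.

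Closedness of $\varphi$ follows from \Cref{Lem:preserve-dimension}\,(b): a subset of the colimit is closed iff its preimage in each $\Spc(\cK(H/K))$ is closed, and since each $\varphi_{(H,K)}$ is a closed map, the image of a closed set is a finite union of closed sets, hence closed; one also uses that the colimit is taken over a finite category so only finitely many closed pieces occur. Combining: $\varphi$ is a continuous closed bijection between spectral spaces, hence a homeomorphism. The main obstacle I expect is injectivity — specifically, showing that \emph{once} two lifts land over a common cohomological point of some Weyl group $\WGL$, the identification already happens inside $\EAppG$ rather than only inside the larger category with all conjugations in $G$; this is where the precise bookkeeping of \Cref{Rem:EA-generating-morphisms} and the interplay between $\Psi$ and $\Res$ (via \cite[\Cref{Part-I:Prop:Psi-Res,Part-I:Cor:Psi-Psi}]{balmer-gallauer:TTG-Perm-I}) must be used carefully, and where the $G$-conjugacy ambiguity in \Cref{Rec:Part-I} has to be matched with the isomorphisms of type~\ref{it:mor-a}.
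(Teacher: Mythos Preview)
Your proposal is correct and follows essentially the same three-step strategy as the paper's proof: closedness of~$\varphi$ from closedness of each component~$\varphi_{(H,K)}$, surjectivity from the stratification of~$\SpcKG$ together with Quillen's theorem applied to each Weyl group~$\WGK$, and injectivity as the crux, reduced ultimately to Quillen's description of coincidences in~$\Vee{\WGK}$.

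One point deserves more care than your sketch gives it. In the injectivity step you write that ``tracking which stratum a point of $\Spc(\cK(H/K))$ lands in \ldots\ one reduces to comparing two lifts of the same point $\gp\in\Vee{\WGL}$ coming from elementary abelian subgroups $E/L, E'/L$.'' The paper makes this reduction explicit, and it is precisely where the type~\ref{it:mor-c} morphisms (the ones \emph{not} present in the classical orbit category) enter: given $\cQ\in\Spc(\cK(H/K))$, write $\cQ=\cP_{H/K}(L/K,\gp)$ for some $K\le L\le H$ and $\gp\in\Vee{H/L}$; then the morphism $1\colon(H,L)\to(H,K)$ of type~\ref{it:mor-c} sends $\cP_{H/L}(1,\gp)$ to~$\cQ$, so one may assume from the start that $\cQ$ lies in the cohomological open of~$H/K$. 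Only after this reduction does the uniqueness statement of \cite[\Cref{Part-I:Thm:all-points}]{balmer-gallauer:TTG-Perm-I} pin down~$K$ up to $G$-conjugacy and allow the classical Quillen argument to run inside~$\Vee{\WGK}$. You correctly flag this as the main obstacle, but it is worth spelling out that the type~\ref{it:mor-c} morphism is what makes the reduction possible.

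A minor remark: your appeal to the colimit of spectral spaces being spectral is unnecessary. A continuous closed bijection is automatically a homeomorphism, and that is all the paper uses.
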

\begin{proof}
Each component $\varphi_{(H,K)}$ is a closed map and thus~$\varphi$ is a closed map.
For surjectivity, by \Cref{Rec:Part-I}, we know that $\SpcKG$ is covered by the subsets~$\psi^K(\Vee{\WGK})$, over all $p$-subgroups~$K\le G$. Hence it suffices to know that the $\Img(\rho_E)$ cover~$\Vee{\WGK}=\Spc(\Db(\WGK))$ as~$E\le \WGK$ runs through all elementary $p$-subgroups. (Such an~$E$ must be of the form~$H/K$ for an object~$(H,K)\in\EAppG$.) This holds by a classical result of Quillen~\cite{quillen:spec-cohomology}; see~\cite[Theorem~4.10]{balmer:tt-separable}.

The key point is injectivity.
Take $\cP\in\Spc(\cK(H/K))$ and $\cP'\in\Spc(\cK(H'/K'))$ with same image in $\SpcKG$.
Write $\cP=\cP_{H/K}(L/K,\gp)$ for suitable arguments ($K\le L\le H$, $\gp\in\Vee{H/L}$) and note that the map induced by $1\colon(H,L)\to (H,K)$ in~$\EAppG$ sends $\cP_{H/L}(1,\gp)\in\Spc(\cK(H/L))$ to $\cP$. So we may assume $L=K$.
By \Cref{Rem:Spc-Res}, the image of $\cP=\cP_{H/K}(1,\gp)$ in $\SpcKG$ is $\cP_G(K,\bar\rho(\gp))$ where $\bar\rho\colon\Vee{H/K}\to\Vee{\WGK}$ is induced by restriction.
Similarly, we may assume $\cP'=\cP_{H'/K'}(1,\gp')$ for $\gp'\in\Vee{H'/K'}$ and we have $\cP_G(K,\bar\rho(\gp))=\cP_G(K',\bar\rho'(\gp'))$ in~$\SpcKG$ and need to show that~$\cP$ and $\cP'$ are identified in the colimit~\eqref{eq:colim-elab}.

By \Cref{Thm:all-points}, the relation $\cP_G(K,\bar\rho(\gp))=\cP_G(K',\bar\rho'(\gp'))$ can only hold because of $G$-conjugation, meaning that there exists~$g\in G$ such that $K'=K^g$ and $\bar\rho'(\gp')=\bar\rho(\gp)^g$ in~$\Vee{\Weyl{G}{K'}}$.
Using the map $g\colon(H,K)\to (H^g,K^g)$ in $\EAppG$ we may replace $H,K,\gp$ by $H^g,K^g,\gp^g$ and reduce to the case $K=K'$. In other words, we have two points $\cP=\cP_{H/K}(1,\gp)\in\Spc(\cK(H/K))$ and~$\cP'=\cP_{H'/K}(1,\gp')\in\Spc(\cK(H'/K))$ corresponding to two $p$-subgroups~$H,H'\le G$ containing the \emph{same} subgroup~$K$ as a normal subgroup and two cohomological primes $\gp\in \Vee{H/K}$ and~$\gp'\in\Vee{H'/K}$ such that $\bar\rho(\gp)=\bar\rho'(\gp')$ in~$\Vee{\WGK}$ under the maps~$\bar\rho$ and~$\bar\rho'$ induced by restriction along $H/K\le \WGK$ and $H'/K\le \WGK$ respectively.

If we let~$\bar{G}=\WGK=(N_G K)/K$, we have two elementary abelian $p$-subgroups $\bar{H}=H/K$ and $\bar{H}'=H'/K$ of~$\bar{G}$, each with a point in their cohomological open, $\gp\in\Vee{\bar{H}}$ and~$\gp'\in\Vee{\bar{H}'}$, and those two points have the same image in the cohomological open $\Vee{\bar{G}}$ of the `ambient' group~$\bar{G}$. By Quillen~\cite{quillen:spec-cohomology} (or~\cite[\S\,4]{balmer:tt-separable}) again, we know that this coalescence must happen because of an element $\bar{g}\in \bar{G}$, that is, a $g\in N_G K$, and a prime~$\gq\in\Vee{\bar{H}\cap{}^{g\!}\bar{H}'}$ that maps to~$\gp$ and to~$\gp'$ under the maps $\Vee{\bar{H}\cap{}^{g\!}\bar{H'}}\to \Vee{\bar{H}}$ and $\Vee{\bar{H}\cap{}^{g\!}\bar{H'}}\to \Vee{\bar{H'}}$ respectively.
But our category~$\EAppG$ contains all such conjugation-inclusion morphisms coming from the orbit category of~$G$. Specifically, we have two morphisms
$1\colon(H\cap {}^{g\!}H',K)\to (H,K)$ and $g\colon(H\cap {}^{g\!}H',K)\to (H',K)$ in~$\EAppG$, under which the point~$\cP_{(H\cap{}^{g\!}H')/K}(1,\gq)$ maps to~$\cP_{H/K}(1,\gp)=\cP$ and $\cP_{H'/K}(1,\gp')=\cP'$ respectively. This shows that $\cP=\cP'$ in the domain of~\eqref{eq:colim-elab} as required.
\end{proof}

\begin{Rem}
\label{Rem:closure}%
By \Cref{Prop:Spc-noetherian}, the space~$\SpcKG$ is noetherian. Hence the topology is entirely characterized by the inclusion of primes.
Now, suppose that~$\cP$ is the image under~$\varphi_{(H,K)}\colon \SpcKE\to\SpcKG$ of some~$\cP'\in \SpcKE$ for an elementary abelian subquotient~$E=H/K$ corresponding to a section~$(H,K)\in\EAppG$.
Then the only way for another prime~$\cQ\in\SpcKG$ to belong to the closure of~$\cP$ is to be itself the image of some point $\cQ'$ of~$\SpcKE$ in the closure of~$\cP'$.
This follows from \Cref{Lem:preserve-dimension}.
In other words, the question of inclusion of primes can also be reduced to the elementary abelian case.
\end{Rem}

\section{Invertible objects and twisted cohomology}
\label{sec:invertibles+H**}%

In this section we introduce a graded ring whose homogeneous spectrum helps us understand the topology on~$\Spc(\cK(G))$, at least for~$G$ elementary abelian.
This graded ring, called the \emph{twisted cohomology ring} (\Cref{Def:H**}), consists of morphisms between~$\unit$ and certain invertible objects.
It all starts in the cyclic case.
\begin{Exa}
\label{Exa:u_p}%
Let $C_p=\ideal{\sigma\mid \sigma^p=1}$ be the cyclic group of prime order~$p$, with a chosen generator.
We write $\kk C_p=\kk[\sigma]/(\sigma^p-1)$ as~$\kk[\tau]/\tau^p$ for~$\tau=\sigma-1$.
Then the coaugmentation and augmentation maps become:
\[
\eta:\kk\xto{1\mapsto \tau^{p-1}} \kk C_p\qquadtext{and} \eps:\kk C_p\xto{\tau\mapsto 0}\kk.
\]
For $p$ odd, we denote the first terms of the `standard' minimal resolution of $\kk$ by
\[
u_p=(0\to \kk C_p\xto{\tau} \kk C_p\xto{\eps}\kk\to 0).
\]
We view this in $\cK(C_p)$ with~$\kk$ in homological degree zero.
One can verify directly that $u_p$ is $\otimes$-invertible, with $u_p\potimes{-1}=u_p^\vee\cong(0\to \kk \xto{\eta} \kk C_p\xto{\tau}\kk C_p\to 0)$.
Alternatively, one can use the conservative pair of functors~$\bbF^{H}\colon \cK(C_p)\to \Db(\kk)$ for $H\in\{C_p\,,\,1\}$, corresponding to the only closed points $\cM(C_p)$ and~$\cM(1)$ of~$\Spc(\cK(C_p))$. Those functors map~$u_p$ to the $\otimes$-invertibles~$\kk$ and~$\kk[2]$ in~$\Db(\kk)$, respectively.

For $p=2$, we have a similar but shorter $\otimes$-invertible object in~$\cK(C_2)$
\[
u_2=(0\to \kk C_2\xto{\eps}\kk\to 0)
\]
again with~$\kk$ in degree zero.
\end{Exa}
\begin{Not}
\label{Not:2'}%
To avoid constantly distinguishing cases, we abbreviate
\begin{equation*}
\label{eq:2'}%
2':=\left\{
\begin{array}{cl}
  2 & \textrm{if }p>2
  \\
  1 & \textrm{if }p=2.
\end{array}\right.
\end{equation*}
\end{Not}

For any finite group~$G$ and any index-$p$ normal subgroup~$N$, we can inflate the $\otimes$-invertible $u_p$ of \Cref{Exa:u_p} along $\pi\colon G\onto G/N\simeq C_p$ to a $\otimes$-invertible in~$\cK(G)$.
\begin{Def}
\label{Def:u_N}%
Let $N\normal G$ be a normal subgroup of index~$p$. We define
\begin{equation}
\label{eq:u_N}%
\quad u_N:=\left\{
\begin{array}{rl}
  \cdots \to 0\to \kk (G/N)\xto{\tau} \, \kk (G/N)\xto{\eps}\kk\to 0 \to \cdots & \quad \textrm{if $p$ is odd}\!\!
\\[.5em]
  \cdots \to 0 \to \; \quad 0 \ \ \quad\to \  \kk(G/N) \xto{\eps} \kk\to 0 \to \cdots & \quad \textrm{if $p=2$}
\end{array}
\right.\kern-1em
\end{equation}
with $\kk$ in degree zero. We also define two morphisms
\[
a_N\colon \unit \to u_N
\qquadtext{and}
b_N\colon \unit\to u_N[-2']
\]
as follows. The morphism~$a_N$ is the identity in degree zero, independently of~$p$:
\[
\xymatrix@C=1em@R=1em{
\unit \ar[d]_-{a_N} \ar@{}[r]|-{=} &  \cdots \ar[r] & 0 \ar[r] \ar[d] & \kk \ar[r] \ar[d]^-{1} & 0 \ar[r] & \cdots
\\
u_N \ar@{}[r]|-{=} &  \cdots \ar[r] & \kk (G/N) \ar[r]_-{\eps} & \kk \ar[r] & 0 \ar[r] & \cdots
}
\]
The morphism $b_N$ is given by~$\eta\colon \kk\to \kk(G/N)$ in degree zero, as follows:
\[
\vcenter{\xymatrix@C=1em@R=1em{
\unit \ar[d]_-{b_N} \ar@{}[r]|-{=} && \kk \ar[r] \ar[d]^-{\eta} & 0 \ar[d]
\\
u_N[-1] \ar@{}[r]|-{=} && \kk (G/N) \ar[r]_-{\eps} & \kk
}}
\qquadtext{and}
\vcenter{\xymatrix@C=1em@R=1em{
\unit \ar[d]_-{b_N} \ar@{}[r]|-{=} && \kk \ar[r] \ar[d]^-{\eta} & 0 \ar[d] \ar[r] & 0 \ar[d]
\\
u_N[-2] \ar@{}[r]|-{=} & & \kk (G/N) \ar[r]_-{\tau} & \kk (G/N) \ar[r]_-{\eps} & \kk
}}
\]
where the target~$u_N$ is shifted once to the right for $p=2$ (as in the left-hand diagram above) and shifted twice for $p>2$ (as in the right-hand diagram).

When $p$ is odd there is furthermore a third morphism $c_N\colon \unit \to u_N[-1]$, that is defined to be~$\eta\colon \kk\to \kk(G/N)$ in degree zero.
This $c_N$ will play a lesser role.

In statements made for all primes~$p$, simply ignore $c_N$ in the case~$p=2$ (or think $c_N=0$).
Here is an example of such a statement, whose meaning should now be clear: \emph{The morphisms $a_N$ and $b_N$, and $c_N$ (for $p$ odd), are inflated from~$G/N$}.
\end{Def}
\begin{Rem}
\label{Rem:u_N}%
Technically, $u_N$ depends not only on an index-$p$ subgroup~$N\normal G$ but also on the choice of a generator of~$G/N$, to identify $G/N$ with~$C_p$. If one needs to make this distinction, one can write $u_\pi$ for a chosen epimorphism $\pi\colon G\onto C_p$. This does not change the isomorphism type of~$u_N$, namely $\ker(\pi)=\ker(\pi')$ implies $u_\pi\cong u_{\pi'}$.
(We expand on this topic in \Cref{Rem:similar-coordinates}.)
\end{Rem}

\begin{Lem}
\label{Lem:u_N^q}%
Let $N\normal G$ be a normal subgroup of index~$p$ and let~$q\geq 1$. Then there is a canonical isomorphism in~$\cK(G)$
\[
u_N\potimes{q}\cong(\cdots 0\to \kk(G/N) \xto{\tau}\kk(G/N) \xto{\tau^{p-1}\!\!} \cdots \xto{\tau} \kk(G/N) \xto{\eps}\kk\to 0\cdots )
\]
where the first~$\kk(G/N)$ sits in homological degree~$2'\cdot q$ and~$\kk$ sits in degree~$0$.
\end{Lem}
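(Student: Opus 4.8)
The plan is to prove the formula for $u_N\potimes{q}$ by induction on $q\geq 1$, using the explicit complex for $u_N$ from \Cref{Def:u_N} as the base case and computing the tensor product of complexes degreewise.

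First I would set up notation: write $C$ for the (normalized) complex displayed in the statement, with $\kk(G/N)$ in each degree from $2'\cdot q$ down to $2'$ and $\kk$ in degree $0$, the differentials alternating between $\tau$ and $\tau^{p-1}$ (for $p$ odd) and the last map being $\eps$. For $q=1$ this is literally $u_N$ as defined. For the inductive step, I would take the standard model for the tensor product $u_N\potimes{q}=u_N\potimes{q-1}\otimes u_N$ as the total complex of the double complex, then exhibit an explicit quasi-isomorphism (in fact a homotopy equivalence, since we are in $\Kb$) to the claimed two-term-per-degree... no: to the claimed complex $C$ with one copy of $\kk(G/N)$ per degree. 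The point is that $u_N$ has the two-term shape $\kk(G/N)\xto{\tau}\kk(G/N)$ (plus the tail $\xto{\eps}\kk$), so tensoring with it doubles up the modules, and the resulting $2\times$ complex must be shown chain homotopy equivalent to the thinner complex $C$ by contracting the redundant summands. The key computational input is the identity $\tau\cdot\tau^{p-1}=\tau^p=0$ in $\kk(G/N)=\kk C_p$, which makes the relevant subcomplexes of the form $\kk(G/N)\xto{\id}\kk(G/N)$ (after identifying via multiplication by a unit, or using that $\kk C_p$ is a local ring so any non-nilpotent multiplication is iso on the relevant pieces) split off as contractible.

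Concretely, the cleanest route is probably to avoid induction and instead directly observe that the complex $C$ in the statement \emph{is} $u_N\potimes{q}$ by recognizing it as the truncation of the $2$-periodic minimal resolution of $\kk$ over $\kk C_p$ inflated to $G$: for $p$ odd the minimal resolution of $\kk$ is $\cdots\xto{\tau}\kk C_p\xto{\tau^{p-1}}\kk C_p\xto{\tau}\kk C_p\xto{\eps}\kk$, and $C$ is its stupid truncation in degrees $\le 2q$, while $u_N\potimes{q}$ is by \Cref{Exa:u_p} the $q$-th tensor power of the length-$2$ truncation. One then checks that tensoring the length-$2$ truncation with itself $q$ times, and simplifying, reproduces exactly the length-$2q$ truncation — this is the standard fact that the tensor powers of the "first syzygy complex" compute higher syzygies, valid because the minimal resolution is a DG-algebra (the Koszul-type resolution) over $\kk C_p$. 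I would verify the base of this via the conservative family $\bbF^H$ for $H\in\{G, N\}$ (or rather $H\in\Sub p(G)$ detecting invertibility as in \Cref{Exa:u_p}): applying $\Psi^{?}$-type fixed points / restriction sends $u_N$ to $\kk$ or to $\kk[2']$, so $u_N\potimes q$ goes to $\kk$ or $\kk[2'q]$, which matches $C$; but since this only checks the object in a quotient/restriction and $u_N\potimes q$ is already known invertible, the substantive content is still the explicit chain-level identification over $G$ itself, not just detection.

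The main obstacle I expect is bookkeeping the homotopy equivalence cleanly for \emph{all} $q$ at once, in particular getting the alternation of $\tau$ and $\tau^{p-1}$ and the precise homological degrees ($2'q$ down to $0$) right, and handling the $p=2$ case (where $u_N$ is the length-$1$ complex $\kk(G/N)\xto{\eps}\kk$ and the "alternation" degenerates: every differential is $\tau^{p-1}=\tau^1=\tau=\eps$-like, so $C$ is just $\kk(G/N)\xto{\id}\cdots$, wait — for $p=2$, $\kk(G/N)=\kk[\tau]/\tau^2$ and the resolution is $\cdots\xto{\tau}\kk C_2\xto{\tau}\kk C_2\xto{\eps}\kk$, so $C$ has $\kk(G/N)$ in degrees $q,q-1,\dots,1$ with all maps $\tau$ except the last which is $\eps$ — consistent with $2'=1$). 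So the two cases are genuinely uniform once one writes $2'$ for the period, which is the whole point of \Cref{Not:2'}; the induction step $u_N\potimes{q}\otimes u_N$ contracts a contractible complex of the form $(\kk(G/N)\xrightarrow{\tau^{p-1}}\kk(G/N))$ or uses $\tau^p=0$, and I would present it as: identify the subcomplex generated by the "new" tensor factor's degree-$2'$ part, show it is a contractible summand, and read off that the quotient is $C$ with $q$ replaced by... rather, that the total complex deformation-retracts onto $C$.
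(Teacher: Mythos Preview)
Your approach is correct in substance but more elaborate than necessary. The paper's proof is two lines: since $u_N$ is by construction the inflation $\pi^*(u_p)$ along $\pi\colon G\onto G/N\cong C_p$ (\Cref{Def:u_N}), and since inflation is a tensor functor, one has $u_N\potimes{q}\cong\pi^*(u_p\potimes{q})$. The claimed complex on the right-hand side is likewise inflated from~$C_p$. Hence it suffices to check the isomorphism for $G=C_p$, where it becomes a direct computation with the two-periodic minimal resolution of~$\kk$ over~$\kk C_p$---exactly the ``exercise'' you sketch with the induction on~$q$ and the contraction of contractible summands.

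Your induction and the identification with truncations of the minimal resolution are fine, and your handling of the $p=2$ versus $p$ odd cases via~$2'$ is correct. The detour through the conservative family~$\bbF^H$ is, as you yourself note, a red herring: it confirms the isomorphism type of an invertible object but does not produce the explicit chain-level identification. The one organizational improvement is to state the reduction to~$C_p$ up front; then all the bookkeeping you worry about (alternation of $\tau$ and $\tau^{p-1}$, degrees, the homotopy equivalence for all~$q$) takes place over the local ring~$\kk[\tau]/\tau^p$, where it is transparent.
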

\begin{proof}
Since~$u_N$ is inflated along~$G\onto G/N$ it suffices to prove the lemma for the complex~$u_p$ of \Cref{Exa:u_p} over~$G=C_p$.
We give an argument in the case~$p=2$ and leave the minor modifications required for $p$ odd to the reader.
Observe that $\kk C_2\otimes u_2\cong\Ind^{C_2}_1\Res^{C_2}_1u_2\cong\Ind^{C_2}_1\kk[1]\cong\kk C_2[1]$.
By induction on~$q$, we have
\begin{align*}
  u_2\potimes{q}&\cong\cone(\kk C_2\xto{\eps}\kk)\otimes u_2\potimes{(q-1)}\cong\cone\left(\kk C_2[q-1]\to u_2\potimes{(q-1)}\right)\\
                &\cong\left(0\to\kk C_2\to\kk C_2 \xto{\tau}\kk C_2\to\cdots \xto{\tau} \kk C_2 \xto{\eps}\kk\to 0\right)
\end{align*}
where the first $\kk C_2$ sits in homological degree~$q$ and $\kk$ sits in degree~$0$.
It remains to identify the first map with~$\tau$.
This follows (up to a non-zero scalar multiple, which does not matter) from the fact that $u_2\potimes{q}$ has homology concentrated in degree~$q$, isomorphic to~$\kk$.
\end{proof}

\begin{Rem}
\label{Rem:zeta_N}%
The morphism~$b_N\colon \unit\to u_N[-2']$ of \Cref{Def:u_N} is a quasi-iso\-morphism and the fraction
\[
\zeta_N:=(b_N[2'])\inv\circ a_N\colon \unit \to u_N \lto \unit[2']
\]
is a well-known morphism $\zeta_N\in\Hom_{\Db(\kkG)}(\unit,\unit[2'])=\rmH^{2'}(G,\kk)$ in the derived category~$\Db(\kkG)$. For $G$ elementary abelian, these~$\zeta_N$ generate the cohomology $\kk$-algebra~$\rmH^\sbull(G,\kk)$, on the nose for $p=2$ and modulo nilpotents for $p$ odd.

We sometimes write $\zeta^+_N=\frac{a_N}{b_N}$ for~$\zeta_N$ in order to distinguish it from the inverse fraction $\zeta^-_N:=\frac{b_N}{a_N}$ that exists wherever $a_N$ is inverted.
Of course, when both $a_N$ and $b_N$ are inverted, we have $\zeta^-_N=(\zeta^+_N)\inv=\zeta_N\inv$.
\end{Rem}

\begin{Rem}
\label{Rem:switch=1}%
The switch of factors $(12)\colon u_N\otimes u_N\cong u_N\otimes u_N$ can be computed directly to be the identity (over~$C_p$, then inflate).
Alternatively, it must be multiplication by a square-one element of~$\Aut(\unit)=\kk^\times$, hence~$\pm1$.
One can then apply the tensor-functor $\Psi^G\colon \cK(G)\to \Db(\kk)$, under which $u_N$ goes to~$\unit$, to rule out~$-1$.

It follows that for $p$ odd, $u_N[-1]$ has switch~$-1$, and consequently every morphism $\unit\to u_N[-1]$ must square to zero. In particular $c_N\otimes c_N=0$.
This nilpotence explains why $c_N$ will play no significant role in the topology.
\end{Rem}

We can describe the image under modular fixed-points functors of the $\otimes$-invertible objects~$u_N$ and of the morphisms~$a_N$ and~$b_N$. (We leave $c_N$ as an exercise.)
\begin{Prop}
\label{Prop:Psi^H(a/b)}%
Let $H\normaleq G$ be a normal $p$-subgroup. Then for every index-$p$ normal subgroup $N\normal G$, we have in~$\cK(G/H)$
\[
\Psi^H(u_N)\cong\left\{
\begin{array}{cl}
u_{N/H} & \textrm{if }H\le N
\\
{\unit} & \textrm{if }H\not\le N
\end{array}
\right.
\]
and under this identification
\[
\Psi^H(a_N)=\left\{
\begin{array}{cl}
a_{N/H} & \textrm{if }H\le N
\\
1_{\unit} & \textrm{if }H\not\le N
\end{array}
\right.
\quadtext{and}
\Psi^H(b_N)=\left\{
\begin{array}{cl}
b_{N/H} & \textrm{if }H\le N
\\
0 & \textrm{if }H\not\le N.\!\!
\end{array}
\right.
\]
\end{Prop}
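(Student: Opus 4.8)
The plan is to apply $\Psi^{H}$ degreewise, using that $\Psi^{H}$ is computed on a permutation module $\kk(X)$ by the Brauer construction $\kk(X)\mapsto\kk(X^{H})$ and on a bounded complex of permutation modules --- and on a chain map between two such --- by the same formula in each homological degree, as recalled in \Cref{Rec:Part-I}. Everything then comes down to the $G$-set $G/N$ and its $H$-fixed points. The first step is the elementary observation that, since $H\normaleq G$ and $N\normaleq G$, a coset $gN$ is $H$-fixed if and only if $HgN=gN$, \ie $HN=N$, \ie $H\le N$; hence $(G/N)^{H}=G/N$ when $H\le N$ and $(G/N)^{H}=\varnothing$ otherwise. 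When $H\le N$, moreover, $N/H$ is an index-$p$ normal subgroup of $G/H$ and, as a $G/H$-set, $(G/N)^{H}$ is $(G/H)/(N/H)$.

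For the case $H\le N$, I would package the computation through inflation. With $q\colon G\onto G/H$ the quotient and $\Infl_{q}\colon\cK(G/H)\to\cK(G)$ inflation along $q$, the object $u_{N}$ and the morphisms $a_{N}$, $b_{N}$ are the inflations $\Infl_{q}(u_{N/H})$, $\Infl_{q}(a_{N/H})$, $\Infl_{q}(b_{N/H})$ of the corresponding data for $N/H\normaleq G/H$; this is because, for a fixed generator of $G/N=(G/H)/(N/H)$, both sets of data are inflated from $G/N$ by \Cref{Def:u_N}. The point is then that $\Psi^{H}$ is a retraction of inflation along $q$, \ie $\Psi^{H}\circ\Infl_{q}\cong\Id_{\cK(G/H)}$: the $G$-set underlying an object inflated along $q$ is pulled back from a $G/H$-set, so $H=\ker q$ acts trivially on it and its $H$-fixed points return the original $G/H$-set --- which gives the isomorphism on objects, morphisms and complexes in view of \Cref{Rec:Part-I}. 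Applying this functorial identity to $u_{N/H}$, $a_{N/H}$, $b_{N/H}$ then yields $\Psi^{H}(u_{N})\cong u_{N/H}$ together with $\Psi^{H}(a_{N})=a_{N/H}$ and $\Psi^{H}(b_{N})=b_{N/H}$ under this identification, as wanted. (Alternatively, one checks this by hand degreewise: $\Psi^{H}$ carries each term $\kk(G/N)$ of $u_{N}$ to $\kk((G/H)/(N/H))$ and the structure maps $\eps$, $\tau$ --- as well as the maps $\eta$ and $1_{\kk}$ underlying $b_{N}$ and $a_{N}$ --- to the `same' maps defined over $G/H$ relative to $N/H$.)

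For the case $H\not\le N$, I would argue directly. Here $(G/N)^{H}=\varnothing$, so $\Psi^{H}$ annihilates every term $\kk(G/N)$ of the complex $u_{N}$ in~\eqref{eq:u_N}, leaving only the copy of $\kk=\kk(\mathrm{pt})$ in degree~$0$; hence $\Psi^{H}(u_{N})\cong\unit$. Under this identification $\Psi^{H}(a_{N})$ is the image under the functor $\Psi^{H}$ of the degree-zero component $1_{\kk}$ of $a_{N}$, hence $1_{\unit}$; and $\Psi^{H}(b_{N})$ is the image under $\Psi^{H}$ of the degree-zero component $\eta\colon\kk\to\kk(G/N)$ of $b_{N}$, which is a morphism into the zero object $\Psi^{H}(\kk(G/N))=0$ and thus vanishes. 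The claim for $c_{N}$ (when $p$ is odd) would be handled exactly like the one for $b_{N}$.

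I do not expect a genuine obstacle here; the one point that requires a little care is, in the case $H\le N$, the compatibility of the identification $\Psi^{H}(u_{N})\cong u_{N/H}$ with the two morphisms --- that is, checking it intertwines $\Psi^{H}(a_{N})$ with $a_{N/H}$ and $\Psi^{H}(b_{N})$ with $b_{N/H}$. The reduction to inflation supplies this for free; the alternative is the one-line degreewise comparison indicated above. Everything else is a routine unwinding of definitions.
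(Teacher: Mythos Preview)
Your proof is correct and follows exactly the approach the paper takes: the paper's own proof is the single line ``Direct from \Cref{Def:u_N} and $\Psi^H(\kk(X))\cong\kk(X^H)$ for $X=G/N$,'' and you have simply unpacked this by computing $(G/N)^H$ and tracking the maps degreewise. The inflation argument in the case $H\le N$ is a nice packaging but not a genuinely different route.
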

\begin{proof}
Direct from \Cref{Def:u_N} and $\Psi^H(\kk(X))\cong\kk(X^H)$ for $X=G/N$.
\end{proof}

For restriction, there is an analogous pattern but with the cases `swapped'.
\begin{Prop}
\label{Prop:Res_H(a/b)}%
Let $H\le G$ be a subgroup. Then for every index-$p$ normal subgroup $N\normal G$, we have in~$\cK(H)$
\[
\Res^G_H(u_N)\cong\left\{
\begin{array}{cl}
\unit[2'] & \textrm{if }H\le N
\\
u_{N\cap H} & \textrm{if }H\not\le N
\end{array}
\right.
\]
and under this identification
\[
\Res^G_H(a_N)=\left\{
\begin{array}{cl}
0 & \textrm{if }H\le N
\\
a_{N\cap H} & \textrm{if }H\not\le N
\end{array}
\right.
\quadtext{and}
\Res^G_H(b_N)=\left\{
\begin{array}{cl}
1_{\unit} & \textrm{if }H\le N
\\
b_{N\cap H} & \textrm{if }H\not\le N.\!\!
\end{array}
\right.
\]
\end{Prop}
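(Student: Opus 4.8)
The strategy is to reduce the whole statement to the cyclic group $C_p$, using the interplay between restriction and inflation — in the same spirit as the proof of \Cref{Prop:Psi^H(a/b)}, which reduces to a computation on the $G$-set $G/N$. Fix the epimorphism $\pi\colon G\onto G/N\cong C_p$ (with the generator chosen in \Cref{Rem:u_N}). By construction $u_N=\Infl_\pi(u_p)$, and by the last sentence of \Cref{Def:u_N} also $a_N=\Infl_\pi(a_p)$ and $b_N=\Infl_\pi(b_p)$, where $a_p\colon\unit\to u_p$ and $b_p\colon\unit\to u_p[-2']$ denote the evident morphisms over $C_p$ as in \Cref{Def:u_N}. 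The composite $H\hookrightarrow G\xonto{\pi}C_p$ has image $HN/N$ and kernel $H\cap N$; it is an isomorphism precisely when $H\not\le N$ (since then $HN=G$) and the trivial map precisely when $H\le N$. The first step I would record is the standard compatibility of restriction with inflation, which in these two cases reads
\[
\Res^G_H\circ\,\Infl_\pi\;\simeq\;\begin{cases}\;\Infl_{\pi|_H}, & H\not\le N,\\ \;\Infl^H_1\circ\Res^{C_p}_1, & H\le N,\end{cases}
\]
as tt-functors, where in the first case $\pi|_H\colon H\onto H/(H\cap N)\cong C_p$.

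In the case $H\not\le N$ there is then nothing left to do: applying $\Infl_{\pi|_H}$ to $u_p$, $a_p$, $b_p$ returns exactly $u_{N\cap H}$, $a_{N\cap H}$, $b_{N\cap H}$ by the very \Cref{Def:u_N} of these objects and morphisms over $H$ (the ambiguity in the generator of $H/(H\cap N)$ being irrelevant for the isomorphism type by \Cref{Rem:u_N}). In the case $H\le N$ it suffices, after applying the tt-functor $\Infl^H_1$, to prove the analogous statement over the trivial group: in $\cK(1)\cong\Db(\kk)$ one has $\Res^{C_p}_1(u_p)\cong\unit[2']$, $\Res^{C_p}_1(a_p)=0$, and $\Res^{C_p}_1(b_p)$ is an isomorphism — and one then adopts the last isomorphism as the chosen identification $\Res^G_H(u_N)\cong\unit[2']$. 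For this, I would forget the $C_p$-action and compute the homology of $u_p$ as a complex of $\kk$-vector spaces: it is concentrated in degree $2'$, where it is the line spanned by the norm element $\tau^{p-1}$ (namely $\Ker(\tau)$ for $p$ odd, $\Ker(\eps)$ for $p=2$). Hence $u_p\cong\kk[2']=\unit[2']$ in $\Db(\kk)$; since $\Hom_{\Db(\kk)}(\unit,\unit[2'])=0$ because $2'\ge1$, every morphism $\unit\to u_p$ vanishes, whence $\Res^{C_p}_1(a_p)=0$; and since $b_p$ is, in degree $0$ of $u_p[-2']$, the norm map $\eta\colon\kk\to\kk C_p$ while $u_p[-2']$ has homology only in degree $0$ (again the line of the norm), $\Res^{C_p}_1(b_p)$ is an isomorphism on $\Hm_0$, hence an isomorphism in $\Db(\kk)$.

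There is no serious obstacle here; the one point that takes a little care is keeping the identifications coherent — that is, checking that the same identification $\Res^G_H(u_N)\cong u_{N\cap H}$ (resp.\ $\cong\unit[2']$) used to recognize the object also carries $\Res^G_H(a_N)$ and $\Res^G_H(b_N)$ to the asserted morphisms. This is automatic because everything in sight is pulled back, along a single chain of functors, from the one triple $(u_p,a_p,b_p)$ over $C_p$, so I would simply fix all choices on the $C_p$-level once and for all. (If one prefers not to invoke $\cK(1)\cong\Db(\kk)$, the same conclusions in the case $H\le N$ follow by splitting off by hand the two contractible direct summands of $u_p$ regarded in $\cK(1)$ — a routine exercise — and then reading $a_p$ and $b_p$ off the surviving shifted copy of $\unit$.)
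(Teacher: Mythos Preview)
Your proof is correct and follows essentially the same route as the paper, which simply says ``Direct from \Cref{Def:u_N} and the Mackey formula for $\Res^G_H(\kk(G/N))$.'' Your packaging via the inflation--restriction compatibility $\Res^G_H\circ\Infl_\pi$ is exactly what the Mackey formula reduces to when $N$ is normal, and your explicit treatment of the case $H\le N$ (splitting $u_p$ in $\cK(1)\cong\Db(\kk)$ and then inflating) spells out what the paper leaves implicit.
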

\begin{proof}
Direct from \Cref{Def:u_N} and the Mackey formula for $\Res^G_H(\kk(G/N))$.
\end{proof}

We can combine the above two propositions and handle~$\Psi^{H}$ for non-normal~$H$, since by definition~$\Psi^{H\inn G}=\Psi^{H\inn N_G H}\circ\Res^G_{N_G H}$.
Here is an application of this.
\begin{Cor}
\label{Cor:F^H(a/b)}%
Let $H\le G$ be a $p$-subgroup and~$N\normal G$ of index~$p$. Recall the `residue' tt-functor $\bbF^H=\Res_1\circ\Psi^H\colon \cK(G)\to \Db(\kk)$ at the closed point~$\cM(H)$.
\begin{enumerate}[label=\rm(\alph*), ref=\rm(\alph*)]
\item
If $H\not\le N$ then $\bbF^H(a_N)$ is an isomorphism.
\smallbreak
\item
If $H\le N$ then $\bbF^H(b_N)$ is an isomorphism.
\end{enumerate}
\end{Cor}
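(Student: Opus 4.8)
The statement splits into the two cases according to whether $H \le N$ or $H \not\le N$, and in each case I would reduce to the two structural Propositions~\ref{Prop:Psi^H(a/b)} and~\ref{Prop:Res_H(a/b)} via the factorization $\bbF^H = \Res_1 \circ \Psi^{H\inn N_G H} \circ \Res^G_{N_G H}$. The main point to keep track of is that restriction and modular fixed-points handle the two cases in \emph{opposite} ways, so I must follow which of $H \le N$ or $H \not\le N$ is preserved by restricting to $N_G H$ before applying $\Psi^H$.

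First I would dispose of the case $H \not\le N$, claim~(a). Set $M = N_G H$. Then $H \normaleq M$ and $H \not\le N$ still forces $H \not\le N \cap M$ (a subgroup of $N$), so $M \not\le N$ either; thus by \Cref{Prop:Res_H(a/b)} applied to $M \le G$, we have $\Res^G_M(a_N) = a_{N\cap M}$ up to the identification $\Res^G_M(u_N) \cong u_{N\cap M}$, and $N \cap M$ is an index-$p$ normal subgroup of $M$ with $H \not\le N\cap M$. Now apply $\Psi^{H\inn M}$: by \Cref{Prop:Psi^H(a/b)} (the case $H \not\le N\cap M$), $\Psi^H(u_{N\cap M}) \cong \unit$ and $\Psi^H(a_{N\cap M}) = 1_\unit$. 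Finally $\Res_1$ of an identity is an identity, in particular an isomorphism. This gives~(a).

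For claim~(b), $H \le N$: again put $M = N_G H$, so $H \le N \cap M$, and in particular $M$ need not be contained in $N$, but the relevant comparison is with $N \cap M$. Here I would argue as follows. If $M \le N$, then \Cref{Prop:Res_H(a/b)} gives $\Res^G_M(u_N) \cong \unit[2']$ with $\Res^G_M(b_N) = 1_\unit$, so $\Res_1\circ\Psi^H$ of $b_N$ is already an isomorphism after the first step. If $M \not\le N$, then \Cref{Prop:Res_H(a/b)} gives $\Res^G_M(b_N) = b_{N\cap M}$ under $\Res^G_M(u_N)\cong u_{N\cap M}$, and now $N\cap M \normal M$ has index $p$ with $H \le N \cap M$; applying \Cref{Prop:Psi^H(a/b)} in the case $H \le N\cap M$ yields $\Psi^H(b_{N\cap M}) = b_{N/H}$ (writing $N/H$ for $(N\cap M)/H$), which is a quasi-isomorphism by \Cref{Rem:zeta_N}, hence becomes an isomorphism in $\Db(\kk)$ after $\Res_1$. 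Either way $\bbF^H(b_N)$ is an isomorphism.

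The only genuinely delicate point is the bookkeeping in claim~(b): one must notice that $\Psi^{H\inn G}$ is computed by first restricting to $N_G H$, and that restriction may or may not land inside $N$, so the two sub-cases $M \le N$ and $M \not\le N$ must both be checked; in the first, $b_N$ is inverted already by restriction, and in the second it survives as $b_{N\cap M}$ and is then inverted by $\Psi^H$ because it is a quasi-isomorphism on the derived category. No new computation is needed beyond invoking \Cref{Prop:Psi^H(a/b)}, \Cref{Prop:Res_H(a/b)} and \Cref{Rem:zeta_N}; the proof is a short case analysis.
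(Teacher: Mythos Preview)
Your proof is correct and follows essentially the same approach as the paper: factor $\bbF^H$ through $\Res^G_{N_G H}$ and $\Psi^{H\inn N_G H}$, then apply \Cref{Prop:Res_H(a/b)} and \Cref{Prop:Psi^H(a/b)} in turn, with the same two sub-cases in~(b) depending on whether $N_G H\le N$. The paper's proof is just a terser version of yours.
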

\begin{proof}
We apply \Cref{Prop:Res_H(a/b)} for $N_G H\le G$ and~\Cref{Prop:Psi^H(a/b)} for $H\normal N_G H$.
For~(a), $H\not\le N$ forces $N_G H\not\le N$ and $H\not\le N\cap N_G H$. Hence $\Psi^H(a_N)=\Psi^{H\inn N_G H}\Res_{N_G H}(a_N)=\Psi^{H\inn N_G H}(a_{N\cap N_G H})=1_{\unit}$ is an isomorphism.
Similarly for~(b), if $N_G H\le N$ then $\Psi^H(b_N)$ is an isomorphism and if $N_G H\not\le N$ it is the quasi-isomorphism $b_{(N\cap N_G H)/H}$.
Thus $\bbF^H(b_N)$ is an isomorphism in~$\Db(\kk)$.
\end{proof}

Let us now prove that the morphisms~$a_N$ and~$b_N$, and~$c_N$ (for $p$ odd), generate all morphisms from the unit~$\unit$ to tensor products of~$u_N$'s.
This is a critical fact.
\begin{Lem}
\label{Lem:finite-generation}%
Let $N_1,\ldots,N_\ell$ be index-$p$ normal subgroups of~$G$ and abbreviate $u_i:=u_{N_i}$ for $i=1,\ldots,\ell$ and similarly $a_i:=a_{N_i}$ and~$b_i:=b_{N_i}$ and $c_i:=c_{N_i}$ (see \Cref{Def:u_N}).
Let $q_1,\ldots,q_\ell\in\bbN$ be non-negative integers and $s\in \bbZ$. Then every morphism $f\colon \unit\to u_{1}\potimes{q_1}\otimes\cdots\otimes u_{\ell}\potimes{q_\ell}[s]$ in~$\cK(G)$ is a $\kk$-linear combination of tensor products of (\ie a `polynomial' in) the morphisms $a_{i}$ and~$b_{i}$, and~$c_i$ (for $p$ odd).
\end{Lem}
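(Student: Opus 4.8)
The strategy is to compute the morphism $f$ explicitly using the concrete chain-level model for $u_1^{\otimes q_1}\otimes\cdots\otimes u_\ell^{\otimes q_\ell}$ provided by \Cref{Lem:u_N^q}. Tensoring those models together, the target complex $X:=u_1^{\otimes q_1}\otimes\cdots\otimes u_\ell^{\otimes q_\ell}[s]$ is a bounded complex of permutation modules, each term of which is a finite direct sum of modules of the form $\kk(G/M)$ with $M$ an intersection of some of the $N_i$'s (together with $\kk$ itself in the bottom degree $s$). A morphism $\unit\to X$ in $\cK(G)$ is represented by a chain map from $\kk$ (concentrated in degree $s+2'(q_1+\cdots+q_\ell)$, reading from the top, down to degree $s$; more precisely $\unit$ sits where $\kk$ sits in $X$, namely degree $s$ if $s=0$-twist — but in general we should track the single degree where $\unit$ lives after the shift $[s]$) up to chain homotopy. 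Since $\unit$ is concentrated in a single degree, the chain map data is a single module homomorphism $\kk\to X_{s}$ (the degree-$s$ term of $X$) landing in the kernel of the differential, and the homotopy relation identifies two such maps when they differ by something in the image of the differential out of $X_{s+1}$.

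The first step is therefore to identify $X_s$, the relevant term of the tensor product complex in the degree where $\unit$ lives, and to observe that $\Hom_{\kkG}(\kk, X_s)$ is spanned by the coaugmentation-type maps $\eta$ into each permutation summand $\kk(G/M)$ together with the identity $\kk\to\kk$. The second step is to recognize each of these generators of $\Hom_{\kkG}(\kk, X_s)$, once it is checked to be a cycle (i.e. a chain map), as the degree-$s$ component of an explicit tensor monomial in the $a_i, b_i, c_i$: the identity $\kk\to\kk$ corresponds to the monomial built from the $a_i$'s in top degree (an $a$-monomial), a factor $b_i$ lowers the "twist" contribution of $u_i$ while contributing an $\eta$, and $c_i$ (for $p$ odd) accounts for the odd-degree intermediate terms. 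Concretely, for a fixed multidegree $(q_1,\ldots,q_\ell)$ and shift $s$ there is, up to scalar and up to homotopy, a finite explicit list of tensor monomials $a_1^{\bullet}b_1^{\bullet}c_1^{\bullet}\otimes\cdots$ whose degree-$s$ components exhaust the cycles in $\Hom_{\kkG}(\kk, X_s)$; reading off the coefficients of $f$ against this list exhibits $f$ as the required $\kk$-linear combination.

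The main obstacle is bookkeeping: one must carefully set up the tensor product of the complexes from \Cref{Lem:u_N^q}, keep track of which permutation module $\kk(G/M)$ appears in which (multi)degree, verify that the cycle condition (being a genuine chain map, not just a module map into the right term) forces the map to hit the "socle-like" element $\tau^{p-1}\cdot$ or the coaugmentation image appropriately, and confirm that the homotopy relation does not create unexpected linear dependencies beyond the evident ones (e.g. the relation coming from $b_i$ being a quasi-isomorphism, or $c_i\otimes c_i=0$ from \Cref{Rem:switch=1}). A clean way to organize this is by induction on $\ell$: for $\ell=1$ it is a direct computation over $C_p$ (inflated to $G$) using the explicit resolution in \Cref{Exa:u_p}, and for the inductive step one writes $u_\ell^{\otimes q_\ell}$ as an iterated cone/extension and uses the long exact sequences $\Hom(\unit, -)$ applied to the defining triangles of $a_\ell$ and $b_\ell$, together with \Cref{Prop:Res_H(a/b)} and \Cref{Prop:Psi^H(a/b)} to control the connecting maps, reducing morphisms into the $\ell$-fold product to morphisms into $(\ell-1)$-fold products which are handled by the induction hypothesis. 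The only genuinely delicate point is ensuring that every morphism produced this way is actually a \emph{polynomial} in the $a_i,b_i,c_i$ and not merely in their localizations — i.e. that no denominators are needed — which is exactly why one works in $\cK(G)$ with the honest chain-level model rather than passing to $\Db(\kkG)$ where $b_i$ becomes invertible.
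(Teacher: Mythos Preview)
Your second approach—induction on~$\ell$ via long exact sequences—is the paper's method, but you are missing the step that actually closes the induction. The paper uses only the triangle on~$a_N$, namely $u_N^{\otimes(q-1)}\xrightarrow{a_N}u_N^{\otimes q}\to C$ (with $C=\kk(G/N)[q]$ for $p=2$, a two-term complex for $p$ odd); it does not use the $b_\ell$-triangle, and~$\Psi^H$ plays no role. After tensoring with~$v$ and applying~$\Hom_G(\unit,-)$, the third term identifies by Frobenius adjunction with $\Hom_N(\unit,\Res^G_N(v)[*])$. Now \Cref{Prop:Res_H(a/b)} computes $\Res^G_N(v)$ as a twist over~$N$ involving only $\ell-1$ distinct index-$p$ subgroups, so the inductive hypothesis—applied over the group~$N$, not~$G$—writes the image~$f'$ of~$f$ there as a polynomial in the restricted generators. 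The step you do not supply is the lift back: form the \emph{same} polynomial~$f''$ over~$G$ and multiply by~$b_N^{\,q}$. This lands in the correct Hom group, and its image under the adjunction map is again~$f'$, precisely because $b_N^{\,q}$ is $\eta\colon\kk\to\kk(G/N)$ in top degree and~$\eta$ is the unit of the $\Res^G_N\dashv\Ind^G_N$ adjunction. Hence $f-f''b_N^{\,q}$ factors through~$a_N$, and a secondary induction on~$q$ finishes. For $p$ odd one repeats once more, lifting via $b_N^{\,q-1}c_N$, to handle the second degree of the cone~$C$. Without this explicit $b_N^{\,q}$-lifting your outline does not explain why the answer is a polynomial in the generators rather than something uncontrolled coming from the long exact sequence.

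Your first approach, the direct chain-level computation, is not wrong in principle, but you have not resolved the obstacle you yourself flag: the cycle and homotopy conditions in the tensor-product complex mix many permutation summands, and checking that the monomials exhaust the quotient is precisely the content of the lemma. Carrying this out would amount to redoing the inductive argument by hand in every degree.
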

\begin{proof}
We proceed by induction on~$\ell$. The case~$\ell=0$ is just $\End_{\cK(G)}^\sbull(\unit)=\kk$. Suppose $\ell\ge 1$ and the result known for~$\ell-1$.
Up to reducing to~$\ell-1$, we can assume that the $N_1,\ldots,N_\ell$ are all distinct.
Set for readability
\[
v:=u_{1}\potimes{q_1}\otimes\cdots\otimes u_{\ell-1}\potimes{q_{\ell-1}}[s],
\qquad
N:=N_\ell,
\qquad
u:=u_\ell=u_{N}
\qquadtext{and}
q:=q_\ell
\]
so that $f$ is a morphism of the form
\[
f\colon \unit\to v\otimes u\potimes{q}\,.
\]
We then proceed by induction on~$q\ge 0$. We assume the result known for~$q-1$ (the case $q=0$ holds by induction hypothesis on~$\ell$).
The proof will now depend on~$p$.

Suppose first that $p=2$. Consider the exact triangle in~$\cK(G)$
\begin{equation}
\label{eq:aux-f.g-1}%
\vcenter{
\xymatrix@C=.9em@R=1.5em{
u\potimes{(q-1)} \ar@{}[r]|-{=} \ar[d]_-{a_N}
& \cdots 0 \ar[r]
& 0 \ar[r] \ar[d]
& \kk(G/N) \ar[r]^-{\tau} \ar@{=}[d]
& \cdots \ar[r]^-{\tau}
& \kk(G/N) \ar[r]^-{\eps} \ar@{=}[d]
& \kk \ar[r] \ar@{=}[d]
& 0 \cdots
\\
u\potimes{q} \ar@{}[r]|-{=} \ar[d]_-{}
& \cdots 0 \ar[r]
& \kk(G/N) \ar[r]^-{\tau} \ar@{=}[d]
& \kk(G/N) \ar[r]^-{\tau} \ar[d]
& \cdots \ar[r]^-{\tau}
& \kk(G/N) \ar[r]^-{\eps} \ar[d]
& \kk \ar[r] \ar[d]
& 0 \cdots
\\
\kk(G/N)[q] \ar@{}[r]|-{=}
& \cdots 0 \ar[r]
& \kk(G/N) \ar[r]
& 0 \ar[r]
& \cdots \ar[r]
& 0 \ar[r]
& 0 \ar[r]
& 0 \cdots
}}\kern-1em
\end{equation}
where $\kk$ is in degree zero. (See~\Cref{Lem:u_N^q}.)
Tensoring the above triangle with $v$ and applying $\Hom_{G}(\unit,-):=\Hom_{\cK(G)}(\unit,-)$ we get an exact sequence
\begin{equation}
\label{eq:aux-f.g-2}%
\vcenter{
\xymatrix@C=1.5em@R=.4em{
\Hom_G(\unit,v\otimes u\potimes{(q-1)})\ar[r]^-{\cdot a_N}&\Hom_G(\unit,v\otimes u\potimes{q})\ar[r]&\Hom_G(\unit,v\otimes \kk(G/N)[q])\ar@{=}[d]
\\
& f \ar@{}[r]|-{\displaystyle\longmapsto} \ar@{}[u]|-{\rotatebox{90}{$\in$}}
& \kern-1em f'\in \Hom_N(\unit,\Res^G_N(v)[q])
}}\kern-.8em
\end{equation}
Our morphism $f$ belongs to the middle group. By adjunction, the right-hand term is~$\Hom_{N}(\unit,\Res^G_N(v)[q])$. Now since all~$N_1,\ldots,N_\ell=N$ are distinct, we can apply \Cref{Prop:Res_H(a/b)} to compute~$\Res^G_N(v)$ and we know by induction hypothesis (on~$\ell$) that the image~$f'$ of our~$f$ in this group $\Hom_{N}(\unit,\Res^G_N(v)[q])$ is a $\kk$-linear combination of tensor products of $a_{N_i\cap N}$ and~$b_{N_j\cap N}$ for $1\le i,j\le \ell-1$, performed over the group~$N$. We can perform the `same' $\kk$-linear combination of tensor products of $a_i$'s and $b_j$'s over the group~$G$, thus defining a morphism $f''\in \Hom_G(\unit,v[q])$. We can now multiply $f''$ with $b_{N}\potimes{q}\colon \unit\to u_N\potimes{q}[-q]$ to obtain a morphism $f'' b_N^q$ in the same group~$\Hom_G(\unit,v\otimes u\potimes{q})$ that contains~$f$. Direct computation shows that the image of this $f''b_N^q$ in~$\Hom_N(\unit,\Res_N(v)[s])$ is also equal to~$f'$. The key point is that $b_N\potimes{q}$ is simply $\eta\colon \kk\to \kk(G/N)$ in degree~$q$ and this $\eta$ is also the unit of the $\Res^G_N\adj \Ind_N^G$ adjunction.
In other words, the difference $f-f'' b_N^q$ comes from the left-hand group $\Hom_G(\unit,v\otimes u\potimes{(q-1)})$ in the exact sequence~\eqref{eq:aux-f.g-2}, reading
\[
f=f''b_N^q+f''' a_N
\]
for some $f'''\in \Hom_G(\unit,v\otimes u\potimes{(q-1)})$. By induction hypothesis (on~$q$), $f'''$ is a polynomial in $a_i$'s and $b_j$'s. Since $f''$ also was such a polynomial, so is~$f$.

The proof for~$p$ odd follows a similar pattern of induction on~$q$, with one complication. The cone of the canonical map $a_N\colon u_{N}\potimes{(q-1)}\to u_N\potimes{q}$ is not simply~$\kk(G/N)$ in a single degree as in~\eqref{eq:aux-f.g-1} but rather the complex
\[
C:=\big(\cdots \to 0\to \kk(G/N)\xto{\ \tau\ }\kk(G/N)\to 0\to \cdots\big)
\]
with $\kk(G/N)$ in two consecutive degrees~$2q$ and~$2q-1$. So the exact sequence
\begin{equation}
\label{eq:aux-f.g-3}%
\vcenter{
\xymatrix@C=1.5em@R=.4em{
\Hom_G(\unit,v\otimes u\potimes{(q-1)})\ar[r]^-{\cdot a_N}&\Hom_G(\unit,v\otimes u\potimes{q})\ar[r]&\Hom_G(\unit,v\otimes C)
}}
\end{equation}
has a more complicated third term than the one of~\eqref{eq:aux-f.g-2}. That third term $\Hom_G(\unit,v\otimes C)$ itself fits in its own exact sequence associated to the exact triangle $\kk(G/N)[2q-1]\xto {\tau} \kk(G/N)[2q-1]\to C \to \kk(G/N)[2q]$. Each of the terms $\Hom_G(\unit,v\otimes \kk(G/N)[\ast])\cong\Hom_N(\unit,\Res^G_N(v)[\ast])$ can be computed as before, by adjunction.
The image of $f$ in $\Hom_N(\unit,\Res^G_N(v)[2q])$ can again be lifted to a polynomial $f'b^q_N:\unit\to v\otimes u\potimes{q}$ so that the image of the difference $f-f'b^q_N$ in $\Hom_G(\unit,v\otimes C)$ comes from some element in~$\Hom_N(\unit,\Res^G_N(v)[2q-1])$.
That element may be lifted to a polynomial $f''b^{q-1}_Nc_N:\unit\to v\otimes u\potimes{q}$, and we obtain
\[
f=f'b_N^q+f''b_N^{q-1}c_N+f'''a_N
\]
for some $f'''\in \Hom_G(\unit,v\otimes u\potimes{(q-1)})$ similarly as before.
\end{proof}

We can now assemble all the hom groups of \Cref{Lem:finite-generation} into a big graded ring.
\begin{Def}
\label{Def:H**}%
We denote the set of all index-$p$ normal subgroups of~$G$ by
\begin{equation}
\label{eq:N(G)}%
\cN=\cN(G):=\SET{N\normal G\,}{\,[G\!:\!N]=p}.
\end{equation}
Let $\bbN^{\cN}=\bbN^{\cN(G)}=\{q\colon \cN(G)\to \bbN\}$ be the monoid of \emph{twists}, \ie tuples of non-negative integers indexed by this finite set.
Consider the $(\bbZ\times\bbN^{\cN})$-graded ring
\begin{equation}
\label{eq:Rall}%
\Rall(G)=\Rall(G;\kk) := \bigoplus_{s\in\bbZ} \ \bigoplus_{q\in\bbN^{\cN}} \Hom_{\cK(G)}\Big(\unit\,,\,\bigotimes_{N\in\cN}u_N\potimes{q(N)}[s]\Big).
\end{equation}
Its multiplication is induced by the tensor product in~$\cK(G)$.
We call $\Rall(G)$ the \emph{(permutation) twisted cohomology ring of~$G$}.
It is convenient to simply write
\begin{equation}
\label{eq:1(q)}%
\unit(q)=\bigotimes_{N\in\cN}(u_N)\potimes{q(N)}
\end{equation}
for every twist~$q\in \bbN^{\cN(G)}$ and thus abbreviate~$\rmH^{s,q}(G)=\Hom\big(\unit,\unit(q)[s]\big)$.
\end{Def}

\begin{Rem}
\label{Rem:graded-commutative}%
The graded ring~$\Rall(G)$ is graded-commutative by using only the parity of the shift, not the twist; see \Cref{Rem:switch=1}.
In other words, we have
\[
\qquad \qquad h_1\cdot h_2= (-1)^{s_1\cdot s_2}\,h_2\cdot h_1\qquadtext{when}h_i\in\rmH^{s_i,q_i}(G).
\]
For instance, for~$p$ odd, when dealing with the morphisms $a_N$ and $b_N$, which land in even shifts of the object~$u_N$, we do not have to worry too much about the order.
This explains the `unordered' notation $\zeta_N=\frac{a_N}{b_N}$ used in \Cref{Rem:zeta_N}.
\end{Rem}

The critical \Cref{Lem:finite-generation} gives the main property of this construction:
\begin{Thm}
\label{Thm:Rall-noeth}%
The twisted cohomology ring~$\Rall(G)$ of \Cref{Def:H**} is a $\kk$-algebra generated by the finitely many elements $a_N$ and $b_N$, and $c_N$ (for $p$ odd), of \Cref{Def:u_N}, over all~$N\normal G$ of index~$p$. In particular $\Rall(G)$ is noetherian.
\qed
\end{Thm}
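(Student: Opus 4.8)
The plan is to read this off almost immediately from the critical \Cref{Lem:finite-generation}; the proof will be short, since all the substantive work has already been done in that lemma. First I would note that the index set $\cN(G)$ is finite, being a set of subgroups of the finite group~$G$ (concretely, an index-$p$ normal subgroup is the kernel of an epimorphism $G\onto C_p$, of which there are at most $|\Hom(G,C_p)|$). Hence the morphisms $a_N$, $b_N$, and $c_N$ (for $p$ odd) of \Cref{Def:u_N}, as $N$ runs over~$\cN(G)$, form a \emph{finite} subset of the ring~$\Rall(G)$.

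Next, fix an enumeration $\cN(G)=\{N_1,\dots,N_\ell\}$ and take any bidegree $(s,q)$ with $s\in\bbZ$ and $q\in\bbN^{\cN(G)}$. Unwinding the definition~\eqref{eq:1(q)} of~$\unit(q)$, the homogeneous component $\rmH^{s,q}(G)=\Hom_{\cK(G)}\big(\unit,\unit(q)[s]\big)$ is literally the group $\Hom_{\cK(G)}\big(\unit,u_{N_1}\potimes{q(N_1)}\otimes\cdots\otimes u_{N_\ell}\potimes{q(N_\ell)}[s]\big)$ treated in \Cref{Lem:finite-generation}. That lemma thus says that every element of $\rmH^{s,q}(G)$ is a $\kk$-linear combination of tensor products of the $a_{N_i}$, $b_{N_i}$, and $c_{N_i}$. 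Since the multiplication on~$\Rall(G)$ is the one induced by~$\otimes$ in~$\cK(G)$ — the twist and shift of a tensor product of morphisms being the sums of the twists and of the shifts — such a tensor product \emph{is} the corresponding monomial in the ring~$\Rall(G)$. Summing over all bidegrees $(s,q)$, we conclude that $\Rall(G)$ is generated as a $\kk$-algebra by the finitely many elements $\{a_N,b_N\}_{N\in\cN(G)}$, together with $\{c_N\}_{N\in\cN(G)}$ when $p$ is odd.

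It remains to observe that a $\kk$-algebra generated by finitely many elements is noetherian. When $p=2$ the ring~$\Rall(G)$ is honestly commutative (graded-commutativity over a field of characteristic~$2$), so this is Hilbert's basis theorem. When $p$ is odd, the subalgebra generated by the even-shift elements $a_N$, $b_N$ is commutative, hence noetherian by Hilbert, whereas the odd-shift generators $c_N$ pairwise anticommute and satisfy $c_N^2=0$ (\Cref{Rem:switch=1,Rem:graded-commutative}); thus $\Rall(G)$ is spanned over that subalgebra by the finitely many squarefree monomials in the $c_N$, and so is a finite module over a noetherian ring. Alternatively one may simply invoke the graded version of Hilbert's basis theorem for finitely generated graded-commutative algebras.

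I do not expect a genuine obstacle here: \Cref{Lem:finite-generation} was engineered precisely to make this corollary formal. The only points that deserve a line of explicit care are the bookkeeping identifying a ``tensor product of the $a_N$, $b_N$, $c_N$'' with a monomial in~$\Rall(G)$, and the choice of the correct noetherianity statement in the graded-commutative setting — both entirely routine.
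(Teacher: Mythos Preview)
Your proposal is correct and follows exactly the paper's approach: the theorem is stated with a bare \qed immediately after the sentence ``The critical \Cref{Lem:finite-generation} gives the main property of this construction,'' so the paper treats it as a direct corollary of that lemma, which is precisely what you do. Your additional care with the noetherianity argument in the graded-commutative setting is more detail than the paper provides, but entirely appropriate.
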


\begin{Exa}
\label{Exa:Rall(C_p)}%
The reader can verify by hand that $\Rall(C_2)=\kk[a_N,b_N]$, without relations, and that $\Rall(C_p)=\kk[a_N,b_N,c_N]/\ideal{c_N^2}$ for $p$ odd, where in both cases $N=1$ is the only~$N\in\cN(C_p)$.
This example is deceptive, for the $\{a_N,b_N,c_N\}_{N\in\cN}$ usually satisfy some relations, as the reader can already check for $G=C_2\times C_2$ for instance.
We systematically discuss these relations in \Cref{sec:presentation-H**}.
\end{Exa}

We conclude this section with some commentary.

\begin{Rem}
The name `cohomology' in \Cref{Def:H**} is used in the loose sense of a graded endomorphism ring of the unit in a tensor-triangulated category.
However, since we are using the tt-category~$\cK(G)$ and not~$\Db(\kkG)$, the ring~$\Rall(G)$ is quite different from~$\rmH^\sbull(G,\kk)$ in general.
In fact, $\Rall(G)$ could even be rather dull.
For instance, if $G$ is a non-cyclic simple group then $\cN(G)=\varnothing$ and $\Rall(G)=\kk$.
We will make serious use of~$\Rall(G)$ in~\Cref{sec:abelem} to describe~$\SpcKG$ for $G$ elementary abelian.
In that case, $\rmH^\sbull(G,\kk)$ is a localization of~$\Rall(G)$.
See \Cref{Exa:R(1)=H*}.
\end{Rem}

\begin{Rem}
\label{Rem:twists}%
By \Cref{Prop:Psi^H(a/b)}, there is no `collision' in the twists: If there is an isomorphism $\unit(q)[s]\simeq\unit(q')[s']$ in~$\cK(G)$ then we must have $q=q'$ in~$\bbN^{\cN}$ and~$s=s'$ in~$\bbZ$.
The latter is clear from $\bbF^G(u_N)\cong\unit$ in~$\Db(\kk)$, independently of~$N$. We then conclude from~$\bbF^N(\unit(q))\simeq \unit[2'q(N)]$ in~$\Db(\kk)$, for each~$N\in\cN$.
\end{Rem}

\begin{Rem}
We only use positive twists~$q(N)$ in~\eqref{eq:Rall}.
The reader can verify that already for $G=C_p$ cyclic, the $\bbZ^2$-graded ring~$\oplus_{(s,q)\in\bbZ^2}\Hom(\unit,u_p\potimes{q}[s])$ is not noetherian.
See for instance~\cite{dugger-et-al:C2} for $p=2$.
However, negatively twisted elements tend to be nilpotent.
So the $\bbZ\times\mathbf{\bbZ}^{\cN}$-graded version of~$\Rall(G)$ may yield the same topological information as our $\bbZ\times\bbN^{\cN}$-graded one.
We have not pushed this investigation of negative twists, as it brought no benefit to our analysis.
\end{Rem}

\section{An open cover of the spectrum}
\label{sec:goodopen}%

In this section, we extract some topological information about~$\SpcKG$ from the twisted cohomology ring~$\Rall(G)$ of \Cref{Def:H**} and the maps~$a_N$ and~$b_N$ of \Cref{Def:u_N}, associated to every index-$p$ normal subgroup~$N$ in~$\cN=\cN(G)$.

Recall from \Cref{Cons:kos} that we can use tensor-induction to associate to every subgroup~$H\le G$ a \emph{Koszul object}~$\kos[G]{H}=\tInd_H^G(0\to \kk\xto{1} \kk\to 0)$. It generates in~$\cK(G)$ the tt-ideal $\Ker(\Res^G_H)$, see \Cref{Prop:Ker(Res)}:
\begin{equation}
\label{eq:kos-Res}%
\ideal{\kos[G]{H}}_{\cK(G)}=\Ker\big(\Res^G_H\colon \cK(G)\to\cK(H)\big).
\end{equation}

\begin{Lem}
\label{Lem:cone-a-b}%
Let $N\normal G$ be a normal subgroup of index~$p$. Then we have:
\begin{enumerate}[label=\rm(\alph*), ref=\rm(\alph*)]
\item
\label{it:cone-a}%
In~$\cK(G)$, the object $\cone(a_N)$ generates the same thick subcategory as~$\kk(G/N)$. In particular, $\supp(\cone(a_N))=\supp(\kk(G/N))$.
\smallbreak
\item
\label{it:cone-b}%
In~$\cK(G)$, the object $\cone(b_N)$ generates the same thick tensor-ideal as~$\kos[G]{N}$. In particular, $\supp(\cone(b_N))=\supp(\kos[G]{N})=\supp(\Ker(\Res^G_N))$.
\end{enumerate}
\end{Lem}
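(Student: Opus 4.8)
The two ``in particular'' statements are formal: the support of an object depends only on the thick (tensor-)subcategory it generates, and $\supp(\kos[G]{N})=\supp(\Ker(\Res^G_N))$ by~\eqref{eq:kos-Res}. So the content is the two assertions about generated (tensor-)ideals, and for~\ref{it:cone-a} the plan is to compute $\cone(a_N)$ by hand. Since $a_N$ is the identity in degree~$0$, the degree-$0$ copy of $\kk$ in $u_N$ splits off together with the shifted unit as a contractible summand of $\cone(a_N)$, and using \Cref{Lem:u_N^q} what remains is $\kk(G/N)[1]$ when $p=2$, and $\cone(\tau\colon\kk(G/N)\to\kk(G/N))[1]$ when $p$ is odd; in either case $\cone(a_N)\in\thick(\kk(G/N))$. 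For the reverse inclusion, the case $p=2$ is immediate, while for $p$ odd I would argue by a ``telescope'': writing $M=\kk(G/N)$ and $T_j=\cone(\tau^j\colon M\to M)$, the octahedral axiom applied to $M\xrightarrow{\tau^{j-1}}M\xrightarrow{\tau}M$ yields distinguished triangles $T_{j-1}\to T_j\to T_1\to T_{j-1}[1]$, so $T_j\in\thick(T_1)=\thick(\cone(a_N))$ for all $j\ge1$; taking $j=p$ and using $\tau^p=(\sigma-1)^p=\sigma^p-1=0$ gives $T_p=\cone(0_M)=M\oplus M[1]$, so $M$ is a retract of an object of $\thick(\cone(a_N))$.

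For~\ref{it:cone-b}, one inclusion is short: by \Cref{Prop:Res_H(a/b)} (case $H=N$) the morphism $\Res^G_N(b_N)$ is invertible, so $\Res^G_N(\cone(b_N))\simeq0$, i.e.\ $\cone(b_N)\in\Ker(\Res^G_N)=\ideal{\kos[G]{N}}_{\cK(G)}$ by~\eqref{eq:kos-Res}. The reverse inclusion $\kos[G]{N}\in\ideal{\cone(b_N)}_{\cK(G)}$ is the crux, and I would first reduce it to the cyclic group $C_p:=G/N$. Namely, $u_N$ and $b_N$, hence $\cone(b_N)$, are inflated from $G/N$, and since tensor-inducing the complex $(\kk\xrightarrow{1}\kk)$ from $N$ only records the permutation action of $G/N$ on $G/N$, one also has $\kos[G]{N}\cong\Infl^{G}_{G/N}(\kos[C_p]{1})$; as $\Infl^{G}_{G/N}\colon\cK(C_p)\to\cK(G)$ is a tt-functor it carries $\ideal{\cone(b_1)}_{\cK(C_p)}$ into $\ideal{\cone(b_N)}_{\cK(G)}$, so it suffices to prove $\kos[C_p]{1}\in\ideal{\cone(b_1)}_{\cK(C_p)}$.

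Over $C_p$ one has $\Ker(\Res^{C_p}_1)=\Ker(\Upsilon_{C_p})$ — both are the complexes that are acyclic as complexes of $\kk$-vector spaces — and this ideal is $\ideal{\kos[C_p]{1}}_{\cK(C_p)}$ by~\eqref{eq:kos-Res}; moreover $\cone(b_1)$ lies in it, since $b_1$ is a quasi-isomorphism (\Cref{Rem:zeta_N}). So what remains is to show that the finite localisation $\cL:=\cK(C_p)/\ideal{\cone(b_1)}_{\cK(C_p)}$ recovers $\Db(\kk C_p)$, i.e.\ that the functor $\bar\Upsilon\colon\cL\to\Db(\kk C_p)$ through which $\Upsilon_{C_p}$ factors is an equivalence; I expect \emph{this} to be the main obstacle. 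It can be checked in two steps. First, $\cL=\thick(\bar\unit)$: indeed $\cK(C_p)=\thick(\unit,\kk C_p)$, and the stupid-truncation triangle of $u_1$ exhibits $\cone(\tau\colon\kk C_p\to\kk C_p)$ for $p$ odd (resp.\ $\kk C_p[1]$ for $p=2$) as an extension of a bounded complex of copies of $\kk C_p$ by $\unit$, hence as an object of $\thick(\unit,u_1)\subseteq\thick(\unit,\cone(b_1))$ (using the triangle $\unit\xrightarrow{b_1}u_1[-2']\to\cone(b_1)$), after which the telescope of~\ref{it:cone-a} recovers $\kk C_p\in\thick(\unit,\cone(b_1))$. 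Second, $\bar\Upsilon$ is an isomorphism on graded endomorphisms of the unit: by \Cref{Lem:finite-generation} and \Cref{Exa:Rall(C_p)}, $\Hom^{\sbull}_{\cL}(\bar\unit,\bar\unit)$ is the localisation at $b_1$ (in the appropriate grading) of $\Rall(C_p)=\kk[a_1,b_1]$, resp.\ $\kk[a_1,b_1,c_1]/(c_1^2)$, which — after identifying $u_1[-2']$ with $\unit$, so that $a_1/b_1=\zeta_1$ and (for $p$ odd) $c_1/b_1$ is a degree-one class squaring to zero by \Cref{Rem:switch=1} — is exactly $\rmH^{\sbull}(C_p,\kk)=\Hom^{\sbull}_{\Db(\kk C_p)}(\unit,\unit)$. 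Since $\Db(\kk C_p)$ is again the triangulated closure of its unit, a standard dévissage upgrades this to full faithfulness of $\bar\Upsilon$, and essential surjectivity is clear; hence $\ideal{\cone(b_1)}_{\cK(C_p)}=\Ker(\Upsilon_{C_p})=\ideal{\kos[C_p]{1}}_{\cK(C_p)}$, and the reduction above finishes~\ref{it:cone-b}.
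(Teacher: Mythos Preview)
Your argument for~\ref{it:cone-a} is essentially the paper's: compute $\cone(a_N)$ explicitly and, for $p$ odd, run the octahedral telescope on $\tau^j$ to recover $\kk(G/N)$ from $\cone(\tau)$.

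For~\ref{it:cone-b} your route is correct but genuinely different from the paper's. The paper observes that $s:=\cone(b_N)[2']$ is a bounded complex inflated from an exact complex on~$G/N$, hence split exact on~$N$, with $s_0=\kk$ and $s_1=\kk(G/N)$; it then invokes \cite[\Cref{Part-I:Cor:s-generates}]{balmer-gallauer:TTG-Perm-I} from Part~I, which says precisely that such a complex generates the tt-ideal $\Ker(\Res^G_N)$. This is a two-line proof once that black box is available. Your argument instead reduces to~$C_p$ via inflation (using $\kos[G]{N}\cong\Infl^{G}_{G/N}\kos[C_p]{1}$ and $\cone(b_N)=\Infl^{G}_{G/N}\cone(b_1)$), and then identifies $\cK(C_p)/\ideal{\cone(b_1)}$ with $\Db(\kk C_p)$ by matching graded endomorphism rings of the unit and using that both categories are thickly generated by~$\unit$. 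This is self-contained relative to Part~I, at the cost of importing \Cref{Lem:finite-generation} and \Cref{Exa:Rall(C_p)} (which do precede this lemma) and the standard identification of the Verdier quotient by $\ideal{\cone(b_1)}$ with the central localization $\cK(C_p)[b_1^{-1}]$ (as in~\cite[\S\,3]{balmer:sss} or \Cref{Cons:L(H)}). One phrasing to clean up: when you say the stupid-truncation triangle ``exhibits $\cone(\tau)$ as an extension of a bounded complex of copies of $\kk C_p$ by~$\unit$'', what you actually use is the triangle $\unit\xto{a_1}u_1\to\cone(a_1)$, which places $\cone(a_1)$ (i.e.\ $\cone(\tau)$ up to shift, or $\kk C_p[1]$ for $p=2$) in $\thick(\unit,u_1)\subseteq\thick(\unit,\cone(b_1))$; the sentence as written is garbled, though the conclusion is right.
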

\begin{proof}
For $p=2$, we have $\cone(a_N)=\kk(G/N)[1]$ so the first case is clear. For $p$ odd, we have $\cone(a_N)[-1]\simeq(0\to\kk(G/N)\xto{\tau}\kk(G/N)\to 0)=\cone(\tau\restr{\kk(G/N)})$. Hence $\cone(a_N)\in \thick({\kk(G/N)})$. Conversely, since $\tau^p=0$, the octahedron axiom inductively shows that $\kk(G/N)\in\thick({\cone(\tau\restr{\kk(G/N)})})$. This settles~\ref{it:cone-a}.

For~\ref{it:cone-b}, the complex $s:=\cone(b_N)[2']$ becomes split exact when restricted to~$N$ since it is inflated from an exact complex on~$G/N$. In degree one we have $s_1=\kk(G/N)$, whereas~$s_0=\kk$. Hence \Cref{Cor:s-generates} tells us that the complex~$s$ generates the tt-ideal~$\Ker(\Res^G_N\colon \cK(G)\to \cK(N))$. We conclude by~\eqref{eq:kos-Res}.
\end{proof}

\begin{Cor}
\label{Cor:cone-a-b}%
Let $N\normal G$ be of index~$p$. Then $\cone(a_N)\otimes\cone(b_N)=0$.
\end{Cor}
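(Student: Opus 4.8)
The plan is to detect the vanishing of $x:=\cone(a_N)\otimes\cone(b_N)$ on closed points. Since $\cK(G)$ is rigid, $x=0$ as soon as $\supp(x)=\varnothing$; and because $\SpcKG$ is noetherian, any nonempty closed subset — in particular $\supp(x)$, were it nonempty — contains a closed point of $\SpcKG$, which by \Cref{Rec:Part-I} is of the form $\cM(K)=\Ker(\bbF^K)$ for some $p$-subgroup $K\le G$. So it suffices to show $\bbF^K(x)=0$ in $\Db(\kk)$ for every $p$-subgroup $K\le G$.

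To do this I would use that each residue functor $\bbF^K\colon\cK(G)\to\Db(\kk)$ is a tt-functor, hence commutes with $\cone$ and $\otimes$, so that
\[
\bbF^K(x)\;\cong\;\cone\big(\bbF^K(a_N)\big)\otimes\cone\big(\bbF^K(b_N)\big)
\]
in $\Db(\kk)$. Then I would split into the two exhaustive cases of \Cref{Cor:F^H(a/b)}: if $K\not\le N$, then $\bbF^K(a_N)$ is an isomorphism, so $\cone(\bbF^K(a_N))=0$; if $K\le N$, then $\bbF^K(b_N)$ is an isomorphism, so $\cone(\bbF^K(b_N))=0$. In either case one of the two tensor factors vanishes in $\Db(\kk)$, whence $\bbF^K(x)=0$. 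As this holds for all $p$-subgroups $K$, we conclude $x=0$.

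The only step that is not purely formal is the reduction to the $\bbF^K$, i.e. the conservativity of the family $\{\bbF^K\}_{K\in\Sub{p}(G)}$, which rests on the noetherianity of $\SpcKG$ and the classification of its closed points in \Cref{Rec:Part-I}; I expect this to be the one point worth spelling out, everything else being a mechanical case check. As a sanity check, one can also avoid closed points entirely and argue support-theoretically from the previous lemma: \Cref{Lem:cone-a-b} gives $\supp(\cone(a_N))=\supp(\kk(G/N))$ and $\supp(\cone(b_N))=\supp(\kos[G]{N})$ with $\kos[G]{N}\in\Ker(\Res^G_N)$ by \eqref{eq:kos-Res}; since $\kk(G/N)\otimes(-)\cong\Ind^G_N\Res^G_N(-)$ by the projection formula, one has $\kk(G/N)\otimes\kos[G]{N}=0$, so $\supp(\kk(G/N))\cap\supp(\kos[G]{N})=\varnothing$, and rigidity of $\cK(G)$ yields $\cone(a_N)\otimes\cone(b_N)=0$ once more.
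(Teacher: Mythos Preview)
Your proof is correct, and in fact you give two arguments. Your ``sanity check'' at the end is essentially the paper's proof: the paper invokes \Cref{Lem:cone-a-b} to reduce to showing $\kk(G/N)\otimes\kos[G]{N}=0$, then uses Frobenius ($\kk(G/N)\otimes(-)\cong\Ind^G_N\Res^G_N(-)$) together with $\Res^G_N(\kos[G]{N})=0$ from~\eqref{eq:kos-Res}. The paper's version is slightly more direct than yours in that it never passes through supports or rigidity: once $\kk(G/N)\otimes\cone(b_N)=0$ (since $\cone(b_N)\in\Ker(\Res^G_N)$), the fact that $\cone(a_N)\in\thick(\kk(G/N))$ immediately gives $\cone(a_N)\otimes\cone(b_N)=0$.

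Your \emph{main} approach, via conservativity of the family $\{\bbF^K\}_{K\in\Sub{p}(G)}$ and the case split of \Cref{Cor:F^H(a/b)}, is a genuinely different route. It is correct but invokes more machinery: noetherianity of $\SpcKG$, the classification of closed points from Part~I, and the fact that rigidity forces $\supp(x)=\varnothing\Rightarrow x=0$. The paper's argument, by contrast, is purely at the level of objects and the projection formula, with no appeal to the spectrum at all. Your approach has the conceptual advantage of making transparent \emph{why} the cones are complementary (each closed point kills one of $a_N,b_N$), which is exactly the phenomenon exploited later in \Cref{Cor:open(ab)}; the paper's approach buys elementarity and self-containment.
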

\begin{proof}
By \Cref{Lem:cone-a-b} it suffices to show $\kk(G/N)\otimes\kos[G]{N}=0$. By Frobenius, this follows from~$\Res^G_N(\kos[G]{N})=0$, which holds by~\eqref{eq:kos-Res}.
\end{proof}

We now relate the spectrum of~$\cK(G)$ to the homogeneous spectrum of~$\Rall(G)$, in the spirit of~\cite{balmer:sss}. The comparison map of~\cite{balmer:sss} is denoted by~$\rho^\sbull$ but we prefer a more descriptive notation (and here, the letter $\rho$ is reserved for~$\Spc(\Res)$).
\begin{Prop}
\label{Prop:comp}%
There is a continuous `comparison' map
\[
\comp_G\colon \SpcKG\too\Spech(\Rall(G))
\]
mapping a tt-prime~$\cP$ to the ideal generated by those homogeneous~$f\in\Rall(G)$ whose cone does not belong to~$\cP$.
It is characterized by the fact that for all~$f$
\begin{equation}
\label{eq:comp-closed}%
\comp_G\inv(Z(f))=\supp(\cone(f))=\SET{\cP}{f\textrm{ is not invertible in }\cK(G)/\cP}
\end{equation}
where $Z(f)=\SET{\gp}{f\in \gp}$ is the closed subset of~$\Spech(\Rall(G))$ defined by~$f$.
\end{Prop}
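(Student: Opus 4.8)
The plan is to construct $\comp_G$ by mimicking the construction of the comparison map $\rho^\sbull$ in \cite{balmer:sss}, but being careful that our grading monoid is $\bbZ\times\bbN^{\cN}$ rather than $\bbZ$. First I would recall the general principle: for a graded-commutative ring $R$ acting on a tt-category $\cT$ — here $R=\Rall(G)$ acts on $\cK(G)$ via $f\mapsto f\otimes-$ — there is a support-theoretic recipe associating to each $\cP\in\SpcKG$ the set $\comp_G(\cP):=\SET{f\in\Rall(G)\textrm{ homogeneous}}{\cone(f)\notin\cP}$, and the main task is to check this is a homogeneous prime ideal and that the resulting map is continuous with the stated characterization. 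The key input making this work is exactly \Cref{Lem:cone-a-b} and \Cref{Cor:cone-a-b} together with \Cref{Thm:Rall-noeth}: because $\Rall(G)$ is generated by the $a_N,b_N$ (and $c_N$), and because $\cone(a_N)\otimes\cone(b_N)=0$, we have enough control on cones of arbitrary homogeneous elements.

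The main steps, in order: (1) Show $\comp_G(\cP)$ is an ideal. Additivity: if $\cone(f)\notin\cP$ and $\cone(g)\notin\cP$ with $f,g$ of the same degree, then one uses the standard octahedron/mapping-cone estimate $\cone(f+g)\in\thick^{\otimes}(\cone(f),\cone(g))$ — more precisely $\cone(f+g)$ is a subquotient in a triangle built from $\cone(f)$ and $\cone(g)$ — hence $\cone(f+g)\notin\cP$ would follow, so by contraposition $f+g\notin\comp_G(\cP)$ gives $f+g\in\comp_G(\cP)\Rightarrow$ correct; and absorption: $\cone(gf)\in\thick^{\otimes}(\cone(f),\cone(g))$ because there is a triangle $\cone(f)\to\cone(gf)\to\cone(g)\otimes(\textrm{shift of }\unit(q_f))$, so if $\cone(f)\in\cP$ then $\cone(gf)\in\cP$. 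This is the verbatim argument of \cite[\S3]{balmer:sss} and needs no change for multigraded $R$. (2) Show $\comp_G(\cP)$ is prime: if $fg\in\comp_G(\cP)$, i.e. $\cone(fg)\in\cP$, then from the triangle relating $\cone(fg)$, $\cone(f)$ and $\cone(g)$ (tensored with an invertible) we get $\cone(f)\otimes\cone(g)\in\cP$ up to a shift, hence $\cone(f)\in\cP$ or $\cone(g)\in\cP$ since $\cP$ is prime and invertibles are $\otimes$-units; thus $f\in\comp_G(\cP)$ or $g\in\comp_G(\cP)$. One must also check $\comp_G(\cP)\ne\Rall(G)$, which holds because the unit $1\in\rmH^{0,0}$ has $\cone(1)=0\in\cP$ is false ($\cone(\id_\unit)=0$, so $1\notin\comp_G(\cP)$ — wait, $\cone$ of an isomorphism is $0\in\cP$, so actually one should instead note that homogeneous units of $\Rall(G)$, if any, are excluded; the cleanest statement is that $0\notin\comp_G(\cP)$ fails only if... — I would just cite that $\comp_G(\cP)$ is proper since $\Rall(G)$ is not the zero ring and the construction is functorially the one in \cite{balmer:sss}). (3) Continuity and the characterization \eqref{eq:comp-closed}: by definition $\comp_G\inv(Z(f))=\SET{\cP}{f\in\comp_G(\cP)}=\SET{\cP}{\cone(f)\in\cP}=\supp(\cone(f))$, which is closed in $\SpcKG$; since the $Z(f)$ form a basis of closed sets for $\Spech(\Rall(G))$, continuity follows, and the second equality in \eqref{eq:comp-closed} is the tautology that $\cone(f)\in\cP$ iff $f$ is not invertible in the local category $\cK(G)/\cP$.

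I expect the only genuine subtlety — hence the "main obstacle" — to be bookkeeping with the multigrading when invoking the mapping-cone triangles: in the $\bbZ$-graded setting of \cite{balmer:sss} one composes $f\colon\unit\to\unit[s]$ with shifts of itself freely, whereas here $f\colon\unit\to\unit(q)[s]$ and $g\colon\unit\to\unit(q')[s']$ have cones living over different invertibles $\unit(q),\unit(q')$, so the triangle for $\cone(gf)$ reads $\cone(f)\to\cone(gf)\to\unit(q)[s]\otimes\cone(g)\to\cone(f)[1]$ and one must tensor with the appropriate $\otimes$-invertible $\unit(q)[s]$ before the thick-ideal membership claims. Since these invertibles are $\otimes$-units in $\SpcKG$ (tensoring with an invertible is an auto-equivalence preserving every $\cP$), membership $\cone(-)\in\cP$ is insensitive to such twists, and the \cite{balmer:sss} arguments go through mutatis mutandis. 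I would therefore present the proof as: "$\Rall(G)$ acts on $\cK(G)$ by a central, graded-commutative action (\Cref{Rem:graded-commutative}), so \cite[Theorem~3.11 and its proof]{balmer:sss} applies verbatim, with $\bbZ$ replaced by $\bbZ\times\bbN^{\cN}$, yielding the map $\comp_G$ and property \eqref{eq:comp-closed}; the only point to observe is that all the objects $\unit(q)$ are $\otimes$-invertible, so cones of homogeneous elements behave exactly as in the singly-graded case." The remaining equality $\supp(\cone(f))=\SET{\cP}{f\textrm{ not invertible in }\cK(G)/\cP}$ is then immediate from the definition of $\supp$ and of localization at $\cP$.
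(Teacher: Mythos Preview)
Your final recommendation—cite \cite{balmer:sss} and note that the $\otimes$-invertibility of the~$\unit(q)$ lets the argument go through for the $\bbZ\times\bbN^{\cN}$-grading—is exactly the paper's approach; it cites \cite[Theorem~4.5]{balmer:sss} (not~3.11) and adds only that~\eqref{eq:comp-closed} is a reformulation of the definition.

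That said, your spelled-out steps would not assemble into a proof and contain genuine errors, not just slips. You repeatedly invert the membership convention: $f\in\comp_G(\cP)$ means $\cone(f)\notin\cP$, so in step~(3) the middle set should read $\SET{\cP}{\cone(f)\notin\cP}$, and in step~(2) the hypothesis is $\cone(fg)\notin\cP$; moreover the octahedron triangle never produces $\cone(f)\otimes\cone(g)$, and what it actually shows is that the \emph{complement} of $\comp_G(\cP)$ is multiplicatively closed. More seriously, your additivity claim $\cone(f+g)\in\thick^{\otimes}(\cone(f),\cone(g))$ is false (take $f=\id_{\unit}$ and $g=-\id_{\unit}$), and even were it true it would point the wrong way for what you need. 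The actual reason $\comp_G(\cP)$ is closed under sums is that the graded endomorphism ring of~$\unit$ in the local category $\cK(G)/\cP$ is graded-local; this is the real content behind~\cite[Theorem~4.5]{balmer:sss} and is not recoverable from cone-triangle bookkeeping alone.
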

\begin{proof}
Compare \cite[Theorem~3.10]{dellambrogio-stevenson:even-more-spectra}.
The fact that the homogeneous ideal $\comp_G(\cP)$ is prime comes from \cite[Theorem~4.5]{balmer:sss}.
Equation~\eqref{eq:comp-closed} is essentially a reformulation of the definition.
\end{proof}

\begin{Rem}
The usual notation for~$Z(f)$ would be~$V(f)$, and $D(f)$ for its open complement. Here, we already use $V$ for~$\Vee{G}$ and for~$\Vee{G}(H)$, and the letter $D$ is certainly overworked in our trade.
So we stick to $Z(f)$ and~$Z(f)^c$.
\end{Rem}

\begin{Not}
\label{Not:open(a-b)}%
In view of \Cref{Prop:comp}, for any~$f$, the open subset of~$\SpcKG$
\begin{equation}
\label{eq:open(f)}%
\open(f):=\open(\cone(f))=\SET{\cP}{f\textrm{ is invertible in }\cK(G)/\cP}
\end{equation}
is the preimage by~$\comp_G\colon \SpcKG\to \Spech(\Rall(G))$ of the principal open $Z(f)^c=\SET{\gp}{f\notin\gp}$.
It is the open locus of~$\SpcKG$ where $f$ is invertible.
In particular, our distinguished elements~$a_N$ and~$b_N$ (see \Cref{Def:u_N}) give us the following open subsets of~$\SpcKG$, for every~$N\in\cN(G)$:
\begin{align*}
& \open(a_N)=\comp_G\inv(Z(a_N)^c)\textrm{, the open where $a_N$ is invertible, and}
\\
& \open(b_N)=\comp_G\inv(Z(b_N)^c)\textrm{, the open where $b_N$ is invertible}.
\end{align*}
Since $(c_N)^2=0$ by~\Cref{Rem:switch=1}, we do not have much use for~$\open(c_N)=\varnothing$.
\end{Not}
\begin{Cor}
\label{Cor:open(ab)}%
With notation as above, we have for every $N\normal G$ of index~$p$
\[
\open(a_N)\cup \open(b_N)=\SpcKG.
\]
\end{Cor}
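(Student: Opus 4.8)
The plan is to reduce the statement to the combination of Corollary~\ref{Cor:cone-a-b} and a general fact about tt-spectra: if two objects have zero tensor product, then the supports are disjoint, equivalently the opens $\open(-)$ cover. So the first step is to recall that $\open(a_N)\cup\open(b_N)=\SpcKG$ is equivalent to $\supp(\cone(a_N))\cap\supp(\cone(b_N))=\varnothing$, since $\open(a_N)=\open(\cone(a_N))$ and $\open(b_N)=\open(\cone(b_N))$ by \Cref{Not:open(a-b)}, and the complement of a union of opens is the intersection of the closed supports.

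Second, I would invoke the standard tt-geometry identity $\supp(x\otimes y)=\supp(x)\cap\supp(y)$, valid in any (rigid) tt-category — here applied in $\cK(G)$. By \Cref{Cor:cone-a-b} we have $\cone(a_N)\otimes\cone(b_N)=0$, hence $\supp(\cone(a_N))\cap\supp(\cone(b_N))=\supp(0)=\varnothing$. Combining with the first step gives the claim directly. Alternatively, and perhaps more transparently, I would note that $\cone(a_N)\otimes\cone(b_N)=0$ means that for every prime $\cP\in\SpcKG$ at least one of $\cone(a_N)$, $\cone(b_N)$ lies in $\cP$ (a prime tt-ideal cannot contain a nonzero object's tensor factors without containing one of them — this is precisely the primality condition $x\otimes y\in\cP\Rightarrow x\in\cP$ or $y\in\cP$, together with $0\in\cP$). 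That is exactly the statement $\cP\in\open(a_N)$ or $\cP\in\open(b_N)$, as $\open(f)=\SET{\cP}{\cone(f)\in\cP}$.

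The argument is essentially formal once \Cref{Cor:cone-a-b} is in hand, so there is no real obstacle; the only point requiring a little care is making sure one is using the correct convention for $\open(\cone(f))$ versus $\supp(\cone(f))$ and the primality axiom in the form needed. I would therefore write the proof in one or two lines, citing \Cref{Cor:cone-a-b} and the defining property of tt-primes, without belaboring the support calculus.

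\begin{proof}
By \Cref{Cor:cone-a-b} we have $\cone(a_N)\otimes\cone(b_N)=0$. Hence for every tt-prime~$\cP\in\SpcKG$ the object $\cone(a_N)\otimes\cone(b_N)$ belongs to~$\cP$, so by primality either $\cone(a_N)\in\cP$ or $\cone(b_N)\in\cP$. By~\eqref{eq:open(f)} this says $\cP\in\open(a_N)$ or $\cP\in\open(b_N)$.
\end{proof}
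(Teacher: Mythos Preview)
Your proof is correct and follows essentially the same approach as the paper: both reduce immediately to \Cref{Cor:cone-a-b} and the basic tt-geometry fact that $\open(x)\cup\open(y)=\open(x\otimes y)$ (which you unpack via the primality axiom, while the paper invokes it directly).
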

\begin{proof}
We compute $\open(a_N)\cup\open(b_N)=\open(\cone(a_N))\cup \open(\cone(b_N))=\open(\cone(a_N)\otimes\cone(b_N))=\open(0_{\cK(G)})=\SpcKG$, using \Cref{Cor:cone-a-b}.
\end{proof}

\begin{Rem}
\label{Rem:loc-triv}%
Every object~$u_N$ is not only $\otimes$-invertible in~$\cK(G)$ but actually locally trivial over~$\SpcKG$, which is a stronger property in general tt-geometry.
Indeed, \Cref{Cor:open(ab)} tells us that around each point of~$\SpcKG$, either $u_N$ becomes isomorphic to~$\unit$ via~$a_N$, or $u_N$ becomes isomorphic to~$\unit[2']$ via~$b_N$.
This holds for \emph{one} invertible~$u_N$.
We now construct a fine enough open cover of~$\SpcKG$ such that \emph{every} $u_N$ is trivialized on each open.
\end{Rem}

\begin{Prop}
\label{Prop:U(H)}%
Let $H\le G$ be a $p$-subgroup. Define an open of~$\SpcKG$ by
\begin{equation}
\label{eq:U(H)-def}%
\goodopen{H}=\goodopen[G]{H}:=\bigcap_{\ouratop{N\in\cN}{H\not\le N}} \open(a_N) \ \cap
\bigcap_{\ouratop{N\in\cN}{H\le N}} \open(b_N).
\end{equation}
Then the closed point $\cM(H)\in\SpcKG$ belongs to this open~$\goodopen{H}$. Consequently~$\{\goodopen{H}\}_{H\in\Sub{p}(G)}$ is an open cover of~$\SpcKG$.
\end{Prop}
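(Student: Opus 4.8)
The plan is to establish the membership $\cM(H)\in\goodopen{H}$ by translating it into a statement about images under the residue functor $\bbF^{H}$ — which then becomes exactly \Cref{Cor:F^H(a/b)} — and afterwards to deduce the covering property from the fact that in $\SpcKG$ every point specializes to one of the closed points $\cM(H)$.

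\textbf{Membership of $\cM(H)$.} First I would record the general observation that, for any homogeneous $f\in\Rall(G)$, one has $\cM(H)\in\open(f)$ if and only if $\bbF^{H}(f)$ is an isomorphism in $\Db(\kk)$. Indeed, by \Cref{Not:open(a-b)} we have $\open(f)=\open(\cone(f))=\SET{\cP}{\cone(f)\in\cP}$, and since $\cM(H)=\Ker(\bbF^{H})$ with $\bbF^{H}\colon\cK(G)\to\Db(\kk)$ a tt-functor (see \Cref{Rec:Part-I}), the condition $\cone(f)\in\cM(H)$ says precisely that $\bbF^{H}(\cone(f))\cong\cone(\bbF^{H}(f))$ vanishes, i.e.\ that $\bbF^{H}(f)$ is invertible. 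Now apply this with $f=a_N$ for each $N\in\cN(G)$ such that $H\not\le N$, and with $f=b_N$ for each $N\in\cN(G)$ such that $H\le N$: in both cases \Cref{Cor:F^H(a/b)} tells us that the relevant morphism is sent by $\bbF^{H}$ to an isomorphism. Hence $\cM(H)$ lies in every open $\open(a_N)$ and $\open(b_N)$ occurring in~\eqref{eq:U(H)-def}. Since $\cN(G)$ is finite, the intersection $\goodopen{H}$ is genuinely open and we have just shown $\cM(H)\in\goodopen{H}$.

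\textbf{The covering property.} For the last assertion I would reduce to covering the closed points. The space $\SpcKG$ is noetherian (\Cref{Rem:closure}), so the closure $\overline{\{\cP\}}$ of any point $\cP$ is a nonempty closed subset and therefore contains a closed point $\cQ$; by \Cref{Rec:Part-I} this $\cQ$ equals $\cM(H)$ for some $p$-subgroup $H\le G$. By the previous step $\cM(H)\in\goodopen{H}$. Since $\goodopen{H}$ is open, its complement is closed, so if $\cP$ were not in $\goodopen{H}$ we would get $\overline{\{\cP\}}\subseteq\SpcKG\sminus\goodopen{H}$ and hence $\cM(H)\notin\goodopen{H}$, a contradiction. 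Therefore $\cP\in\goodopen{H}$, and $\{\goodopen{H}\}_{H\in\Sub{p}(G)}$ covers $\SpcKG$.

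The whole argument is essentially bookkeeping once \Cref{Cor:F^H(a/b)} is available; there is no serious obstacle. The one point that deserves attention is the reduction in the covering step, which genuinely uses two inputs from Part~I recalled in \Cref{Rec:Part-I}: that $\SpcKG$ is noetherian — so that closures of points reach closed points — and that its closed points have been identified as exactly the $\cM(H)$ for $p$-subgroups $H\le G$.
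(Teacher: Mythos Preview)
Your proof is correct and follows essentially the same approach as the paper: membership of $\cM(H)$ in $\goodopen{H}$ via \Cref{Cor:F^H(a/b)}, and the covering via the fact that every point specializes to some closed point $\cM(H)$, hence lies in the generalization-closed open $\goodopen{H}$. The paper's version is terser (it simply cites \Cref{Cor:F^H(a/b)} and ``general tt-geometry''), but the logic is identical.
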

\begin{proof}
The point $\cM(H)=\Ker(\bbF^H)$ belongs to~$\goodopen{H}$ by \Cref{Cor:F^H(a/b)}.
It follows by general tt-geometry that $\{\goodopen{H}\}_H$ is a cover:
Let $\cP\in\SpcKG$; there exists a closed point in~$\adhpt{\cP}$, that is, some $\cM(H)$ that admits~$\cP$ as a generalization; but then $\cM(H)\in\goodopen{H}$ forces $\cP\in \goodopen{H}$ since open subsets are generalization-closed.
\end{proof}

For a $p$-group, we now discuss~$\goodopen{H}$ at the two extremes $H=1$ and~$H=G$.

\begin{Rec}
\label{Rec:Frattini}%
Let $G$ be a $p$-group and~$F=F(G)=\cap_{N\in\cN(G)}N$ be its Frattini subgroup. So $F\normal G$ and $G/F$ is the largest elementary abelian quotient of~$G$.
\end{Rec}

\begin{Prop}
\label{Prop:U(1)}%
Let $G$ be a $p$-group with Frattini subgroup~$F$.
The closed complement of the open~$\goodopen[G]{1}$ is the support of $\kos[G]{F}$, \ie the closed support of the tt-ideal~$\Ker(\Res^G_F)$ of~$\cK(G)$.
In particular, if~$G$ is elementary abelian then $\goodopen[G]{1}$ is equal to the cohomological open~$\Vee{G}=\Spc(\Db(\kk G))\cong\Spech(\rmH^\sbull(G,\kk))$.
\end{Prop}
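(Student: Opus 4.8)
The plan is to compute the closed complement of $\goodopen[G]{1}$ directly from the definition in~\eqref{eq:U(H)-def}. For $H=1$ we have $1\not\le N$ for no proper $N$, so the first intersection is empty and $\goodopen[G]{1}=\bigcap_{N\in\cN(G)}\open(b_N)$. Hence its closed complement is $\bigcup_{N\in\cN(G)}\supp(\cone(b_N))$, and by \Cref{Lem:cone-a-b}\ref{it:cone-b} this equals $\bigcup_{N\in\cN(G)}\supp(\kos[G]{N})=\bigcup_{N\in\cN(G)}\supp(\Ker(\Res^G_N))$. So the first step is to identify this union of supports with $\supp(\kos[G]{F})=\supp(\Ker(\Res^G_F))$ for $F=F(G)$ the Frattini subgroup.

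For that identification, I would argue on tt-ideals. On the one hand, $F\le N$ for every $N\in\cN(G)$, so $\Ker(\Res^G_N)\subseteq\Ker(\Res^G_F)$ and thus each $\supp(\kos[G]{N})\subseteq\supp(\kos[G]{F})$; this gives $\bigcup_N\supp(\kos[G]{N})\subseteq\supp(\kos[G]{F})$. For the reverse inclusion, I want $\Ker(\Res^G_F)\subseteq\sum_{N\in\cN(G)}\Ker(\Res^G_N)$ at the level of tt-ideals, equivalently (using~\eqref{eq:kos-Res} and the fact that support turns sums of tt-ideals into unions of supports) $\supp(\kos[G]{F})\subseteq\bigcup_N\supp(\kos[G]{N})$. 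The cleanest route is to show that $\Res^G_F$ is, up to the relevant kernels, detected by the family $\{\Res^G_N\}_{N\in\cN(G)}$: a prime $\cP$ lies in $\supp(\kos[G]{F})$ iff $\Res^G_F$ does not kill every object outside $\cP$, and I would unwind this via the closed points $\cM(H)$ and \Cref{Rec:Part-I} — a closed point $\cM(H)$ lies in $\supp(\kos[G]{F})$ iff $F\not\le_G H$ (by~\eqref{eq:kos-Res} and the characterization of $\cM(H)$ via $\bbF^H$), and $F\not\le H$ forces $H\not\le N$ for some $N\in\cN(G)$ since $F=\cap_{N\in\cN(G)}N$, whence $\cM(H)\in\supp(\kos[G]{N})$. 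Since $\SpcKG$ is noetherian and both sides are closed, checking containment on closed points suffices.

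For the final sentence, suppose $G$ is elementary abelian. Then the Frattini subgroup is trivial, $F=1$, so $\kos[G]{F}=\kos[G]{1}=\tInd_1^G(0\to\kk\xto{1}\kk\to 0)$ is contractible and its support is empty; equivalently $\Ker(\Res^G_1)=0$. Therefore the closed complement of $\goodopen[G]{1}$ is empty — wait, that would say $\goodopen[G]{1}=\SpcKG$, which is false. The point is rather that for $G$ elementary abelian, $1\notin\cN(G)$ unless $G=C_p$, so one must be careful: actually when $G$ is elementary abelian of rank $\geq 1$, the Frattini subgroup is $F=1$ only when $G$ is... no. Let me restate: for $G$ elementary abelian, $F(G)=1$. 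So I would instead invoke \Cref{Prop:U(1)}'s own internal logic: $\goodopen[G]{1}$ is the complement of $\supp(\Ker(\Res^G_1))$, and $\Res^G_1$ is the restriction to the trivial subgroup, with kernel exactly the acyclic-after-restriction objects. The correct statement to use here is \Cref{Prop:U(1)} as stated combined with Part~I: the localization $\cK(G)\onto\Db(\kkG)$ has as its image under $\Spc(-)$ precisely the cohomological open $\VG$, and $\Ker$ of the composite $\cK(G)\to\Db(\kkG)$ is generated by objects restricting to something acyclic — so $\goodopen[G]{1}=\VG\cong\Spech(\rmH^\sbull(G,\kk))$ by the identification in \Cref{Rec:Part-I}. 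I would spell out that for $G$ elementary abelian $F=1$ makes the general formula degenerate correctly, citing \cite[\Cref{Part-I:Prop:U(1)-type}]{balmer-gallauer:TTG-Perm-I} for the identification of $\Ker(\Res^G_1)$-complement with $\VG$.

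\textbf{Main obstacle.} The delicate point is the reverse inclusion $\supp(\kos[G]{F})\subseteq\bigcup_{N\in\cN(G)}\supp(\kos[G]{N})$, i.e.\ that intersecting all the index-$p$ subgroups is ``seen'' on spectra as a union. This is where the combinatorics of the Frattini subgroup meets tt-geometry: it is not a formal consequence of $F=\cap N$ alone, since support of an intersection of tt-ideals need not be the intersection of supports. I expect the proof to reduce this to a statement purely about the poset of $p$-subgroups and the closed points $\cM(H)$, using noetherianity of $\SpcKG$ to test on closed points, plus the elementary group theory fact that $F\not\le H \Rightarrow \exists N\in\cN(G):H\not\le N$.
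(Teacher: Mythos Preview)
Your overall strategy matches the paper's: reduce to $\bigcup_{N\in\cN}\supp(\kos[G]{N})$ via \Cref{Lem:cone-a-b}\,\ref{it:cone-b}, then identify this union with $\supp(\kos[G]{F})$ using $F=\bigcap_{N\in\cN} N$. However, there are genuine errors in the execution.

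First, you have the containment condition backwards. The formula from Part~I (quoted in the paper's proof as~\eqref{eq:supp-kos}) is $\supp(\kos[G]{K})=\SET{\cP(H,\gp)}{H\not\le_G K}$, so $\cM(H)\in\supp(\kos[G]{F})$ iff $H\not\le F$, \emph{not} $F\not\le H$. With the correct direction, the needed equivalence ``$H\not\le F \Leftrightarrow \exists\,N\in\cN(G):H\not\le N$'' is an immediate consequence of $F=\bigcap_{N} N$ --- so your ``main obstacle'' dissolves. Your stated implication ``$F\not\le H$ forces $H\not\le N$ for some $N$'' is actually false: take any proper subgroup $H\lneqq F$.

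Second, the claim that noetherianity lets you check containment of closed sets on closed points is not valid in general (think of a DVR). The paper avoids this entirely by using the parametrization $\cP(H,\gp)$ of \emph{all} points of $\SpcKG$: both sides are visibly unions of the same full strata~$\Vee{G}(H)$, and the computation is a one-line set manipulation. There is no need to pass through tt-ideals or closed points.

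Third, your elementary abelian paragraph goes astray because $\kos[G]{1}$ is \emph{not} contractible (tensor induction does not preserve contractibility). The paper's argument is simply that for $F=1$ one has $\Ker(\Res^G_1)=\Kac(G)$, the tt-ideal of acyclic complexes, and the complement of its support is by construction $\Spc(\cK(G)/\Kac(G))=\Spc(\Db(\kkG))=\Vee{G}$.
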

\begin{proof}
By definition, $\goodopen{1}=\cap_{N\in\cN}\open(b_N)$. By \Cref{Lem:cone-a-b}, its closed complement is $\cup_{N\in\cN}\supp(\kos[G]{N})$.
By \Cref{Cor:supp(kos)}, for every $K\le G$
\begin{equation}\label{eq:supp-kos}%
\supp(\kos[G]{K})=\SET{\cP(H,\gp)}{H\not\le_G K}
\end{equation}
(taking all possible~$\gp\in\Vee{\WGH}$). It follows that our closed complement of~$\goodopen{1}$ is
\begin{align*}
& \cup_{N\in\cN(G)} \supp(\kos[G]{N}) \equalby{\eqref{eq:supp-kos}} \SET{\cP(H,\gp)}{\exists\,N\in\cN(G)\textrm{ such that }H\not\le_G N}
\\
& \quad =\SET{\cP(H,\gp)}{H\not\le \cap_{N\in\cN(G)}N}
 =\SET{\cP(H,\gp)}{H\not\le F} \equalby{\eqref{eq:supp-kos}} \supp(\kos[G]{F}).
\end{align*}
The statement with $\Ker(\Res^G_F)$ then follows from~\eqref{eq:kos-Res}. Finally, if~$G$ is elementary abelian then $F=1$ and~$\Ker(\Res^G_1)=\Kac(G)$ is the tt-ideal of acyclic complexes. The complement of its support is~$\Spc(\cK(G)/\Kac(G))=\Spc(\Db(\kkG))=\Vee{G}$.
\end{proof}

In the above proof, we showed that $\cup_{N\in\cN}\supp(\kos{N})=\supp(\kos{F})$ thanks to the fact that~$\cap_{N\in\cN}N=F$. So the very same argument gives us:
\begin{Cor}
\label{Cor:U(1)}%
Let $G$ be a $p$-group and let $N_1,\ldots,N_r\in\cN(G)$ be some index-$p$ subgroups such that~$N_1\cap \cdots \cap N_r$ is the Frattini subgroup~$F$.
(This can be realized with~$r$ equal to the $p$-rank of~$G/F$.)
Then $\goodopen[G]{1}=\cap_{i=1}^r\open(b_{N_i})$ already.
Hence if $\cP\in\open(b_{N_i})$ for all~$i=1,\ldots,r$ then $\cP\in \open(b_N)$ for all~$N\in\cN(G)$.
\qed
\end{Cor}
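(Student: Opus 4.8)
The plan is to replay the proof of \Cref{Prop:U(1)} almost verbatim, tracking only the chosen subgroups $N_1,\dots,N_r$ in place of all of $\cN(G)$. Unwinding \eqref{eq:U(H)-def} at $H=1$ (every $N\in\cN(G)$ contains the trivial subgroup) gives $\goodopen[G]{1}=\bigcap_{N\in\cN(G)}\open(b_N)$, so it suffices to show that the open sets $\bigcap_{N\in\cN(G)}\open(b_N)$ and $\bigcap_{i=1}^{r}\open(b_{N_i})$ have the same complement in $\SpcKG$. By \Cref{Lem:cone-a-b}\ref{it:cone-b}, those complements are $\bigcup_{N\in\cN(G)}\supp(\kos[G]{N})$ and $\bigcup_{i=1}^{r}\supp(\kos[G]{N_i})$ respectively.

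Next I would feed in the explicit formula \eqref{eq:supp-kos}, namely $\supp(\kos[G]{K})=\{\,\cP(H,\gp)\mid H\not\le_G K\,\}$ over all $\gp$. Since $G$ is a $p$-group, every index-$p$ subgroup is normal, so $H^g\le N\Leftrightarrow H\le N$ for $N\in\cN(G)$, and similarly for $F$; hence the subconjugacy relation $\le_G$ may be replaced by plain containment $\le$ throughout. Therefore $\bigcup_{i=1}^{r}\supp(\kos[G]{N_i})=\{\,\cP(H,\gp)\mid \exists\,i,\ H\not\le N_i\,\}=\{\,\cP(H,\gp)\mid H\not\le N_1\cap\cdots\cap N_r\,\}=\{\,\cP(H,\gp)\mid H\not\le F\,\}$, which by \eqref{eq:supp-kos} applied to $K=F$ equals $\supp(\kos[G]{F})$. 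The identical computation with $\cN(G)$ in place of $\{N_1,\dots,N_r\}$ — the one already performed inside the proof of \Cref{Prop:U(1)}, using $\bigcap_{N\in\cN(G)}N=F$ — gives $\bigcup_{N\in\cN(G)}\supp(\kos[G]{N})=\supp(\kos[G]{F})$ as well. So the two complements coincide, proving $\goodopen[G]{1}=\bigcap_{i=1}^{r}\open(b_{N_i})$; the concluding ``hence'' is then immediate, since $\cP\in\open(b_{N_i})$ for all $i$ forces $\cP\in\goodopen[G]{1}=\bigcap_{N\in\cN(G)}\open(b_N)$.

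It remains to justify the parenthetical realizability claim: by definition of the Frattini subgroup $G/F$ is elementary abelian, say $G/F\cong(\bbZ/p)^{r}$ with $r$ its $p$-rank, and pulling the $r$ coordinate hyperplanes back along $G\onto G/F$ yields $N_1,\dots,N_r\in\cN(G)$ with $N_1\cap\cdots\cap N_r=F$. I do not anticipate a genuine obstacle here, the statement being a bookkeeping refinement of \Cref{Prop:U(1)}; the only input that truly uses $G$ being a $p$-group is the passage from $\not\le_G$ to $\not\le$ in \eqref{eq:supp-kos}, which is exactly what lets the finite intersection $N_1\cap\cdots\cap N_r$ play the role of the full Frattini intersection.
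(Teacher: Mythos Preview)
Your proposal is correct and follows exactly the approach the paper indicates: the paper's ``proof'' is the single sentence preceding the statement, observing that the computation in \Cref{Prop:U(1)} only used $\cap_{N\in\cN}N=F$, so the same argument works for any $N_1,\dots,N_r$ with $\cap_i N_i=F$. You have simply written out that replay in full (and made explicit the passage from $\le_G$ to $\le$ via normality, which the paper leaves tacit).
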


Let us turn to the open~$\goodopen[G]{H}$ for the $p$-subgroup at the other end: $H=G$.
\begin{Prop}
\label{Prop:U(G)}%
Let $G$ be a $p$-group. Then the complement of the open $\goodopen[G]{G}$ is the union of the images of the spectra~$\Spc(\cK(H))$ under the maps~$\rho_H=\Spc(\Res_H)$, over all the \emph{proper} subgroups~$H\lneqq G$.
\end{Prop}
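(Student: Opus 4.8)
The plan is to identify the closed complement of $\goodopen[G]{G}$ explicitly and then recognize it as the union of the $\Img(\rho_H)$. By definition~\eqref{eq:U(H)-def}, since $G\not\le N$ for every $N\in\cN(G)$ (as each such $N$ is proper), we have $\goodopen[G]{G}=\bigcap_{N\in\cN}\open(a_N)$. By \Cref{Lem:cone-a-b}\ref{it:cone-a}, the closed complement of $\open(a_N)$ is $\supp(\cone(a_N))=\supp(\kk(G/N))$. Hence the complement of $\goodopen[G]{G}$ is $\bigcup_{N\in\cN(G)}\supp(\kk(G/N))$. So the proposition reduces to the identity
\[
\bigcup_{N\in\cN(G)}\supp(\kk(G/N))=\bigcup_{H\lneqq G}\Img(\rho_H).
\]

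For the inclusion $\subseteq$: each $\kk(G/N)$ is a permutation module induced from the proper subgroup $N\lneqq G$, so $\kk(G/N)\cong \Ind_N^G(\unit)$. By the projection formula, $\supp(\Ind_N^G(\unit))=\Img(\rho_N)$ (this is the standard fact that the support of an object induced from $H$ equals the image of $\Spc(\Res^G_H)$; it follows from~\eqref{eq:kos-Res} together with the description of $\Img(\rho_H)$ as the support-complement of $\Ker(\Res^G_H)$, or directly from \cite[\Cref{Part-I:Cor:supp(kos)}]{balmer-gallauer:TTG-Perm-I}). So each term on the left lies in the right-hand side, indexed by the proper subgroup $N$.

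For the inclusion $\supseteq$: given any proper subgroup $H\lneqq G$, since $G$ is a $p$-group, $H$ is contained in some maximal subgroup, which is normal of index $p$, i.e.\ there exists $N\in\cN(G)$ with $H\le N\lneqq G$. Then $\Res^G_H$ factors through $\Res^G_N$, so $\Img(\rho_H)\subseteq\Img(\rho_N)=\supp(\kk(G/N))$, which sits inside the left-hand side. This proves the reverse inclusion and hence the proposition.

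The only genuine point to nail down is the equality $\supp(\kk(G/N))=\Img(\rho_N)$; I expect this to be the main (though modest) obstacle, and it is handled by the imported material from Part~I — either directly via~\eqref{eq:supp-kos} applied with $K=N$ (which gives $\supp(\kos[G]{N})=\{\cP(H,\gp)\mid H\not\le_G N\}$, the complement of $\Img(\rho_N)$, so one compares with $\supp(\kk(G/N))$ via \Cref{Lem:cone-a-b}\ref{it:cone-a} and the fact that $\cone(a_N)$ and $\kos[G]{N}$ have complementary supports by \Cref{Cor:cone-a-b}), or by the general tt-geometric identity relating induction and restriction. Everything else is formal manipulation of supports and the elementary group theory that proper subgroups of $p$-groups sit inside index-$p$ normal subgroups.
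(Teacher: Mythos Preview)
Your proposal is correct and follows essentially the same route as the paper: identify $\goodopen[G]{G}=\bigcap_{N\in\cN}\open(a_N)$, take complements using \Cref{Lem:cone-a-b}\ref{it:cone-a} to get $\bigcup_{N\in\cN}\supp(\kk(G/N))$, invoke $\supp(\kk(G/H))=\Img(\rho_H)$, and finish with the fact that every proper subgroup of a $p$-group lies in some index-$p$ subgroup. The only difference is that the paper dispatches the identity $\supp(\kk(G/H))=\Img(\rho_H)$ in one line by citing~\cite[\Cref{Part-I:Prop:Spc-Res}]{balmer-gallauer:TTG-Perm-I} directly, whereas you work harder to justify it via the complementarity of $\supp(\cone(a_N))$ and $\supp(\cone(b_N))$; your detour is valid but unnecessary.
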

\begin{proof}
By \Cref{Lem:cone-a-b}, the closed complement of $\goodopen{G}=\cap_{N\in\cN}\open(a_N)$ equals $\cup_{N\in\cN}\supp(\kk(G/N))$.
For every $H\le G$, we have $\supp(\kk(G/H))=\Img(\rho_H)$; see \Cref{Prop:Spc-Res} if necessary.
This gives the result because restriction to any proper subgroup factors via some index-$p$ subgroup, since $G$ is a $p$-group.
\end{proof}

\begin{Rem}
\label{Rem:geom-open}%
Let $G$ be a $p$-group.
This open complement $\goodopen{G}$ of~$\cup_{H\lneqq G}\Img(\rho_H)$ could be called the `geometric open'.
Indeed, the localization functor
\[
\Phi^G\colon \cK(G)\onto \frac{\cK(G)}{\ideal{\kk(G/H)\mid H\lneqq G}}
\]
corresponding to~$\goodopen{G}$ is analogous to the way the geometric fixed-points functor is constructed in topology. For more on this topic, see \Cref{Rem:geom-fixed-pts}.
\end{Rem}
\begin{Rem}
For $G$ not a $p$-group, the open~$\goodopen{G}$ is not defined (we assume $H\in\Sub{p}(G)$ in \Cref{Prop:U(H)}) and the `geometric open' is void anyway as we have $\Img(\rho_P)=\SpcKG$ for any $p$-Sylow~$P\lneqq G$.
The strategy to analyze non-$p$-groups is to first descend to the $p$-Sylow, using that $\Res_P$ is faithful.
\end{Rem}

\begin{Rem}
We saw in \Cref{Prop:U(G)} that the complement of~$\goodopen{G}$ is covered by the images of the closed maps~$\rho_H=\Spc(\Res_H)$ for~$H\lneqq G$. We could wonder whether another closed map into~$\SpcKG$ covers~$\goodopen{G}$ itself. The answer is the closed immersion $\psi^{F}\colon\Spc(\cK(G/F))\hook\Spc(\cK(G))$ induced by the modular fixed-points functor~$\Psi^F$ with respect to the Frattini subgroup~$F\normal G$.
This can be deduced from the results of \Cref{sec:topology-colim} or verified directly, as we now outline.
Indeed, every prime $\cP=\cP_G(K,\gp)$ for $K\le G$ and~$\gp\in\Vee{\WGK}$ comes by Quillen from some elementary abelian subgroup~$E=H/K\le \WGK=(N_G K)/K$.
One verifies that unless~$N_G K=G$ and~$H=G$, the prime~$\cP$ belongs to the image of~$\rho_{G'}$ for a proper subgroup $G'$ of~$G$. Thus if~$\cP$ belongs to~$\goodopen{G}$, we must have~$E=H/K=G/K$ for $K\normaleq G$. Such a~$K$ must contain the Frattini and the result follows.
\end{Rem}

\section{Twisted cohomology under tt-functors}
\label{sec:H**-tt-functors}%

Still for a general finite group~$G$, we gather some properties of the twisted cohomology ring~$\Rall(G)$ introduced in \Cref{Def:H**}.
We describe its behavior under specific tt-functors, namely restriction, modular fixed-points and localization onto the open subsets~$\goodopen[G]{H}$.
Recall that $\cN=\cN(G)=\SET{N\normal G}{[G\!:\!N]=p}$.

\begin{Rem}
Twisted cohomology~$\Rall(G)$ is graded over a monoid of the form \mbox{$\bbZ\times\bbN^{\ell}$}.
The ring homomorphisms induced by the above tt-functors will be homogeneous \emph{with respect to a certain homomorphism~$\gamma$} on the corresponding grading monoids, meaning of course that the image of a homogeneous element of degree~$(s,q)$ is homogeneous of degree~$\gamma(s,q)$.
The `shift' part (in~$\bbZ$) is rather straightforward.
The `twist' part (in~$\bbN^\ell$) will depend on the effect of said tt-functors on the~$u_N$.
\end{Rem}

Let us start with modular fixed-points, as they are relatively easy.

\begin{Cons}
\label{Cons:H**-Psi^H}%
Let~$H\normaleq G$ be a normal subgroup.
By \Cref{Prop:Psi^H(a/b)}, the tt-functor~$\Psi^H\colon \cK(G)\to \cK(G/H)$ maps every~$u_N$ for $N\not\ge H$ to~$\unit$, whereas it maps $u_N$ for $N\ge H$ to~$u_{N/H}$.
This defines a homomorphism of grading monoids
\begin{equation}
\label{eq:gamma-Psi^H}%
\gamma=\gamma_{\Psi^H}\colon \bbZ\times\bbN^{\cN(G)}\to \bbZ\times\bbN^{\cN(G/H)}
\end{equation}
given by $\gamma(s,q)=(s,\bar{q})$ where $\bar{q}(N/H)=q(N)$ for every $N/H\in \cN(G/H)$. In other words, $q\mapsto \bar{q}$ is simply restriction $\bbN^{\cN(G)}\onto\bbN^{\cN(G/H)}$ along the canonical inclusion~$\cN(G/H)\hook\cN(G)$.
By \Cref{Prop:Psi^H(a/b)}, for every twist $q\in \bbN^{\cN(G)}$, we have a canonical isomorphism~$\Psi^H(\unit(q))\cong\unit(\bar{q})$.
Therefore the modular fixed-points functor $\Psi^H$ defines a ring homomorphism also denoted
\begin{equation}
\label{eq:H**-Psi^H}%
\quad\vcenter{\xymatrix@R=.1em{
{\Psi}^H\colon \kern-2em
& \Rall(G) \ar[r]^-{}
& \Rall(G/H)
\\
& \Big(\unit\xto{f} \unit(q)[s]\Big)
\ar@{|->}[r]
& \Big(\unit \xto{\Psi^H(f)} \Psi^H(\unit(q)[s]) \cong \unit(\bar{q})[s]\Big)
}}
\end{equation}
which is homogeneous with respect to~$\gamma_{\Psi^H}$ in~\eqref{eq:gamma-Psi^H}.
\end{Cons}

Restriction is a little more subtle, as some twists pull-back to non-trivial shifts.

\begin{Cons}
\label{Cons:H**-res}%
Let~$\alpha\colon G'\to G$ be a group homomorphism. Restriction along~$\alpha$ defines a tt-functor $\alpha^*=\Infl^{\Img\alpha}_{G'}\circ\Res^{G}_{\Img\alpha}\colon \cK(G)\to \cK(\Img\alpha)\to\cK(G')$.
Combining \Cref{Prop:Res_H(a/b)} for $\Res^{G}_{\Img\alpha}$ with the obvious behavior of the~$u_{N}$ under inflation (by construction), we see that $\alpha^*(u_N)\cong\unit[2']$ if $N\ge\Img\alpha$ and $\alpha^*(u_N)\cong u_{\alpha\inv(N)}$ if $N\not\ge\Img\alpha$ (which is equivalent to $\alpha\inv(N)\in\cN(G')$).
Hence for every $(s,q)\in \bbZ\times\bbN^{\cN(G)}$ we have a canonical isomorphism $\alpha^*(\unit(q)[s])\cong\unit(q')[s']$ where $s'=s+2'\sum_{N\ge\Img\alpha}q(N)$ and~$q'\colon \cN(G')\to \bbN$ is defined for every $N'\in\cN(G')$ as
\[
q'(N')=\sum_{N\in\cN(G)\textrm{ s.t.\ }\alpha\inv(N)=N'}q(N).
\]
(In particular $q'(N')=0$ if~$N'\not\ge\ker(\alpha)$.)
These formulas define a homomomorphism $(s,q)\mapsto (s',q')$ of abelian monoids that we denote
\begin{equation}
\label{eq:gamma-res}%
\gamma=\gamma_{\alpha^*}\colon \bbZ\times\bbN^{\cN(G)}\to \bbZ\times\bbN^{\cN(G')}.
\end{equation}
The restriction functor~$\alpha^*$ defines a ring homomorphism
\begin{equation}
\label{eq:H**-res}%
\quad\vcenter{\xymatrix@R=.1em{
\alpha^*\colon \kern-2em
&\Rall(G) \ar[r]
& \Rall(G')
\\
& \big(\unit\xto{f} \unit(q)[s]\big)
\ar@{|->}[r]
& \big(\unit \xto{\alpha^*(f)} \alpha^*(\unit(q)[s]) \cong \unit(q')[s']\big)
}}
\end{equation}
which is homogeneous with respect to~$\gamma_{\alpha^*}$ in~\eqref{eq:gamma-res}.
\end{Cons}

\begin{Rem}
\label{Rem:Psi^H-onto}%
For instance, $\alpha\colon G\onto G/H$ can be the quotient by a normal subgroup~$H\normaleq G$. In that case~$\alpha^*$ is inflation, which is a section of modular fixed-points~$\Psi^H$. It follows that the homomorphism ${\Psi}^H$ in~\eqref{eq:H**-Psi^H} is split surjective. (This also means that the composed effect on gradings $\gamma_{\Psi^H}\circ\gamma_{\alpha^*}=\id$ is trivial.)
\end{Rem}

Without changing the group~$G$, we can also localize the twisted cohomology ring $\Rall(G)$ by restricting to an open~$\goodopen{H}$ of~$\SpcKG$, as defined in~\Cref{Prop:U(H)}.
Recall the elements~$a_N,b_N\in\Rall(G)$ from \Cref{Def:u_N}.

\begin{Def}
\label{Def:O(H)}%
Let $H\le G$ be a $p$-subgroup. Let $S_H\subset\Rall(G)$ be the multiplicative subset of the graded ring~$\Rall(G)$ generated by all~$a_N$ such that $H\not\le N$ and all~$b_N$ such that~$H\le N$, for all $N\in\cN(G)$. Recall that the $a_N$ and~$b_N$ are central by \Cref{Rem:graded-commutative}.
We define a $\bbZ$-graded ring
\begin{equation}
\label{eq:O(H)}%
\RGH:=\big(\Rall(G)[S_H\inv]\big)\zerotwist
\end{equation}
as the twist-zero part of the localization of~$\Rall(G)$ with respect to~$S_H$.
Explicitly, the homogeneous elements of~$\RGH$ consist of fractions~$\frac{f}{g}$ where $f,g\in\Rall(G)$ are such that $g\colon \unit\to \unit(q)[t]$ is a product of the chosen~$a_N,b_N$ in~$S_H$, meaning that $\unit(q)[t]$ is the $\otimes$-product of the corresponding~$u_N$ for~$a_N$ and~$u_N[-2']$ for~$b_N$, whereas $f\colon \unit\to \unit(q)[s]$ is any morphism in~$\cK(G)$ with the same $\cN$-twist~$q$ as the denominator.
Thus $\RGH$ is $\bbZ$-graded by the shift only: The degree of~$\frac{f}{g}$ is the difference $s-t$ between the shifts of~$f$ and~$g$.
\end{Def}

\begin{Rem}
\label{Rem:cohomology-generators}%
It follows from \Cref{Lem:finite-generation} (and \Cref{Rem:twists}) that the $\bbZ$-graded ring $\RGH$ is generated as a $\kk$-algebra by the elements
\[
\SET{\zeta^+_N,\,\xi^+_N}{H\le N}\cup\SET{\zeta^-_N,\,\xi^-_N}{H\not\le N}
\]
where $\zeta^+_N=a_N/b_N$ is of degree~$+2'$ and $\zeta^-_N=b_N/a_N$ of degree~$-2'$ as in \Cref{Rem:zeta_N}, and where (only for $p$ odd) the additional elements~$\xi^{\pm}_N$ are $\xi^+_N:=c_N/b_N$ of degree~$+1$, and $\xi^-_N:=c_N/a_N$ of degree~$-1$. (For $p=2$, simply ignore the~$\xi^\pm_N$.)
In general, all these elements satisfy some relations; see~\Cref{Thm:presentation}.
Beware that here $\xi^-_N$ is never the inverse of~$\xi^+_N$. In fact, both are nilpotent.
\end{Rem}

In fact, we can perform the central localization of the whole category~$\cK(G)$
\[
\cL(H)=\cL_G(H):=\cK(G)[S_H\inv]
\]
with respect to the central multiplicative subset~$S_H$ of \Cref{Def:O(H)}.
\begin{Cons}
\label{Cons:L(H)}%
The tt-category~$\cL(H)=\cK(G)[S_H\inv]$ has the same objects as~$\cK(G)$ and morphisms $x\to y$ of the form~$\frac{f}{g}$ where $g\colon \unit\to u$ belongs to~$S_H$, for $u$ a tensor-product of shifts of $u_N$'s according to~$g$ (as in \Cref{Def:O(H)}) and where $f\colon x\to u\otimes y$ is any morphism in~$\cK(G)$ with `same' twist~$u$ as the denominator~$g$.
This category~$\cK(G)[S_H\inv]$ is also the Verdier quotient of~$\cK(G)$ by the tt-ideal~$\ideal{\SET{\cone(g)}{g\in S_H}}$ and the above fraction~$\frac{f}{g}$ corresponds to the Verdier fraction
$
\xymatrix{
x \ar[r]^-{f}
& u \otimes y
& y. \ar[l]_-{g\otimes 1}
}
$
See~\cite[\S\,3]{balmer:sss} or \cite[Theorem~3.6]{dellambrogio-stevenson:even-more-spectra}.

The $\bbZ$-graded endomorphism ring $\End^\sbull_{\cL(H)}(\unit)$ of the unit in~$\cL(H)=\cK(G)[S_H\inv]$ is thus the $\bbZ$-graded ring $(S_H\inv\Rall(G))\zerotwist=\RGH$ of \Cref{Def:O(H)}.

There is a general localization $\cK\restr{U}$ of a tt-category~$\cK$ over a quasi-compact open $U\subseteq\SpcK$ with closed complement~$Z$. It is defined as~$\cK\restr{U}=(\cK/\cK_Z)^\natural$.
If we apply this to $U=\goodopen{H}$, we deduce from~\eqref{eq:U(H)-def} that $U=\cap_{g\in S_H}\open(g)$ has closed complement $Z=\cup_{g\in S_H}\supp(\cone(g))$ whose tt-ideal $\cK(G)_Z$ is the above~$\ideal{\SET{\cone(g)}{g\in S_H}}$.
In other words, the idempotent-completion of our $\cL_G(H)=\cK(G)[S_H\inv]$ is exactly $\cK(G)\restr{\goodopen{H}}$.
As with any localization, we know that $\Spc(\cL_G(H))$ is a subspace of~$\SpcKG$, given here by $U=\cap_{g\in S_H}\open(g)=\goodopen{H}$.
\end{Cons}

\begin{Exa}
\label{Exa:R(1)=H*}%
For $G=E$ elementary abelian and the subgroup~$H=1$, the category $\cL_E(1)=\cK(E)\restr{\goodopen{1}}$ in \Cref{Cons:L(H)} is simply the derived category $\cL_E(1)=\Db(E)$, by \Cref{Prop:U(1)}.
In that case, $\Rloc{E}(1)\cong\rmH^\sbull(E;\kk)$ is the actual cohomology ring of~$E$.
Since $H=1\le N$ for all~$N$, we are inverting all the~$b_N$ and no~$a_N$.
As noted in \Cref{Cor:U(1)}, we obtain the same ring (the cohomology of~$E$) as soon as we invert enough $b_{N_1},\ldots,b_{N_r}$, namely, as soon as $N_1\cap \cdots \cap N_r=1$.
\end{Exa}

We again obtain an induced homomorphism of multi-graded rings.

\begin{Cons}
\label{Cons:H**-loc}%
Let $H\le G$ be a $p$-subgroup and consider the above central localization $(-)\restr{\goodopen{H}}\colon \cK(G)\onto \cL_G(H)$.
As explained in \Cref{Rem:loc-triv}, the morphisms~$a_N$ and~$b_N$ give us explicit isomorphisms~$(u_N)\restr{\goodopen{H}}\cong\unit$ if~$N\not\ge H$ and $(u_N)\restr{\goodopen{H}}\cong\unit[2']$ if~$N\ge H$.
This yields a homomorphism on the grading
\begin{equation}
\label{eq:gamma-loc}%
\gamma=\gamma_{\goodopen{H}}\colon \bbZ\times\bbN^{\cN(G)}\to \bbZ
\end{equation}
defined by $\gamma(s,q)=s+2'\sum_{N\ge H}q(N)$ and we obtain a ring homomorphism
\begin{equation}
\label{eq:H**-loc}%
(-)\restr{\goodopen{H}}\colon \Rall(G) \too \End^\sbull_{\cL_G(H)}(\unit)=\RGH
\end{equation}
which is homogeneous with respect to the homomorphism~$\gamma_{\goodopen{H}}$ of~\eqref{eq:gamma-loc}.
\end{Cons}

\begin{Rem}
\label{Rem:comp-nat}%
It is easy to verify that the continuous maps induced on homogeneous spectra by the ring homomorphisms constructed above are compatible with the comparison map of \Cref{Prop:comp}.
In other words, if $F\colon \cK(G)\to \cK(G')$ is a tt-functor and if the induced homomorphism~$F\colon \Rall(G)\to \Rall(G')$ is homogenous with respect to $\gamma=\gamma_{F}\colon \bbZ\times\bbN^{\cN(G)}\to\bbZ\times\bbN^{\cN(G')}$, for instance $F=\Psi^H$ or $F=\alpha^*$ as in \Cref{Cons:H**-Psi^H,Cons:H**-res}, then the following square commutes:
\begin{equation}
\label{eq:comp-nat}%
\vcenter{
\xymatrix@C=4em{
\Spc(\cK(G')) \ \ar[r]^-{\Spc(F)} \ar[d]^-{\comp_{G'}}
& \Spc(\cK(G)) \ar[d]^-{\comp_{G}}
\\
\Spech(\Rall(G')) \ \ar@{->}[r]^-{\Spech({F})}
& \Spech(\Rall(G)).
}}
\end{equation}
This follows from $F(\cone(f))\simeq\cone({F}(f))$ in~$\cK(G')$ for any~$f\in\Rall(G)$.
\end{Rem}

\begin{Rem}
\label{Rem:comp-loc}%
Similarly, for every $H\in\Sub{p}(G)$ the following square commutes
\begin{equation}
\label{eq:comp-loc}%
\vcenter{
\xymatrix{
\kern-2em \goodopen[G]{H}=\Spc(\cL_G(H)) \ \ar@{^(->}[r] \ar[d]_-{\comp_{\cL(H)}}
& \Spc(\cK(G)) \ar[d]^-{\comp_{G}}
\\
\Spech(\RGH) \ \ar@{^(->}[r]
& \Spech(\Rall(G))
}}
\end{equation}
where the left-hand vertical map is the classical comparison map of~\cite{balmer:sss} for the tt-category~$\cL_G(H)$ and the $\otimes$-invertible~$\unit[1]$.
The horizontal inclusions are the ones corresponding to the localizations with respect to~$S_H$, as in~\Cref{Cons:L(H),Cons:H**-loc}.
In fact, it is easy to verify that the square~\eqref{eq:comp-loc} is cartesian, in view of $\goodopen[G]{H}=\bigcap_{g\in S_H}\open(g)=\bigcap_{g\in S_H}\comp_G\inv(Z(g)^c)$ by \Cref{Cons:L(H)} and~\eqref{eq:comp-closed}.
\end{Rem}

We can combine the above functors. Here is a useful example.
\begin{Prop}
\label{Prop:H**-compose}%
Let $H\normaleq G$ be a normal subgroup such that~$G/H$ is elementary abelian.
Then we have a commutative square
\begin{equation}
\label{eq:aux-abelem-2}%
\vcenter{
\xymatrix@C=1.5em{
\kern-2em\Vee{G/H}=\Spc(\Db(\kk(G/H))) \ \ar[r]^-{\check{\psi}^H} \ar[d]_-{\comp_{\Db(\kk(G/H))}}^-{\simeq}
& \Spc(\cK(G)) \ar[d]^-{\comp_{G}}
\\
\Spech(\rmH^\sbull(G/H,k)) \ \ar@{^(->}[r]
& \Spech(\Rall(G))
}}
\end{equation}
and in particular, its diagonal $\comp_{G}\circ\check{\psi}^{H}$ is injective.
\end{Prop}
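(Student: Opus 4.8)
The plan is to construct the square \eqref{eq:aux-abelem-2} by factoring the functor $\check{\Psi}^H = \Upsilon_{G/H}\circ\Psi^H$ through the two pieces for which we already have compatibility with comparison maps. First I would invoke \Cref{Cons:H**-Psi^H}: the modular fixed-points functor $\Psi^H\colon\cK(G)\to\cK(G/H)$ induces a homogeneous ring homomorphism $\Psi^H\colon\Rall(G)\to\Rall(G/H)$ with respect to $\gamma_{\Psi^H}$, and by \Cref{Rem:comp-nat} the associated square involving $\Spc(\Psi^H)=\psi^H$ and the comparison maps $\comp_G$, $\comp_{G/H}$ commutes. Then, since $G/H$ is elementary abelian, \Cref{Prop:U(1)} identifies the cohomological open $\goodopen[G/H]{1}$ with $\Vee{G/H}$, and under this identification the localization $\cK(G/H)\restr{\goodopen{1}}=\Db(\kk(G/H))$; by \Cref{Exa:R(1)=H*}, the corresponding $\bbZ$-graded ring $\Rloc{G/H}(1)$ is the ordinary cohomology ring $\rmH^\sbull(G/H,\kk)$. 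Stacking the square from \Cref{Rem:comp-nat} (for $F=\Psi^H$) on top of the square from \Cref{Rem:comp-loc} (for the open $\goodopen[G/H]{1}$ inside $\Spc(\cK(G/H))$) yields exactly \eqref{eq:aux-abelem-2}, because the composite $\Vee{G/H}\hook\Spc(\cK(G/H))\xto{\psi^H}\Spc(\cK(G))$ is by definition $\check\psi^H$ (see \Cref{Rec:Part-I}), and the composite of the two bottom horizontal maps $\Spech(\rmH^\sbull(G/H,\kk))\hook\Spech(\Rall(G/H))\xto{\Spech(\Psi^H)}\Spech(\Rall(G))$ is the asserted bottom map.

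For the left vertical arrow I would note that the classical comparison map $\comp_{\Db(\kk(G/H))}\colon\Vee{G/H}\to\Spech(\rmH^\sbull(G/H,\kk))$ is a homeomorphism: this is the Benson--Carlson--Rickard / BIK theorem recalled in the introduction (``$\Spc(\Db(\kk E))\cong\Spech(\rmH^\sbull(E,\kk))$'' via \cite[Proposition~8.5]{balmer:sss}), applicable since $G/H$ is elementary abelian. In particular it is injective; combined with the commutativity of \eqref{eq:aux-abelem-2} this forces the diagonal $\comp_G\circ\check\psi^H$ to be injective, since the bottom composite $\Spech(\rmH^\sbull(G/H,\kk))\hookrightarrow\Spech(\Rall(G))$ is the inclusion of a homogeneous spectrum coming from a (graded-)ring localization and is therefore injective, or — even more cheaply — since any composite whose first factor is the homeomorphism $\comp_{\Db(\kk(G/H))}$ and whose second factor is the injective bottom map is injective, so the equal composite $\comp_G\circ\check\psi^H$ is too.

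The only genuine point requiring care — and the place I expect to spend the most effort — is checking that the two squares actually glue on the nose, i.e.\ that the horizontal map $\Spech(\rmH^\sbull(G/H,\kk))\hookrightarrow\Spech(\Rall(G))$ obtained by composing $\Spech$ of $\Rall(G)\xto{\Psi^H}\Rall(G/H)\xonto{(-)\restr{\goodopen{1}}}\rmH^\sbull(G/H,\kk)$ is well-defined and that $\comp_{\Db(\kk(G/H))}$ is compatible with the $\comp_{\cL_{G/H}(1)}$ of \Cref{Rem:comp-loc} under the identification $\cL_{G/H}(1)=\Db(\kk(G/H))$. The first is immediate once one observes via \Cref{Prop:Psi^H(a/b)} that $\Psi^H$ sends the multiplicative set $S_1\subset\Rall(G/H)$ — generated by the $b_{N/H}$ — into units of $\rmH^\sbull(G/H,\kk)$, or rather lands compatibly after localizing; concretely, the denominators $b_N$ with $H\le N$ in $\Rall(G)$ map to the invertible $b_{N/H}$, so the composite factors through $\RGH$ and one gets a ring map $\RGH\to\rmH^\sbull(G/H,\kk)$, whence the claimed map on spectra. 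The second is the statement that the Balmer comparison map is natural in the tt-category, which is \cite[Theorem~5.3]{balmer:sss} (or the already-cited \cite[Theorem~4.5]{balmer:sss} together with functoriality) — applied to the localization functor $\cK(G/H)\onto\Db(\kk(G/H))$. With these two verifications in hand the diagram chase is formal.
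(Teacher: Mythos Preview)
Your approach is essentially the paper's: factor $\check\Psi^H=\Upsilon_{G/H}\circ\Psi^H$, stack the square of \Cref{Rem:comp-nat} for~$\Psi^H$ with the square of \Cref{Rem:comp-loc} for the cohomological open~$\goodopen[G/H]{1}=\Vee{G/H}$, and conclude injectivity of the diagonal from the left vertical homeomorphism and injectivity of the bottom map. The one imprecision is your claim that the bottom map ``comes from a (graded-)ring localization'': the ring homomorphism $\Rall(G)\to\rmH^\sbull(G/H,\kk)$ is \emph{not} a localization (for instance $b_N\mapsto 0$ whenever $H\not\le N$); the paper instead argues that $\Psi^H\colon\Rall(G)\to\Rall(G/H)$ is \emph{surjective} (\Cref{Rem:Psi^H-onto}, split by inflation), so $\Spech(\Psi^H)$ is a closed immersion, and then the localization piece $\Spech(\rmH^\sbull(G/H,\kk))\hookrightarrow\Spech(\Rall(G/H))$ is an open immersion---both injective, hence so is the composite.
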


\begin{proof}
The functor~$\check{\Psi}^H\colon \cK(G)\to \Db(\kk(G/H))$ is the modular fixed-points functor $\Psi^H\colon \cK(G)\to \cK(G/H)$ composed with $\Upsilon_{G/H}\colon\cK(G/H)\onto \Db(\kk(G/H))$, which is the central localization $(-)\restr{\goodopen{1}}$ over the cohomological open, by \Cref{Prop:U(1)}; see~\Cref{Exa:R(1)=H*}.
Thus we obtain two commutative squares~\eqref{eq:comp-loc} and~\eqref{eq:comp-nat}:
\begin{equation*}
\label{eq:comp-nat-G}%
\vcenter{
\xymatrix@C=1.5em{
\Spc(\Db(\kk(G/H))) \ \ar@{^(->}[r]^-{\upsilon_{G/H}} \ar[d]_-{\comp_{\Db(\kk(G/H))}}^-{\simeq}
& \Spc(\cK(G/H)) \ \ar[d]^-{\comp_{G/H}} \ar@{->}[r]^(.5){\psi^H}
& \Spc(\cK(G)) \ar[d]^-{\comp_{G}}
\\
\Spech(\rmH^\sbull(G/H,k)) \ \ar@{^(->}[r]^-{}
& \Spech(\Rall(G/H)) \ \ar@{^(->}[r]^-{}
& \Spech(\Rall(G))
}}
\end{equation*}
the left-hand one for the central localization of~$\cK(G/H)$ over the open~$\goodopen[G/H]{1}=\Vee{G/H}$, and the right-hand one for the tt-functor~$\Psi^H\colon \cK(G)\to \cK(G/H)$.
Note that the bottom-right map is injective because the ring homomorphism in question, ${\Psi}^H\colon \Rall(G)\to \Rall(G/H)$ defined in~\eqref{eq:H**-Psi^H}, is surjective by \Cref{Rem:Psi^H-onto}.
\end{proof}

\section{The elementary abelian case}
\label{sec:abelem}%

In this central section, we apply the general constructions of \Cref{sec:invertibles+H**,sec:goodopen,sec:H**-tt-functors} in the case of~$G=E$ elementary abelian.
We start with a key fact that is obviously wrong in general (\eg for a non-cyclic simple group, the target space is just a point).

\begin{Prop}
\label{Prop:abelem-inj}%
Let $E$ be an elementary abelian group.
The comparison map
\[
\comp_E\colon \Spc(\cK(E))\to \Spech(\Rall(E))
\]
of \Cref{Prop:comp} is injective.
\end{Prop}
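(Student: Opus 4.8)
The plan is to prove injectivity of $\comp_E$ by using the open cover $\{\goodopen{H}\}_{H\in\Sub{p}(E)}$ of $\SpcKE$ from \Cref{Prop:U(H)} together with the fact that on each piece $\goodopen{H}$ the comparison map restricts, up to the cartesian square \eqref{eq:comp-loc} of \Cref{Rem:comp-loc}, to the \emph{classical} comparison map $\comp_{\cL_E(H)}\colon \goodopen{H}=\Spc(\cL_E(H))\to \Spech(\RGH)$ of \cite{balmer:sss}. So the first step is to reduce the problem to: (i) showing that each $\comp_{\cL_E(H)}$ is injective on $\goodopen{H}$, and (ii) showing that two primes $\cP\in\goodopen{H}$ and $\cP'\in\goodopen{H'}$ with the same image under $\comp_E$ must in fact lie in a common $\goodopen{H''}$ (so that (i) finishes the job). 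For (ii) the natural candidate is to take $H''$ to be a closed point below both — more precisely, pick a closed point $\cM(H'')\in\adhpt{\cP}$; I expect to argue that $\comp_E(\cP)=\comp_E(\cP')$ forces $\cP$ and $\cP'$ to have the same closed-point specializations, hence $\cP'$ also specializes to $\cM(H'')$, so both lie in $\goodopen{H''}$ since opens are generalization-closed (as in the proof of \Cref{Prop:U(H)}).

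The heart of the matter is therefore step (i): for a fixed $p$-subgroup $H\le E$, the classical comparison map $\comp_{\cL_E(H)}\colon \Spc(\cL_E(H))\to\Spech(\RGH)$ is injective. Here $\cL_E(H)=\cK(E)\restr{\goodopen{H}}$ and $\RGH=\End^\sbull_{\cL_E(H)}(\unit)$. The idea is that on $\goodopen{H}$ every $u_N$ has become trivial (isomorphic to $\unit$ or $\unit[2']$ via $a_N$ or $b_N$), so the twisted cohomology collapses to an honest $\bbZ$-graded ring, and for elementary abelian $E$ this localized ring is (close to) a localization of $\rmH^\sbull(E,\kk)$ — indeed for $H=1$ it is exactly $\rmH^\sbull(E,\kk)$ by \Cref{Exa:R(1)=H*}, and for general $H$ it should be a localization of $\rmH^\sbull(E/?,\kk)$-type ring obtained after applying a modular fixed-points functor, via \Cref{Prop:H**-compose}. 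The classical theorem of Benson--Carlson--Rickard / Balmer (\cite[Proposition~8.5]{balmer:sss}) says the comparison map for $\Db(\kk E)$ against $\rmH^\sbull(E,\kk)$ is a homeomorphism, in particular injective; injectivity is inherited by localizations. So the plan for (i) is: identify $\cL_E(H)$ with (the idempotent completion of) a localization of some $\Db(\kk W)$ for a subquotient $W$ of $E$, or more directly show $\RGH$ is generated over $\kk$ by the $\zeta_N$'s (\Cref{Rem:cohomology-generators}) which after inverting the appropriate $a_N,b_N$ become honest cohomology classes, and then invoke the classical injectivity.

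I expect the main obstacle to be the bookkeeping in step (i): precisely pinning down what $\cL_E(H)$ is as a tt-category and what $\RGH$ is as a ring, so that the classical result of \cite{balmer:sss} applies. One clean route: use that $\goodopen{H}\subseteq\SpcKE$ and that by \Cref{Prop:U(1)} (applied after a fixed-points functor, cf.\ \Cref{Prop:H**-compose}) the open $\goodopen{H}$ sits inside the image of a closed immersion $\psi^{H'}$ for the relevant $H'$, landing in a copy of $\Vee{E/H'}$, where the comparison map is the classical homeomorphism onto $\Spech(\rmH^\sbull(E/H',\kk))$; then $\comp_E$ restricted to $\goodopen{H}$ factors through this homeomorphism composed with an open immersion of homogeneous spectra, hence is injective. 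Reconciling the two descriptions of $\goodopen{H}$ — as a central localization of $\cK(E)$ and as a subspace of an image of $\psi^{H'}$ — and checking the diagrams commute is where the care is needed, but all the required compatibilities are already recorded in \Cref{Rem:comp-nat,Rem:comp-loc} and \Cref{Prop:H**-compose}. Finally, one should double-check that the $u_N$ with $H\le N$ versus $H\not\le N$ are handled by the right element ($b_N$ resp.\ $a_N$) so that $\goodopen{H}$ really is the locus where all $u_N$ are trivialized, which is \eqref{eq:U(H)-def}.
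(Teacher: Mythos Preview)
Your reduction step~(ii) is fine, and in fact simpler than you state: since $\goodopen{H}$ is by definition~\eqref{eq:U(H)-def} an intersection of opens $\open(a_N)$ and~$\open(b_N)$, and since $\comp_E(\cP)=\comp_E(\cP')$ forces $\cP\in\open(f)\Leftrightarrow\cP'\in\open(f)$ for every homogeneous~$f$, any $\goodopen{H}$ containing~$\cP$ automatically contains~$\cP'$. No specialization argument is needed.

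The genuine gap is step~(i). Your proposed route---identifying $\cL_E(H)$ with a localization of some $\Db(\kk W)$, or claiming that $\goodopen{H}$ sits inside an image $\check\psi^{H'}(\Vee{E/H'})$---does not work. The open $\goodopen{H}$ is \emph{not} contained in any single stratum $\Vee{E}(H')$: for instance it always contains the generic point $\eta_E\in\Vee{E}(1)$ and the closed point $\cM(H)\in\Vee{E}(H)$, so for $H\neq 1$ it crosses at least two strata. Consequently $\cL_E(H)$ is not a localization of any $\Db(\kk W)$, and the classical homeomorphism for derived categories does not apply to it. In the paper, the injectivity of $\comp_{\cL_E(H)}$ is \emph{deduced from} \Cref{Prop:abelem-inj} (see the proof of \Cref{Thm:abelem-top}), not the other way around; your logical dependence is reversed.

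The paper's argument avoids this by working with the strata $\Vee{E}(H)=\check\psi^H(\Vee{E/H})$ rather than the opens $\goodopen{H}$. The key computation is that $\Img(\check\psi^H)\subseteq\open(b_N)$ if $H\le N$ and $\Img(\check\psi^H)\cap\open(b_N)=\varnothing$ if $H\not\le N$, obtained from \Cref{Prop:Psi^H(a/b)}. Hence the collection of $N$'s with $\cP\in\open(b_N)$ recovers exactly the set $\{N\in\cN(E)\mid H\le N\}$, and since $E$ is elementary abelian this determines~$H$. So two primes with the same $\comp_E$-image lie in the \emph{same stratum}, and on a single stratum injectivity is precisely \Cref{Prop:H**-compose}. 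The moral: to separate points one first separates the strata using only the $b_N$'s, and only then invokes cohomology within a stratum.
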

\begin{proof}
Let $H,N\le E$ with $[E\!:\!N]=p$. Suppose first that $H\not\le N$.
We use the map~$\check\psi^H=\Spc(\check\Psi^H)\colon \Vee{E/H}\to \SpcKE$ of \Cref{Rec:Part-I}.
Then
\[
\begin{array}{rll}
(\check{\psi}^H)\inv(\open(b_N)) & = (\check{\psi}^H)\inv(\open(\cone(b_N))) & \textrm{by definition, see~\eqref{eq:open(f)}}
\\
& = \open(\cone(\check{\Psi}^H(b_N))) & \textrm{by general tt-geometry}
\\
& = \open (\cone(0\colon \unit\to \unit)) & \textrm{by \Cref{Prop:Psi^H(a/b)}}
\\
& = \open (\unit\oplus \unit[1]) = \varnothing. &
\end{array}
\]
Thus $\Img(\check{\psi}^H)$ does not meet~$\open(b_N)$ when $H\not\le N$.
Suppose now that $H\le N$. A similar computation as above shows that $(\check{\psi}^H)\inv(\open(b_N))=\Spc(\Db(\kk(E/H)))$ since in that case $\check{\Psi}^H(b_N)$ is an isomorphism in~$\Db(\kk(E/H))$. Therefore $\Img(\check{\psi}^H)\subseteq\open(b_N)$ when $H\le N$. Combining both observations, we have
\begin{equation}
\label{eq:aux-abelem-1}%
\Img(\check{\psi}^H)\cap \open(b_N)\neq\varnothing \quad \Longleftrightarrow \quad H\le N.
\end{equation}
Let now $\cP,\cQ\in\Spc(\cK(E))$ be such that $\comp_E(\cP)=\comp_E(\cQ)$ in~$\Spech(\Rall(E))$.
Say $\cP=\cP_E(H,\gp)$ and $\cQ=\cP_E(K,\gq)$ for $H,K\le E$ and $\gp\in \Vee{E/H}$ and~$\gq\in\Vee{E/K}$.
(See \Cref{Rec:Part-I}.)
The assumption $\comp_E(\cP)=\comp_E(\cQ)$ implies  that $\cP\in\open(f)$ if and only if~$\cQ\in\open(f)$, for every $f\in\Rall(E)$. In particular applying this to~$f=b_N$, we see that for every index-$p$ subgroup~$N\normal E$ we have $\cP\in\open(b_N)$ if and only if $\cQ\in\open(b_N)$.
By \eqref{eq:aux-abelem-1}, we have for every $N\in\cN(E)$
\[
H\le N \quad \Longleftrightarrow \quad K\le N.
\]
Since $E$ is elementary abelian, this forces $H=K$. So we have two points~$\gp,\gq\in \Vee{E/H}$ that go to the same image under $\Vee{E/H}\xto{\check\psi^H}\Spc(\cK(E))\xto{\comp_E}\Spech(\Rall(E))$ but we know that this map in injective by \Cref{Prop:H**-compose} for $G=E$.
\end{proof}

In fact, we see that the open~$\goodopen{H}$ of~$\SpcKE$ defined in \Cref{Prop:U(H)} matches perfectly the open~$\Spech(\REH)$ of~$\Spech(\Rall(E))$ in \Cref{Def:O(H)}.
\begin{Thm}
\label{Thm:abelem-top}%
Let $E$ be an elementary abelian $p$-group. Let $H\le E$ be a subgroup. Then the comparison map of \Cref{Prop:comp} restricts to a homeomorphism
\[
\comp_E\colon \goodopen{H}\isoto \Spech(\REH)
\]
where $\REH$ is the $\bbZ$-graded endomorphism ring of the unit~$\unit$ in the localization~$\cL_E(H)$ of~$\cK(E)$ over the open~$\goodopen{H}$.
\end{Thm}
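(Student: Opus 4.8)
The plan is to reduce the statement to the classical comparison map of~\cite{balmer:sss} for the central localization~$\cL_E(H)=\cK(E)[S_H\inv]$. Indeed, by~\Cref{Cons:L(H)} the open~$\goodopen{H}$ is exactly the subspace $\Spc(\cL_E(H))\subseteq\SpcKE$, with $\End^\sbull_{\cL_E(H)}(\unit)=\REH$, and by~\Cref{Rem:comp-loc} the square~\eqref{eq:comp-loc} is cartesian. Chasing that square, it suffices to prove that the comparison map $\comp_{\cL(H)}\colon\Spc(\cL_E(H))\to\Spech(\REH)$ attached to the tt-category~$\cL_E(H)$ and the $\otimes$-invertible~$\unit[1]$ is a homeomorphism. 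Injectivity is then free: $\comp_{\cL(H)}$ is the restriction of~$\comp_E$ along the (injective) horizontal inclusions of~\eqref{eq:comp-loc}, and $\comp_E$ is injective by~\Cref{Prop:abelem-inj}. So what remains is surjectivity and the fact that the map is closed, a continuous closed bijection being a homeomorphism.

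The idea for both is a stratumwise analysis. Using the partition $\SpcKE=\bigsqcup_{K\le E}\Vee{E}(K)$ of~\Cref{Rec:Part-I}, one first determines which strata meet~$\goodopen{H}$: extending the computation~\eqref{eq:aux-abelem-1} from the proof of~\Cref{Prop:abelem-inj} by means of~\Cref{Prop:Psi^H(a/b)} and~\Cref{Lem:cone-a-b}, the stratum $\Vee{E}(K)$ meets $\goodopen{H}$ iff $K\le N$ for every $N\in\cN(E)$ with $H\le N$, that is (since $E$ is elementary abelian, so $H=\bigcap_{N\in\cN(E),\,N\ge H}N$) iff $K\le H$; and in that case $\goodopen{H}\cap\Vee{E}(K)$ is the open subset of $\Vee{E}(K)\cong\Vee{E/K}=\Spech(\rmH^\sbull(E/K,\kk))$ on which the images of all~$a_{N/K}$ with $N\ge K$, $H\not\le N$ are invertible. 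Next, on each such stratum~$K\le H$, \Cref{Prop:H**-compose} applied to~$E/K$ (legitimate since $K\normaleq E$) provides a commutative square whose left edge $\comp_{\Db(\kk(E/K))}$ is a homeomorphism by~\cite[Proposition~8.5]{balmer:sss}; since $\Psi^K$ is surjective on twisted cohomology (\Cref{Rem:Psi^H-onto}) the lower edge $\Spech(\rmH^\sbull(E/K,\kk))\hookrightarrow\Spech(\Rall(E))$ is a locally closed embedding (a closed immersion of the quotient followed by the open inclusion of a localization). Hence $\comp_E$ carries $\Vee{E}(K)$ homeomorphically onto a locally closed subspace of~$\Spech(\Rall(E))$, and thus $\goodopen{H}\cap\Vee{E}(K)$ homeomorphically onto a locally closed subspace $T_K\subseteq\Spech(\REH)$. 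By injectivity of~$\comp_E$ the $T_K$ (for $K\le H$) are pairwise disjoint, so $\comp_{\cL(H)}$ is already a continuous bijection of~$\goodopen{H}$ onto $\bigsqcup_{K\le H}T_K$.

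The heart of the matter is then the identification $\bigsqcup_{K\le H}T_K=\Spech(\REH)$, i.e.\ surjectivity. Here one uses that $\REH$ is noetherian --- being the twist-zero part of a localization of the noetherian ring $\Rall(E)$ of~\Cref{Thm:Rall-noeth} --- together with the description of its generators $\zeta^\pm_N,\xi^\pm_N$ from~\Cref{Rem:cohomology-generators}: to a homogeneous prime $\gq\subset\REH$ one attaches the subgroup $K:=\bigcap\SET{N\in\cN(E)}{H\le N,\ \zeta^+_N\in\gq}$, and one checks, via the relations worked out in~\Cref{sec:presentation-H**} (see~\Cref{Thm:presentation}), that $\gq$ is pinned down to the locus $T_K$ and comes from a prime of~$\rmH^\sbull(E/K,\kk)$, hence from a point of~$\Vee{E}(K)$. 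Finally, to upgrade the continuous bijection to a homeomorphism it suffices --- as $\SpcKE$, hence $\goodopen{H}$, is noetherian by~\Cref{Rem:closure} --- to verify that $\comp_{\cL(H)}$ reflects specializations; within a single stratum this is the stratumwise homeomorphism just established, and across strata it follows from the dimension-preserving behaviour recorded in~\Cref{Lem:preserve-dimension} and~\Cref{Rem:closure}. The main obstacle is precisely the surjectivity clause $\bigsqcup_{K\le H}T_K=\Spech(\REH)$: this is where one genuinely needs that the localized and truncated ring $\REH$ has no homogeneous primes beyond those visibly coming from the cohomology rings of the subquotients $E/K$, which rests on the presentation of~\Cref{sec:presentation-H**}.
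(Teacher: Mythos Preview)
Your reduction to the comparison map $\comp_{\cL(H)}$ for the localization~$\cL_E(H)$, and your injectivity argument via~\Cref{Prop:abelem-inj} and the cartesian square~\eqref{eq:comp-loc}, match the paper exactly. The divergence begins with surjectivity and closedness, and here your proposal has a genuine gap.

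For surjectivity you invoke the presentation of~$\REH$ from~\Cref{sec:presentation-H**}, specifically~\Cref{Thm:presentation}. This is circular: the proof of~\Cref{Thm:presentation} explicitly uses~\Cref{Thm:abelem-top} (it reduces the claimed isomorphism modulo nilpotents to surjectivity of the map~$\beta^H$ defined via~$\comp_E$, which requires knowing that~$\comp_E$ is a homeomorphism onto~$\Spech(\REH)$). So you cannot appeal to it here. The paper's argument is far simpler and avoids any stratumwise bookkeeping: since~$\REH$ is noetherian (\Cref{Thm:Rall-noeth}), surjectivity of~$\comp_{\cL(H)}$ follows directly from the general result in~\cite{balmer:sss}.

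Your closedness argument is also problematic. Appealing to~\Cref{Lem:preserve-dimension} and~\Cref{Rem:closure} does not help: those concern maps between tt-spectra induced by tt-functors, not the comparison map into a homogeneous spectrum; reflecting specializations across strata in~$\Spech(\REH)$ would require control over that target space which you have not established. The paper instead proves that~$\cL_E(H)$ is generated as a thick subcategory by the unit~$\unit$: for each~$N\in\cN$, either~$a_N$ is inverted (so~$\kk(E/N)=0$ by~\Cref{Lem:cone-a-b}) or~$b_N$ is inverted (so~$u_N\in\thick(\unit)$, whence~$\kk(E/N)\in\thick(\unit)$ via~$\cone(a_N)$); then every~$\kk(E/K)$ is a tensor product of such~$\kk(E/N)$'s since~$E$ is elementary abelian. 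Combined with noetherianity this gives finite generation of all~$\Hom^\sbull_{\cL(H)}(x,y)$ as~$\REH$-modules, and closedness follows from Lau's general criterion~\cite[Proposition~2.4]{lau:spc-dm-stacks}.
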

\begin{proof}
Recall the tt-category $\cL(H)=\cL_E(H):=\cK(E)[S_H\inv]$ of \Cref{Cons:L(H)}, where $S_H\subset\Rall(E)$ is the multiplicative subset generated by the homogeneous elements $\SET{a_N}{H\not\le N}\cup\SET{b_N}{H\le N}$ of \Cref{Def:O(H)}.
In view of \Cref{Rem:comp-loc}, it suffices to show that the map $\comp_{\cL(H)}\colon \Spc(\cL(H))\to \Spech(\REH)$ is a homeomorphism.
We have injectivity by \Cref{Prop:abelem-inj}. We also know that $\REH$ is noetherian by \Cref{Thm:Rall-noeth}. It follows from~\cite{balmer:sss} that $\comp_{\cL(H)}$ is surjective. Hence it is a continuous bijection and we only need to prove that it is a closed map.

We claim that $\cL(H)$ is generated by its $\otimes$-unit~$\unit$. Namely, let $\cJ=\thick_{\cL(H)}(\unit)$ be the thick subcategory of~$\cL(H)$ generated by~$\unit$ and let us see that~$\cJ=\cL(H)$.
Observe that $\cJ$ is a sub-tt-category of~$\cL(H)$.
Let $N\in\cN$ be an index-$p$ subgroup. We claim that $\kk(E/N)$ belongs to~$\cJ$.
If $N\not\ge H$, then $a_N$ is inverted in~$\cL(H)$, so $\kk(E/N)=0$ in~$\cL(H)$ by \Cref{Lem:cone-a-b}\,\ref{it:cone-a}.
If $N\ge H$, then $b_N\colon \unit\to u_N[-2']$ is inverted, so $u_N\in\cJ$ and we conclude again by \Cref{Lem:cone-a-b}\,\ref{it:cone-a} since $a_N\colon \unit\to u_N$ is now a morphism in~$\cJ$. For a general proper subgroup~$K<E$, the module $\kk(E/K)$ is a tensor product of~$\kk(E/N)$ for some $N\in\cN$. (Here we use~$E$ elementary abelian again.) Hence $\kk(E/K)$ also belongs to~$\cJ$ as the latter is a sub-tt-category of~$\cL(H)$.
In short $\cJ$ contains all generators~$\kk(E/H)$ for~$H\le E$. Therefore $\cL(H)=\cJ$ is indeed generated by its unit.
It follows from this and from noetherianity of~$\End^\sbull_{\cL(H)}(\unit)=\REH$ that $\Hom^\sbull_{\cL(H)}(x,y)$ is a finitely generated $\REH$-module for every $x,y\in \cL(H)$.
We conclude from a general tt-geometric fact, observed by Lau~\cite[Proposition~2.7]{lau:spc-dm-stacks}, that the map~$\comp$ must then be closed.
\end{proof}

\begin{Cor}
\label{Cor:abelem-Dirac}%
Let $E$ be an elementary abelian $p$-group.
Let $\Rloc{E}$ be the sheaf of $\bbZ$-graded rings on~$\SpcKE$ obtained by sheafifying $U\mapsto \End^\sbull_{\cK(E)\restr{U}}(\unit)$.
Then $(\SpcKE,\Rloc{E})$ is a Dirac scheme in the sense of~\cite{hesselholt-pstragowski:dirac1}.
\end{Cor}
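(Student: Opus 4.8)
The plan is to realize the open cover $\{\goodopen{H}\}_{H\le E}$ of $\SpcKE$ from \Cref{Prop:U(H)} as a cover by Dirac-affine pieces. Recall that a Dirac scheme in the sense of~\cite{hesselholt-pstragowski:dirac1} is a locally Dirac-ringed space which is locally isomorphic to the homogeneous spectrum $\Spech(R)$ of a $\bbZ$-graded-commutative ring~$R$, equipped with its canonical sheaf of $\bbZ$-graded rings --- the sheafification, on the basis of principal opens, of the assignment $Z(h)^c\mapsto R[h\inv]$ for $h\in R$ homogeneous. Fixing a $p$-subgroup $H\le E$, it therefore suffices to check that (i) the space $\goodopen{H}$ is homeomorphic to the underlying space of $\Spech(R)$ for a suitable $\bbZ$-graded-commutative ring~$R$, and (ii) under this homeomorphism the sheaf $\Rloc{E}|_{\goodopen{H}}$ corresponds to the canonical structure sheaf of $\Spech(R)$. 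Once this is established for all~$H$, the space $\SpcKE$ is covered by Dirac-affine opens, so $(\SpcKE,\Rloc{E})$ is a Dirac scheme.

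Point (i) is exactly \Cref{Thm:abelem-top}, taken with $R=\REH$: the comparison map restricts to a homeomorphism $\comp_E\colon\goodopen{H}\isoto\Spech(\REH)$. That $\REH$ is $\bbZ$-graded-commutative follows from \Cref{Rem:graded-commutative}: the sign rule involves only the parity of the shift, because each switch $u_N\otimes u_N\cong u_N\otimes u_N$ is the identity (\Cref{Rem:switch=1}) and the denominators allowed in $\REH$ --- products of the $a_N$ and the $b_N$ --- sit in even shifts. This is precisely the Dirac sign convention.

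For point (ii) I would argue on the basis of principal opens of $\goodopen{H}$. A homogeneous element $h\in\REH=\End^\sbull_{\cK(E)|_{\goodopen{H}}}(\unit)$ is a morphism $\unit\to\unit[|h|]$ in $\cK(E)|_{\goodopen{H}}$, and the open $\open(\cone(h))\subseteq\goodopen{H}$ where $h$ becomes invertible is carried by $\comp_E$ onto the principal open $Z(h)^c$. By the same reasoning as in \Cref{Cons:L(H)}, the further localization $\cK(E)|_{\open(\cone(h))}$ is the idempotent completion of the central localization of $\cK(E)|_{\goodopen{H}}$ that inverts~$h$; hence its $\bbZ$-graded endomorphism ring of the unit is the graded localization $\REH[h\inv]$, by the standard properties of central localization (\cite[\S\,3]{balmer:sss}) together with the fact that idempotent completion does not change $\End^\sbull(\unit)$. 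Thus the presheaf $U\mapsto\End^\sbull_{\cK(E)|_{U}}(\unit)$ agrees, on the basis $\{\open(\cone(h))\}_h$ of $\goodopen{H}$, with the presheaf $Z(h)^c\mapsto\REH[h\inv]$ whose sheafification is $\calO_{\Spech(\REH)}$. Since the sheafifications of two presheaves agreeing on a basis coincide, $(\comp_E)_*\big(\Rloc{E}|_{\goodopen{H}}\big)\cong\calO_{\Spech(\REH)}$ as sheaves of $\bbZ$-graded rings, which is point (ii).

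The step that needs care is the computation in point (ii), namely identifying $\End^\sbull_{\cK(E)|_{\open(\cone(h))}}(\unit)$ with the graded localization $\REH[h\inv]$: this rests on the coincidence of the Verdier localization by the ideal of objects supported away from $\open(\cone(h))$ with the central localization at the endomorphism~$h$ --- which uses that the residue functors $\bbF^{H'}$ jointly detect zero objects on $\cK(E)$, so that objects of empty support vanish --- together with the behaviour of $\End^\sbull(\unit)$ under central localization and idempotent completion, and on matching our grading and sign conventions with those of~\cite{hesselholt-pstragowski:dirac1}. Everything else is immediate from \Cref{Thm:abelem-top} and \Cref{Prop:U(H)}.
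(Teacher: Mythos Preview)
Your approach is the paper's: exhibit $\{\goodopen{H}\}_{H\le E}$ as a Dirac-affine cover via \Cref{Prop:U(H)} and \Cref{Thm:abelem-top}; the paper's own proof is the single sentence ``We identified an affine cover $\{\goodopen{H}\}_{H\le E}$ in \Cref{Thm:abelem-top}'', leaving your point~(ii) implicit, and your elaboration of the sheaf comparison is correct. One sharpening is worth making: to obtain ``empty support implies zero'' in the \emph{localizations} of~$\cK(E)$ (which is what you need for the coincidence of Verdier and central localization at~$h$), appeal to rigidity---in a rigid tt-category every tt-ideal is radical, hence $\sqrt{0}=0$, and this passes to all Verdier quotients and idempotent completions---rather than to joint conservativity of the~$\bbF^{H'}$, which a priori yields the statement only for~$\cK(E)$ itself.
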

\begin{proof}
We identified an affine cover $\{\goodopen{H}\}_{H\le E}$ in \Cref{Thm:abelem-top}.
\end{proof}

\begin{Rem}
\label{Rem:O(E)}%
This result further justifies the notation for the ring~$\Rloc{E}(H)$ in \Cref{Def:O(H)}.
Indeed, this~$\RE(H)$ is also the ring of sections~$\Rloc{E}(\goodopen{H})$ of the $\bbZ$-graded structure sheaf~$\RE$ over the open~$\goodopen{H}$ of \Cref{Prop:U(H)}.
\end{Rem}

\begin{Cor}
\label{Cor:abelem-top}%
Let $E$ be an elementary abelian $p$-group.
Then the comparison map of \Cref{Prop:comp} is an open immersion. More precisely, it defines a homeomorphism between $\Spc(\cK(E))$ and the following open subspace of~$\Spech(\Rall(E))$:
\begin{equation}
\label{eq:abelem-top}%
\SET{\gp\in\Spech(\Rall(E))}{\textrm{for all $N\normal E$ of index~$p$ either $a_N\notin\gp$ or $b_N\notin\gp$}}.\kern-1em
\end{equation}
\end{Cor}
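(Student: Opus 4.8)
The plan is to glue the local homeomorphisms of \Cref{Thm:abelem-top} along the open cover $\{\goodopen{H}\}_{H\le E}$ of \Cref{Prop:U(H)}, and then to pin down the resulting open image. First I would record that, by \Cref{Rem:comp-loc} (the cartesian square~\eqref{eq:comp-loc}) together with \Cref{Thm:abelem-top}, the map $\comp_E$ carries each $\goodopen{H}$ homeomorphically onto $\Spech(\REH)$, viewed as the open subset of $\Spech(\Rall(E))$ on which every element of the multiplicative set $S_H$ of \Cref{Def:O(H)} is invertible, \ie $\SET{\gp}{\gp\cap S_H=\varnothing}$. Since $\comp_E$ is injective by \Cref{Prop:abelem-inj} and the $\goodopen{H}$ cover $\SpcKE$, this already forces $\comp_E$ to be an \emph{open} map: any open $V\subseteq\SpcKE$ satisfies $\comp_E(V)=\bigcup_{H}\comp_E(V\cap\goodopen{H})$, and each $\comp_E(V\cap\goodopen{H})$ is open in $\Spech(\REH)$, hence in $\Spech(\Rall(E))$. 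An injective, continuous, open map is a homeomorphism onto its image, so $\comp_E$ identifies $\SpcKE$ with the open subset $\bigcup_{H\le E}\SET{\gp}{\gp\cap S_H=\varnothing}$ of $\Spech(\Rall(E))$.

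It remains to check that this union is exactly the set~\eqref{eq:abelem-top}. The inclusion ``$\subseteq$'' is immediate: if $\gp\cap S_H=\varnothing$, then for every $N\in\cN(E)$ exactly one of $a_N,b_N$ belongs to $S_H$ (according to whether $H\not\le N$ or $H\le N$), so one of the two avoids $\gp$; equivalently, this is \Cref{Cor:open(ab)} transported through $\comp_E$ via \Cref{Not:open(a-b)}. For ``$\supseteq$'', let $\gp$ be a homogeneous prime with the property of~\eqref{eq:abelem-top} and set $H:=\bigcap_{N\in\cN(E),\,a_N\in\gp}N$ (with $H=E$ if no such $N$ exists). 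By construction $a_N\in\gp$ implies $N\ge H$, which gives ``$a_N\notin\gp$ whenever $H\not\le N$''. The remaining requirement, ``$b_N\notin\gp$ whenever $H\le N$'', is the heart of the matter. Choose a minimal family $M_1,\dots,M_r\in\cN(E)$ with $a_{M_i}\in\gp$ and $M_1\cap\cdots\cap M_r=H$; if $N\ge H$ then, $E$ being elementary abelian, the functional cutting out $N$ is a linear combination of those cutting out the $M_i$, and this dependence lifts to an identity in $\Rall(E)$ of the form $a_N\cdot\prod_{i}b_{M_i}=b_N\cdot\big(\sum_{i}\lambda_i\,a_{M_i}\prod_{j\ne i}b_{M_j}\big)$. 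Were $b_N\in\gp$, the right-hand side would lie in $\gp$, forcing $a_N\in\gp$ or some $b_{M_i}\in\gp$; but $b_N\in\gp$ precludes $a_N\in\gp$, and $a_{M_i}\in\gp$ precludes $b_{M_i}\in\gp$, by~\eqref{eq:abelem-top} --- a contradiction. Hence $\gp\cap S_H=\varnothing$.

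The main obstacle is precisely establishing that last relation as an honest identity in $\Rall(E)$: in the localization where the $b_{M_i}$ and $b_N$ are inverted it is merely the familiar cohomological linear relation $\zeta_N=\sum_i\lambda_i\zeta_{M_i}$ among the classes of \Cref{Rem:zeta_N} with denominators cleared, but one must verify that no further correction terms (\eg involving the $c_N$ for $p$ odd) are needed and that the equality holds before inverting anything. This is a direct computation in $\cK(E)$, and it is the only non-formal input; the rest is the gluing above together with the comparison-map and noetherianity facts already in hand.
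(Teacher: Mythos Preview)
Your global strategy --- glue the local homeomorphisms of \Cref{Thm:abelem-top} over the cover $\{\goodopen{H}\}_{H\le E}$ and then identify the image --- is exactly the paper's, and your ``$\subseteq$'' is fine. The difference (and the gap) is entirely in the ``$\supseteq$'' step.

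Both you and the paper pick a candidate $H$ from~$\gp$ and must show $b_N\notin\gp$ for every $N\ge H$, knowing only $b_{M_i}\notin\gp$ for some $M_1,\dots,M_r$ with $\cap_i M_i=H$. The paper does \emph{not} produce an explicit identity in~$\Rall(E)$; instead it invokes \Cref{Cor:U(1)} for~$E/H$, which gives the tt-ideal membership $\cone(b_{N/H})\in\langle\cone(b_{M_i/H})\rangle_\otimes$ in~$\cK(E/H)$, hence (after inflation) $\cone(b_N)\in\langle\cone(b_{M_i})\rangle_\otimes$ in~$\cK(E)$. Therefore $b_N$ becomes invertible in the central localization $\Rall(E)[b_{M_1}^{-1},\dots,b_{M_r}^{-1}]$, i.e.\ $b_N$ divides some monomial $\prod_i b_{M_i}^{m_i}$ in~$\Rall(E)$. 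That divisibility is all one needs: if $b_N\in\gp$ then some $b_{M_i}\in\gp$, contradiction.

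Your proposed identity $a_N\prod_i b_{M_i}=b_N\sum_i\lambda_i a_{M_i}\prod_{j\ne i}b_{M_j}$ is genuinely stronger, and you correctly flag it as the obstacle. For $r=2$ it is precisely \Cref{Lem:master-relations} (after rescaling via~\eqref{eq:master-relations-scalar}), but for $r\ge 3$ iterating that lemma only yields your identity \emph{multiplied by} further factors $b_{M'}$ for auxiliary index-$p$ subgroups $M'$ (the intermediate kernels) that are not among your $M_i$; you have no control over whether those $b_{M'}$ lie outside~$\gp$, so you cannot cancel them. Thus the identity as written is not established, and calling it a ``direct computation'' undersells the difficulty. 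The clean fix is to drop the precise identity and argue via \Cref{Cor:U(1)} (equivalently, via the divisibility $b_N\mid\prod_i b_{M_i}^{m_i}$ coming from the tt-ideal relation), exactly as the paper does.
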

\begin{proof}
By \Cref{Prop:abelem-inj}, the (continuous) comparison map is injective.
Therefore, it being an open immersion can be checked locally on the domain.
By \Cref{Prop:U(H)}, the open~$\goodopen{H}$ form an open cover of~$\Spc(\cK(E))$. \Cref{Thm:abelem-top} tells us that each $\goodopen{H}$ is homeomorphic to the following open of~$\Spech(\Rall(E))$
\[
U'(H):=\bigcap_{N\not\ge H}Z(a_N)^c\cap\bigcap_{N\ge H}Z(b_N)^c
\]
(recall that $Z(f)^c=\SET{\gp}{f\notin\gp}$ is our notation for a principal open).
So it suffices to verify that the union $\cup_{H\le E}\,U'(H)$, is the open subspace of the statement~\eqref{eq:abelem-top}.
Let $\gp\in U'(H)$ for some~$H\le E$ and let $N\in\cN(E)$; then clearly either $N\not\ge H$ in which case~$a_N\notin\gp$, or $N\ge H$ in which case~$b_N\notin\gp$.
Conversely let~$\gp$ belong to the open~\eqref{eq:abelem-top} and define $H=\cap_{M\in\cN\textrm{\,s.t.\,}b_M\notin\gp}M$.
We claim that $\gp\in U'(H)$. Let $N\in\cN$. If $N\not\ge H$ then $b_{N}\in\gp$ by construction of~$H$ and therefore $a_{N}\notin\gp$.
So the last thing we need to prove is that $N\ge H$ implies $b_{N}\notin\gp$. One should be slightly careful here, as $H$ was defined as the intersection of the $M\in\cN$ such that $b_M\notin\gp$, and certainly such $M$'s will contain~$H$, but we need to see why \emph{every} $N\ge H$ satisfies $b_N\notin\gp$.
This last fact follows from \Cref{Cor:U(1)} applied to~$E/H$.
\end{proof}

\begin{Exa}
\label{Exa:C_p}%
Consider the spectrum of $\cK(C_p)$ for the cyclic group~$C_p$ of order~$p$.
By \Cref{Exa:Rall(C_p)}, the reduced ring $\Rloc{C_p}(1)_{\red}$ is~$k[\zeta^+]$ with $\zeta^+=a/b$ in degree~$2'$ while $\Rloc{C_p}(C_p)_{\red}=k[\zeta^-]$ with $\zeta^-=b/a$.
(The former is also \Cref{Exa:R(1)=H*}.)
Each of these has homogeneous spectrum the Sierpi\'{n}ski space and we easily deduce that
\begin{equation}
\label{eq:C_p}
\Spc(\cK(C_p))=\qquad\vcenter{\xymatrix@R=1em@C=.5em{
& {\color{Brown}\bullet} \ar@{-}@[Brown][rd]_-{{\color{Brown}\goodopen{C_p}}}
&&
{\color{ForestGreen}\bullet}
\\
&& \scalebox{.6}{{\color{Brown}\LEFTCIRCLE}\kern-0.78em{\color{ForestGreen}\RIGHTCIRCLE}} \ar@{-}@[ForestGreen][ru]_-{{\color{ForestGreen}\goodopen{1}}}
}}
\end{equation}
confirming the computation of~$\Spc(\cK(C_{p^n}))$ in \Cref{Prop:cyclic} for $n=1$.

We can also view this as an instance of \Cref{Cor:abelem-top}.
Namely, still by \Cref{Exa:Rall(C_p)}, the reduced ring $\Rall(C_p)_{\red}$ is~$\kk[a,b]$ with~$a$ in degree~$0$ and~$b$ in degree~$-2'$.
Its homogeneous spectrum has one more point at the top:
\[
\Spech(\Rall(C_p))=\qquad\vcenter{
\xymatrix@R=1em@C=.5em@R=.5em{
  &&\bullet \ar@{-}[ld] \ar@{-}[rd]\\
&  {\color{Brown}\bullet} \ar@{-}@[Brown][rd]_-{{\color{Brown}Z(a)^c}}
&&
{\color{ForestGreen}\bullet}
\\
&& \scalebox{.6}{{\color{Brown}\LEFTCIRCLE}\kern-0.78em{\color{ForestGreen}\RIGHTCIRCLE}} \ar@{-}@[ForestGreen][ru]_-{{\color{ForestGreen}Z(b)^c}}
}}
\]
and this superfluous closed point~$\langle a,b\rangle$ lies outside of the open subspace~\eqref{eq:abelem-top}.
\end{Exa}

\begin{Rem}
\label{Rem:Psi-on-O(H)}%
Let $K\le H\le E$.
The functor $\Psi^K\colon\cK(E)\to\cK(E/K)$ passes, by \Cref{Prop:Psi^H(a/b)}, to the localizations over $\goodopen[E]{H}$ and $\goodopen[E/K]{H/K}$, respectively.
On the $\ZZ$-graded endomorphisms rings, we get a homomorphism $\Psi^K\colon\REH\to\Rloc{E/K}(H/K)$ that on generators~$a_N,b_N$ is given by the formulas of \Cref{Prop:Psi^H(a/b)}.
By \Cref{Rem:Psi^H-onto} this homomorphism $\Psi^K\colon\REH\to \Rloc{E/K}(H/K)$ is surjective.
\end{Rem}
\begin{Prop}
\label{Prop:SpcKE-irreducible}%
For every elementary abelian group~$E$, the spectrum~$\SpcKE$ admits a unique generic point~$\eta_E$, namely the one of the cohomological open~$\Vee{E}$.
\end{Prop}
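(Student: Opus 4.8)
The plan is to produce the generic point explicitly as the image of the generic point of the cohomological open, and then show every prime specializes to it. Recall from \Cref{Rec:Part-I} that the cohomological open is $\Vee{E}=\Spc(\Db(\kk E))\cong\Spech(\rmH^\sbull(E,\kk))$. Since $E$ is elementary abelian, $\rmH^\sbull(E,\kk)$ is (mod nilpotents) a polynomial ring, hence an integral domain after reducing, so $\Spech(\rmH^\sbull(E,\kk))$ has a unique generic point; call its image under $\upsilon_E\colon\Vee{E}\hook\SpcKE$ the point $\eta_E$. First I would record that $\eta_E$ is generic \emph{within} the open subset $\Vee{E}=\goodopen[E]{1}$ (using \Cref{Prop:U(1)}), so that $\Vee{E}\subseteq\adhpt{\eta_E}$. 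It then remains to show that $\adhpt{\eta_E}$ is all of $\SpcKE$, i.e. that every $\cP\in\SpcKE$ is a specialization of $\eta_E$.

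For this, the key step is to use the partition $\SpcKE=\sqcup_{H\le E}\Vee{E}(H)$ from \Cref{Rec:Part-I}, where $\Vee{E}(H)=\check\psi^H(\Vee{E/H})\cong\Vee{\WEH}$ and $\WEH=E$ since $E$ is abelian — wait, more precisely $\WEH=E/H$ here. So write an arbitrary prime as $\cP=\cP_E(H,\gp)=\check\psi^H(\gp)$ for some subgroup $H\le E$ and some $\gp\in\Vee{E/H}$. Since $\Vee{E/H}\cong\Spech(\rmH^\sbull(E/H,\kk))$ is irreducible with generic point $\eta_{E/H}$, and since $\check\psi^H$ is continuous, $\cP$ lies in the closure of $\check\psi^H(\eta_{E/H})$. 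Thus it suffices to show that for every $H\le E$, the point $\check\psi^H(\eta_{E/H})$ lies in $\adhpt{\eta_E}$. Equivalently, after applying the comparison map (injective by \Cref{Prop:abelem-inj}, so it detects the specialization order faithfully onto its image), I need $\comp_E(\check\psi^H(\eta_{E/H}))$ to specialize $\comp_E(\eta_E)$ in $\Spech(\Rall(E))$.

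To verify this last containment I would compute both images in terms of the generators $a_N,b_N,c_N$ of $\Rall(E)$ from \Cref{Thm:Rall-noeth}. On the one hand, $\comp_E(\eta_E)$ is the prime of $\Rall(E)$ generated by the elements whose cone meets $\eta_E$; since $\eta_E\in\goodopen[E]{1}=\cap_N\open(b_N)$, every $b_N$ is invertible at $\eta_E$, so $b_N\notin\comp_E(\eta_E)$, and $\comp_E(\eta_E)$ is contained in the prime $\langle a_N,c_N : N\in\cN(E)\rangle$ plus whatever cohomology classes vanish generically — but the essential point is just which of the $a_N,b_N$ lie in it. On the other hand, by \Cref{Prop:Psi^H(a/b)} (applied via $\check\Psi^H=\Upsilon_{E/H}\circ\Psi^H$), one has $\check\Psi^H(b_N)$ invertible exactly when $H\le N$ and $\check\Psi^H(a_N)$ invertible exactly when $H\not\le N$, so by the characterization \eqref{eq:comp-closed} the prime $\comp_E(\check\psi^H(\eta_{E/H}))$ excludes $b_N$ for $H\le N$ and excludes $a_N$ for $H\not\le N$; in all cases it contains at least as many of the distinguished elements as $\comp_E(\eta_E)$ does, hence $\comp_E(\eta_E)\subseteq\comp_E(\check\psi^H(\eta_{E/H}))$. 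Since $\Spech(\Rall(E))$ is a spectral space, containment of homogeneous primes is exactly the specialization order, giving $\check\psi^H(\eta_{E/H})\in\adhpt{\eta_E}$ and completing the argument.

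The main obstacle I anticipate is being careful that I am comparing the \emph{right} primes: the comparison map $\comp_E$ is injective but a priori not a closed map on all of $\SpcKE$ — only on each $\goodopen{H}$, by \Cref{Thm:abelem-top} — so I cannot naively pull back closures from $\Spech(\Rall(E))$ to $\SpcKE$ globally. However, this is not actually needed: I only need that $\comp_E$ reflects specializations, i.e. that $\cQ\in\adhpt{\cP}$ whenever $\comp_E(\cQ)\in\adhpt{\comp_E(\cP)}$ — and this does follow because $\SpcKE$ is noetherian (\Cref{Rem:closure}, or directly) so its topology is determined by inclusion of primes, and $\comp_E$ is an order-reflecting injection by \Cref{Cor:abelem-top} identifying $\SpcKE$ with an open subspace of $\Spech(\Rall(E))$ — and an open subspace inherits the specialization order. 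So the cleanest route is: invoke \Cref{Cor:abelem-top} to regard $\SpcKE$ as the open subspace \eqref{eq:abelem-top} of $\Spech(\Rall(E))$, observe $\eta_E$ corresponds to a prime contained in every other prime of that open subspace (by the computation above, since the defining conditions only get easier as the prime grows), and conclude that $\eta_E$ is the unique generic point.
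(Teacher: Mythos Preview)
There is a genuine gap at the heart of your argument. You claim that because $\comp_E(\check\psi^H(\eta_{E/H}))$ ``contains at least as many of the distinguished elements $a_N,b_N$ as $\comp_E(\eta_E)$ does'', it follows that $\comp_E(\eta_E)\subseteq\comp_E(\check\psi^H(\eta_{E/H}))$. This inference is invalid: prime ideals of a ring with relations are not determined by which ring generators they contain, so comparing the sets $\{a_N,b_N\}\cap\gp$ tells you nothing about inclusion of the primes themselves. Concretely, for $E$ of rank~$2$ there are many homogeneous primes of $\Rall(E)$ containing none of the $a_N,b_N$ (they correspond to the points of $\bbP^1_\kk$ away from the $\bbF_{\!p}$-rational locus after inverting all $a_N$ and $b_N$), and you have given no reason why $\comp_E(\eta_E)$ is the smallest among them, let alone why it is contained in the primes $\comp_E(\check\psi^H(\eta_{E/H}))$ that \emph{do} contain some of the $b_N$. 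Your final paragraph compounds the problem: the phrase ``the defining conditions only get easier as the prime grows'' is backwards, since the condition in~\eqref{eq:abelem-top} becomes harder to satisfy for larger primes.

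What your approach would really need is that $\comp_E(\eta_E)$ is the nilradical of $\Rall(E)$, equivalently that $\Rall(E)_{\red}$ is a domain. But this is essentially equivalent to what you are trying to prove (via the open immersion of \Cref{Cor:abelem-top}), and the paper does not establish it independently. The paper's proof proceeds quite differently: by induction on the $p$-rank, it reduces to showing $\eta_E(H)\in\adhpt{\eta_E}$ for each cyclic $H\le E$, and for this it constructs a localization~$\cL$ of $\cK(E)$ (using inflation from $E/H$ and inverting suitable $b_N$'s and cohomology classes) in which $\eta_E(H)$ becomes an isolated closed point; the fully faithful inflation functor forces the graded endomorphism ring of $\unit$ in $\cL$ to be indecomposable, hence $\Spc(\cL)$ is connected, so $\eta_E(H)$ cannot be isolated --- it must lie in the closure of some other point, which is then shown to lie in~$\Vee{E}$. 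This connectedness argument is the missing idea.
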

\begin{proof}
We proceed by induction on the $p$-rank.
Let us write $\eta_E=\cP_E(1,\sqrt{0})$ for the generic point of~$\Vee{E}$, corresponding to the ideal $\sqrt{0}$ of nilpotent elements in~$\rmH^\sbull(E;\kk)$.
Similarly, for every~$K\le E$, let us write $\eta_E(K)=\cP_E(K,\eta_{E/K})$ for the generic point of the stratum~$\Vee{E}(K)\simeq\Vee{E/K}$.
We need to prove that every point~$\eta_E(K)$ belongs to the closure of~$\eta_E=\eta_E(1)$ in~$\SpcKE$.
It suffices to show this for every cyclic subgroup $H<E$, by an easy induction argument on the rank, using the fact that $\psi^H\colon\Spc(\cK(E/H))\hook\SpcKE$ is closed.
So let $H\le E$ be cyclic.

Note that inflation $\Infl^{E/H}_E\colon \cK(E/H)\to \cK(E)$ passes to the localization of the former with respect to all~$b_{N/H}$ for all~$N\in\cN(E)$ containing~$H$ (which is just the derived category of~$E/H$) and of the latter with respect to the corresponding~$b_N$:
\begin{equation}
\label{eq:aux-Infl-1}%
\Infl^{E/H}_E \colon \Db(\kk(E/H))
\too \cK(E)\big[\SET{b_{N}}{N\ge H}\inv\big].
\end{equation}
This being a central localization of a fully-faithful functor with respect to a multiplicative subset in the source, it remains fully-faithful. One can further localize both categories with respect to all non-nilpotent~$f\in\rmH^\sbull(E/H;\kk)$ in the source, to obtain a fully-faithful
\begin{equation}
\label{eq:aux-Infl-2}%
\Infl^{E/H}_E \colon \Db(\kk(E/H))[\SET{f}{f\not\in\sqrt{0}}\inv]
\too \cL
\end{equation}
where $\cL$ is obtained from $\cK(E)$ by first inverting all~$b_N$ for~$N\ge H$ as in~\eqref{eq:aux-Infl-1} and then inverting all~$\Infl^{E/H}_E(f)$ for $f\in\rmH^\sbull(E/H;\kk)\sminus\sqrt{0}$.

At the level of spectra, $\Spc(\cL)$ is a subspace of~$\SpcKE$. By construction, it meets the closed subset $\Img(\psi^H)\cong\Spc(\cK(E/H))$ of~$\SpcKE$ only at the image of the generic point~$\eta_E(H)$. Indeed, inverting all~$b_N$ for~$N\ge H$ on~$\Img(\psi^{H})$ corresponds to inverting all~$b_{N/H}$ in~$\cK(E/H)$, hence shows that $\Spc(\cL)\cap \Img(\psi^H)$ is in the image under~$\psi^H$ of the cohomological open~$\Vee{E/H}$. Similarly, inverting all~$f\notin\sqrt{0}$ removes all non-generic points of~$\Vee{E/H}$. In particular, the generic point $\eta_E(H)$ of~$\Vee{E}(H)$ is now a closed point of the subspace~$\Spc(\cL)$ of~$\SpcKE$.

Using that~\eqref{eq:aux-Infl-2} is fully-faithful and that the endomorphism ring of the source is the cohomology of~$E/H$ localized at its generic point (in particular not a product of two rings), we see that $\cL$ is not a product of two tt-categories and therefore $\Spc(\cL)$ is not disconnected.
Also $\eta_E$ belongs to $\Spc(\cL)$ and is distinct from~$\eta_E(H)$.
Hence the closed point~$\eta_E(H)\in\Spc(\cL)$ cannot be isolated.
Thus $\eta_E(H)$ belongs to the closure of some other point in~$\Spc(\cL)$.

Let then~$\cQ\in\SpcKE$ be a point in the subspace~$\Spc(\cL)$, such that $\cQ\neq\eta_E(H)$ and $\eta_E(H)\in\adhpt{\cQ}$, which reads~$\cQ\subsetneq\eta_E(H)$.
We know by \Cref{Cor:P<P'} that this can only occur for~$\cQ=\cP(H',\gp)$ with~$H'\le H$, that is, either $H'=H$ or~$H'=1$ since here~$H$ was taken cyclic.
The case~$H'=H$ is excluded, as in the subspace $\Spc(\cL)$ the only prime of the form~$\cP(H,\gq)$ that remained was $\eta_E(H)$ itself, and~$\cQ$ is different from~$\eta_E(H)$. Thus $H'=1$, which means that $\cQ\in\Vee{E}=\adhpt{\eta_E(1)}$ and we therefore have $\eta_E(H)\in\adhpt{\cQ}\subseteq \adhpt{\eta_E(1)}$ as claimed.
\end{proof}

We can now determine the Krull dimension of the spectrum of~$\cK(E)$.
\begin{Prop}
\label{Prop:SpcKE-dim}
Let $E$ be a elementary abelian $p$-group. Then the Krull dimension of $\SpcKE$ is the $p$-rank of~$E$.
\end{Prop}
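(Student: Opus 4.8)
The plan is to prove the equality by induction on the $p$-rank $r$ of $E$, using the cohomological open $\Vee{E}$ for the generic stratum and the closed immersions $\psi^K$ coming from modular fixed-points to handle the remaining ones. Throughout I will use (\Cref{Rec:Part-I}) that every prime of $\SpcKE$ has the form $\cP_E(K,\gp)=\check\psi^K(\gp)$ for a unique subgroup $K\le E$ (no conjugation is needed since $E$ is abelian) and some $\gp\in\Vee{E/K}$. For the \emph{lower bound} I would simply note that, by \Cref{Prop:U(1)}, the cohomological open $\goodopen[E]{1}=\Vee{E}\cong\Spech(\rmH^\sbull(E,\kk))$ is an \emph{open} subspace of $\SpcKE$, and that $\rmH^\sbull(E,\kk)$ is, modulo nilpotents, a polynomial ring on $r$ generators (equivalently, Quillen's theorem \cite{quillen:spec-cohomology} computes the dimension of the cohomology variety of $E$ to be its $p$-rank); hence $\Spech(\rmH^\sbull(E,\kk))$ has Krull dimension $r$ and $\dim\SpcKE\ge\dim\Vee{E}=r$.

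For the \emph{upper bound} I would induct on $r$, the case $r=0$ being clear (then $E=1$ and $\SpcKE$ is a point). Assuming the statement for elementary abelian groups of rank $<r$, let $\cP_0\subsetneq\cP_1\subsetneq\cdots\subsetneq\cP_n$ be a chain of primes of $\SpcKE$ and write $\cP_i=\cP_E(K_i,\gp_i)$; the goal is $n\le r$. The first point is that the ``first coordinates'' are monotone: a strict inclusion $\cP_i\subsetneq\cP_j$ forces $K_i\le K_j$ by the containment criterion \cite[\Cref{Part-I:Cor:P<P'}]{balmer-gallauer:TTG-Perm-I} (used in exactly this way in the proof of \Cref{Prop:SpcKE-irreducible}), so $K_0\le K_1\le\cdots\le K_n$ as subgroups of $E$.

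Now I would split according to the bottom subgroup $K_0$. If $K_0=1$, then $\cP_0$ lies in the \emph{open} subset $\Vee{E}=\goodopen[E]{1}$; since opens are closed under generalization and every $\cP_i$ is a generalization of $\cP_0$, the entire chain lies in $\Vee{E}$, whence $n\le\dim\Vee{E}=r$ by the computation above. If $K_0\ne 1$, then $K_0\le K_i$ for all $i$, so transitivity of modular fixed-points (\cite[\Cref{Part-I:Cor:Psi-Psi}]{balmer-gallauer:TTG-Perm-I}), i.e. $\Psi^{K_i}\cong\Psi^{K_i/K_0}\circ\Psi^{K_0}$, shows that each $\check\psi^{K_i}$ — and hence each $\cP_i$ — factors through $\psi^{K_0}$. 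Thus the whole chain lies in $\Img(\psi^{K_0})$, and since $K_0\normaleq E$ the map $\psi^{K_0}\colon\Spc(\cK(E/K_0))\hookrightarrow\SpcKE$ is a closed immersion (\Cref{Rec:Part-I}), hence a homeomorphism onto its image; the chain therefore pulls back to a chain of length $n$ in $\Spc(\cK(E/K_0))$. As $E/K_0$ is elementary abelian of rank $r-\rank(K_0)<r$, the induction hypothesis gives $n\le r-\rank(K_0)\le r-1$. In either case $n\le r$, so $\dim\SpcKE\le r$, and combined with the lower bound this yields $\dim\SpcKE=r$.

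I expect the only real subtlety to be organizational: one must be sure that a chain cannot simultaneously avoid the cohomological open \emph{and} every single image $\Img(\psi^{K_0})$, and this is precisely what the monotonicity $K_0\le\cdots\le K_n$ together with the transitivity of the $\Psi$'s provide — both being results imported from Part~I — while the only external input, $\dim\Vee{E}=r$, is classical. A parallel argument via the affine cover $\{\goodopen[E]{H}\}_{H\le E}$ of \Cref{Thm:abelem-top}, estimating $\dim\Spech(\REH)$ directly, is also available, but seems no shorter: bounding that dimension by $r$ ultimately requires the same passage through the stratification.
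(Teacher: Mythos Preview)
There is a genuine gap: the monotonicity goes the wrong way. In Balmer's spectrum one has $\overline{\{\cP\}}=\{\cQ:\cQ\subseteq\cP\}$, so the \emph{maximal} tt-prime is the generic point and the closed points~$\cM(H)$ are \emph{minimal}. Since the generic point~$\eta_E=\cP_E(1,\cdot)$ carries the trivial subgroup whereas~$\cM(H)=\cP_E(H,\cdot)$ carries~$H$, the citation \cite[\Cref{Part-I:Cor:P<P'}]{balmer-gallauer:TTG-Perm-I} actually gives: $\cP_E(K_i,\cdot)\subsetneq\cP_E(K_j,\cdot)$ forces $K_j\le K_i$. Thus along your chain one has $K_0\ge K_1\ge\cdots\ge K_n$, not the reverse. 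Your case $K_0=1$ survives (indeed becomes trivial: all $K_i\le K_0=1$, hence the chain lies in~$\Vee{E}$). But the case $K_0\neq 1$ collapses: with the correct monotonicity one only knows $K_i\le K_0$, so the chain need not lie in~$\Img(\psi^{K_0})$. For instance in the Klein-four group the chain $\cM(E)\subsetneq\eta_E(N_0)\subsetneq\eta_E$ has $K_0=E\neq 1$, yet $\eta_E\notin\Img(\psi^{E})=\{\cM(E)\}$. Swapping the case split to~$K_n$ does not help either: when $K_n=1$ one cannot trap the chain in any single~$\Img(\psi^{K})$, and the na\"ive split at the transition index only yields the loose bound~$n\le 2r$.

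The paper's proof is genuinely different and avoids this obstacle. It passes to the open cover $\{\goodopen{H}\}_{H\le E}$: the bottom point~$\cP_0$ lies in some~$\goodopen{H}$, hence so do all its generalizations, and one is reduced to bounding $\dim\Spech(\REH)$ for each~$H$. The key input is then \emph{irreducibility} (\Cref{Prop:SpcKE-irreducible}): all~$\goodopen{H}$ share the same generic point~$\eta_E$, so after a periodic localization the transcendence degree of the residue field at~$\eta_E$ is independent of~$H$ and can be read off from the classical case $H=1$, where it equals~$r-1$. Your stratification-and-induction strategy does not see this uniformity, which is why the upper bound escapes it.
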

\begin{proof}
By \Cref{Prop:U(H)}, the dimension of $\SpcKE$ is the maximum of the dimensions of the open subsets~$\goodopen{H}$, for~$H\le E$.
Each of these spaces has the same generic point~$\eta_E$ (by \Cref{Prop:SpcKE-irreducible}) and a unique closed point~$\cM(H)$ by \Cref{Prop:U(H)}
(and the fact that $\cM(K)\in\goodopen{H}$ forces $K$ and~$H$ to be contained in the same subgroups $N\in\cN(G)$ by \Cref{Prop:Psi^H(a/b)}, which in turn forces $K=H$ because $E$ is elementary abelian).
Using \Cref{Thm:abelem-top} we translate the problem into one about the graded ring~$\REH$.
Let $\eta_E=\gp_0\subsetneq\gp_1\subsetneq\cdots\subsetneq\gp_n=\cM(H)$ be a chain of homogeneous prime ideals in~$\REH$.
Note that $\gp_{n-1}$ belongs to the open $Z(f)^c$ of $\Spech(\REH)$ for some~$f=\zeta^+_N$, $H\le N$, or some $f=\zeta^-_N$, $H\not\le N$.
Each of these has non-zero degree so the graded ring $\REH[f\inv]$ is periodic.
We deduce that $\dim(\Spech(\REH))$ is the maximum of $1+\dim(R)$ where $R$ ranges over the ungraded rings $R=\REH[f\inv]_{(0)}$ for~$f$ as above.
The reduced ring~$R_{\textup{red}}$ is a finitely generated $\kk$-algebra with irreducible spectrum, hence a domain. Therefore $\dim(R)=\dim(R_{\textup{red}})$ is the transcendence degree of the residue field at the unique generic point.
As observed above, this generic point is the same for all $H\le E$, namely the generic point of $\goodopen{1}=\Spc(\Db(\kk E))$.
We conclude that $\dim(\SpcKE)=\dim(\Spc(\Db(\kk E)))$ which is indeed the $p$-rank of~$E$.
\end{proof}

\begin{Rem}
In fact, the proof shows that all closed points~$\cM(H)\in\SpcKE$ have the same codimension (height), namely the $p$-rank of~$E$.
\end{Rem}
\begin{Rem}
Thus for~$E$ elementary abelian, the Krull dimension of~$\SpcKE$ is the same as the Krull dimension of the classical cohomological open~$\Spc(\Db(\kk E))\cong\Spech(\rmH^\sbull(E,\kk))$.
In other words, the spectrum of~$\cK(E)$ is not monstrously different from that of~$\Db(\kk E)$, at least in terms of dimension, or `vertical complexity'.
There is however `horizontal complexity' in~$\SpcKE$: each $\goodopen{H}$ has its own shape and form, and there are as many~$\goodopen{H}$ as there are subgroups~$H\le E$.
We give a finite presentation of the corresponding $\kk$-algebras~$\REH$ in \Cref{sec:presentation-H**}.
\end{Rem}

\goodbreak
\section{Closure in elementary abelian case}
\label{sec:closure}%

In this section,~$E$ is still an elementary abelian $p$-group. Following up on \Cref{Rem:closure}, we can now use \Cref{Thm:abelem-top} to analyze inclusion of tt-primes~$\cP,\cQ$ in~$\cK(E)$, which amounts to asking when~$\cQ$ belongs to~$\adhpt{\cP}$ in~$\SpcKE$.

\begin{Rem}
\label{Rem:closure-E}%
Using again that every $\psi^H\colon \Spc(\cK(E/H))\hook\SpcKE$ is a closed immersion, induction on the $p$-rank easily reduces the above type of questions to the case where the `lower' point~$\cP$ belongs to~$\goodopen[E]{1}=\Vee{E}$.
More generally, given a closed piece $Z$ of the cohomological open~$\Vee{E}$, we consider its closure $\bar{Z}$ in~$\SpcKE=\sqcup_{H\le E}\Vee{E}(H)$ and we want to describe the part $\bar{Z}\cap \Vee{E}(H)$ in each stratum~$\Vee{E}(H)\cong\Vee{E/H}$ for~$H\le E$.
\end{Rem}

\begin{Cons}
\label{Cons:inclusions}%
Let $H\le E$ be a subgroup of our elementary abelian group~$E$. Consider the open subsets~$\goodopen[E]{H}$ of \Cref{Prop:U(H)}, the cohomological open $\goodopen[E]{1}=\Vee{E}$ and their intersection $\goodopen[E]{H}\cap \Vee{E}$. Consider also the stratum~$\Vee{E}(H)=\check\psi^H(\Vee{E/H})$, that is a closed subset of~$\goodopen[E]{H}$ homeomorphic to~$\Vee{E/H}$ via~$\check\psi^H$:
\[
\vcenter{\xymatrix@C=2em{
& \goodopen[E]{H}
&& \Vee{E}
\\
\Vee{E/H} \ar@{^(->}[ru]_-{\check\psi^H}
&& \goodopen[E]{H}\cap \Vee{E} \ar@{_(->}[lu] \ar@{^(->}[ru]
}}
\]
On graded endomorphism rings of the unit (\Cref{Def:O(H)}) this corresponds to
\begin{equation}
\label{eq:R(H)-span}%
\vcenter{\xymatrix@C=1em{
& \REH \ar@{->>}[ld]_-{\Psi^H} \ar[rd]_(.4){Q}
&& \rmH^\sbull(E;\kk) \ar[ld]^(.4){Q'}
\\
\rmH^\sbull(E/H;\kk)
&& \kern-2em \RE(\goodopen[E]{H}\cap \Vee{E})\kern-2em
}}
\end{equation}
where $Q$ is the localization of~$\REH$ with respect to~$\zeta^-_N=\frac{b_N}{a_N}$ for all~$N\not\ge H$, where $Q'$ is the localization of~$\Rloc{E}(1)=\rmH^\sbull(E,\kk)$ with respect to~$\zeta^+_N=\frac{a_N}{b_N}$ for all~$N\not\ge H$ and where $\Psi^H$ is the epimorphism of~\Cref{Rem:Psi-on-O(H)} for~$K=H$.
\end{Cons}

\begin{Lem}
\label{Lem:inclusions}%
With above notation, let~$I\subseteq \rmH^\sbull(E,\kk)$ be a homogeneous ideal of the cohomology of~$E$.
Define the homogeneous ideal~$J=\Psi^H(Q\inv(\ideal{Q'(I)}))$ in the cohomology~$\rmH^\sbull(E/H;\kk)$ of~$E/H$ by `carrying around' the ideal~$I$ along~\eqref{eq:R(H)-span}:
\[
\quad\xymatrix@C=1em{
& \kern-1em Q\inv(\ideal{Q'(I)}) \kern-1em \ar@{|->}[ld]
&& \quad I \quad \ar@{|->}[ld]
\\
\kern-2em J:=\Psi^H(Q\inv(\ideal{Q'(I)})) \kern-4em
&& \ \ideal{Q'(I)} \ \ar@{|->}[lu]
}
\]
Let $Z$ be the closed subset of~$\Vee{E}$ defined by the ideal~$I$.
Then the closed subset of~$\Vee{E/H}$ defined by~$J$ is exactly the intersection $\bar{Z}\cap \Vee{E}(H)$ of the closure $\bar{Z}$ of~$Z$ in~$\Spc(\cK(E))$ with the subspace~$\Vee{E/H}$, embedded via $\check{\psi}^H$.
\end{Lem}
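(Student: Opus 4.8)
The plan is to localize the whole question to the open $\goodopen[E]{H}$, transport it across the homeomorphism $\comp_E\colon\goodopen[E]{H}\isoto\Spech(\REH)$ of \Cref{Thm:abelem-top}, and then conclude with two elementary facts about homogeneous spectra. For bookkeeping, write $A:=\RE(\goodopen[E]{H}\cap\Vee{E})$ for the bottom ring of the span~\eqref{eq:R(H)-span}, and for a homogeneous ideal $\mathfrak a$ of a $\bbZ$-graded ring $R$ write $Z(\mathfrak a):=\SET{\gp\in\Spech(R)}{\mathfrak a\lec\gp}$, extending the notation of \Cref{Prop:comp}; so $Z=Z(I)$ under $\Vee{E}\cong\Spech(\rmH^\sbull(E,\kk))$.

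\emph{Step 1: reduction to $\goodopen[E]{H}$.} Since $Z\lec\Vee{E}=\goodopen[E]{1}$ and $\goodopen[E]{H}$ is open, the standard identity $\bar{Z}\cap\goodopen[E]{H}=\overline{Z\cap\goodopen[E]{H}}$ (closure taken inside the subspace $\goodopen[E]{H}$) holds; combined with $\Vee{E}(H)\lec\goodopen[E]{H}$ from \Cref{Cons:inclusions}, this gives
\[
\bar{Z}\cap\Vee{E}(H)=\overline{Z\cap\goodopen[E]{H}}\cap\Vee{E}(H),
\]
a statement now internal to the noetherian space $\goodopen[E]{H}$.

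\emph{Step 2: transport across $\comp_E$.} Under the homeomorphism $\comp_E\colon\goodopen[E]{H}\isoto\Spech(\REH)$ I would identify: the open $\goodopen[E]{H}\cap\Vee{E}$ with the basic open $\Spech(A)$, with $Q\colon\REH\to A$ the corresponding localization and $Q'\colon\rmH^\sbull(E,\kk)\to A$ the localization realizing $\goodopen[E]{H}\cap\Vee{E}$ inside $\Vee{E}$; hence $Z\cap\goodopen[E]{H}=Z\cap(\goodopen[E]{H}\cap\Vee{E})$ with the closed subset $Z(\ideal{Q'(I)})$ of $\Spech(A)$; and the stratum $\Vee{E}(H)=\check{\psi}^H(\Vee{E/H})$ with the closed subset $Z(\ker\Psi^H)$, so that $\comp_E$ matches $\check{\psi}^H$ with $\Spech(\Psi^H)\colon\Spech(\rmH^\sbull(E/H,\kk))\isoto Z(\ker\Psi^H)$, where $\Psi^H\colon\REH\onto\rmH^\sbull(E/H,\kk)$ is the surjection of \Cref{Rem:Psi-on-O(H)}. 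All these identifications are furnished by \Cref{Cons:inclusions}, \Cref{Thm:abelem-top}, \Cref{Rem:comp-loc} and \Cref{Prop:H**-compose} (for $G=E$, restricted to $\goodopen[E]{H}$) together with the naturality of the comparison map (\Cref{Rem:comp-nat}).

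\emph{Step 3: commutative algebra and conclusion.} Two facts finish the argument. First, for a localization $Q\colon R\to S^{-1}R$ of a graded ring at a set $S$ of homogeneous elements and a homogeneous ideal $\mathfrak b\lec S^{-1}R$, the closure of $Z(\mathfrak b)$ in $\Spech(R)$ is $Z(Q\inv\mathfrak b)$: this set is closed and contains $Z(\mathfrak b)$, and any homogeneous $\mathfrak c\lec R$ with $Z(\mathfrak c)\supseteq Z(\mathfrak b)$ satisfies $\mathfrak c\lec\bigcap_{\gq\supseteq\mathfrak b}Q\inv\gq=Q\inv\sqrt{\mathfrak b}=\sqrt{Q\inv\mathfrak b}$. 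Applied with $\mathfrak b=\ideal{Q'(I)}$, this gives $\overline{Z\cap\goodopen[E]{H}}=Z(Q\inv\ideal{Q'(I)})$ in $\Spech(\REH)$. Second, for a surjection $f\colon R\onto R'$ of graded rings and a homogeneous ideal $\mathfrak a\lec R$, the isomorphism $R/(\mathfrak a+\ker f)\cong R'/f(\mathfrak a)$ shows that $Z(\mathfrak a)\cap Z(\ker f)$ corresponds, under $\Spech(R')\isoto Z(\ker f)$, to $Z(f(\mathfrak a))$. Applied with $f=\Psi^H$ and $\mathfrak a=Q\inv\ideal{Q'(I)}$, it identifies $\overline{Z\cap\goodopen[E]{H}}\cap\Vee{E}(H)$, transported along $\check\psi^H$, with $Z\big(\Psi^H(Q\inv\ideal{Q'(I)})\big)=Z(J)$ in $\Spech(\rmH^\sbull(E/H,\kk))$. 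Together with Step 1, this says that $\bar{Z}\cap\Vee{E}(H)$ is exactly the closed subset of $\Vee{E/H}$ defined by $J$.

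The only genuine obstacle is the bookkeeping in Step 2: correctly matching the four subspaces $\goodopen[E]{H}$, $\Vee{E}$, $\goodopen[E]{H}\cap\Vee{E}$, $\Vee{E}(H)$ and the three maps of the span~\eqref{eq:R(H)-span} through the comparison maps and their naturality. The topological reduction of Step 1 and the algebra of Step 3 are routine, and the $\bbZ$-graded setting costs nothing, since over a noetherian homogeneous spectrum every closed subset is cut out by a homogeneous ideal and the intersection of the homogeneous primes over such an ideal is its radical.
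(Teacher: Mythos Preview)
Your proof is correct and follows essentially the same approach as the paper's own proof: reduce to $\goodopen[E]{H}$ via the identity $\bar{Z}\cap\goodopen[E]{H}=\overline{Z\cap\goodopen[E]{H}}$, translate across the homeomorphism of \Cref{Thm:abelem-top}, then use the two standard facts that (i) the closure of a closed subset of a localization is cut out by the contracted ideal and (ii) intersecting with $Z(\ker\Psi^H)$ corresponds to projecting the ideal along the surjection~$\Psi^H$. The paper's argument is terser (it gestures at these as ``a general result about the multi-graded ring''), while you spell out the commutative algebra explicitly.
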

\begin{proof}
Once translated by \Cref{Thm:abelem-top}, it is a general result about the multi-graded ring $A=\Rall(E)$. We have two open subsets, $\goodopen{H}=\cap_{s\in S_H}Z(s)^c$ and~$\Vee{E}=\goodopen{1}=\cap_{s\in S_1}Z(s)^c$ for the multiplicative subsets $S_H$ and~$S_1$ of \Cref{Def:O(H)}.
These open subsets are `Dirac-affine', meaning they correspond to the homogenous spectra of the $\bbZ$-graded localizations~$S_H\inv(A)\zerotwist=\REH$ and~$S_1\inv (A)\zerotwist=\Rloc{E}(1)=\rmH^\sbull(E;\kk)$, where~$(-)\zerotwist$ refers to~`zero-twist', as before. The intersection of those two affine opens corresponds to inverting both~$S_H$ and~$S_1$, that is, inverting~$\SET{\frac{b_N}{a_N}}{N\not\ge H}$ from~$\REH$ and~$\SET{\frac{a_N}{b_N}}{N\not\ge H}$ from~$\rmH^\sbull(E;\kk)$.
This explains the two localizations~$Q$ and~$Q'$ and why their targets coincide.

The intersection $\goodopen{H}\cap\bar{Z}$ coincides with the closure in~$\goodopen{H}$ of $\goodopen{H}\cap Z$.
The latter is a closed subset of~$\goodopen{H}\cap\Vee{E}$ defined by the ideal~$\ideal{Q'(I)}$. The preimage ideal~$Q\inv(\ideal{Q'(I)})$ then defines that closure~$\goodopen{H}\cap \bar{Z}$ in~$\goodopen{H}$.
Finally, to further intersect this closed subset of~$\goodopen{H}$ with the closed subset~$\Vee{E/H}=\Img(\Spech(\Psi^H))$, it suffices to project the defining ideal along the corresponding epimorphism~$\Psi^H\colon \REH\onto \rmH^\sbull(E/H;\kk)$.
\end{proof}

Before illustrating this method, we need a technical detour via polynomials.
\begin{Lem}
\label{Lem:japan}%
Let $I$ be a homogeneous ideal of the cohomology~$\rmH^\sbull(E,\kk)$ and let $1\neq H\lneqq E$ be a fixed non-trivial subgroup.
Suppose that the only homogeneous prime containing~$I$ and all the~$\zeta_N$ for $N\ge H$ (\Cref{Rem:zeta_N}) is the maximal ideal~$\rmH^+(E,\kk)$.
Then there exists in~$I$ a homogeneous\,{\rm(\footnote{\,The grading is the usual $\bbN$-grading in which all the~$\zeta_N$ have the same degree~$2'$. In particular, the first term~$\prod_{M\not\ge H}\zeta_M^d$ in~$f$ has degree~$2'\cdot d\cdot |\SET{M\in \cN}{M\not\ge H}|$.})} polynomial~$f$ of the form
\[
f = \prod_{M\not\ge H}\zeta_M^d + \sum_{m} \lambda_m \cdot \prod_{N\in\cN}\zeta_N^{m(N)}
\]
for some integer~$d\ge 1$ and scalars~$\lambda_m\in \kk$ and finitely many exponents $m\in\bbN^{\cN}$ that satisfy the following properties:
\begin{equation}
\label{eq:japan}%
m(N)\ge 1\textrm{ for at least one }N\ge H
\quadtext{and}
m(N')<d\textrm{ for all }N'\not\ge H.
\end{equation}
\end{Lem}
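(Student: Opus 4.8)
The plan is to transfer the whole question into the polynomial subring of the cohomology, extract from the hypothesis a family of ``reduction relations'' for the $\zeta_M$ with $M\not\ge H$, and then produce $f$ by a terminating rewriting procedure. First I would replace $\rmH^\sbull(E,\kk)$ by the subalgebra $S:=\kk[\,\zeta_N\mid N\in\cN(E)\,]$ generated by all the $\zeta_N$: for $p=2$ this is everything, while for $p$ odd it is a polynomial $\kk$-algebra, and in both cases one has $\rmH^\sbull(E,\kk)=\Lambda\otimes_\kk S$ with $\Lambda$ an exterior algebra (trivial when $p=2$) and $S$ mapping isomorphically onto $\rmH^\sbull(E,\kk)/\sqrt{0}$. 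Since the $f$ we seek is by definition a polynomial in the $\zeta_N$, it lies in $S$, so it suffices to find it in the homogeneous ideal $J:=I\cap S$ of $S$. Writing $\mathcal A:=\{M\in\cN(E)\mid M\not\ge H\}$ (nonempty, because $H\ne E$) and $\mathcal B:=\{N\in\cN(E)\mid N\ge H\}$, the first task is to check that the hypothesis descends to
\[
\sqrt{\,J+(\,\zeta_N\mid N\in\mathcal B\,)\,}=\mathfrak m_S,
\]
the irrelevant ideal of $S$. For $p=2$ this is the hypothesis verbatim; for $p$ odd it follows from a routine computation with $\rmH^\sbull(E,\kk)=\Lambda\otimes_\kk S$, the point being that since $\sqrt 0$ lies inside every radical ideal, a suitable power of any element of $\sqrt I$ already lies in $I\cap S$, so $\sqrt J$ is the image of $\sqrt I$ and the hypothesis $\sqrt{I+(\,\zeta_N\mid N\in\mathcal B\,)}=\rmH^+$ projects to the display (the inclusion ``$\subseteq$'' being clear because $I$, necessarily proper by hypothesis and homogeneous, lies in $\rmH^+$).

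Consequently each $\zeta_M$ with $M\in\mathcal A$ lies in $\mathfrak m_S=\sqrt{J+(\,\zeta_N\mid N\in\mathcal B\,)}$, so there is a single integer $e\ge 1$ (the maximum over the finite set $\mathcal A$) together with homogeneous identities
\[
\zeta_M^{e}\;=\;g_M+\sum_{N\in\mathcal B}\zeta_N\,h_{N,M}\qquad(M\in\mathcal A),
\]
with $g_M\in J$ homogeneous of cohomological degree $2'e$ and each $h_{N,M}\in S$ homogeneous of degree $2'(e-1)$, hence a $\kk$-linear combination of monomials in the $\zeta_N$ of total $\zeta$-degree exactly $e-1$. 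The heart of the proof is then a rewriting procedure carried out modulo $J$, i.e.\ in $R:=S/J$, where the relations above read $\overline{\zeta_M^{e}}=\overline{\sum_{N\in\mathcal B}\zeta_N h_{N,M}}$. Starting from the single monomial $\prod_{M\in\mathcal A}\zeta_M^{e}$, I would repeatedly replace a factor $\zeta_M^{e}$ (with $M\in\mathcal A$) by $\sum_{N\in\mathcal B}\zeta_N h_{N,M}$ in any monomial still carrying exponent $\ge e$ on some $\zeta_M$, $M\in\mathcal A$, re-expanding each substitution into a $\kk$-combination of $\zeta$-monomials. Assigning to a monomial $\prod_N\zeta_N^{m(N)}$ its ``$\mathcal A$-degree'' $\sum_{M\in\mathcal A}m(M)$, a single substitution trades a factor of $\mathcal A$-degree $e$ for a $\zeta_N$ with $N\in\mathcal B$ (of $\mathcal A$-degree $0$) times a term of $\zeta$-degree $e-1$, hence of $\mathcal A$-degree $\le e-1$; so every monomial produced has strictly smaller $\mathcal A$-degree than the one rewritten. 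By well-foundedness of the multiset ordering on the $\mathcal A$-degrees of the still-rewritable monomials, the procedure terminates, yielding an identity in $R$
\[
\overline{\prod_{M\in\mathcal A}\zeta_M^{e}}=\sum_{\mu}c_\mu\,\overline{\mu}\qquad(c_\mu\in\kk)
\]
whose monomials $\mu$ all: (i)~carry exponent $\le e-1$ on every $\zeta_M$, $M\in\mathcal A$, since no substitution remains possible; (ii)~are divisible by some $\zeta_N$ with $N\in\mathcal B$, since at least one substitution occurred on the way from the initial monomial (all of whose exponents equal $e$) to $\mu$, and this divisibility is preserved by subsequent substitutions; (iii)~have the same total $\zeta$-degree $e\cdot|\mathcal A|$ as the initial monomial, since substitutions preserve $\zeta$-degree.

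It then remains to set $d:=e\ge 1$ and $f:=\prod_{M\not\ge H}\zeta_M^{d}-\sum_\mu c_\mu\mu\in S$. By the rewriting identity $f$ maps to $0$ in $R$, so $f\in J=I\cap S\subseteq I$, and $f$ is homogeneous by~(iii). By unique factorization in $S$ — the $\zeta_N$ being pairwise non-associate prime elements — none of the $\mu$ occurring in the correction equals $\prod_{M\not\ge H}\zeta_M^{d}$, since each such $\mu$ is divisible by a $\zeta_N$ with $N\ge H$ while the leading monomial is not; hence $f$ genuinely has the advertised shape with leading term $\prod_{M\not\ge H}\zeta_M^{d}$ of coefficient $1$. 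Finally~(i) gives $m(N')\le e-1<d$ for every $N'\not\ge H$ appearing in the correction, and~(ii) gives $m(N)\ge 1$ for at least one $N\ge H$, which is exactly~\eqref{eq:japan}. I expect the decisive difficulty to be the rewriting step: the bookkeeping must be arranged so that it provably terminates while simultaneously forcing the ``bad'' exponents on the $\zeta_M$ ($M\not\ge H$) below $d$ and ensuring that every surviving monomial has acquired a factor $\zeta_N$ with $N\ge H$; by contrast, the $p$-odd part of the descent in the first step — carrying the exterior generators of $\rmH^\sbull(E,\kk)$ through the radical computation — is only a minor nuisance, handled by the device of taking powers to land inside $I\cap S$.
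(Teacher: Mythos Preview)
Your proof is correct and follows the same approach as the paper: both pass to the subring generated by the~$\zeta_N$, extract from the hypothesis relations $\zeta_M^{d}\equiv(\text{element of }\langle\zeta_N\mid N\ge H\rangle)\pmod{I}$ for each $M\not\ge H$, and then iterate these modulo~$I$ to drive all exponents on the $\zeta_M$ with $M\not\ge H$ below~$d$ while forcing a factor $\zeta_N$ with $N\ge H$ into every surviving monomial. The paper organizes the iteration as descending induction on $\sum_{N\ge H}m(N)$, which is equivalent to your termination via decreasing $\mathcal A$-degree since total degree is fixed; two minor quibbles: $\mathcal A\ne\varnothing$ follows from $H\ne 1$ (not from $H\ne E$), and the final UFD argument, while correct, is unnecessary since the lemma only asks for \emph{some} expression of the stated shape.
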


\begin{proof}
For simplicity, we work in the subring $\rmH^\ast\subseteq \rmH^\sbull(E,\kk)$ generated by the $\zeta_N$. (For $p=2$, this is the whole cohomology anyway and for $p$ odd we only miss nilpotent elements, which are mostly irrelevant for the problem, as we can always raise everything in sight to a large $p$-th power.)
Let us denote the maximal ideal by~$\gm=\ideal{\zeta_N\mid N\in\cN}$.
It is also convenient to work in the quotient $\bbN$-graded ring
\[
A^\ast:=\rmH^\ast(E,\kk)/I
\]
which is generated, as a $\kk$-algebra, by the classes $\bar{\zeta}_N$ of all~$\zeta_N$ modulo~$I$.

The assumption about $Z(I+\SET{\zeta_N}{N\ge H})=\{\gm\}$ implies that $\gm$ has some power contained in~$I+\ideal{\SET{\zeta_N}{N\ge H}}$.
In other words when $N'\not\ge H$ we have
\begin{equation}
\label{eq:aux-japan-1}%
(\bar\zeta_{N'})^d\in\ideal{\bar\zeta_{N}\mid N\ge H}_{A^{\ast}}
\end{equation}
for $d\gg1$ that we take large enough to work for all the (finitely many)~$N'\not\ge H$.

Consider this ideal~$J=\ideal{\,\bar\zeta_{N}\mid N\ge H\,}$ of~$A^{\ast}$ more carefully. It is a $\kk$-linear subspace generated by the classes~$\bar{\theta}_m$ of the following products in~$\rmH^*$
\begin{equation}
\label{eq:aux-japan-2}%
{\theta}_m:=\prod_{N\in\cN}(\zeta_N)^{m(N)}
\end{equation}
with $m\in\bbN^\cN$ such that~$m(N)\ge 1$ for at least one~$N\ge H$.
We claim that $J$ is in fact generated over~$\kk$ by the subset of the $\bar{\theta}_m$ for the special $m\in\bbN^\cN$ satisfying~\eqref{eq:japan}.
Indeed, let $J'\subseteq J$ be the $\kk$-subspace generated by the~$\bar{\theta}_m$ for the special~$m$.
Then we can prove that the class $\bar{\theta}_m$ of each product~\eqref{eq:aux-japan-2} belongs to~$J'$, by using~\eqref{eq:aux-japan-1} and (descending) induction on the number~$\sum_{N\ge H}m(N)$, for a fixed total degree~$\sum_{N}m(N)$.
We conclude that $J=J'$.

By~\eqref{eq:aux-japan-1}, the monomial~$\prod_{M\not\ge H}(\bar\zeta_{M})^d$ belongs to~$J$ and therefore to~$J'$: It is a $\kk$-linear combination of monomials of the form~$\bar{\theta}_m$ for~$m\in\bbN^\cN$ satisfying~\eqref{eq:japan}. Returning from~$A^{\ast}=\rmH^\ast(E,\kk)/I$ to~$\rmH^\ast(E,\kk)$, the difference between $\prod_{M\not\ge H}(\zeta_{M})^d$ and the same $\kk$-linear combination of the lifts~$\theta_m$ in~$\rmH^\ast(E,\kk)$ is an element of~$I$, that we call$~f$ and that fulfills the statement.
\end{proof}

\begin{Prop}
\label{Prop:closure}%
Let $Z\subset\Vee{E}$ be a non-empty closed subset of the cohomological open and let $1\neq H\lneqq E$ be a non-trivial subgroup.
Suppose that in~$\Vee{E}$, the subset~$Z$ intersects the image of the cohomological open of~$H$ in the smallest possible way:
\[
Z\cap \rho_H(\Vee{H}) = \{\cM(1)\}.
\]
Consider the closure $\bar{Z}$ of~$Z$ in the whole spectrum~$\SpcKE$.
Then $\bar{Z}$ does not intersect the stratum $\Vee{E}(H)=\psi^H(\Vee{E/H})$.
Hence $\cM(H)$ does not belong to~$\bar{Z}$.
\end{Prop}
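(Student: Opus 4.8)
The plan is to translate the statement, via \Cref{Thm:abelem-top} and \Cref{Lem:inclusions}, into a purely algebraic question about the cohomology ring $\rmH^\sbull(E,\kk)$ and to solve it using the polynomial constructed in \Cref{Lem:japan}. Concretely, let $I\subseteq \rmH^\sbull(E,\kk)$ be the homogeneous radical ideal defining $Z\subseteq\Vee{E}$. First I would unwind the hypothesis $Z\cap\rho_H(\Vee{H})=\{\cM(1)\}$: the closed subset $\rho_H(\Vee{H})=\supp(\kk(E/H))$ of $\Vee{E}$ is, under the identification $\Vee{E}\cong\Spech(\rmH^\sbull(E,\kk))$, cut out by the elements $\zeta_N$ for $N\ge H$ (these are exactly the cohomology classes that restrict to zero on $H$, using that $E$ is elementary abelian and \Cref{Prop:Res_H(a/b)}). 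So the hypothesis says precisely that the only homogeneous prime containing $I$ together with all $\zeta_N$, $N\ge H$, is the maximal ideal $\rmH^+(E,\kk)$; this is exactly the hypothesis of \Cref{Lem:japan}. Hence \Cref{Lem:japan} produces a homogeneous $f\in I$ of the stated shape: $f=\prod_{M\not\ge H}\zeta_M^d+\sum_m\lambda_m\prod_N\zeta_N^{m(N)}$, where each monomial in the sum involves at least one $\zeta_N$ with $N\ge H$ and has all $\not\ge H$-exponents $<d$.

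The next step is to chase this $f$ around the diagram \eqref{eq:R(H)-span} as prescribed by \Cref{Lem:inclusions}, and to show the resulting ideal $J\subseteq\rmH^\sbull(E/H,\kk)$ is the unit ideal (equivalently, $1\in J$), which exactly says $\bar Z\cap\Vee{E}(H)=\varnothing$, and in particular $\cM(H)\notin\bar Z$. To do this I would look at the image $Q'(f)\in\RE(\goodopen[E]{H}\cap\Vee{E})$, where $Q'$ inverts $\zeta_N=\frac{a_N}{b_N}$ for all $N\not\ge H$. In this localization the leading term $\prod_{M\not\ge H}\zeta_M^d$ becomes a unit, so dividing $f$ by it we get that $1+\sum_m\lambda_m\prod_N\zeta_N^{m(N)-[M\not\ge H]\cdot d}\cdot(\text{unit})$ lies in $\ideal{Q'(I)}$. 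The key point is now the behaviour of $\Psi^H$: by \Cref{Prop:Psi^H(a/b)} (and \Cref{Rem:Psi-on-O(H)}), $\Psi^H$ sends $a_N\mapsto 1$ hence $\zeta_N=\frac{a_N}{b_N}\mapsto 0$ for $N\not\ge H$ — wait, one must be careful: under $\Psi^H$ with $H\not\le N$ one has $\Psi^H(a_N)=1$ and $\Psi^H(b_N)=0$, so $\zeta^-_N=\frac{b_N}{a_N}\mapsto 0$; it is $\zeta^-_N$, not $\zeta^+_N$, that is being inverted in $Q$. So I would instead track $Q(f)\in\RE(\goodopen[E]{H}\cap\Vee{E})$ through the \emph{left} branch: here $Q$ inverts $\zeta^-_N$ for $N\not\ge H$, and $\Psi^H$ kills $\zeta^-_N$ for those $N$ while sending $\zeta_N\mapsto\zeta_{N/H}$ for $N\ge H$. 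Applying $\Psi^H$ to the relation "$f$ becomes $\prod_{M\not\ge H}(\zeta^-_M)^{-d}\cdot f\in\ideal{\dots}$" and using that every monomial in $\sum_m$ contains a factor $\zeta_N$ with $N\ge H$: each such monomial, after multiplying by the units $(\zeta^-_M)^{-d}$, still maps under $\Psi^H$ into $\rmH^+(E/H,\kk)$, while the leading term maps to $1$. One then concludes that $J=\Psi^H(Q^{-1}(\ideal{Q'(I)}))$ contains an element of the form $1+(\text{something in }\rmH^+(E/H,\kk))$; since $J$ is homogeneous, taking the degree-zero component forces $1\in J$.

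The main obstacle I anticipate is bookkeeping the two localizations correctly — making sure the leading monomial $\prod_{M\not\ge H}\zeta_M^d$ of $f$ really becomes invertible on $\goodopen[E]{H}\cap\Vee{E}$ (it does, since on $\goodopen[E]{H}$ the elements $a_N$ with $N\not\ge H$ are inverted and on $\Vee{E}$ the elements $b_N$ are inverted, so $\zeta_N=\zeta^+_N$ is a unit there for $N\not\ge H$), and then pushing the resulting identity through the \emph{correct} epimorphism $\Psi^H\colon\REH\onto\rmH^\sbull(E/H,\kk)$ rather than through $Q'$. A clean way to package this is to observe that $\goodopen[E]{H}\cap\Vee{E}$ is the open of $\Vee{E}=\Spc(\Db(\kk E))$ complementary to $\supp(\kk(E/H))=Z(\{\zeta_N\}_{N\ge H})$, i.e.\ the basic open $\cup_{N\not\ge H}Z(\zeta_N)^c$; the hypothesis says $Z$ meets this open only in $\{\cM(1)\}$, and since $\cM(1)$ is the generic point that is automatically in the closure of nothing new — so really $\bar Z\cap\goodopen[E]{H}$ is disjoint from the stratum $\Vee{E}(H)$, which is the closed complement inside $\goodopen[E]{H}$ of $\goodopen[E]{H}\cap\Vee{E}$. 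Phrasing it this way may let me avoid the explicit polynomial $f$ altogether and instead argue: $\bar Z\cap\goodopen[E]{H}$ is the closure of $Z\cap(\goodopen[E]{H}\cap\Vee{E})$, which is contained in the closure of the single point $\cM(1)$; but $\cM(1)=\cM(1)$ lies in $\Vee{E}$, whose closure meets the stratum $\Vee{E}(H)$ only if... and here one does need the finite-generation input of \Cref{Lem:japan} to see that the closure of that punctured locus cannot reach $\Vee{E}(H)$. So \Cref{Lem:japan} is genuinely the crux, and the remaining work is the diagram chase of \Cref{Lem:inclusions} carried out with the explicit $f$.
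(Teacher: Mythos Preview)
Your main line is correct and essentially identical to the paper's proof: verify that the hypothesis $Z\cap\rho_H(\Vee{H})=\{\cM(1)\}$ is exactly the hypothesis of \Cref{Lem:japan}, extract the polynomial $f\in I$, pass to the common localization, divide by the leading monomial $\prod_{M\not\ge H}\zeta_M^{d}$ to obtain a degree-zero element $\tilde f=1+\tilde g$ that lifts to $\REH$ with $\tilde g\in\ideal{\zeta^+_N\mid N\ge H}$, and then apply $\Psi^H$ to get $\Psi^H(\tilde g)\in\rmH^+(E/H,\kk)$ of degree zero, hence zero, so $1\in J$. (Small slip: you mean multiply by $(\zeta^-_M)^{d}=(\zeta^+_M)^{-d}$, not $(\zeta^-_M)^{-d}$.)

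Your tentative ``clean'' rephrasing at the end, however, contains real confusions worth flagging even though you abandon it. First, $\goodopen[E]{H}\cap\Vee{E}=\bigcap_{N\not\ge H}Z(\zeta_N)^c$ is \emph{not} the complement of $\rho_H(\Vee{H})=\bigcap_{N\ge H}Z(\zeta_N)$ in~$\Vee{E}$; these two closed/open sets involve disjoint sets of coordinates and there is no containment either way. Second, $\cM(1)$ is the unique \emph{closed} point of $\Vee{E}$, not the generic one, and it does not lie in $\goodopen[E]{H}$ when $H\neq 1$. So the hypothesis gives no direct control over $Z\cap\goodopen[E]{H}$, and this is precisely the gap that the explicit polynomial from \Cref{Lem:japan} fills --- as you correctly diagnose in your final sentence.
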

\begin{proof}
Let $I\subset \rmH^\sbull(E,\kk)$ be the homogeneous ideal that defines~$Z$. The closed image~$\rho_H(\Vee{H})$ is given by the (partly redundant) equations~$\zeta_N=0$ for all~$N\ge H$. It follows that the intersection $Z\cap \rho_H(\Vee{H})$ is defined by the ideal~$I+\ideal{\,\zeta_N\mid N\ge H\,}$.
So our hypothesis translates exactly in saying that $I$ satisfies the hypothesis of \Cref{Lem:japan}. Hence there exists a homogeneous element of~$I$
\[
f = \prod_{M\not\ge H}\zeta_M^d +  \sum_{m} \lambda_m \prod_{N\in\cN}\zeta_N^{m(N)}
\]
for scalars~$\lambda_m\in\kk$ and finitely many exponents $m\in\bbN^\cN$ satisfying~\eqref{eq:japan}.
We can now use \Cref{Lem:inclusions} and follow Diagram~\eqref{eq:R(H)-span} with the ideal~$I$ and particularly with its element~$f$.
The element $Q'(f)$ is just~$f$ seen in~$\RE(\goodopen[E]{H}\cap \Vee{E})$. But it does not belong to the image of~$\REH$ under~$Q$ because~$f$ contains some~$b_M$ with~$M\not\ge H$ in denominators in the~$\zeta_M$'s.
Still, we can multiply~$Q'(f)$ by~$\prod_{M\not\ge H}(\frac{b_M}{a_M})^d=\prod_{M\not\ge H}(\zeta_M)^{-d}$ to get a degree-zero homogeneous element
\begin{equation}
\label{eq:aux-tilde-f}%
\tilde{f}=1 +  \sum_{m} \lambda_m \prod_{N\in\cN}\zeta_N^{m'(N)}
\end{equation}
in the ideal~$\ideal{Q'(f)}$, where we set the exponent $m'(N):=m(N)-d$ if~$N\not\ge H$ and $m'(N):=m(N)$ if $N\ge H$.
Note that by~\eqref{eq:japan} the exponent $m'(N)$ is negative if~$N\not\ge H$ and is non-negative if $N\ge H$ and strictly positive for at least one~$N\ge H$.
Both types of exponents of~$\zeta_N$ are allowed in~$\REH$, namely, when $N\not\ge H$, the element~$\zeta^-_N=\frac{b_N}{a_N}$ exists in~$\REH$.
In other words, the element $\tilde{f}\in\ideal{Q'(f)}$ satisfies
\[
\tilde{f} = Q(1 + \tilde{g})
\]
where $\tilde{g}\in \REH$ belongs to the ideal~$\ideal{\zeta_N\mid N\ge H}$ in~$\REH$ and must be of degree zero by homogeneity.
Now, for $N\ge H$, we have $\Psi^H(\zeta_N)=\zeta_{N/H}$ by \Cref{Prop:Psi^H(a/b)}.
It follows that $\Psi^H(\tilde{g})$ belongs to the maximal ideal~$\ideal{\,\zeta_{\bar N}\mid\bar{N}\in\cN(E/H)}\subseteq\rmH^+(E/H,\kk)$ of~$\rmH^\sbull(E/H,\kk)$ and still has degree zero. This forces~$\Psi^H(\tilde{g})=0$ and therefore $\Psi^H(1+\tilde{g})=1$ in~$\rmH^\sbull(E/H,\kk)$.
In the notation of \Cref{Lem:inclusions}, we have shown that $J$ contains~1, which implies $\bar{Z}\cap\Vee{E}(H)=\varnothing$.
\end{proof}

\begin{Cor}
\label{Cor:closure}%
Let $Z\subset\Vee{E}$ be a closed subset of the cohomological open, strictly larger than the unique closed point~$\cM(1)$ of~$\Vee{E}$. Suppose that in~$\Vee{E}$, the subset~$Z$ intersects the images of all proper subgroups trivially, \ie~$Z\cap \big(\bigcup_{H\lneqq E}\rho_H(\Vee{H})\big) = \{\cM(1)\}$.
Then the closure $\bar{Z}$ of~$Z$ in the whole spectrum~$\SpcKE$ has only one more point, namely $\bar{Z}=Z\cup\{\cM(E)\}$.
\end{Cor}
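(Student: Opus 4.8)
The plan is to deduce this corollary from \Cref{Prop:closure} together with the stratification $\SpcKE=\sqcup_{H\le E}\Vee{E}(H)$ from \Cref{Rec:Part-I}. First I would observe that, since $\SpcKE$ is noetherian (\Cref{Rem:closure}), the closure $\bar Z$ is a closed subset that meets each stratum $\Vee{E}(H)$ in a closed subset, and we must identify $\bar Z\cap\Vee{E}(H)$ for every subgroup $H\le E$. The stratum for $H=1$ is $\Vee{E}$ itself, and there $\bar Z\cap\Vee{E}=Z$ since $Z$ is already closed in $\Vee{E}$ and $\Vee{E}$ is open. So the task reduces to showing $\bar Z\cap\Vee{E}(H)=\varnothing$ for every $H\neq 1$, except that $\bar Z\cap\Vee{E}(E)=\{\cM(E)\}$ (note $\Vee{E}(E)=\{\cM(E)\}$ is a single point since $E/E=1$).

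For a \emph{non-trivial proper} subgroup $1\neq H\lneqq E$: the hypothesis $Z\cap\rho_H(\Vee{H})\subseteq Z\cap(\cup_{K\lneqq E}\rho_K(\Vee{K}))=\{\cM(1)\}$, and $\cM(1)\in\rho_H(\Vee H)$ always (it is the image of the closed point of $\Vee H$, or one checks $\cM(1)=\cP_E(1,\rmH^+)$ lies in every $\rho_H(\Vee H)$), so $Z\cap\rho_H(\Vee H)=\{\cM(1)\}$ exactly. Thus \Cref{Prop:closure} applies verbatim and gives $\bar Z\cap\Vee{E}(H)=\varnothing$; in particular $\cM(H)\notin\bar Z$.

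For $H=E$: here $\Vee{E}(E)=\{\cM(E)\}$, so the only question is whether $\cM(E)\in\bar Z$. Since $Z$ is strictly larger than $\{\cM(1)\}$, it contains a point other than the unique closed point of $\Vee E$, hence is a positive-dimensional closed subset of $\Vee E$; as $\Vee E\cong\Spech(\rmH^\sbull(E,\kk))$ is irreducible with generic point $\eta_E$ and $Z$ is closed, $Z$ contains some point specializing to a non-closed point, but more to the point I would argue directly: the open $\goodopen{E}$ is the `geometric open' $\SpcKE\setminus\cup_{H\lneqq E}\Img(\rho_H)$ by \Cref{Prop:U(G)}, and $\goodopen{E}\cap\Vee E=Z(\prod_{N\in\cN}\zeta_N)^c$ inside $\Vee E$ (since $\goodopen{E}=\cap_N\open(a_N)$ and on $\Vee E$ each $b_N$ is already invertible, so $\open(a_N)\cap\Vee E=\open(\zeta_N)\cap\Vee E$ — here I use $\zeta_N=a_N/b_N$ from \Cref{Rem:zeta_N}). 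The hypothesis that $Z$ meets no $\rho_H(\Vee H)$ except at $\cM(1)$ means, via $\rho_H(\Vee H)\cap\Vee E=Z(\{\zeta_N\mid N\ge H\})$, that $Z\setminus\{\cM(1)\}$ is contained in $\goodopen{E}$; so $Z\cap\goodopen{E}$ is non-empty (as $Z\neq\{\cM(1)\}$) and its closure in $\goodopen{E}$ is a non-empty closed subset. But $\goodopen{E}$ has a unique closed point $\cM(E)$ by \Cref{Prop:U(H)}, so $\cM(E)$ lies in the closure of $Z\cap\goodopen{E}$, hence in $\bar Z$. Combining all cases: $\bar Z=Z\cup\{\cM(E)\}$.

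The main obstacle I anticipate is the bookkeeping in the $H=E$ case — specifically, pinning down that $Z\setminus\{\cM(1)\}$ actually lands in the geometric open $\goodopen{E}$ and that $Z\cap\goodopen{E}\ne\varnothing$, which requires knowing that $\rho_H(\Vee H)\cap\Vee E$ is cut out by $\{\zeta_N\mid N\ge H\}$ and that $\cM(1)$ is the unique closed point of $\Vee E$ lying in all the $\rho_H(\Vee H)$. Both facts are implicit in \Cref{Prop:U(1)}, \Cref{Prop:U(G)} and the classical description of $\Spech(\rmH^\sbull(E,\kk))$ via Quillen, so this should be routine but needs to be stated carefully. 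Everything else is a direct citation of \Cref{Prop:closure} stratum by stratum.
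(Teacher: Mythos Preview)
Your proof is correct and follows the same route as the paper: invoke \Cref{Prop:closure} for each non-trivial proper~$H$, then show $\cM(E)\in\bar Z$. For the last step the paper is slightly more direct---picking $\cP\in Z\sminus\{\cM(1)\}$, it notes $\cP\notin\supp(\kk(E/H))=\Img(\rho_H)$ for all $H\lneqq E$, hence $\cM(E)=\ideal{\kk(E/H)\mid H\lneqq E}\subseteq\cP$ and thus $\cM(E)\in\adhpt{\cP}$---whereas you reach the same conclusion via the unique-closed-point property of~$\goodopen{E}$ (which, incidentally, is established in the proof of \Cref{Prop:SpcKE-dim} rather than in \Cref{Prop:U(H)} itself).
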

\begin{proof}
By \Cref{Prop:closure}, we see that $\bar{Z}$ does not meet any stratum~$\Vee{E}(H)$ for $H\neq E$. Thus the only point of~$\SpcKE$ outside of~$Z$ itself, hence outside of~$\Vee{E}$, that remains candidate to belong to~$\bar{Z}$ must belong to~$\supp(\Kac(E))\sminus\cup_{H\lneqq E}\Vee{E}(H)=\Vee{E}(E)=\{\cM(E)\}$. We know that $\cM(E)=\ideal{\kk(E/H)\mid H\lneqq E}$ in~$\cK(E)$, by \Cref{Exa:M(G)}. Take~$\cP\in Z$ different from~$\cM(1)$. Since~$\cP$ does not belong to any~$\Img(\rho_H)=\supp(\kk(E/H))$ by assumption, it must contain $\kk(E/H)$. Consequently, $\cM(E)\subseteq \cP$, meaning that $\cM(E)\in \adhpt{\cP}\subseteq\bar{Z}$.
\end{proof}
\begin{Cor}
\label{Cor:dim1-subvar-abelem}%
Let $E$ be an elementary abelian group of rank~$r$. Let $\cP$ be a point of height~$r-1$ in the cohomological open~$\Vee{E}$, that is, a closed point of the classical projective support variety~$\mathcal{V}_E(\kk):=\Vee{E}\sminus\{\cM(1)\}\cong\Proj(\rmH^\sbull(E,\kk))\cong\bbP^{r-1}_{\kk}$.
Suppose that~$\cP$ does not belong to the image~$\rho_H(\Vee{H})$ of the support variety of any proper subgroups~$H\lneqq E$.
Then the closure of~$\cP$ in~$\SpcKE$ is exactly the following
\[
\overline{\{\cP\}}=\{\cM(E),\cP,\cM(1)\}.
\]
\end{Cor}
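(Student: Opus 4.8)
The plan is to deduce this from \Cref{Cor:closure}, applied to the closed subset $Z\subseteq\Vee{E}$ obtained as the closure of $\{\cP\}$ \emph{inside the cohomological open} $\Vee{E}$. First I would identify $Z$ explicitly. By hypothesis $\cP$ is a closed point of the open subspace $\mathcal{V}_E(\kk)=\Vee{E}\sminus\{\cM(1)\}$, so its closure in $\mathcal{V}_E(\kk)$ is just $\{\cP\}$, whence $Z\cap\mathcal{V}_E(\kk)=\{\cP\}$. On the other hand, under the identification $\Vee{E}\cong\Spech(\rmH^\sbull(E,\kk))$ the point $\cM(1)$ is the unique closed point (the cone point, corresponding to the irrelevant ideal), so it lies in every non-empty closed subset of $\Vee{E}$, in particular in $Z$. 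Since $\cP\in\mathcal{V}_E(\kk)$ we have $\cP\neq\cM(1)$, and therefore $Z=\{\cP,\cM(1)\}$, a closed subset of $\Vee{E}$ strictly larger than $\{\cM(1)\}$, exactly as \Cref{Cor:closure} requires.

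Next I would verify the remaining hypothesis of \Cref{Cor:closure}, namely $Z\cap\big(\bigcup_{H\lneqq E}\rho_H(\Vee{H})\big)=\{\cM(1)\}$. The inclusion $\supseteq$ is clear: taking $H=1$, the space $\Vee{1}=\Spc(\cK(1))$ is a single point, and $\rho_1$ carries it to $\cM(1)$ (indeed $\bbF^1=\Res^E_1$, so $\cM(1)=\Ker(\Res^E_1)=\rho_1(\Vee{1})$). For $\subseteq$, recall $Z=\{\cP,\cM(1)\}$, so it suffices that $\cP$ lies in no $\rho_H(\Vee{H})$ with $H\lneqq E$; this is precisely the hypothesis of the present corollary (and for $H=1$ it is automatic from $\cP\neq\cM(1)$). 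Hence \Cref{Cor:closure} applies and gives $\adh{Z}=Z\cup\{\cM(E)\}=\{\cM(E),\cP,\cM(1)\}$ in $\SpcKE$.

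Finally I would note that $\adhpt{\cP}=\adh{Z}$: indeed $Z$ is the closure of $\{\cP\}$ in the \emph{subspace} $\Vee{E}$, so $Z=\adhpt{\cP}\cap\Vee{E}\subseteq\adhpt{\cP}$, and taking closures in $\SpcKE$ yields $\adhpt{\cP}=\adh{Z}=\{\cM(E),\cP,\cM(1)\}$, as claimed. I do not expect a genuine obstacle here: the statement is essentially a packaged instance of \Cref{Cor:closure}. The only point that needs a little care is the first paragraph --- pinning down that the closure of the height-$(r-1)$ point $\cP$ in $\Vee{E}$ is exactly the ``line'' $\{\cP,\cM(1)\}$, and that this closed set meets the images of all proper subgroups only at the cone point $\cM(1)$; everything afterwards is bookkeeping.
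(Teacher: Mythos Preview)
Your proposal is correct and follows exactly the paper's approach: apply \Cref{Cor:closure} to $Z=\{\cP,\cM(1)\}$, the closure of $\cP$ in $\Vee{E}$. The paper's proof is a single sentence to this effect; your version simply spells out the hypothesis verification in more detail.
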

\begin{proof}
Apply \Cref{Cor:closure} to~$Z=\{\cP,\cM(1)\}$, the closure of~$\cP$ in~$\Vee{E}$.
\end{proof}

\begin{Exa}
We can review the proof of \Cref{Prop:closure} in the special case of \Cref{Cor:dim1-subvar-abelem}, to see how elements like~$f\in\RE(1)$ and $\tilde{f}\in\REH$ come into play.
We do it in the special case where~$\cP$ is a $\kk$-rational point (\eg\ if $\kk$ is algebraically closed).
Let $1\neq H\lneqq E$ be a non-trivial subgroup (the case $r=1$ being trivial).
Choose $N_0,N_1\normal E$ index-$p$ subgroups with~$H\le N_0$ and $H\not\le N_1$.
They define coordinates $\zeta_0,\zeta_1$ in $\bbP^{r-1}$ (where $\zeta_{i}=\zeta_{N_i}$ as in \Cref{Rem:zeta_N}).
There exists a hyperplane of~$\bbP^{r-1}$
\begin{equation}
\label{eq:hyperplane}%
\lambda_0\zeta_0+\lambda_1\zeta_1=0,\qquad [\lambda_0:\lambda_1]\in \bbP^1(\kk),
\end{equation}
going through the rational point~$\cP$.
Note that $\lambda_1\neq 0$ as $\cP\notin Z(\zeta_0)=\rho_{N_0}(\Vee{N_0})$, by assumption.
As in \Cref{Lem:inclusions}, the following two localizations agree
\begin{equation}
\label{eq:common-loc}%
\Rloc{E}(H)\big[(\zeta^-_N)\inv\mid H\not\le N\big]=\Rloc{E}(1)\big[(\zeta^+_N)\inv\mid H\not\le N\big]
\end{equation}
where $N\normal E$ ranges over the index-$p$ subgroups as usual.
We find a lift
\[
\tilde{f}:=\lambda_0 \zeta_0\zeta^-_{N_1}+\lambda_1 \in \RE(H)
\]
of the element~$f=\lambda_0\zeta_0+\lambda_1\zeta_1\in \RE(1)$ of~\eqref{eq:hyperplane} suitably multiplied by~$\zeta_{1}\inv=\zeta^-_{N_1}$ in the localization~\eqref{eq:common-loc}.
Then we have $\Psi^H(\zeta^-_{N_1})=0$ since $H\not\le N_1$, by \Cref{Prop:Psi^H(a/b)}, so $\Psi^H(\tilde{f})=\lambda_1\in\kk^\times$ is an isomorphism.
We deduce that $\cone(\tilde{f})$ belongs to~$\cM(H)\sminus\cP$, which shows that $\cP$ does not specialize to~$\cM(H)$.
\end{Exa}

\begin{Exa}
\label{Exa:Klein-four}%
Let $E=C_2\times C_2$ be the Klein-four group.
Let us justify the description of~$\Spc(\cK(E))$ announced in \Cref{Exa:Klein4} in some detail:
\begin{equation}\label{eq:Klein}%
\vcenter{\xymatrix@C=.0em@R=.4em{
{\color{Black}\overset{\cM(E)}{\bullet}} \ar@{-}@[Gray][rrdd] \ar@{-}@[Gray][rrrrdd] \ar@{-}@[Gray][rrrrrrdd] \ar@{~}@<.1em>@[Red][rrrrrrrrdd] &&& {\color{Black}\overset{\cM(N_0)}{\bullet}} \ar@{-}@[Gray][ldd] \ar@{-}@<.1em>@[Gray][rrrrrrdd]
&& {\color{Black}\overset{\cM(N_1)}{\bullet}} \ar@{-}@[Gray][ldd] \ar@{-}@[Gray][rrrrrrdd]
&& {\color{Black}\overset{\cM(N_\infty)}{\bullet}} \ar@{-}@[Gray][ldd] \ar@{-}@[Gray][rrrrrrdd]
&& {\color{Black}\overset{\cM(1)}{\bullet}} \ar@{~}@[Gray][ldd] \ar@{-}@[Gray][dd] \ar@{-}@[Gray][rrdd] \ar@{-}@[Gray][rrrrdd]
\\ \\
&& {\color{Brown}\underset{\color{Brown}\eta_E(N_0)}{\bullet}} \ar@{-}@<-.4em>@[Gray][rrrrrrrdd]
&& {\color{Brown}\underset{\color{Brown}\eta_E(N_1)}{\bullet}} \ar@{-}@<-.1em>@[Gray][rrrrrdd]
&& {\color{Brown}\underset{\color{Brown}\eta_E(N_\infty)}{\bullet}} \ar@{-}@[Gray][rrrdd]
&\ar@{.}@[RoyalBlue][r]
& {\scriptstyle\color{RoyalBlue}\tinyPone} \ar@{~}@[Gray][rdd] \ar@{.}@[RoyalBlue][rrrrrrr]
& {\color{OliveGreen}\underset{\color{OliveGreen}0}{\bullet}} \ar@{-}@[Gray][dd]
&& {\color{OliveGreen}\underset{\color{OliveGreen}1}{\bullet}} \ar@{-}@[Gray][lldd]
&& {\color{OliveGreen}\underset{\color{OliveGreen}\infty}{\bullet}} \ar@{-}@[Gray][lllldd]
&&
\\ \\
&&&&&&&&& {\color{Black}\bullet_{\eta_E}}\kern-1em
&&&&
}}\kern-1em
\end{equation}
By \Cref{Rec:Part-I}, we have a partition of the spectrum as a set
\[
\SpcKE=\Vee{E}(E)\,\sqcup\, \Vee{E}(N_0)\,\sqcup\, \Vee{E}(N_1)\,\sqcup\, \Vee{E}(N_\infty)\,\sqcup\, \Vee{E},
\]
where we write $N_0,N_1,N_\infty$ for the three cyclic subgroups~$C_2$ and where $\Vee{E}=\Vee{E}(1)$ is the cohomological open as usual. Let us review those five parts $\Vee{E}(H)=\psi^{H}(\Vee{E/H})$ separately, in growing order of complexity, \ie from left to right in~\eqref{eq:Klein}.

For $H=E$, the stratum $\Vee{E}(E)=\psi^E(\Vee{E/E})=\{\cM(E)\}$ is just a closed point.

For each cyclic subgroup~$N_i<E$, the quotient~$E/N_i\simeq C_2$ is cyclic, so~$\Spc(\cK(E/N_i))$ is the space of~\Cref{Exa:C_p}.
Its image under $\psi^{N_i}$ is~$\{\cM_E(E),\eta_E(N_i),\cM_E(N_i)\}$, defining the (brown) point~$\eta_E(N_i):=\psi^{N_i}(\eta_{E/N_i})$, as in the proof of \Cref{Prop:SpcKE-irreducible}.
The stratum~$\Vee{E}(N_i)$ is the image of the cohomological open~$\Vee{E/N_i}$ only, that is, the Sierpi\'{n}ski space~$\{\eta_E(N_i),\cM(N_i)\}$, whose non-closed point~$\eta_E(N_i)$ is the generic point of the irreducible~$\{\cM_E(E),\eta_E(N_i),\cM_E(N_i)\}$ in~$\SpcKE$.

Finally, for $H=1$, the cohomological open $\Vee{E}=\Spc(\Db(\kk E))\cong\Spech(\kk[\zeta_0,\zeta_1])$ is a $\bbP^1_\kk$ with a closed point~$\cM(1)$ on top.
We denote by~$\eta_E$ the generic point of~$\SpcKE$ as in~\Cref{Prop:SpcKE-irreducible} and by~$0,1,\infty$ the three $\bbF_2$-rational points of~$\bbP^1_\kk$ (in green).
The notation~$\Pone$ refers to all remaining points of~$\bbP^1_\kk$.
The undulated lines indicate that \emph{all} points of~$\Pone$ have the same behavior.
Namely, $\eta_E$ specializes to all points of~$\Pone$ and every point of~$\Pone$ specializes to~$\cM(1)$ and the (red) undulated line towards~$\cM(E)$ indicates that all points of~$\Pone$ specialize to~$\cM(E)$, as follows from \Cref{Cor:dim1-subvar-abelem}.
(Note that the latter was rather involved: Its proof occupies most of this section, and relies on technical \Cref{Lem:japan}.)

We have described the closure of every point in~$\SpcKE$, except for the $\bbF_2$-rational points~$0,1,\infty$.
For this, we use the closed immersion~$\rho_{N_i}\colon \Spc(\cK(N_i))\hook \SpcKE$ induced by restriction~$\Res_{N_i}$.
The point $i$ is the image of the generic point~$\eta_{N_i}$ of the V-shaped space~$\Spc(\cK(N_i))$ of~\Cref{Exa:C_p}. Hence its closure is~$\Img(\rho_{N_i})=\{\cM(E_i),i,\cM(1)\}$. So specializations are exactly those of~\eqref{eq:Klein}.

We revisit this picture in more geometric terms in \Cref{Exa:Klein-revisit}.
\end{Exa}

\begin{Rem}
\label{Rem:dim1-subvar}%
It is possible to extend \Cref{Cor:dim1-subvar-abelem} to a general finite group~$G$ by means of the Colimit \Cref{Thm:colim}.
Let $Z\subseteq \SpcKG$ be a one-dimensional irreducible closed subset.
Write its generic point as $\cP=\cP(K,\gp)$ for (unique) $K\in\Sub{p}(G)_{/G}$ and $\gp\in \Vee{\WGK}$.
By Quillen applied to~$\bar{G}=\WGK$, there exists a minimal elementary abelian subgroup $E\le \bar{G}$ such that $\gp\in\Img(\rho_E\colon \Vee{E}\to\Vee{\bar{G}})$, also unique up to $\bar{G}$-conjugation. This~$E\le \bar{G}=(N_GK)/K$ is given by $E=H/K$ for $H\le N_G K$ containing~$K$. Then $\cP=\varphi_{(H,K)}(\cQ)$ where $\cQ\in\SpcKE$ is given by $\cQ=\cP_{E}(1,\gq)$ for some $\gq\in\Vee{E}$.
By \Cref{Lem:preserve-dimension}, the map $\varphi_{(H,K)}\colon \SpcKE\to \SpcKG$ is closed and preserves the dimension of points.
It follows that $\cQ$ is also the generic point of a one-dimensional irreducible in~$\SpcKE$.
By minimality of~$E$, the point~$\cQ\in\Vee{E}$ does not belong to $\Vee{H'}$ for any proper subgroup~$H'<E$.
By \Cref{Cor:dim1-subvar-abelem}, we have $\adhpt{\cQ}=\{\cM_E(E),\cQ,\cM_E(1)\}$ in~$\SpcKE$.
The map $\varphi_{(H,K)}$ sends this subset to~$\{\cM_G(H), \cP, \cM_G(K)\}$.
In summary, every one-dimensional irreducible subset of~$\SpcKG$ is of the form $Z=\{\cM(H),\cP,\cM(K)\}$, where $H$ and~$K$ are uniquely determined by the generic point~$\cP$ via the above method.
\end{Rem}

\section{Presentation of twisted cohomology}
\label{sec:presentation-H**}

We remain in the case of an elementary abelian group~$E$.
In this section we want to better understand the local $\bbZ$-graded rings $\REH$ that played such an important role in \Cref{sec:abelem}.
Thankfully they are reasonable $\kk$-algebras.

\begin{Ter}
\label{Ter:coord}%
Recall that we write $C_p=\ideal{\,\sigma\mid\sigma^p=1\,}$ for the cyclic group of order~$p$ with a chosen generator~$\sigma$.
For brevity we call an $\FFp$-linear surjection $\pi\colon E\onto C_p$ a \emph{coordinate}.
For two coordinates $\pi,\pi'$ we write $\pi\sim\pi'$ if $\ker(\pi)=\ker(\pi')$.
Finally, for a subgroup~$H$, we often abbreviate $H\mid\pi$ to mean $H\le \ker(\pi)$.

Recall from \Cref{Def:u_N} and \Cref{Rem:u_N} that each coordinate~$\pi$ yields an invertible object~$u_\pi=\pi^*u_p$ in~$\cK(E)$.
It comes with maps $a_\pi,b_\pi,c_\pi\colon \kk\to u_\pi[\ast]$.
\end{Ter}

\begin{Rem}
\label{Rem:similar-coordinates}
If $\pi\sim\pi'$ then there exists a unique $\lambda\in\FFp^\times$ such that $\pi'=\pi^{\lambda}$.
Hence, if $p=2$ then necessarily $\pi=\pi'$ and $u_\pi=u_{\pi'}$.
On the other hand, if $p>2$ is odd then we still have $u_\pi\cong u_{\pi'}$ as already mentioned.
Explicitly, consider the automorphism $\lambda\colon C_p\to C_p$ that sends~$\sigma$ to~$\sigma^\lambda$.
The isomorphism $u_{\pi}=\pi^*u_p\isoto \pi^*\lambda^*u_p=(\pi^{\lambda})^*u_p=u_{\pi'}$ will be the pullback $\pi^*\Lambda$ along~$\pi$ of an isomorphism of complexes~$\Lambda\colon u_p\isoto \lambda^* u_p$.
This isomorphism~$\Lambda$ can be given explicitly by the identity in degree~$0$ and by the $\kk C_p$-linear maps $\kk C_p\to \lambda^*\kk C_p$ in degree~$1$ (resp.~$2$) determined by $1\mapsto 1$ (resp.~$1\mapsto 1+\sigma+\cdots\sigma^{\lambda-1}$).
One checks directly that $\Lambda\circ a_p=a_p$ and $\Lambda\circ b_p=\lambda \cdot b_p$. By applying~$\pi^*$ we obtain
\begin{align}
  \label{eq:master-relations-scalar}
  (\pi^*\Lambda)\circ a_\pi=a_{\pi'}\qquadtext{and}
  (\pi^*\Lambda)\circ b_{\pi}=\lambda \cdot b_{\pi'}.
\end{align}
\end{Rem}
\begin{Lem}
\label{Lem:master-relations}
Given coordinates~$\pi_1\not\sim\pi_2$ set $\pi_3=\pi_1\inv\pi_2\inv$.
Write $u_i$, $a_i$ and $b_i$ for $u_{\pi_i}$, $a_{\pi_i}$ and $b_{\pi_i}$ in~$\cK(E)$.
Then we have the relation
\begin{align*}
a_1b_2b_3+b_1a_2b_3+b_1b_2a_3&=0
\end{align*}
as a map from~$\unit$ to~$(u_1\otimes u_2\otimes u_3)[-2'\cdot 2]$ in~$\cK(E)$. (See \Cref{Not:2'} for~$2'$.)
\end{Lem}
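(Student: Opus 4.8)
The plan is to reduce to the group $C_p\times C_p$ and then verify the relation by an explicit computation with complexes of permutation modules; the one subtlety is that the displayed sum is \emph{not} zero on the nose as a chain map, and must be trivialised by an explicit null-homotopy. As motivation: dividing the asserted identity formally by $b_1b_2b_3$ turns it, in the localisation $\Db(\kk E)$ where each $b_i$ becomes a quasi-isomorphism, into $\zeta_1+\zeta_2+\zeta_3=0$ in $\rmH^{2'}(E,\kk)$ (notation of \Cref{Rem:zeta_N}), which holds because $\pi\mapsto\zeta_\pi$ is additive in the coordinate and $\pi_1\pi_2\pi_3$ is the trivial homomorphism by construction. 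This does not suffice, since $\cK(E)\onto\Db(\kk E)$ need not be injective on the $\Hom$-group in question; one has to argue in $\cK(E)$ before inverting anything.

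First I would reduce to $C_p\times C_p$. Set $N_i=\ker\pi_i$; as $\pi_1\not\sim\pi_2$, the subgroups $N_1\neq N_2$ have index $p$, so $N_1\cap N_2$ has index $p^2$ and all three of $\pi_1,\pi_2,\pi_3=\pi_1^{-1}\pi_2^{-1}$ factor through $\bar E:=E/(N_1\cap N_2)\cong C_p\times C_p$. By \Cref{Rem:similar-coordinates} (or directly from \Cref{Def:u_N}), the objects $u_{\pi_i}$ and the morphisms $a_{\pi_i},b_{\pi_i}$ are all inflated along $E\onto\bar E$. Since inflation along the quotient $E\onto\bar E$ is fully faithful on module categories (a $\kk\bar E$-module is the same as a $\kk E$-module on which $N_1\cap N_2$ acts trivially) and sends permutation modules to permutation modules, it induces a fully faithful — in particular faithful — tensor functor $\cK(\bar E)\to\cK(E)$, so it is enough to prove the relation in $\cK(C_p\times C_p)$. (Alternatively one runs the same bookkeeping directly in $\cK(E)$ with $P_i:=\kk(E/N_i)$; nothing changes.)

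Then I would compute. Write $u_i=(\cdots0\to P_i\xto{\tau_i}P_i\xto{\eps}\kk\to0\cdots)$ with $\kk$ in homological degree $0$, where $P_i=\kk(E/N_i)\cong\kk[t_i]/t_i^{p}$, the differential $\tau_i$ is multiplication by $t_i$, and $\eps(t_i^{\,j})=\delta_{j,0}$ (for $p=2$ the complex is one term shorter, cf.\ \Cref{Exa:u_p}); then $a_i$ is the identity in degree $0$ and $b_i(1)=\eta(1)=t_i^{p-1}$ lies in the degree-$2'$ term $u_i^{(2')}=P_i$. Evaluating the three monomials on $1\in\unit$ yields
\begin{align*}
a_1b_2b_3(1)&=1\otimes t_2^{p-1}\otimes t_3^{p-1},\\
b_1a_2b_3(1)&=t_1^{p-1}\otimes 1\otimes t_3^{p-1},\\
b_1b_2a_3(1)&=t_1^{p-1}\otimes t_2^{p-1}\otimes 1,
\end{align*}
which lie in the \emph{three distinct} summands $u_1^{(0)}\otimes u_2^{(2')}\otimes u_3^{(2')}$, $u_1^{(2')}\otimes u_2^{(0)}\otimes u_3^{(2')}$ and $u_1^{(2')}\otimes u_2^{(2')}\otimes u_3^{(0)}$ of the homological degree-$(2'\cdot2)$ part of $u_1\otimes u_2\otimes u_3$. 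Thus the sum $g$ is patently nonzero as a chain map, and the point is to produce a degree-raising map $h\colon\unit\to(u_1\otimes u_2\otimes u_3)[-2'\cdot2]$ with $d\circ h=g$. I would take
\[
h(1)=1_{P_1}\otimes t_2^{p-1}\otimes t_3^{p-1}\;+\;t_1^{p-1}\otimes 1_{P_2}\otimes t_3^{p-1}\;+\;t_1^{p-1}\otimes t_2^{p-1}\otimes 1_{P_3},
\]
placing the three terms in $u_1^{(1)}\otimes u_2^{(2')}\otimes u_3^{(2')}$, $u_1^{(2')}\otimes u_2^{(1)}\otimes u_3^{(2')}$ and $u_1^{(2')}\otimes u_2^{(2')}\otimes u_3^{(1)}$ (hence in homological degree $2'\cdot2+1$), where $1_{P_i}$ is the identity coset of $P_i$ sitting in the degree-$1$ slot $u_i^{(1)}$. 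Applying $d$: in each term the two internal differentials $\tau_i$ hit some $t_i^{p-1}$ and vanish because $t_i^{p}=0$, while the single $\eps$-differential sends $1_{P_i}$ to $1\in u_i^{(0)}$; this reproduces exactly $g$ (with the standard Koszul sign convention every surviving contribution carries sign $+$, and in any case the three sign choices available in $h$ correct the three summands separately). The case $p=2$ is identical with $2'=1$ and no signs. Hence $g$ is null-homotopic, i.e.\ $g=0$ in $\cK(C_p\times C_p)$, and therefore in $\cK(E)$.

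\textbf{Main obstacle.} The real wrinkle is conceptual rather than computational: the relation genuinely fails at the level of chain maps — the three monomials live in pairwise different direct summands — so one must construct the homotopy $h$, whose correctness rests entirely on the nilpotence $t_i^{p}=0$ killing the other differentials occurring in $d\circ h$. Everything else (faithfulness of inflation, Mackey for $\kk(E/N_1)\otimes\kk(E/N_2)$, the explicit form of $u_i$, $a_i$, $b_i$) is routine.
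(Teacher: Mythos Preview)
Your reduction to $C_p\times C_p$ via inflation is fine, and for $p=2$ your homotopy actually works (and in fact, once expanded, coincides with the paper's). The gap is for $p$ odd: your proposed $h$ is \emph{not} $\kk E$-linear. Indeed, for $p>2$ the three degree-$(2'\cdot2+1)$ summands $u_1^{(1)}\!\otimes u_2^{(2)}\!\otimes u_3^{(2)}$, $u_1^{(2)}\!\otimes u_2^{(1)}\!\otimes u_3^{(2)}$, $u_1^{(2)}\!\otimes u_2^{(2)}\!\otimes u_3^{(1)}$ are \emph{distinct} $\kk E$-submodules, so $E$-invariance of $h(1)$ forces $E$-invariance of each component separately. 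But in the first component, $t_2^{p-1}$ and $t_3^{p-1}$ are norm elements (hence $E$-fixed) while the identity coset $1_{P_1}\in P_1=\kk(E/N_1)$ is not: the generator $\sigma_1$ sends it to $\sigma\neq 1$. So $1_{P_1}\otimes t_2^{p-1}\otimes t_3^{p-1}$ is not $E$-invariant, and $h$ is not a morphism of $\kk E$-modules. Your verification that ``$d\circ h=g$'' is a computation in $\kk$-modules only; the nilpotence $t_i^{p}=0$ kills the $\tau$-contributions, but that is irrelevant if $h$ is not admissible to begin with.

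The paper's fix is to use, in each of the three $M$-slots (where $M=P_1\otimes P_2\otimes P_3$), the genuinely $E$-invariant element
\[
m=\sum_{i_1,i_2=0}^{p-1}\sigma^{i_1}\otimes\sigma^{i_2}\otimes\sigma^{-i_1-i_2},
\]
whose invariance uses precisely the relation $\pi_1\pi_2\pi_3=1$. The $\eps$-component of $d$ applied to $m$ then produces $1\otimes t_2^{p-1}\otimes t_3^{p-1}$ etc.\ (because summing $\sigma^{-i_1-i_2}$ over $i_1$ yields the norm $=t_3^{p-1}$), while the $\tau$-contributions from the three copies of $m$ cancel pairwise with the Koszul signs. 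So the correct homotopy is $(\tilde m,\tilde m,\tilde m)$ rather than your three ``obvious'' lifts; the price is that the cross-term cancellation requires a short additional check.
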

\begin{proof}
Let $N_i=\ker(\pi_i)$ for $i=1,2,3$, which are all distinct.
Let $N=N_1\cap N_2\cap N_3$ be the common kernel, which is of index~$p^2$ in~$E$.
By inflation along~$E\onto E/N$, it suffices to prove the lemma for $E=C_p\times C_p$ and $\pi_1$ and~$\pi_2$ the two projections on the factors.
We abbreviate~$u$ for the complex of permutation~$\kk E$-modules $u:=u_1\otimes u_2\otimes u_3$.
Consider the permutation module
$M:=kC_p\otimes kC_p\otimes kC_p\cong k(E/N_1)\otimes k(E/N_2)\otimes k(E/N_3)$ which appears as a summand in various degrees of the complex~$u$.
One element in~$M$ is of particular interest:
\begin{align*}
  m&:=\sum_{i_1,i_2=0}^{p-1}\sigma^{i_1}\otimes \sigma^{i_2}\otimes \sigma^{-i_1-i_2}.
\end{align*}
It is easy to check that $m$ is $E$-invariant, thus defines a $\kk E$-linear map $\tilde{m}\colon k\to M$, that can be used to define the required homotopies. This depends on~$p$.
If $p=2$, the homotopy is given by $\tilde{m}$ when viewed from~$\unit$ to the only $M$-entry of~$u[-2]$ in degree one.
If $p>2$, the homotopy is given by $(\tilde{m},\tilde{m},\tilde{m})$ as a map from $\unit$ to the three $M$-entries of~$u[-4]$ in degree one.
Verifications are left to the reader.
\end{proof}

\begin{Cons}
We construct a commutative $\kk$-algebra $\Rlocp{E}(H)$ by generators and relations. Its generators are indexed by coordinates~$\pi\colon E\onto C_p$ (\Cref{Ter:coord})
\[
\SET{\zeta_\pi^+}{\pi\textrm{ s.t.\ }H\le\ker(\pi)}\ \cup\ \SET{\zeta_{\pi}^-}{\pi\textrm{ s.t.\ }H\not\le\ker(\pi)}.
\]
These generators come equipped with a degree in~$\bbZ$: If \mbox{$H\mid \pi$} the generator~$\zeta^+_\pi$ is set to have degree~$2'$, whereas if \mbox{$H\nmid\pi$} the generator~$\zeta^-_\pi$ is set to have degree~$-2'$.
We impose the following four families of homogeneous relations. First for every coordinate~$\pi$ and every $\lambda\in \FFp^\times$ (for $p$ odd), we have a rescaling relation
\begin{enumerate}[label=\rm(\alph*), ref=\rm(\alph*)]
\item \label{rel-z-scalar}
$\zeta_{\pi^{\lambda}}^+=\lambda\zeta^+_\pi$ if $H\mid\pi$  \quad and \quad $\zeta_{\pi^{\lambda}}^-=\lambda^{-1}\zeta^-_{\pi}$ if~$H\nmid\pi$
\end{enumerate}
and whenever $\pi_3=\pi_1\inv\pi_2\inv$ and $\pi_1\not\sim\pi_2$, writing $\zeta_i^{\pm}:=\zeta_{\pi_i}^{\pm}$, we impose one of the following relations, inspired by~\Cref{Lem:master-relations}:
\begin{enumerate}[label=\rm(\alph*), ref=\rm(\alph*)]
\setcounter{enumi}{1}
\item \label{rel-+++}
$\zeta_{1}^++\zeta_{2}^++\zeta^+_{3}=0$, if $H\mid\pi_1$ and~$H\mid\pi_2$ (and therefore $H\mid\pi_3$)
\smallbreak
\item \label{rel---+}
$\zeta^-_{1}+\zeta^-_{2}+\zeta^-_{1}\zeta^-_{2}\zeta_{3}^+=0$, if $H\nmid\pi_1$ and~$H\nmid\pi_2$ but $H\mid\pi_3$
\smallbreak
\item \label{rel-zx-2--}
$\zeta^-_{1}\zeta^-_{2}+\zeta^-_{2}\zeta^-_{3}+\zeta^-_{3}\zeta^-_{1}=0$ if $H\nmid\pi_i$ for all~$i=1,2,3$.
\end{enumerate}
Since these relations are homogeneous, the ring~$\Rlocp{E}(H)$ is a $\bbZ$-graded ring.
\end{Cons}

\begin{Rem}
\label{Rem:R_E}%
We could also define a multi-graded commutative $\kk$-algebra $\Rallp{E}$ generated by all~$a_\pi,b_\pi$ subject to the relations in~\eqref{eq:master-relations-scalar} and \Cref{Lem:master-relations}.
This algebra $\Rallp{E}$ would be $\ZZ\times\bbN^{\cN}$-graded with $a_\pi$ in degree~$(0,1_{\ker(\pi)})$ and $b_\pi$ in degree $(-2',1_{\ker(\pi)})$.
Then $\Rlocp{E}(H)$ is simply the `zero-twist' part of the localization of~$\Rallp{E}$ with respect to the~$a_\pi,b_\pi$ that become invertible in~$U(H)$, that is, those~$a_\pi$ such that $H\nmid\pi$ and those~$b_\pi$ such that $H\mid\pi$, as in \Cref{Def:O(H)}.
\end{Rem}

\begin{Rem}
By~\eqref{eq:master-relations-scalar} and~\Cref{Lem:master-relations}, there exists a canonical homomorphism
\begin{equation}
\label{eq:R2R}%
\Rlocp{E}(H)\to \REH
\end{equation}
mapping~$\zeta^+_\pi$ to~$\frac{a_{\pi}}{b_{\pi}}$ and~$\zeta^-_\pi$ to~$\frac{b_{\pi}}{a_{\pi}}$.
\end{Rem}

\begin{Exa}
\label{Exa:Rloc(1)}%
Let $H=1$. Recall from~\Cref{Exa:R(1)=H*} that $\Rloc{E}(1)$ is the cohomology ring. Then the homomorphism~\eqref{eq:R2R} is the standard one $\Rlocp{E}(1)\to\rmH^\sbull(E;\kk)$, that maps~$\zeta^+_\pi$ to the usual generator~$\zeta_{\pi}=\pi^*(\zeta_{C_p})$. Note that here $H=1\mid\pi$ for all~$\pi$, so there is no~$\zeta^-_\pi$.
For~$E$ elementary abelian, it is well-known that this homomorphism $\Rlocp{E}(1)\to\rmH^\sbull(E;\kk)$ is an isomorphism modulo nilpotents.
See for instance~\cite{carlson:ETH-lectures}.
\end{Exa}

For two subgroups~$H,K\le E$, the open subsets $\goodopen[E]{H}$ and $\goodopen[E]{K}$ can intersect in~$\SpcKE$. Similarly, we can discuss what happens with the rings~$\Rlocp{E}(H)$.
\begin{Prop}
\label{Prop:Rlocp-intersect}%
Let $H,K\le E$ be two subgroups. Define $S=S(H,K)\subset\Rlocp{E}(H)$ to be the multiplicative subset generated by the finite set
\[
\SET{\zeta^+_{\pi}}{\textrm{ for }\pi\textrm{ with }H\mid \pi\textrm{ and }K\nmid\pi}\cup\SET{\zeta^-_{\pi}}{\textrm{ for }\pi\textrm{ with }H\nmid \pi\textrm{ and }K\mid\pi}
\]
and similarly, swapping~$H$ and~$K$, let $T=S(K,H)\subset\Rlocp{E}(K)$ be the multiplicative subset generated by $\SET{\zeta^+_{\pi}}{H\nmid \pi\textrm{ and }K\mid\pi}\cup\SET{\zeta^-_{\pi}}{H\mid \pi\textrm{ and }K\nmid\pi}$. Then we have a canonical isomorphism of (periodic) $\bbZ$-graded rings
\[
S\inv \Rlocp{E}(H)\cong T\inv \Rlocp{E}(K)
\]
and in particular of their degree-zero parts. Thus the open of~$\Spech(\Rlocp{E}(H))$ defined by~$S$ is canonically homeomorphic to the open of~$\Spech(\Rlocp{E}(K))$ defined by~$T$.
\end{Prop}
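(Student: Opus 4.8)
The plan is to realize both sides as the twist‑zero part of one and the same localization of the master ring $\Rallp{E}$ of \Cref{Rem:R_E}. Recall from there the canonical identification $\Rlocp{E}(H)=\bigl(\Rallp{E}[S_H\inv]\bigr)\zerotwist$, where $S_H\subset\Rallp{E}$ is the multiplicative subset generated by $\SET{a_\pi}{H\nmid\pi}\cup\SET{b_\pi}{H\mid\pi}$, the generator~$a_\pi$ having bidegree $(0,1_{\ker\pi})$ and $b_\pi$ having bidegree $(-2',1_{\ker\pi})$; likewise $\Rlocp{E}(K)=\bigl(\Rallp{E}[S_K\inv]\bigr)\zerotwist$. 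Under this identification $\zeta^+_\pi=a_\pi/b_\pi$ and $\zeta^-_\pi=b_\pi/a_\pi$ are homogeneous of twist zero, consistently with their living in $\Rlocp{E}(H)$ (resp.~$\Rlocp{E}(K)$) exactly when $H\mid\pi$ (resp.~$H\nmid\pi$).

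First I would rewrite the localization at $S=S(H,K)$ inside $\Rallp{E}$. For a coordinate~$\pi$ with $H\mid\pi$ and $K\nmid\pi$ the element $b_\pi$ is already a unit in $\Rallp{E}[S_H\inv]$, so inverting $\zeta^+_\pi=a_\pi b_\pi\inv$ amounts to inverting $a_\pi$; symmetrically, for $\pi$ with $H\nmid\pi$ and $K\mid\pi$, inverting $\zeta^-_\pi=b_\pi a_\pi\inv$ amounts to inverting $b_\pi$. Combining this with the elementary observation that, for a ring~$C$ graded by shifts and twists and a multiplicative subset $\Sigma_0$ of homogeneous elements of twist zero, inverting $\Sigma_0$ does not mix twists, so that $\bigl(C[\Sigma_0\inv]\bigr)\zerotwist=\bigl(C\zerotwist\bigr)[\Sigma_0\inv]$, we obtain
\[
S\inv\Rlocp{E}(H)=\bigl(\Rallp{E}[\Sigma\inv]\bigr)\zerotwist,
\]
where $\Sigma\subset\Rallp{E}$ is generated by $S_H$ together with $\SET{a_\pi}{H\mid\pi,\ K\nmid\pi}\cup\SET{b_\pi}{H\nmid\pi,\ K\mid\pi}$.

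Finally I would simplify and symmetrize. An $a_\pi$ lies in $\Sigma$ precisely when $H\nmid\pi$ or ($H\mid\pi$ and $K\nmid\pi$), that is, when $H\nmid\pi$ or $K\nmid\pi$; a $b_\pi$ lies in $\Sigma$ precisely when $H\mid\pi$ or $K\mid\pi$. Hence $\Rallp{E}[\Sigma\inv]$ is the localization of $\Rallp{E}$ inverting exactly the set $\SET{a_\pi}{H\nmid\pi\textrm{ or }K\nmid\pi}\cup\SET{b_\pi}{H\mid\pi\textrm{ or }K\mid\pi}$, which is visibly unchanged under interchanging $H$ and $K$. Since $T=S(K,H)$ is obtained from $S=S(H,K)$ by interchanging $H$ and $K$, the same computation applied to $\Rlocp{E}(K)$ localized at $T$ produces the identical localization of $\Rallp{E}$, so $S\inv\Rlocp{E}(H)=\bigl(\Rallp{E}[\Sigma\inv]\bigr)\zerotwist=T\inv\Rlocp{E}(K)$; this is the asserted canonical isomorphism of $\bbZ$‑graded rings, periodic because we inverted homogeneous elements of nonzero shift. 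Taking degree‑zero parts gives the isomorphism of the corresponding ungraded rings, and applying $\Spech(-)$ gives the homeomorphism of the two distinguished opens. The only delicate point is the bookkeeping of which $a_\pi,b_\pi$ get inverted and the verification that the multiplicative subset produced from $(H,K)$ equals the one produced from $(K,H)$; there is no genuine obstacle beyond this.
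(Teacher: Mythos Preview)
Your proof is correct and follows essentially the same approach as the paper: both realize $S\inv\Rlocp{E}(H)$ and $T\inv\Rlocp{E}(K)$ as the twist-zero part of the localization of the master ring~$\Rallp{E}$ at the multiplicative set $\SET{a_\pi}{H\nmid\pi\textrm{ or }K\nmid\pi}\cup\SET{b_\pi}{H\mid\pi\textrm{ or }K\mid\pi}$, which is manifestly symmetric in~$H$ and~$K$. You have spelled out in more detail than the paper why inverting $\zeta^\pm_\pi$ in the twist-zero part amounts to inverting the corresponding $a_\pi$ or $b_\pi$ upstairs, but the argument is the same.
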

\begin{proof}
The left-hand side $S\inv \Rlocp{E}(H)$ is the (`zero-twist' part of the) localization of the multi-graded ring~$\Rallp{E}$ of \Cref{Rem:R_E} with respect to
\begin{multline*}
\SET{a_\pi}{H\nmid\pi}\cup\SET{b_\pi}{H\mid\pi}\cup\SET{a_\pi}{H\mid\pi, K\nmid\pi}\cup\SET{b_\pi}{H\nmid\pi, K\mid\pi}\\
=\SET{a_\pi}{H\nmid\pi\text{ or }K\nmid\pi}\cup\SET{b_\pi}{H\mid\pi\text{ or }K\mid\pi}
\end{multline*}
which is symmetric in~$H$ and~$K$.
This completes the proof.
\end{proof}

\begin{Rem}
\label{Rem:intersection-compatibility}
The above isomorphism is compatible with the homomorphism~\eqref{eq:R2R}, namely the obvious diagram commutes when we perform the corresponding localizations on~$\Rloc{E}(H)$ and~$\Rloc{E}(K)$.
\end{Rem}

\begin{Prop}
\label{Prop:Rlocp-quot}%
Let $K\le H\le E$.
There is a canonical split epimorphism $`\Psi^K\colon \Rlocp{E}(H)\onto \Rlocp{E/K}(H/K)$ whose kernel is $\ideal{\,{\zeta^-_{\pi}}\mid{K\nmid \pi}\,}$. It is compatible with the homomorphism~$\Psi^K$ of \Cref{Rem:Psi-on-O(H)}, in that the following diagram commutes
\[
\xymatrix@C=3em@R=2em{
\Rlocp{E}(H) \ar@{->>}[d]_-{`\Psi^K} \ar[r]^-{\textrm{\eqref{eq:R2R}}} &
\Rloc{E}(H) \ar@{->>}[d]^-{\Psi^K}
\\
\Rlocp{E/K}(H/K) \ar[r]^-{\textrm{\eqref{eq:R2R}}}
& \Rloc{E/K}(H/K).
}
\]
\end{Prop}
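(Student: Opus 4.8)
The plan is to build $`\Psi^K$ directly from the presentation of~$\Rlocp{E}(H)$, rather than transport it across~\eqref{eq:R2R}. The combinatorial backbone is the bijection between coordinates $\pi$ of~$E$ with $K\mid\pi$ and coordinates $\bar\pi$ of~$E/K$, obtained by factoring $\pi$ as $E\onto E/K\xto{\bar\pi}C_p$; it is visibly compatible with the equivalence~$\sim$, with rescaling $\pi\mapsto\pi^\lambda$, with the operation $(\pi_1,\pi_2)\mapsto\pi_1\inv\pi_2\inv$ (note $K\mid\pi_1$ and $K\mid\pi_2$ force $K\mid\pi_1\inv\pi_2\inv$), and it satisfies $H\mid\pi\iff H/K\mid\bar\pi$. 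Since $K\le H$, any $\pi$ with $H\mid\pi$ already has $K\mid\pi$, and any $\pi$ with $K\nmid\pi$ has $H\nmid\pi$. I would then define $`\Psi^K$ on generators by $\zeta^+_\pi\mapsto\zeta^+_{\bar\pi}$ and $\zeta^-_\pi\mapsto\zeta^-_{\bar\pi}$ when $K\mid\pi$, and $\zeta^-_\pi\mapsto 0$ when $K\nmid\pi$ — precisely the formulas predicted by \Cref{Prop:Psi^H(a/b)} for the images of $a_\pi/b_\pi$ and $b_\pi/a_\pi$ under $\Psi^K$. With this choice the compatibility square with~\eqref{eq:R2R} and the $\Psi^K$ of \Cref{Rem:Psi-on-O(H)} commutes on generators, hence commutes, so that last assertion comes for free.

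The substantive point is well-definedness: one must check that these assignments respect each of the relations \ref{rel-z-scalar}--\ref{rel-zx-2--}. Relations whose coordinates are all $K$-compatible map, via the bijection above, to the same-named relations over~$E/K$. The only thing left to verify is what happens to a triple $\pi_3=\pi_1\inv\pi_2\inv$ not all of whose members satisfy $K\mid\pi_i$, and to a rescaling relation at a $\pi$ with $K\nmid\pi$ (which is killed, since both sides go to~$0$). Here I would use the elementary fact that restricting the characters $\pi_i$ to~$K$ gives $\pi_3|_K=-\pi_1|_K-\pi_2|_K$, so the number of $K$-compatible members of the triple is $3$, $1$, or $0$, never exactly $2$; consequently, outside the all-$K$-compatible case, at least two of $\zeta^-_{\pi_1},\zeta^-_{\pi_2},\zeta^-_{\pi_3}$ are sent to~$0$, and a short inspection shows each of~\ref{rel---+} and~\ref{rel-zx-2--} then maps to the trivial identity $0=0$. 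This is the one place requiring a careful case split; everything else is formal.

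With well-definedness in hand, surjectivity is immediate since every coordinate of~$E/K$ lifts canonically to a $K$-compatible coordinate of~$E$; moreover the assignment $\zeta^\pm_{\bar\pi}\mapsto\zeta^\pm_\pi$ on that lift respects all relations — it is literally the sub-presentation of~$\Rlocp{E}(H)$ supported on the $K$-compatible coordinates, which coincides with the presentation of~$\Rlocp{E/K}(H/K)$ — and so yields a section $s$ with $`\Psi^K\circ s=\id$, proving split surjectivity. For the kernel, $I:=\ideal{\,\zeta^-_\pi\mid K\nmid\pi\,}$ is clearly contained in $\ker(`\Psi^K)$, so $`\Psi^K$ descends to a surjection $\Rlocp{E}(H)/I\onto\Rlocp{E/K}(H/K)$, and I would conclude by observing that this is actually an isomorphism of presentations: modulo~$I$ the surviving generators are exactly $\{\zeta^+_\pi:H\mid\pi\}\cup\{\zeta^-_\pi:K\mid\pi,\ H\nmid\pi\}$ (matching the generators of the target under the bijection) and the surviving relations are exactly those on $K$-compatible coordinates (matching the defining relations of the target), all other relations having become trivial by the case analysis of the previous paragraph. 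Hence $\ker(`\Psi^K)=I$.

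The main obstacle — conceptually minor but requiring care — is exactly this bookkeeping: making the case split for the triple relations \ref{rel---+} and \ref{rel-zx-2--} watertight, and then justifying precisely that ``modulo~$I$ the presentation of~$\Rlocp{E}(H)$ becomes that of~$\Rlocp{E/K}(H/K)$'' with neither spurious nor missing relations. No deeper input than \Cref{Prop:Psi^H(a/b)}, \Cref{Rem:similar-coordinates} and \Cref{Lem:master-relations} (already encoded in the presentation) is needed.
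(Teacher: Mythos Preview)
Your proof is correct and follows the same underlying idea as the paper's: define the map on generators by the formulas dictated by \Cref{Prop:Psi^H(a/b)}, check the defining relations are preserved, and build a section from inflation. The one genuine difference is where you work. The paper first defines an analogous map $``\Psi^K\colon\Rallp{E}\to\Rallp{\bar E}$ on the multi-graded rings of \Cref{Rem:R_E} (generators $a_\pi,b_\pi$, one master relation from \Cref{Lem:master-relations} plus rescaling), and only \emph{then} localizes to obtain~$`\Psi^K$. This buys economy: at the $\Rallp{}$ level the relation-check is a single line, because every term of $a_1b_2b_3+b_1a_2b_3+b_1b_2a_3$ contains at least two $b$-factors and hence vanishes whenever two of the $b_{\pi_i}$ are sent to zero. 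Your approach works directly at the localized level~$\Rlocp{E}(H)$, which forces the more involved case split through the four relation-types \ref{rel-z-scalar}--\ref{rel-zx-2--}; on the other hand, you give an explicit argument for the kernel (showing the quotient presentation matches that of~$\Rlocp{E/K}(H/K)$), whereas the paper simply declares that part ``easy''. Either route works; the paper's is shorter, yours more self-contained.
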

\begin{proof}
Set~$\bar{H}:=H/K\le \bar{E}:=E/K$. Similarly, for every coordinate~$\pi\colon E\onto C_p$ such that $K\mid \pi$, let us write $\bar{\pi}\colon E/K\onto C_p$ for the induced coordinate.
The  morphism $`\Psi^K$ will come from a morphism $``\Psi^K\colon\Rallp{E}\to\Rallp{E/K}$, with respect to the homomorphism of gradings~\eqref{eq:gamma-Psi^H}.
As these algebras are constructed by generators and relations (\Cref{Rem:R_E}), we need to give the image of generators.
In view of \Cref{Prop:Psi^H(a/b)} we define $``\Psi^K\colon\Rallp{E}\to \Rallp{E/K}$ on generators by
\begin{align*}
  a_\pi\mapsto
         \begin{cases}
           \ a_{\bar\pi}&\textrm{if }K\mid\pi\\
           \ 1&\textrm{if }K\nmid\pi
         \end{cases}&&
  b_\pi\mapsto
         \begin{cases}
           \ b_{\bar \pi}&\textrm{if }K\mid\pi\\
           \ 0&\textrm{if }K\nmid\pi.\!\!
         \end{cases}
\end{align*}
It is easy to see that the relations in~$\Rallp{E}$ are preserved; thus the map~$``\Psi^K$ is well-defined.
Let $\varpi\colon E\onto E/K$ and for every $\bar{\pi}\colon \bar{E}\onto C_p$ consider the coordinate $\pi=\bar{\pi}\circ\varpi\colon E\onto C_p$. Then $\bar{H}\mid\bar{\pi}$ if and only if $H\mid\pi$.
It follows that the morphism passes to the localizations $`\Psi^K\colon \Rlocp{E}(H)\onto \Rlocp{E/K}(H/K)$ as announced.
The statement about its kernel is easy and commutativity of the square follows from the fact (\Cref{Rem:Psi-on-O(H)}) that $\Psi^K$ treats the $a_\pi$ and~$b_\pi$ according to the same formulas.

The section of~$`\Psi^K$ is inspired by inflation.
Namely, $a_{\bar\pi}\mapsto a_\pi$ and $b_{\bar\pi}\mapsto b_\pi$ defines a map of graded $\kk$-algebras $\Rallp{\bar E}\to\Rallp{E}$ that is already a section to~$``\Psi^K$ and passes to the localizations.
\end{proof}

\begin{Thm}
\label{Thm:presentation}%
The canonical homomorphism~\eqref{eq:R2R} induces an isomorphism
\[
\Rlocp{E}(H)_{\red}\isoto \REH_{\red}
\]
of reduced $\ZZ$-graded $\kk$-algebras.
\end{Thm}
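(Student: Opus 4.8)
The plan is to establish the surjectivity and the injectivity of the induced map $\Rlocp{E}(H)_{\red}\to\REH_{\red}$ separately. Surjectivity is formal: by \Cref{Rem:cohomology-generators} the $\bbZ$-graded ring $\REH$ is generated over $\kk$ by the elements $\zeta^{\pm}_N$ together with the $\xi^{\pm}_N$ (for $p$ odd); since the $\xi^{\pm}_N$ are nilpotent they die in $\REH_{\red}$, so $\REH_{\red}$ is generated by the classes of the $\zeta^{\pm}_N$, which are exactly the images of the generators $\zeta^{\pm}_\pi$ of $\Rlocp{E}(H)$ under~\eqref{eq:R2R}. Hence $\Rlocp{E}(H)_{\red}\onto\REH_{\red}$.

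For injectivity I would first reduce to a statement about spectra. The graded ring $\Rlocp{E}(H)$ is commutative, so the nilradical of $\Rlocp{E}(H)_{\red}$ (namely $0$) is the intersection of its homogeneous primes; a surjection of reduced graded rings whose induced map on homogeneous spectra is surjective is therefore an isomorphism. So it suffices to prove that $\Spech(\varphi)\colon\Spech(\REH)\to\Spech(\Rlocp{E}(H))$ is surjective, where $\varphi\colon\Rlocp{E}(H)\to\REH$ is~\eqref{eq:R2R}; equivalently, that every homogeneous prime $\mathfrak{q}\subset\Rlocp{E}(H)$ contains $\ker\varphi$. I would do this by induction on the $p$-rank of $E$, the cases of rank $\le 1$ being settled by the explicit rings in \Cref{Exa:Rall(C_p)} (compare \Cref{Exa:C_p}). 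For the inductive step, if $H=1$ then $\varphi$ is the classical comparison $\Rlocp{E}(1)\to\rmH^\sbull(E;\kk)=\Rloc{E}(1)$, an isomorphism modulo nilpotents by \Cref{Exa:Rloc(1)}, so there is nothing to prove.

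Assume $H\neq 1$ and let $\mathfrak{q}$ be a homogeneous prime. If $\mathfrak{q}$ contains no $\zeta^-_\pi$ with $H\nmid\pi$, then $\mathfrak{q}$ lies in the principal open of $\Spech(\Rlocp{E}(H))$ where all such $\zeta^-_\pi$ are inverted; by \Cref{Prop:Rlocp-intersect} applied with $K=1$ (for which $S(H,1)=\SET{\zeta^-_\pi}{H\nmid\pi}$), and by \Cref{Rem:intersection-compatibility}, this localization of $\varphi$ is identified with a localization of the classical map $\Rlocp{E}(1)\to\rmH^\sbull(E;\kk)$, hence is an isomorphism modulo nilpotents, so $\mathfrak{q}\supseteq\ker\varphi$. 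Otherwise set $K:=H\cap\bigcap\SET{\ker\pi}{H\nmid\pi\textrm{ and }\zeta^-_\pi\notin\mathfrak{q}}$, a subgroup of $H$. I claim $K\neq 1$ and $\mathfrak{q}\supseteq\ideal{\zeta^-_\pi\mid K\nmid\pi}$; the latter ideal is the kernel of the split epimorphism $`\Psi^K\colon\Rlocp{E}(H)\onto\Rlocp{E/K}(H/K)$ of \Cref{Prop:Rlocp-quot}. Granting this, $\mathfrak{q}$ is the preimage of a homogeneous prime $\bar{\mathfrak{q}}$ of $\Rlocp{E/K}(H/K)$; since $E/K$ has smaller rank, the induction hypothesis gives $\bar{\mathfrak{q}}\supseteq\ker\big(\Rlocp{E/K}(H/K)\to\Rloc{E/K}(H/K)\big)$, and pulling back along the commutative square of \Cref{Prop:Rlocp-quot}, whose right vertical map $\Psi^K\colon\REH\onto\Rloc{E/K}(H/K)$ is the surjection of \Cref{Rem:Psi-on-O(H)}, yields $\mathfrak{q}\supseteq\ker\varphi$.

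The hard part will be the combinatorial claim in the last case: that a homogeneous prime lying outside the ``derived locus'' where all the $\zeta^-_\pi$ ($H\nmid\pi$) are inverted must come from a genuinely smaller quotient $E/K$ with $1\neq K\le H$. Here one exploits the relations among the $\zeta^-$'s in the presentation of $\Rlocp{E}(H)$ — the rescaling relation and, decisively, relations (c) and (d) — which by primality of $\mathfrak{q}$ force the set of coordinates $\pi$ with $\zeta^-_\pi\notin\mathfrak{q}$ to be closed under $(\pi_1,\pi_2)\mapsto\pi_1^{-1}\pi_2^{-1}$; one then checks that, once this set is proper inside $\SET{\pi}{H\nmid\pi}$, the kernels of its members cannot intersect $H$ in the trivial subgroup. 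A viable alternative that sidesteps the induction is to identify $\Spech(\Rlocp{E}(H))$ directly with the open subspace of $\Spech(\Rall(E))$ carved out by the equations of \Cref{Cor:abelem-top}, which is homeomorphic to $\goodopen{H}=\Spech(\REH)$ by \Cref{Thm:abelem-top}; this trades the combinatorics above for an explicit comparison of defining equations, but still rests on relations (c) and (d).
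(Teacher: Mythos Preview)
Your approach is correct and uses the same two key propositions as the paper (\Cref{Prop:Rlocp-intersect} and \Cref{Prop:Rlocp-quot}), but with the roles swapped and a different induction variable. The paper inducts on the order of~$H$: when \emph{some} $\zeta^-_{\pi_1}\notin\gp$, it picks a single such~$\pi_1$, sets $K=H\cap\ker(\pi_1)$ (index~$p$ in~$H$), and uses \Cref{Prop:Rlocp-intersect} to pass to the open~$\goodopen{K}$, reducing~$|H|$; when \emph{all} $\zeta^-_\pi\in\gp$, it uses \Cref{Prop:Rlocp-quot} with $K=H$ to descend to the classical $H=1$ case over~$E/H$. You instead induct on the rank of~$E$: when \emph{no} $\zeta^-_\pi$ lies in~$\mathfrak{q}$, you localize (via \Cref{Prop:Rlocp-intersect} with $K=1$) back to the classical case over the same~$E$; when \emph{some} $\zeta^-_\pi\in\mathfrak{q}$, you quotient by a carefully chosen $1\neq K\le H$ and descend to~$E/K$.

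The paper's organization is leaner in the combinatorial step: it only needs to show that the open defined by $S(H,K)$ contains~$\gp$, which boils down to one use of relation~\ref{rel---+} (given $\pi_1\not\sim\pi_2$ with $H\nmid\pi_i$ and $K\mid\pi_i$, rescale $\pi_1$ so that $H\mid\pi_1^{-1}\pi_2^{-1}$). Your ``hard part'' is the stronger statement that $T\cup H^\perp$ is an $\FFp$-subspace of~$E^*$, where $T=\{\pi:H\nmid\pi,\ \zeta^-_\pi\notin\mathfrak{q}\}$; this needs both~\ref{rel---+} and~\ref{rel-zx-2--}, and your sketch (``closed under $(\pi_1,\pi_2)\mapsto\pi_1^{-1}\pi_2^{-1}$'') is slightly imprecise because the product can land in~$H^\perp$ rather than in~$T$. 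Once phrased correctly the argument goes through and yields $K=(T\cup H^\perp)^\perp\neq 1$ with $\ker(`\Psi^K)\subseteq\mathfrak{q}$, as you want. Either route works; the paper's is shorter.
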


\begin{proof}
It follows from \Cref{Rem:cohomology-generators} that the map is surjective.
We will now show that the closed immersion $\Spech(\REH)\hook\Spech(\Rlocp{E}(H))$ is surjective---this will complete the proof, by the usual commutative algebra argument, which can be found in \cite[Lemma~2.22]{hesselholt-pstragowski:dirac1} for the graded case.
By \Cref{Thm:abelem-top}, this is equivalent to showing the surjectivity of the composite with~$\comp_{E}$, that we baptize~$\beta^H$
\begin{equation}
\label{eq:phi}%
\xymatrix@C=3em{
\beta^H\colon & \kern-2em \goodopen[E]{H} \ar[r]_-{\comp_{E}}^-{\simeq}
& \Spech(\REH) \ \ar@{^(->}[r] & \Spech(\Rlocp{E}(H)).
}
\end{equation}

We proceed by induction on the order of the subgroup~$H$. If $H=1$ the result follows from \Cref{Exa:Rloc(1)}.
So suppose that $H\neq 1$ and pick a homogeneous prime~$\gp\in \Spech(\Rlocp{E}(H))$. We distinguish two cases.

If for every coordinate $\pi\colon E\onto C_p$ such that $H\nmid\pi$ we have $\zeta^-_\pi\in\gp$ then $\gp$ belongs to $V(\SET{\zeta^-_\pi}{H\nmid \pi})$, which we identify with the image of~$\Spech(\Rlocp{E/H}(1))$ by \Cref{Prop:Rlocp-quot} applied to $K=H$.
Namely, we have a commutative square
\[
\xymatrix@C=5em@R=2em{
  \goodopen[E]{H}
  \ar[d]_{\beta^H}
  &
  \goodopen[E/H]{1}
  \ar[d]^{\beta^1}
  \ar[l]_{\psi^H}
  \\
  \Spech(\Rlocp{E}(H))
  &
  \Spech(\Rlocp{E/H}(1))
  \ar[l]_{\Spech(`\Psi^H)}
}
\]
and since the right-hand vertical arrow is surjective by the case already discussed, we conclude that $\gp$ belongs to the image of~$\beta^H$ in~\eqref{eq:phi} as well.

Otherwise, there exists a coordinate~$\pi_1$ such that $H\nmid\pi_1$ and~$\zeta^-_{\pi_1}\notin\gp$. Let $K:=H\cap \ker(\pi_1)$ and let $S=S(H,K)$ be defined as in \Cref{Prop:Rlocp-intersect}:
\[
S=\SET{\zeta^-_\pi}{\text{for }\pi \text{ with }H\nmid\pi\textrm{ and }K\mid\pi}.
\]
We claim that $\gp$ belongs to the open of $\Spech(\Rlocp{E}(H))$ defined by~$S$.
Indeed, let $\zeta_{\pi_2}^-\in S$, that is for $\pi_2$ with $H\nmid\pi_2$ and $K\mid\pi_2$, and let us show that $\zeta_{\pi_2}^-\notin\gp$.
If $\pi_2\sim\pi_1$ this is clear from $\zeta^-_{\pi_1}\notin\gp$ and the relation~\ref{rel-z-scalar} in~$\Rlocp{E}(H)$.
If $\pi_2\not\sim\pi_1$, let $h\in H\sminus K$ (so that $h$ generates the cyclic group $H/K\cong C_p$).
As $\pi_1(h)\neq 1$ and $\pi_2(h)\neq 1$ we may replace $\pi_1$ by an equivalent coordinate~$\tilde{\pi}_1:=\pi_1^\lambda$ such that $\tilde{\pi}_1(h)=\pi_2(h)\inv$ and therefore
$H\mid\pi_3:=\tilde{\pi}_1\inv\pi_2\inv$.
Then relation~\ref{rel---+} exhibits~$\zeta_{\tilde{\pi}_1}^-$ as a multiple of~$\zeta_{\pi_2}^-$.
As the former does not belong to~$\gp$ (by the previous case), neither does~$\zeta_{\pi_2}^-$.
At this point we may apply \Cref{Prop:Rlocp-intersect} for our subgroups~$H$ and~$K$.
By \Cref{Rem:intersection-compatibility}, we have a commutative triangle:
\[
\xymatrix@C=2em@R=2em{
  &
  \goodopen[E]{H}\cap\goodopen[E]{K}
  \ar[dl]_{\beta^H}
  \ar[dr]^{\beta^K}
  \\
  \Spech(\Rlocp{E}(H)[S\inv])
  \ar@{<->}[rr]^{\approx}
  &&
  \Spech(\Rlocp{E}(K)[T\inv])
}
\]
We just proved that $\gp$ belongs to the open subset in the bottom left corner.
As $K$ is a proper subgroup of~$H$, we know that $\beta^K$ is surjective by induction hypothesis and we conclude that $\gp$ belongs to the image of~$\beta^H$ as well.
\end{proof}

\begin{Rem}
In \Cref{Thm:presentation} we have proved something slightly more precise, namely that the map
\begin{equation}
\label{eq:REH-mod-xi}
\Rlocp{E}(H)\to\REH/\langle\xi^{\pm}_\pi\rangle
\end{equation}
(where $\pi$ ranges over all coordinates)
is surjective with nilpotent kernel.
We expect that $\Rlocp{E}(H)$ is already reduced, which would imply that~\eqref{eq:REH-mod-xi} is in fact an isomorphism of graded rings.
In particular, for $p=2$ we expect that $\Rlocp{E}(H)\isoto\Rloc{E}(H)$.
\end{Rem}

\section{Applications and examples}
\label{sec:applications}%

In this final section, we push our techniques further and compute more examples.

\begin{Rem}
\label{Rem:remove-closed-pts}%
For $E$ elementary abelian, \Cref{Cor:abelem-Dirac,Thm:presentation} allow us to think of the \emph{geometry} of $\SpcKE$, beyond its mere topology, by viewing $\SpcKE$ as a Dirac scheme.
Consider further the `periodic' locus of $\SpcKE$, which is the open complement of the closed points~$\SET{\cM(H)}{H\le E}$; see~\Cref{Rec:Part-I}.
This is analogous to considering the projective support variety~$\Proj(\rmH^\sbull(E,\kk))\cong\bbP^{r-1}_{\kk}$ by removing the `irrelevant ideal'~$\cM(1)=\rmH^+(E,\kk)$ from~$\Spech(\rmH^\sbull(E,\kk))$. To avoid confusion with the phrase `closed points', we now refer to the $\cM(H)$ as \emph{very closed points}, allowing us to speak of closed points of~$\bbP^{r-1}_{\kk}$ in the usual sense (as we did in \Cref{Cor:dim1-subvar-abelem}).
Removing those finitely many `irrelevant' points allows us to draw more geometric pictures by depicting the (usual) closed points of the periodic locus, as in classical algebraic geometry.

In fact, for any finite group~$G$, we can speak of the \emph{periodic locus} of~$\SpcKG$ to mean the open~$\Spc'(\cK(G)):=\SpcKG\sminus\SET{\cM(H)}{H\in\Sub{p}(G)}$ obtained by removing the `irrelevant' very closed points.
However, we do not endow these spectra with a scheme-theoretic structure beyond the elementary abelian case, since we do not have \Cref{Cor:abelem-Dirac} in general.
We postpone a systematic treatment of the periodic locus to later work. For now we focus on examples.
\end{Rem}

\begin{Exa}
\label{Exa:Klein-revisit}
Let us revisit Klein-four, with the notation of \Cref{Exa:Klein-four}.
From the picture in~\eqref{eq:Klein} we see that the union of the open subsets $\goodopen[E]{1}$ and $\goodopen[E]{E}$ only misses (three) very closed points hence covers the periodic locus.
We have
\begin{equation}
\label{eq:Klein-Rlocp}
\begin{aligned}
\Rlocp{E}(1)&=\frac{\kk[\zeta^+_{N_0},\zeta^+_{N_1},\zeta^+_{N_{\infty}}]}{\langle{\zeta^+_{N_0}+\zeta^+_{N_1}+\zeta^+_{N_{\infty}}}\rangle}&&\left(=\Hm^*(E;\kk)\right),\\
  \Rlocp{E}(E)&=\frac{\kk[\zeta^-_{N_0},\zeta^-_{N_1},\zeta^-_{N_{\infty}}]}{\langle{\zeta^-_{N_0}\zeta^-_{N_1}+\zeta^-_{N_1}\zeta^-_{N_{\infty}}+\zeta^-_{N_{\infty}}\zeta^-_{N_0}}\rangle}
\end{aligned}
\end{equation}
and their homogeneous spectra are both a projective line with a unique closed point added.
(For~$\Rlocp{E}(E)$, the coordinate transformation for $i=0,1$, $\zeta^-_{N_i}\mapsto \tilde{\zeta}^-_i:=\zeta^-_{N_i}+\zeta^-_{N_\infty}$, identifies the ring with $\kk[\tilde{\zeta}^-_{0},\tilde{\zeta}^-_{1},\zeta^-_{N_\infty}]/\langle\tilde{\zeta}^-_{0}\tilde{\zeta}^-_{1}+(\zeta^-_{N_\infty})^2\rangle$, which corresponds to the image of a degree-two Veronese embedding of~$\mathbb{P}^1$ in~$\mathbb{P}^2$.)
Removing the very closed points (\Cref{Rem:remove-closed-pts}), it is a straightforward exercise to check that the two lines are glued along the open complement of the $\FF_{\!2}$-rational points, according to the rule $(\zeta^+_{N_i})\inv=\zeta_{N_i}^-$.
In other words, we obtain the following picture of~$\Spc'(\cK(E))$:
\begin{figure}[H]
\centering
\includegraphics[scale=0.2]{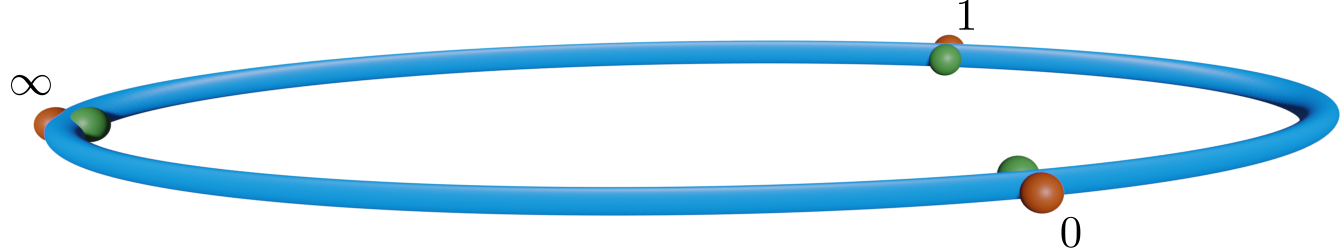}
\caption{A $\mathbb{P}^1_{\!\!\kk}$ with three doubled points.}
\label{fig:P1-doubled}
\end{figure}

To translate between this picture and the one in~\eqref{eq:Klein}, think of the blue part as~$\Pone$, the three green points as the $\FF_{\!2}$-rational points $i=0,1,\infty$ in $\goodopen[E]{1}=\Vee{E}$ and the brown points as the~$\eta_E(N_i)$ in~$\goodopen[E]{E}$.

In the above example as in the next, we encounter non-separated spectra. This is a standard phenomenon for $\bbZ$-graded rings, as explained in~\cite[\S\,3]{brenner-schroer:multihomogeneous} for instance.
We are not aware of a purely tensor-triangular interpretation of this fact.
\end{Exa}

\begin{Exa}
\label{Exa:Klein-conjugation}%
In view of later applications let us consider the action induced on spectra by the involution on~$E=C_2\times C_2$ that interchanges the two $C_2$-factors.
Let us say that the two factors correspond to the subgroups~$N_0$ and~$N_1$.
On the generators~$\zeta^{\pm}_{N_i}$ of~$\Rlocp{E}(1)$ and~$\Rlocp{E}(E)$ in~\eqref{eq:Klein-Rlocp}, the effect of the involution is
\[
\zeta^{\pm}_{N_0}\mapsto\zeta^{\pm}_{N_1}\qquad\zeta^{\pm}_{N_1}\mapsto\zeta^{\pm}_{N_0}\qquad\zeta^{\pm}_{N_\infty}\mapsto\zeta^{\pm}_{N_\infty}.
\]
The subrings of invariants in~$\Rlocp{E}(1)$ and~$\Rlocp{E}(E)$ are, respectively,
\begin{align*}
  \frac{\kk[e_1^+, e_2^+,\zeta^+_{N_\infty}]}{\langle{e_1^++\zeta^+_{N_\infty}}\rangle}\cong\kk[e_2^+,\zeta^+_{N_\infty}]
  &&\textrm{and}
  &&\frac{\kk[e_1^-,e_2^-,\zeta^-_{N_\infty}]}{\langle{e_1^-\zeta^-_{N_\infty}+e_2^-}\rangle}\cong\kk[e_1^-,\zeta^-_{N_\infty}]
\end{align*}
where $e_1^{\pm}=\zeta^\pm_{N_0}+\zeta^\pm_{N_1}$ and $e_2^{\pm}=\zeta^\pm_{N_0}\zeta^\pm_{N_1}$ are the first and second symmetric polynomials in~$\zeta^{\pm}_{N_0}$ and~$\zeta^{\pm}_{N_1}$.
Thus $e_i^{\pm}$ has degree~$\pm i$.
The homogeneous spectra of these rings (with unique very closed point removed) are again two projective lines\,(\footnote{\,More precisely, as already in \Cref{Exa:Klein-revisit}, we are dealing with weighted projective spaces which happen to be isomorphic to projective lines.}) and they are glued together along the complement of two points.
In other words, the quotient of~$\Spc'(\cK(E))$ by the involution is a~$\mathbb{P}^1_{\!\!\kk}$ with two doubled points:
\begin{figure}[H]
\centering
\includegraphics[scale=0.2]{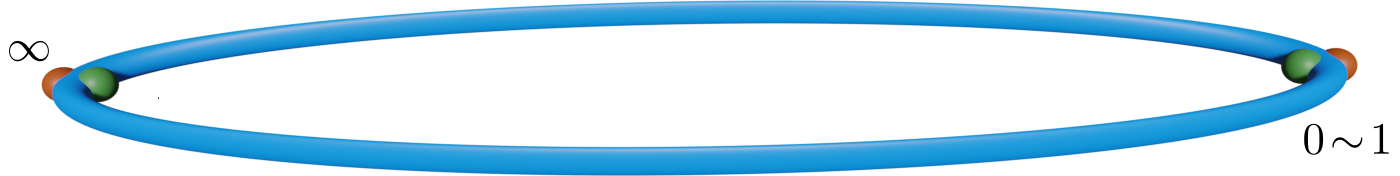}
\caption{A $\mathbb{P}^1_{\!\!\kk}$ with two doubled points.}
\label{fig:P1-2doubled}
\end{figure}

Alternatively, the topological space underlying this quotient may be obtained more directly at the level of \Cref{fig:P1-doubled}.
Indeed, this involution fixes the two colored points corresponding to~$\infty$, fixes no other points, and swaps the points corresponding to~$0$ with the points corresponding to~$1$, respecting the color.
So, again, the quotient can be pictured as a~$\mathbb{P}^1_{\!\!\kk}$ with only two doubled points as in \Cref{fig:P1-2doubled}.
\end{Exa}

\goodbreak
Let us return to general finite groups. We want to optimize the Colimit \Cref{Thm:colim} by revisiting the category of elementary abelian $p$-sections~$\EAppG$.
\begin{Rem}
\label{Rem:reduce-colim}%
In \Cref{Cons:EA}, we gave a `raw' version of the morphisms in the indexing category~$\EAppG$, which could be fine-tuned without changing the colimit~\eqref{eq:colim-elab}.
As with any colimit, we can quotient-out the indexing category $\EAppG\onto \EApG$ by identifying any two morphisms that induce the same map by the functor under consideration, here~$\Spc(\cK(-))$. We then still have
\begin{equation}
\label{eq:colim-elab-quot}%
\colim_{(H,K)\in\EApG}\Spc(\cK(H/K))\isoto\SpcKG.
\end{equation}
The same holds for any intermediate quotient $\EAppG\onto \EAt{p}{G}\onto\EApG$. For instance if $Z(G)$ denotes the center of~$G$, we can consider the category $\EAt{p}{G}$ obtained from~$\EAppG$ by modding out the obvious right action of the group $Z(G)\cdot H'$ on each hom set~$\hom_{\EAppG}((H,K),(H',K'))$.

Let us illustrate how such reductions can be used in practice.
\end{Rem}

\begin{Exa}
\label{Exa:cyclic}%
Let $G=C_{p^n}$ be the cyclic group of order~$p^n$.
As with any abelian group, using $Z(G)=G$, the reduced category $\EApG$ discussed in~\Cref{Rem:reduce-colim} just becomes a poset.
Here, if we denote by $1=H_{n}<H_{n-1}<\cdots<H_{1}<H_{0}=G$ the tower of subgroups of~$G$ then the poset $\EApG$ looks as follows:
\[
\xymatrix@C=.3em@R=.7em{
  &(H_{0},H_{1})
  &&&\cdots&&& (H_{n-1},H_{n})\\
  (H_{0},H_{0})\ar[ru]
  &&(H_{1},H_{1})\ar[lu]\ar[rru]
  &&\cdots&
  &(H_{n-1},H_{n-1})\ar[ru]\ar[llu]
  &&(H_{n},H_{n})\ar[lu]
  }
\]
From \Cref{Thm:colim} we deduce that $\SpcKG$ is the colimit of the diagram
\[
\xymatrix@C=.7em@R=.7em{
  &V
  &&V
  &&&&V
  \\
  \ast
  \ar[ru]
  &&\ast\ar[lu]\ar[ru]
  &&\ar[lu]
  &\cdots
  &\ar[ru]
  &&\ast\ar[lu]
}
\]
with $\ast=\Spc(\cK(1))$ and $V=\Spc(\cK(C_p))$ the V-shaped space in~\eqref{eq:C_p}. In the above diagram, the arrow to the right (resp.\ left) captures the left-most (resp.\ right-most) point of~$V$.
We conclude that the spectrum of $\cK(C_{p^n})$ is equal to
\begin{equation}
\label{eq:C_p^n}%
\vcenter{\xymatrix@R=1em@C=.7em{
{\scriptstyle\gm_0\kern-1em}
& {\bullet} \ar@{-}@[Gray] '[rd] '[rr] '[drrr]
&&
{\bullet}
&{\kern-1em\scriptstyle\gm_1}
&&{\scriptstyle\gm_{n-1}\kern-1em}&\bullet&&\bullet&{\kern-1em\scriptstyle\gm_n}
\\
&{\scriptstyle\gp_1\kern-1em}& {\bullet}&&& {\cdots}& \ar@{-}@[Gray] '[ru] '[rr] '[rrru] &&\bullet&{\kern-1em}\scriptstyle\gp_{n}}}
\end{equation}
This example reproves \Cref{Prop:cyclic}. It will provide the starting point for our upcoming work on the tt-geometry of Artin motives over finite fields.
\end{Exa}

\begin{Rem}
\label{Rem:E_p-EI-cat}%
The category of elementary abelian $p$-sections~$\EAppG$ is a finite \emph{EI-category}, meaning that all endomorphisms are invertible.
The same is true of its reduced versions $\EAt{p}{G}$ and $\EApG$ in \Cref{Rem:reduce-colim}.
\Cref{Thm:colim} then implies formally that $\SpcKG$ is the quotient of the spectra for the maximal elementary abelian $p$-sections by the maximal relations.
Let us spell this out.
\end{Rem}

\begin{Cons}
\label{Cons:EI-maximal}%
Let $I$ be a finite EI-category.
The (isomorphism classes of) objects in~$I$ inherit a poset structure with $x\leq y$ if $\Hom_I(x,y)\neq\emptyset$.
Maximal objects $\Max(I)\subseteq I$ are by definition the maximal ones in this poset.
Now, let $x_1,x_2$ be two objects in~$I$, possibly equal.
The category $\Rel(x_1,x_2)$ of spans $x_1\leftarrow y\to x_2$ (or `relations') between~$x_1$ and~$x_2$, with obvious morphisms (on the~$y$ part, compatible with the spans), is also a finite EI-category and we may consider its maximal objects.
\end{Cons}

\begin{Not}
\label{Not:max-elab}%
We denote by $\maxel(G)$ the set of maximal objects in~$\EAppG$.
A word of warning: In general, there can be more maximal elementary abelian $p$-sections than just the elementary abelian $p$-sections of maximal rank.
\end{Not}

\begin{Cor}
\label{Cor:SpcKG-coeq}%
Let $G$ be a finite group.
The components~$\varphi_{(H,K)}$ of~\eqref{eq:colim-comp} induce a homeomorphism between the following coequalizer in topological spaces
\begin{equation}
\label{eq:coeq}
\operatorname{coeq}
\Bigg(
\vcenter{\xymatrix@C=5em{\displaystyle\coprod_{{E_1\xfrom{g_1}L\xto{g_2}E_2}\atop{\textup{maximal relations}}}\kern-.5cm\Spc(\cK(L))\ar@<2pt>[r]^-{\Spc(\cK(g_1))}\ar@<-5pt>[r]_-{\Spc(\cK(g_2))}&\displaystyle\coprod\limits_{E\in\maxel(G)}\kern-.5cm\SpcKE}}
\Bigg)
\end{equation}
and $\SpcKG$, for `maximal relations' in~$\EAppG$ or any variant of \Cref{Rem:reduce-colim}.
\end{Cor}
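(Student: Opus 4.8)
The plan is to deduce this from the Colimit \Cref{Thm:colim} together with the fact that $\EAppG$ (and each of its quotients from \Cref{Rem:reduce-colim}) is a finite EI-category, so that the colimit over it can be computed as a coequalizer of the pieces indexed by \emph{maximal} objects. So the proof really has two layers: a purely formal lemma about colimits over finite EI-categories, and then the application to our diagram $\Spc(\cK(-))$.

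First I would isolate the formal statement: if $I$ is a finite EI-category and $F\colon I\to\Top$ is any functor, then the canonical map
\[
\operatorname{coeq}\Bigl(
\coprod_{E_1\xleftarrow{} L\xrightarrow{} E_2\ \textup{max.\ span}}\!\!\! F(L)
\ \rightrightarrows\!
\coprod_{E\in\Max(I)} F(E)
\Bigr)\ \isoto\ \colim_I F
\]
is a homeomorphism, where the two parallel maps are induced by the two legs of each span and the indexing spans are the maximal objects of the span categories $\Rel(E_1,E_2)$ of \Cref{Cons:EI-maximal}. The key point is that in an EI-category every object $x$ admits a morphism to some maximal object $m(x)$, so in $\colim_I F$ every class is represented by an element coming from some $F(E)$ with $E$ maximal; moreover two such representatives, living over maximal $E_1,E_2$, are identified in the colimit iff there is a zig-zag between them, and any such zig-zag can be straightened—again using EI—to a single span $E_1\leftarrow L\rightarrow E_2$, which can then be enlarged to a maximal span. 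This shows the coequalizer surjects onto and injects into $\colim_I F$; continuity and the fact that both are quotient topologies on the same set give the homeomorphism. (One has to be a little careful that the coproduct in the coequalizer ranges over maximal spans rather than all spans, but since every span factors through a maximal one and $F$ is covariant, the extra spans impose no new relations.)

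Then I would apply this with $I=\EAppG$ (or any intermediate quotient $\EAt{p}{G}$, $\EApG$ from \Cref{Rem:reduce-colim}, which are all finite EI-categories by \Cref{Rem:E_p-EI-cat}) and $F=\Spc(\cK(-))$, i.e.\ $F(H,K)=\Spc(\cK(H/K))$ with transition maps $\Spc(\cK(g))$. By \Cref{Thm:colim} (and the variant \eqref{eq:colim-elab-quot}), $\colim_I F\cong\SpcKG$ via the maps $\varphi_{(H,K)}$ of \eqref{eq:colim-comp}; substituting this identification into the formal lemma gives exactly the coequalizer description \eqref{eq:coeq}, with $\Max(I)=\maxel(G)$ (\Cref{Not:max-elab}) and the maximal spans being the maximal objects of the span categories. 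Since the colimit is unchanged under the quotients of \Cref{Rem:reduce-colim}, the same conclusion holds with maximal relations taken in $\EAppG$ or in any of its variants.

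The main obstacle is the bookkeeping in the formal EI-lemma: one must verify carefully that an arbitrary zig-zag of morphisms identifying two elements over maximal objects can be replaced by a single maximal span, and that replacing the set of all spans by the (finite) set of maximal spans does not lose any relations. This is where the EI hypothesis is used twice—once to push every object up to a maximal one, and once to collapse a length-$n$ zig-zag: given $E_1\xleftarrow{}x_1\xrightarrow{}x_2\xleftarrow{}x_3\cdots\xrightarrow{}E_2$, repeatedly use that consecutive objects sit below a common object to merge the backward–forward pairs, eventually producing a single apex $L$ with maps to both $E_1$ and $E_2$ inducing the same identification; then map $L$ into a maximal span. I expect the rest (continuity, surjectivity, that the coequalizer carries the quotient topology) to be routine once the underlying set-level statement is nailed down. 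I would present the EI-lemma as a short self-contained paragraph and then give the one-line deduction above.
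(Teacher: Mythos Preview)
Your approach is essentially the paper's: deduce the statement from \Cref{Thm:colim} via the general fact that a colimit over a finite EI-category can be computed as a coequalizer indexed by the maximal objects and maximal spans. The paper's proof simply writes the colimit as the coequalizer over \emph{all} objects and \emph{all} spans and then asserts that the canonical comparison with the small coequalizer is invertible ``as with any finite EI-category''; your proposal just unpacks this last clause.

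One step in your EI-lemma sketch is too strong, however. The claim that any zig-zag between elements over maximal $E_1,E_2$ collapses to a \emph{single} span $E_1\leftarrow L\to E_2$ is false in general: in the poset with relations $1<A$, $1<B$, $2<B$, $2<C$ there is no object mapping to both $A$ and $C$, yet elements over $A$ and $C$ can be identified in a colimit via the chain $A\leftarrow 1\to B\leftarrow 2\to C$. This does not break the argument, because a coequalizer already takes the transitive closure of the relation it imposes; so it suffices to replace the zig-zag by a \emph{chain} of maximal spans between maximal objects. To do this, push each intermediate object up to a chosen maximal one, turning each step of the zig-zag into a span between maximals, and then enlarge each such span to a maximal span (possible since each $\Rel(E,E')$ is again finite EI). With that correction your proof goes through and matches the paper's.
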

\begin{proof}
Applying \Cref{Thm:colim} we obtain
\[
\SpcKG\simeq\operatorname{coeq}\Bigg(
\vcenter{\xymatrix@C=5em{\displaystyle\coprod_{{E_1\xfrom{g_1}L\xto{g_2}E_2}}\kern-.5cm\Spc(\cK(L))\ar@<2pt>[r]^-{\Spc(\cK(g_1))}\ar@<-5pt>[r]_-{\Spc(\cK(g_2))}&\displaystyle\coprod\limits_{E\in\EAppG}\SpcKE}}
\Bigg)
\]
where~$E$ ranges over \emph{all} elementary abelian $p$-sections and $(g_1,g_2)$ over \emph{all} relations.
There is a canonical map from the coequalizer in the statement to this one and it is straightforward to produce an inverse, as with any finite EI-category.
\end{proof}

We can apply \Cref{Cor:SpcKG-coeq} to find the irreducible components of~$\SpcKG$.

\begin{Prop}
\label{Prop:SpcKG-components}%
The set of irreducible components of~$\SpcKG$ is in bijection with the set $\maxel(G)$ of maximal elementary abelian $p$-sections of~$G$ up to conjugation, via the following bijection with generic points:
\begin{align*}
  \maxel(G)_{/G}&\overset{\sim}{\longleftrightarrow}\SpcKG^0\\
  (H,K)&\longmapsto \varphi_{(H,K)}(\eta_{H/K}).
\end{align*}
In particular, $\dim(\SpcKG)=$p$\textrm{\,\rm-rank}_{\textup{sec}}(G)$ is the sectional $p$-rank of~$G$.
\end{Prop}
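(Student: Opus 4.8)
The plan is to deduce everything from \Cref{Cor:SpcKG-coeq} together with the elementary abelian results of \Cref{sec:abelem}. First I would recall that by \Cref{Prop:SpcKE-irreducible}, for each elementary abelian $p$-group $E$ the space $\SpcKE$ is irreducible with unique generic point $\eta_E$. Hence in the coequalizer~\eqref{eq:coeq}, the space $\coprod_{E\in\maxel(G)}\SpcKE$ is a disjoint union of irreducibles, one for each $E\in\maxel(G)$, and its irreducible components are exactly the closures of the $\eta_E$. Since $\SpcKG$ is a quotient of this space, every irreducible component of $\SpcKG$ is the image (closure of the image) of one of these; so the generic points of the components of $\SpcKG$ are among the $\varphi_{(H,K)}(\eta_{H/K})$ for $(H,K)\in\maxel(G)$.

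Next I would show that distinct maximal sections, up to $G$-conjugation, give distinct components. Suppose $\varphi_{(H,K)}(\eta_{H/K})$ lies in the closure of $\varphi_{(H',K')}(\eta_{H'/K'})$. By \Cref{Rem:closure}, this forces $\varphi_{(H,K)}(\eta_{H/K})$ to be the image of a point of $\SpcKE'$ (for $E'=H'/K'$) lying in the closure of $\eta_{H'/K'}$; but by \Cref{Lem:preserve-dimension}\,(b) the map $\varphi_{(H',K')}$ preserves the dimension of points, and $\eta_{H'/K'}$ has dimension $\dim\SpcKE' = \operatorname{rank}(H'/K')$ by \Cref{Prop:SpcKE-dim}. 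So $\varphi_{(H,K)}(\eta_{H/K})$, being a specialization of that point, sits inside an irreducible of dimension $\operatorname{rank}(H'/K')$, forcing $\operatorname{rank}(H/K)\le \operatorname{rank}(H'/K')$, with equality only if the two generic points coincide. When they coincide, I would trace through the colimit description (i.e.\ the injectivity argument of \Cref{Thm:colim}, or directly \cite[\Cref{Part-I:Thm:all-points}]{balmer-gallauer:TTG-Perm-I}): two maximal sections whose associated generic points agree in $\SpcKG$ must be related by a morphism in $\EAppG$, and since both are \emph{maximal} that morphism is an isomorphism, i.e.\ the sections are $G$-conjugate. This gives injectivity of the displayed map $\maxel(G)_{/G}\to\SpcKG^0$.

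For surjectivity onto the set of components: every irreducible component has a generic point $\cP$, and by \Cref{Rem:closure} (or the surjectivity half of \Cref{Cor:SpcKG-coeq}) $\cP = \varphi_{(H,K)}(\cP')$ for some section $(H,K)\in\EAppG$ and $\cP'\in\SpcKE$. Enlarging $(H,K)$ to a maximal section $(H',K')\ge (H,K)$ in $\EAppG$ and using that $\varphi$ is a natural transformation, $\cP$ lies in the image of $\varphi_{(H',K')}$, hence in the closure of $\varphi_{(H',K')}(\eta_{H'/K'})$; since $\cP$ is a generic point of a component, $\cP = \varphi_{(H',K')}(\eta_{H'/K'})$. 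So every component's generic point is of the claimed form. Finally, the Krull dimension statement: $\dim\SpcKG$ is the maximum of $\dim\overline{\{\varphi_{(H,K)}(\eta_{H/K})\}}$ over $(H,K)\in\maxel(G)$, which by the dimension-preservation of \Cref{Lem:preserve-dimension}\,(b) and \Cref{Prop:SpcKE-dim} equals the maximum of $\operatorname{rank}(H/K)$ over all elementary abelian $p$-sections, i.e.\ the sectional $p$-rank of $G$.

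The main obstacle I anticipate is the equality case in the injectivity argument — namely showing cleanly that two maximal sections with the same generic point in $\SpcKG$ are $G$-conjugate. One must be careful that "maximal in the EI-category $\EAppG$" is being used in the right way: the poset structure on objects of $\EAppG$ is exactly increasing rank along morphisms (\Cref{Rem:EA-generating-morphisms}), so a section of maximal rank among those receiving a morphism from a given one is what gets used, and one needs the coincidence-of-twists type rigidity (\Cref{Rem:twists}, or \cite[\Cref{Part-I:Thm:all-points}]{balmer-gallauer:TTG-Perm-I}) to upgrade "same image point" to "isomorphic in $\EAppG$". Everything else is a formal consequence of the colimit theorem plus the already-established irreducibility and dimension computations for the elementary abelian case.
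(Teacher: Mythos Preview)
Your overall strategy matches the paper's: use the coequalizer description (\Cref{Cor:SpcKG-coeq}), irreducibility of each $\SpcKE$ (\Cref{Prop:SpcKE-irreducible}), and dimension preservation (\Cref{Lem:preserve-dimension}) to get surjectivity and the dimension formula. Those parts are fine.

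The gap is in the injectivity step. Your rank comparison only yields $\operatorname{rank}(H/K)\le\operatorname{rank}(H'/K')$, but maximal sections can have different ranks (see the warning in \Cref{Not:max-elab}), so this does not by itself rule out a strict specialization between $\varphi_{(H,K)}(\eta_{H/K})$ and $\varphi_{(H',K')}(\eta_{H'/K'})$---which is exactly what you must exclude to know every maximal section actually gives a component. More seriously, your plan to ``trace through the colimit description'' and conclude that two maximal sections with coinciding generic points ``must be related by a morphism in $\EAppG$'' is not what the colimit provides: identification in a colimit produces a \emph{zigzag} of relations through possibly non-maximal objects, not a single morphism, so maximality alone does not immediately force an isomorphism. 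The references you suggest (\Cref{Rem:twists}, Part~I's classification of points) do not supply the missing step either.

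The paper closes this as follows. By \Cref{Cor:SpcKG-coeq}, if $\varphi_E(\eta_E)=\varphi_{E'}(\eta_{E'})$ for maximal $E,E'$, there is a finite chain of \emph{maximal} relations responsible, and one treats them one at a time. Given a maximal relation $E_1\xleftarrow{g_1}L\xto{g_2}E_2$ with the generic point of $\Spc(\cK(E_1))$ in the image of $\Spc(\cK(g_1))$: this map is a closed immersion (\Cref{Lem:preserve-dimension}) whose image contains the generic point, hence a homeomorphism. One then checks directly that $g_1$ must be an isomorphism in $\EAppG$, because restriction to a proper subgroup is never surjective already on the cohomological open, and modular fixed-points with respect to a nontrivial subgroup does not meet the cohomological open at all. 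Hence $L$ is itself maximal, forcing $g_2$ to be an isomorphism too; by \Cref{Rem:EA-generating-morphisms} both are conjugations. Inducting along the chain shows $E$ and $E'$ are $G$-conjugate. This argument simultaneously handles well-definedness (each $\varphi_E(\eta_E)$ really is a generic point) and injectivity, without any appeal to rank comparison.
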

\begin{proof}
We use coequalizer~\eqref{eq:coeq}.
Recall from \Cref{Prop:SpcKE-irreducible} that $\SpcKE$ for an elementary abelian $p$-group~$E$ is always irreducible.
We get immediately that the map $\maxel(G)_{/G}\onto\SpcKG^0$ is a surjection.
Assume now that $\varphi_{E}(\eta_{E})=\varphi_{E'}(\eta_{E'})$ for $E,E'\in\maxel(G)$ and let us show that~$E$ and~$E'$ are conjugate $p$-sections.
By \Cref{Cor:SpcKG-coeq}, there exists a finite sequence of maximal relations responsible for the identity $\varphi_{E}(\eta_{E})=\varphi_{E'}(\eta_{E'})$ and we will treat one relation at a time.
More precisely, assuming that the generic point in~$\Spc(\cK(E_1))$ is in the image of (the map on spectra induced by) some relation $E_1\xleftarrow{g_1}L\xto{g_2}E_2$, with $E_1,E_2\in\maxel(G)$, we will show below that both~$g_i$ are conjugation isomorphisms (type~\ref{it:mor-a} in \Cref{Exas:EA-morphisms}).
In particular, $E_1,E_2$ are conjugate.
And as conjugation identifies the unique generic points in the spectra for~$E_1$ and~$E_2$ one can apply induction on the number of relations to conclude.

As the map induced by~$g_1$ is a closed immersion (\Cref{Lem:preserve-dimension}) it must be a homeomorphism once its image contains the generic point.
From this, we deduce that $g_1$ itself must be an isomorphism.
(Indeed, the map induced by restriction to a proper subgroup of~$E_1$ is not surjective, already on the cohomological open.
And similarly, the map induced by modular fixed-points with respect to a non-trivial subgroup of~$E_1$ does not even meet the cohomological open.)
Hence $L\simeq E_1$ is maximal too and therefore $g_2$ is also an isomorphism.
The only isomorphisms in~$\EAppG$ are conjugations (\Cref{Rem:EA-generating-morphisms}) and we conclude.

The second statement follows from this together with \Cref{Prop:SpcKE-dim}.
\end{proof}

\begin{Rem}
For $G$ not elementary abelian, we already saw with~$G=Q_8$ in \Cref{Exa:Q_8} that $\SpcKG$ can have larger Krull dimension than~$\Spc(\Db(\kkG))$.
And indeed, $Q_8$ has sectional $p$-rank two and $p$-rank one.
\end{Rem}

\begin{Rem}
For every maximal~$(H,K)\in\maxel(G)$, since $\varphi_{(H,K)}$ is a closed map, it yields a surjection $\varphi_{(H,K)}\colon\SpcKE\onto \adhpt{\varphi_{H,K}(\eta_{H/K})}$ from the spectrum of the elementary abelian $E=H/K$ onto the corresponding irreducible component of~$\SpcKG$. We illustrate this with $G=D_8$ in \Cref{Exa:D8} below, where said surjection coincides with the folding of \Cref{Exa:Klein-conjugation}.
\end{Rem}

\begin{Exa}
\label{Exa:D8}%
Let us compute $\Spc(\cK(D_8))$ for $G=D_8=\langle \,r,s\mid r^4=s^2=1, rs=sr\inv\,\rangle$ the dihedral group of order~$8$.
We label its subgroups as follows\,(\footnote{\,The two Klein-four subgroups are called~$K$ and~$K'$. The names $L_0$ and $L_1$ for the cyclic subgroups of~$K$ (resp.\ $L'_0$ and~$L'_1$ in~$K'$) are chosen to evoke $N_0$ and $N_1$ in \Cref{Exa:Klein-four}. The third cyclic subgroup, $N_{\infty}$, corresponds to~$C_2=Z(D_8)$ and is common to $K$ and~$K'$.}):
\[
\xymatrix@C=1.5em@R=1em{
  &&D_8\\
  &K_{}=\langle r^2,s\rangle\ar@{-}[ru]&C_4=\langle r\rangle\ar@{-}[u]&K'_{}=\langle r^2,r^3s\rangle\ar@{-}[lu]\\
  L_{0}=\langle s\rangle\ar@{-}[ru]&L_{1}=\langle r^2s\rangle\ar@{-}[u]&C_2=\langle r^2\rangle\ar@{-}[lu]\ar@{-}[u]\ar@{-}[ru]&L'_{0}=\langle rs\rangle\ar@{-}[u]&L'_{1}=\langle r^3s\rangle\ar@{-}[lu]\\
  &&1\ar@{-}[llu]\ar@{-}[llu]\ar@{-}[lu]\ar@{-}[u]\ar@{-}[ru]\ar@{-}[rru]
  }
\]
Since $L_0$ and~$L_1$ (resp.\ $L'_0$ and~$L'_1$) are $G$-conjugate, by the element~$r$, we have exactly eight very closed points~$\cM(H)$ for~$H\in \Sub{p}(G)_{/G}$. We shall focus on the open complement of these very closed points, \ie the periodic locus $\Spc'(\cK(D_8))$ of \Cref{Rem:remove-closed-pts}, which is of Krull dimension one.
Since all maps in the coequalizer diagram~\eqref{eq:coeq} preserve the dimension of points (\Cref{Lem:preserve-dimension}) we may first remove these very closed points and \emph{then} compute the coequalizer.

Let us describe $\maxel(D_8)$ and the maximal relations.
In addition to the maximal elementary abelian subgroups~$K$ and~$K'$ there is one maximal elementary abelian subquotient~$D_8/C_2$. So we have three maximal sections: $\maxel(D_8)=\{(K,1),(K',1),(D_8,C_2)\}$.
We compute the relations in the category~$\EAt{2}{D_8}$ which is obtained from~$\EA{2}{D_8}$ by quotienting each hom-set~$\hom((H,M),(H',M'))$ by the action of~$H'$, as in  \Cref{Rem:reduce-colim}.
One then easily finds by inspection five non-degenerate\,(\footnote{\,that is, not of the form $x\xfrom{\id}x\xto{\id}x$ (which would not affect the coequalizer~\eqref{eq:coeq} anyway)}) maximal relations up to isomorphism, pictured as follows:
\begin{equation}
\label{eq:D8-colimit-shape}%
\vcenter{\xymatrix@C=.5em@R=.5em{
  &&&{\color{azure(colorwheel)}(D_8,C_2)}\\
  &(K_{},C_2)\ar@[OliveGreen][rru]\ar@[Brown][ld]&&&&\ar@[OliveGreen][llu](K'_{},C_2)\ar@[Brown][rd]\\
  {\color{azure(colorwheel)}(K_{},1)}\ar@(l,lu)^{r}&&&\ar@[OliveGreen][lll](C_2,1)\ar@[OliveGreen][rrr]&&&{\color{azure(colorwheel)}(K'_{},1)}\ar@(ru,r)^{r}
}}
\end{equation}
Here, the loops labeled~$r$ represent the relations $(K_{},1)\xfrom{1}(K_{},1)\xto{r}(K_{},1)$, and similarly for~$K'$.
All unlabeled arrows are given by $1\in D_8$, as in \Cref{Exas:EA-morphisms}\,\ref{it:mor-b}-\ref{it:mor-c}.
We explain below the brown/green color-coding in the other three relations.

Hence the space $\Spc'(\cK(D_8))$ is a quotient of three copies of the space $\Spc'(\cK(E))$ for $E$ the Klein-four group, equal to $\bbP^1_{\kk}$ with three doubled points as in \Cref{fig:P1-doubled}.

Let us discuss the relations. We start with the self-relation corresponding to the loop~$r$ on~$(K_{},1)$.
As the conjugation by~$r$ on~$K_{}$ simply swaps the subgroups~$L_{0}$ and~$L_{1}$, we deduce from \Cref{Exa:Klein-conjugation} that the quotient of~$\Spc'(\cK(K_{}))$ by this relation is a~$\mathbb{P}^1_{\!\!\kk}$ with two doubled points, as in \Cref{fig:P1-2doubled}.
The same is true for~$K'$.

At this stage we have identified the three irreducible components (see \Cref{fig:D8-components}) and the three remaining relations will tell us how to glue these components.
\begin{figure}[H]
\centering
\includegraphics[scale=0.35]{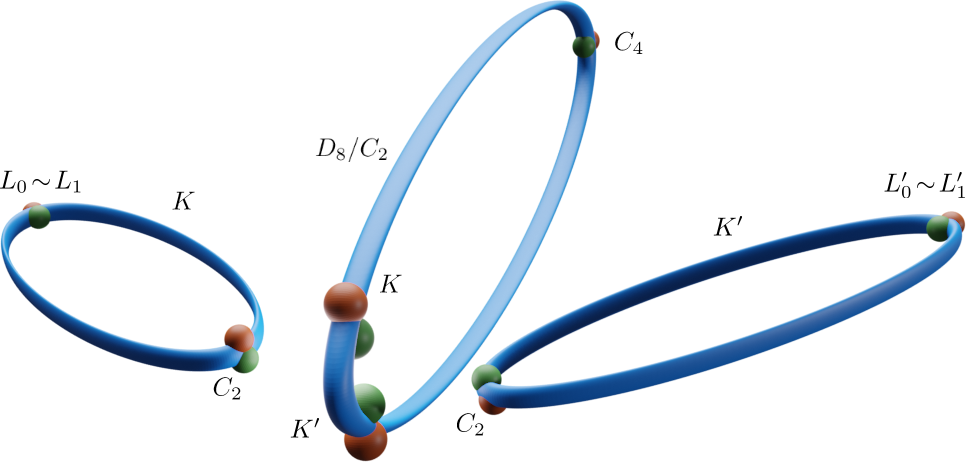}
\medbreak\caption{Three $\mathbb{P}^1_{\!\!\kk}$ with several points doubled.}
\label{fig:D8-components}
\end{figure}

The three sides of the `triangle'~\eqref{eq:D8-colimit-shape} display maximal relations that identify a single point of one irreducible component with a single point of another.
Indeed, each of the middle sections~$K/C_2$, $K'/C_2$ and~$C_2/1$ is a $C_2$, whose periodic locus is a single point~$\eta_{C_2}$ (\Cref{Exa:C_p}).
Each edge in~\eqref{eq:D8-colimit-shape} identifies the image of that single point~$\eta_{C_2}$ in the two corresponding irreducible components in~\Cref{fig:D8-components}.
The color in~\eqref{eq:D8-colimit-shape} records the color of that image: A brown point or a green point in the $\bbP^1_{\kk}$ with doubled points.
Let us do all three.
First, the relation between the two Klein-fours, $K$ and~$K'$, at the bottom of~\eqref{eq:D8-colimit-shape}, identifies the two green points corresponding to~$C_2$, as we are used to with projective support varieties.
Then, the last two relations in~\eqref{eq:D8-colimit-shape}, on the sides, identify a brown point in the $K$- or $K'$-component with the green point in the $D_8/C_2$-component corresponding to~$K_{}/C_2$ and~$K'_{}/C_2$, respectively.
This is a direct verification, for instance using that $(\psi^{C_2})\inv(\Img(\rho^{D_8}_{K}))=(\psi^{C_2})\inv(\supp_{D_8}(\kk(D_8/K)))=\supp_{D_8/C_2}(\Psi^{C_2}(\kk(D_8/K)))=\supp_{D_8/C_2}(\kk(D_8/K))=\Img(\rho^{D_8/C_2}_{K/C_2})$ in~$\Spc(\cK(D_8/C_2))$.

Thus we obtain~$\Spc'(\cK(D_8))$ from these three identifications on the space of \Cref{fig:D8-components}.
The result is the space that appeared in~\Cref{fig:mackey-perm-stab}:
\begin{figure}[H]
\centering
\includegraphics[scale=0.4]{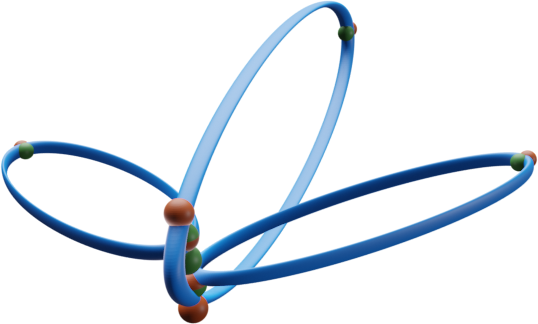}
\medbreak\caption{Three $\mathbb{P}^1_{\!\!\kk}$ with several points doubled and some identified.}
\label{fig:D8}
\end{figure}
\end{Exa}

\addtocontents{toc}{\vspace{1\baselineskip}}

\newcommand{\etalchar}[1]{$^{#1}$}


\begin{thebibliography}{LMSM86}

\bibitem[Bac16]{bachmann:thesis}
Tom Bachmann.
\newblock Invertible objects in motivic homotopy theory, 2016.
\newblock PhD thesis.

\bibitem[Bal10a]{balmer:sss}
Paul Balmer.
\newblock Spectra, spectra, spectra - tensor triangular spectra versus
  {Z}ariski spectra of endomorphism rings.
\newblock {\em Algebraic and Geometric Topology}, 10(3):1521--63, 2010.

\bibitem[Bal10b]{balmer:icm}
Paul Balmer.
\newblock Tensor triangular geometry.
\newblock In {\em Proc.\ of the {I}nternational {C}ongress of {M}athematicians.
  {V}olume {II}}, pages 85--112. Hindustan Book Agency, New Delhi, 2010.

\bibitem[Bal16]{balmer:tt-separable}
Paul Balmer.
\newblock Separable extensions in tensor-triangular geometry and generalized
  {Q}uillen stratification.
\newblock {\em Ann. Sci. \'{E}c. Norm. Sup\'{e}r. (4)}, 49(4):907--925, 2016.

\bibitem[Bal18]{balmer:surjectivity}
Paul Balmer.
\newblock On the surjectivity of the map of spectra associated to a
  tensor-triangulated functor.
\newblock {\em Bull. Lond. Math. Soc.}, 50(3):487--495, 2018.

\bibitem[BCH{\etalchar{+}}25]{barthel-et-al:Quillen-stratification}
Tobias Barthel, Nat\`alia Castellana, Drew Heard, Niko Naumann, and Luca Pol.
\newblock Quillen stratification in equivariant homotopy theory.
\newblock {\em Invent. Math.}, 239(1):219--285, 2025.

\bibitem[BCR97]{benson-carlson-rickard:tt-class-stab(kG)}
D.~J. Benson, Jon~F. Carlson, and Jeremy Rickard.
\newblock Thick subcategories of the stable module category.
\newblock {\em Fund. Math.}, 153(1):59--80, 1997.

\bibitem[BD24]{balmer-dellambrogio:coh-two-mackey}
Paul Balmer and Ivo Dell'Ambrogio.
\newblock Cohomological {M}ackey 2-functors.
\newblock {\em J. Inst. Math. Jussieu}, 23(1):279--309, 2024.

\bibitem[Ben98]{benson:representation-cohomology}
David~J. Benson.
\newblock {\em Representations and cohomology {I}\,\&\,{II}}, volume 30\,\&\,31
  of {\em Cambridge Studies in Advanced Mathematics}.
\newblock Cambridge University Press, 1998.

\bibitem[BF11]{balmer-favi:idempotents}
Paul Balmer and Giordano Favi.
\newblock Generalized tensor idempotents and the telescope conjecture.
\newblock {\em Proc. Lond. Math. Soc. (3)}, 102(6):1161--1185, 2011.

\bibitem[BG22a]{balmer-gallauer:resol-big}
Paul Balmer and Martin Gallauer.
\newblock Permutation modules and cohomological singularity.
\newblock {\em Comment. Math. Helv.}, 97(3):413--430, 2022.

\bibitem[BG22b]{balmer-gallauer:rage}
Paul Balmer and Martin Gallauer.
\newblock Three real {A}rtin-{T}ate motives.
\newblock {\em Adv. Math.}, 406:\,paper no.~108535, 2022.

\bibitem[BG23a]{balmer-gallauer:resol-small}
Paul Balmer and Martin Gallauer.
\newblock Finite permutation resolutions.
\newblock {\em Duke Math. J.}, 172(2):201--229, 2023.

\bibitem[BG23b]{balmer-gallauer:Dperm}
Paul Balmer and Martin Gallauer.
\newblock Permutation modules, {M}ackey functors, and {A}rtin motives.
\newblock In {\em Representations of algebras and related structures}, EMS Ser.
  Congr. Rep., pages 37--75. EMS Press, Berlin, 2023.

\bibitem[BG25]{balmer-gallauer:Artin-finite-fields}
Paul Balmer and Martin Gallauer.
\newblock The spectrum of {A}rtin motives.
\newblock {\em Trans. Amer. Math. Soc.}, 378(3):1733--1754, 2025.

\bibitem[BGH20]{barthel-greenlees-hausmann:SH(G)-compact-Lie}
Tobias Barthel, J.~P.~C. Greenlees, and Markus Hausmann.
\newblock On the {B}almer spectrum for compact {L}ie groups.
\newblock {\em Compos. Math.}, 156(1):39--76, 2020.

\bibitem[BHS22]{barthel-heard-sanders:stratification-supph}
Tobias Barthel, Drew Heard, and Beren Sanders.
\newblock {Stratification and the comparison between homological and tensor
  triangular support}.
\newblock {\em The Quarterly Journal of Mathematics}, 74(2):747--766, 12 2022.

\bibitem[BHS23]{barthel-heard-sanders:stratification-Mackey}
T.~Barthel, D.~Heard, and B.~Sanders.
\newblock Stratification in tensor triangular geometry with applications to
  spectral {M}ackey functors.
\newblock {\em Camb. J. Math.}, 11(4):829--915, 2023.

\bibitem[BIK11]{BIK:stratifying-stmod-kG}
David~J. Benson, Srikanth~B. Iyengar, and Henning Krause.
\newblock Stratifying modular representations of finite groups.
\newblock {\em Ann. of Math. (2)}, 174(3):1643--1684, 2011.

\bibitem[Bro85]{broue:permutation-brauer}
Michel Brou\'{e}.
\newblock On {S}cott modules and {$p$}-permutation modules: an approach through
  the {B}rauer morphism.
\newblock {\em Proc. Amer. Math. Soc.}, 93(3):401--408, 1985.

\bibitem[BS03]{brenner-schroer:multihomogeneous}
Holger Brenner and Stefan Schr\"oer.
\newblock Ample families, multihomogeneous spectra, and algebraization of
  formal schemes.
\newblock {\em Pacific J. Math.}, 208(2):209--230, 2003.

\bibitem[BS17]{balmer-sanders:SH(G)-finite}
Paul Balmer and Beren Sanders.
\newblock The spectrum of the equivariant stable homotopy category of a finite
  group.
\newblock {\em Invent. Math.}, 208(1):283--326, 2017.

\bibitem[BT08]{bouc-thevenaz:gluing-endo-perm}
Serge Bouc and Jacques Th\'{e}venaz.
\newblock Gluing torsion endo-permutation modules.
\newblock {\em J. Lond. Math. Soc. (2)}, 78(2):477--501, 2008.

\bibitem[Buc21]{buchweitz:cohen-macauley}
Ragnar-Olaf Buchweitz.
\newblock {\em Maximal {C}ohen-{M}acaulay modules and {T}ate cohomology},
  volume 262 of {\em Mathematical Surveys and Monographs}.
\newblock American Mathematical Society, Providence, RI, [2021] \copyright
  2021.
\newblock With appendices and an introduction by Luchezar L. Avramov, Benjamin
  Briggs, Srikanth B. Iyengar and Janina C. Letz.

\bibitem[Car96]{carlson:ETH-lectures}
Jon~F. Carlson.
\newblock {\em Modules and group algebras}.
\newblock Lectures in Mathematics ETH Z\"{u}rich. Birkh\"{a}user Verlag, Basel,
  1996.
\newblock Notes by Ruedi Suter.

\bibitem[DHM24]{dugger-et-al:C2}
Daniel Dugger, Christy Hazel, and Clover May.
\newblock Equivariant $\underline{{\mathbb{Z}}/\ell}$-modules for the cyclic
  group ${C}_2$.
\newblock {\em Journal of Pure and Applied Algebra}, 228(3):107473, 2024.

\bibitem[DHS88]{devinatz-hopkins-smith:chromatic}
Ethan~S. Devinatz, Michael~J. Hopkins, and Jeffrey~H. Smith.
\newblock Nilpotence and stable homotopy theory. {I}.
\newblock {\em Ann. of Math. (2)}, 128(2):207--241, 1988.

\bibitem[DS14]{dellambrogio-stevenson:even-more-spectra}
Ivo Dell'Ambrogio and Greg Stevenson.
\newblock Even more spectra: tensor triangular comparison maps via graded
  commutative 2-rings.
\newblock {\em Appl. Categ. Struct.}, 22(1):169--210, 2014.

\bibitem[HP23]{hesselholt-pstragowski:dirac1}
Lars {Hesselholt} and Piotr {Pstr\k{a}gowski}.
\newblock {Dirac geometry I: Commutative algebra}.
\newblock {\em Peking Math. J.}, 2023.
\newblock \url{https://doi.org/10.1007/s42543-023-00072-6}.

\bibitem[HPS97]{hovey-palmieri-strickland}
Mark Hovey, John~H. Palmieri, and Neil~P. Strickland.
\newblock Axiomatic stable homotopy theory.
\newblock {\em Mem. Amer. Math. Soc.}, 128(610):x+114, 1997.

\bibitem[HS98]{hopkins-smith:chromatic}
Michael~J. Hopkins and Jeffrey~H. Smith.
\newblock Nilpotence and stable homotopy theory. {II}.
\newblock {\em Ann. of Math. (2)}, 148(1):1--49, 1998.

\bibitem[J{\o}r05]{jorgensen:projectives}
Peter J{\o}rgensen.
\newblock The homotopy category of complexes of projective modules.
\newblock {\em Adv. Math.}, 193(1):223--232, 2005.

\bibitem[Lau23]{lau:spc-dm-stacks}
Eike Lau.
\newblock The {B}almer spectrum of certain {D}eligne–{M}umford stacks.
\newblock {\em Compositio Mathematica}, 159(6):1314–1346, 2023.

\bibitem[LMSM86]{LMSM:equivariant-stable-htpy}
L.~G. Lewis, Jr., J.~P. May, M.~Steinberger, and J.~E. McClure.
\newblock {\em Equivariant stable homotopy theory}, volume 1213 of {\em Lecture
  Notes in Mathematics}.
\newblock Springer-Verlag, Berlin, 1986.
\newblock With contributions by J. E. McClure.

\bibitem[MNN17]{mathew-naumann-noel:nilpotence-descent}
Akhil Mathew, Niko Naumann, and Justin Noel.
\newblock Nilpotence and descent in equivariant stable homotopy theory.
\newblock {\em Adv. Math.}, 305:994--1084, 2017.

\bibitem[Nee96]{neeman:brown}
Amnon Neeman.
\newblock The {G}rothendieck duality theorem via {B}ousfield's techniques and
  {B}rown representability.
\newblock {\em J. Amer. Math. Soc.}, 9(1):205--236, 1996.

\bibitem[Nee08]{neeman:K-flat}
Amnon Neeman.
\newblock The homotopy category of flat modules, and {G}rothendieck duality.
\newblock {\em Invent. Math.}, 174(2):255--308, 2008.

\bibitem[PSW22]{psw:derived-mackey}
Irakli Patchkoria, Beren Sanders, and Christian Wimmer.
\newblock The spectrum of derived {M}ackey functors.
\newblock {\em Trans. Amer. Math. Soc.}, 375(6):4057--4105, 2022.

\bibitem[Qui71]{quillen:spec-cohomology}
Daniel Quillen.
\newblock The spectrum of an equivariant cohomology ring. {I}, {II}.
\newblock {\em Ann. of Math. (2)}, 94:549--572; ibid. (2) {\bf 94} (1971),
  573--602, 1971.

\bibitem[Ric89]{rickard:der-stab}
Jeremy Rickard.
\newblock Derived categories and stable equivalence.
\newblock {\em J. Pure Appl. Algebra}, 61(3):303--317, 1989.

\bibitem[Ric96]{rickard:splendid}
Jeremy Rickard.
\newblock Splendid equivalences: derived categories and permutation modules.
\newblock {\em Proc. London Math. Soc. (3)}, 72(2):331--358, 1996.

\bibitem[Ser65]{serre:bockstein}
Jean-Pierre Serre.
\newblock Sur la dimension cohomologique des groupes profinis.
\newblock {\em Topology}, 3:413--420, 1965.

\bibitem[Ste13]{stevenson:disconnecting}
Greg Stevenson.
\newblock Disconnecting spectra via localization sequences.
\newblock {\em J. K-Theory}, 11(2):324--329, 2013.
\newblock Appendix to: \textit{Module categories for group algebras over
  commutative rings}, by D.\ Benson, H.\ Krause and S.\ Iyengar.

\bibitem[Voe00]{Voevodsky00}
Vladimir Voevodsky.
\newblock Triangulated categories of motives over a field.
\newblock In {\em Cycles, transfers, and motivic homology theories}, volume 143
  of {\em Ann. of Math. Stud.}, pages 188--238. Princeton Univ. Press,
  Princeton, NJ, 2000.

\bibitem[Yos83]{yoshida:G-functors}
Tomoyuki Yoshida.
\newblock On {$G$}-functors. {II}. {H}ecke operators and {$G$}-functors.
\newblock {\em J. Math. Soc. Japan}, 35(1):179--190, 1983.

\end{thebibliography}
\end{document}